\author{Baptiste Chantraine}
\author{Georgios Dimitroglou Rizell}
\author{Paolo Ghiggini}
\author{Roman Golovko}
\address{Universit\'e de Nantes, France.}
\email{baptiste.chantraine@univ-nantes.fr}
\address{Uppsala University, Sweden.}
\email{georgios.dimitroglou@math.uu.se}
\address{Universit\'e de Nantes, France.}
\email{paolo.ghiggini@univ-nantes.fr}
\address{Charles University, Czech Republic.}
\email{golovko@karlin.mff.cuni.cz}
\theoremstyle{plain}
\newtheorem{Thm}{Theorem}[section]
\newtheorem{Rem}[Thm]{Remark}
\newtheorem{Prop}[Thm]{Proposition}
\newtheorem{Lem}[Thm]{Lemma}
\newtheorem{Cor}[Thm]{Corollary}
\theoremstyle{remark}
\newtheorem{defn}[Thm]{Definition}
\newtheorem{Quest}[Thm]{Question}
\newtheorem{Ex}[Thm]{Example}
\def\co{\colon\thinspace}
\DeclareMathAlphabet{\mathdj}{U}{msb}{m}{n}
\newcommand{\R}{\ensuremath{\mathdj{R}}}
\newcommand{\F}{\ensuremath{\mathdj{F}}}
\newcommand{\Z}{\ensuremath{\mathdj{Z}}}
\newcommand{\C}{\ensuremath{\mathdj{C}}}
\newcommand{\Cth}{\operatorname{Cth}}
\newcommand{\gr}{\operatorname{gr}}
\newcommand{\tr}{\operatorname{tr}}
\newcommand{\id}{\operatorname{Id}}
\newcommand{\im}{\operatorname{im}}
\newcommand{\OP}{\operatorname}
\begin{document}
\title{Floer theory for Lagrangian cobordisms}
\thispagestyle{empty}
\thanks{The first author is partially supported by the ANR project COSPIN (ANR-13-JS01-0008-01). The second author is supported by the grant KAW 2013.0321 from the Knut and Alice Wallenberg Foundation. The fourth author is supported  by the ESF Short Visit Grant, by the ERC Advanced Grant ``LDTBud'' and by the ERC Consolidator Grant 646649 ``SymplecticEinstein''.}

\maketitle
\begin{abstract}
In this article we define intersection Floer homology for exact Lagrangian cobordisms
between Legendrian submanifolds in the contactisation of a Liouville manifold, provided
that  the Chekanov-Eliashberg algebras of the negative ends of the cobordisms admit augmentations. From this theory we derive several long exact sequences relating the Morse homology of an exact Lagrangian cobordism with the bilinearised contact homologies of its ends. These are then used to investigate the topological properties of exact Lagrangian cobordisms.
\end{abstract}

\markboth{Chantraine, Dimitroglou Rizell, Ghiggini, Golovko}{Floer theory for
Lagrangian cobordisms}

\section{Introduction}

Lagrangian cobordism is a natural relation between Legendrian submanifolds, and it is a crucial ingredient in the definition of the functorial properties of invariants of Legendrian submanifolds in the spirit of symplectic field theory of Eliashberg, Givental and Hofer \cite{Eliashberg_&_SFT}.
In the present paper we study rigidity phenomena in the topology of  exact Lagrangian cobordisms in the symplectisation of the contactisation of a Liouville manifold.  In \cite{Eliash_Mur_Caps}, Eliashberg and Murphy showed that  exact Lagrangian cobordisms are flexible when their negative ends are loose (in the sense of Murphy \cite{Murphy_loose}). On the contrary, we will show that they become rigid if their negative ends admit augmentations (or more generally finite-dimensional representations) of their Chekanov-Eliashberg algebras.

In order to  study the topology of such cobordisms, we introduce a version of Lagrangian Floer homology 
for pairs of exact Lagrangian cobordisms. This construction finds its inspiration in the work of Ekholm in \cite{Ekholm_FloerlagCOnt}, which gives a symplectic field theory point of view on wrapped Floer homology of Abouzaid and Seidel from \cite{WrappedFuk}.

The definition of this new Floer theory requires the use of augmentations of the Chekanov-Eliashberg algebras of the negative ends as bounding cochains in order to algebraically cancel certain degenerations of the holomorphic curves at the negative ends of the cobordisms. Bounding cochains have been introduced, in the closed case, by Fukaya, Oh, Ohta and Ono in \cite{fooo}, while augmentations, which play a similar role in the context of Legendrian contact homology, have been introduced by Chekanov in \cite{Chekanov_DGA_Legendrian}.

For a pair of exact Lagrangian cobordisms obtained by a suitable small Hamiltonian push-off, our construction gives rise to various long exact sequences relating the singular homology of the cobordism with the Legendrian contact homology of its ends. We then use these long exact sequences to give restrictions on the topology of exact Lagrangian cobordisms under various hypotheses on the topology of the Legendrian ends.
Analogous long exact sequences have previously been found by Sabloff and Traynor in \cite{Sabloff_Traynor} in the setting of generating family homology  under the additional assumption that the cobordism admits a compatible generating family, and by the fourth author in \cite{GolovkoLagCob} in the case when the negative end of the cobordism admits an exact Lagrangian filling. The latter results have been put in a much more general framework in recent work by Cieliebak and Oancea \cite{SymplecticEilenbergSteenrod}.

We will assume that the reader is familiar with Legendrian contact homology, defined in
  \cite{Chekanov_DGA_Legendrian}, \cite{Ekholm_Contact_Homology} and \cite{LCHgeneral}. See also Etnyre's excellent survey \cite{Etnyre_Legendrian_Transver}
for a quick and relatively painless introduction to the topic.

The notion of Lagrangian cobordism between Legendrian submanifolds studied in this article is (in general) different from the notion of Lagrangian cobordisms  between Lagrangian submanifolds introduced by Arnol'd in \cite{Arnold_Lagrange_Cobordism1} and recently popularised by Biran and Cornea
in \cite{BiranCornea_LCI} and \cite{BirCo2}.
Despite the differences, for Lagrangian cobordisms between Legendrian submanifolds with no Reeb chords, some of the results we obtain resemble some of the results
obtained by Biran and Cornea \cite{BiranCornea_LCI, BirCo2} and Su\'arez \cite{suarezthesis}.

\subsection{Main results.}
\label{sec:main-result}

Let  $(P, \theta)$ be a Liouville manifold and $(Y, \alpha):= (P \times \R, dz+ \theta)$ its contactisation. We consider a pair of exact Lagrangian embeddings $\Sigma_0,\Sigma_1\hookrightarrow X$, where $(X, \omega) = (\R\times Y, d(e^t \alpha))$ is the symplectisation of $(Y, \alpha)$. We assume that the positive and negative ends of $\Sigma_i$ $i=0,1$ are cylindrical over  Legendrian submanifolds $\Lambda_i^-$ and $\Lambda_i^+$ respectively, and thus $\Sigma_i$ is a Lagrangian cobordisms from $\Lambda_i^-$ to $\Lambda_i^+$; see Figure \ref{fig:paircob} for a schematic representation. We assume that $\Sigma_0$ and $\Sigma_1$ intersect transversely and that their Legendrian ends are chord-generic in the sense of \cite{LCHgeneral}.
\begin{figure}[htp]
\vspace{3mm}
\labellist \pinlabel $t$ at 5 150
\pinlabel $\color{red}{\Lambda^+_1}$ at 38 154
\pinlabel $\Lambda^+_0$ at 120 154
\pinlabel $\Sigma_0$ at 136 98
\pinlabel $\color{red}{\Sigma_1}$ at 30 80
\pinlabel $\color{red}{\Lambda^-_1}$ at 38 4
\pinlabel $\Lambda^-_0$ at 116 4
\endlabellist
\centering
\includegraphics{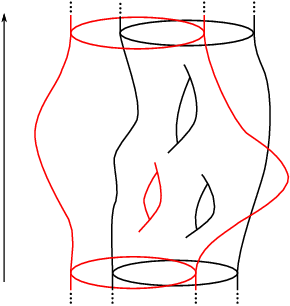}
\vspace{3mm}
\caption{Two Lagrangian cobordisms inside a symplectisation $\R \times Y$, where the vertical axis corresponds to the $\R$-coordinate.}
\label{fig:paircob}
\end{figure}

We denote by $\mathcal{R}(\Lambda_i^\pm)$ the set of Reeb chords of $\Lambda_i^\pm$  for $i=0,1$, and by $\mathcal{R}(\Lambda_1^\pm,\Lambda_0^\pm)$ the set of Reeb chords from $\Lambda_1^\pm$ to $\Lambda_0^\pm$. Let $R$ be a ring of characteristic $2$ or, if all $\Sigma_i$'s and $\Lambda_i^\pm$'s are relatively Pin, any  ring. (See Section \ref{sec:remarks-about-grad}.)  We denote by $C(\Lambda_0^\pm,\Lambda_1^\pm)$ the free $R$-module generated by $\mathcal{R}(\Lambda_1^\pm,\Lambda_0^\pm)$. Note that we are not assuming that $R$ is commutative, but we assume it is unital.

We assume that the Chekanov-Eliashberg algebra $\mathcal{A}(\Lambda^-_i;R)$ of $\Lambda^-_i$ admits an augmentation $\varepsilon^-_i$ over $R$ for $i=0,1$. It follows from the results of Ekholm, Honda and K\'alm\'an in \cite{Ekhoka} that $\mathcal{A}(\Lambda^+_i;R)$ also admits an augmentation $\varepsilon^+_i= \varepsilon^-_i \circ \Phi_{\Sigma_i}$, where  $\Phi_{\Sigma_i} \co \mathcal{A}(\Lambda^+_i;R) \to \mathcal{A}(\Lambda^-_i;R)$ is the unital DGA morphism induced by the cobordism $\Sigma_i$.  Thus the bilinearised
contact cohomologies $LCH_{\varepsilon_0^\pm, \varepsilon_1^\pm}(\Lambda_0^\pm, \Lambda_1^\pm)$ are defined. See Chekanov \cite{Chekanov_DGA_Legendrian} and Bourgeois and Chantraine \cite{augcat} for the notions of linearisation and bilinearisation of a differential graded algebra.

We denote by $CF(\Sigma_0,\Sigma_1)$ the free $R$-module spanned by the intersection points $\Sigma_0\cap\Sigma_1$. Note that, in general, it is not possible to define a Floer differential on $CF(\Sigma_0,\Sigma_1)$ because of breakings at the negative ends. 
We define a chain complex $(\Cth(\Sigma_0,\Sigma_1),\mathfrak{d}_{\varepsilon_0^-,\varepsilon_1^-})$ associated to a pair of Lagrangian cobordisms which we call the \emph{Cthulhu complex} (see Section \ref{sec:Cthulhu-complex}). Its underlying $R$-module is
$$\Cth(\Sigma_0,\Sigma_1) = C(\Lambda_0^+, \Lambda_1^+)  \oplus CF_+(\Sigma_0, \Sigma_1) \oplus C(\Lambda_0^-, \Lambda_1^-) \oplus CF_-(\Sigma_0, \Sigma_1).$$
The Cthulhu complex is acyclic because  of its invariance properties with respect to a large class of Hamiltonian deformations which, in the contactisation of a Liouville manifold, allow us to displace any pair of Lagrangian cobordisms.

When the negative ends are empty, this complex recovers the wrapped Floer cohomology complex in the form described by Ekholm in \cite{Ekholm_FloerlagCOnt}. When the positive ends are empty and there are no homotopically trivial Reeb chords of both $\Lambda_i^-$'s, this complex is similar to the Floer complex sketched in the work of Akaho in \cite[Section 8]{Akaho}. However, the later condition cannot be satisfied in the symplectisation of a contactisation of a Liouville manifold by Corollary \ref{cor:nonemptypos}; see also \cite{Caps}.

\subsubsection{Remarks about grading and orientation.}
\label{sec:remarks-about-grad} In order to define a graded theory,
we need that $2c_1(P)=0$ and that all Lagrangian cobordisms have
vanishing Maslov class. This implies that all Lagrangian cobordisms
admit Maslov potentials; a particular choice of such a potential
leads to the notion of a \emph{graded} Lagrangian cobordisms, for
which $Cth(\Sigma_0,\Sigma_1)$ has a well-defined grading in $\Z$.
See \cite{Seidel_Graded} for the closed case, which is similar.
In general the grading must be
taken in a (possibly trivial) cyclic group. Most of the result here
are stated for graded Lagrangian cobordisms, but our methods
apply to the ungraded cases as well. The only difference is that the
long exact sequences in Section \ref{sec:les} become exact triangles.

For the results to hold when the coefficient ring $R$ is of characteristic different from two, which is crucial for several of the applications in Section \ref{sec:cobobs}, we need to define coherent orientations for the relevant moduli spaces of pseudoholomorphic curves. This can be done in the case when the Lagrangian cobordisms are relatively Pin (following Ekholm, Etnyre and Sullivan in \cite{Ekholm_&_Orientation_Homology} and Seidel in \cite[Section 11]{Seidel_Fukaya}). For a precise treatment of signs, we refer to the recent work of Karlsson \cite{CoherentOrientationsLagrangianCobordisms}.

\subsection{Long exact sequences for LCH induced by a Lagrangian cobordism.}
\label{sec:les}

If $\Sigma_1$ is a Hamiltonian deformation of $\Sigma_0$ for some suitable and sufficiently small Hamiltonian, there is a well defined Floer differential on $CF(\Sigma_0, \Sigma_1)$ and the Floer homology group $HF(\Sigma_0, \Sigma_1)$ can be identified with the Morse homology group of $\Sigma_0$. Similarly, the bilinearised Legendrian contact homology groups $LCH_{\varepsilon^\pm_0,\varepsilon_1^\pm} (\Lambda^\pm_0,\Lambda^\pm_1)$ can be identified with the bilinearised contact homology groups $LCH_{\varepsilon^\pm_0,\varepsilon_1^\pm}(\Lambda^\pm_0)$ (as defined in \cite{augcat}) following \cite{Duality_EkholmetAl}. Moreover, in the same situation, the Cthulhu complex can be interpreted as a double cone, and thus provides long exact sequences which can be reinterpreted, by the identifications discussed above, as exact sequences relating the singular homology of a Lagrangian cobordism and the Legendrian contact homology of its ends. These results are proved in  Section \ref{sec:small-pert-lagr}.

In the rest of this introduction, $\Lambda^+$ and $\Lambda^-$ will always denote closed Legendrian submanifolds of dimension $n$ in the contactisation of a Liouville manifold, and every Lagrangian cobordism between them, as well as any Lagrangian filling of them, will always live in the corresponding symplectisation. We will denote by $\overline{\Sigma}$ the natural compactification of $\Sigma$ obtained by adjoining its Legendrian ends $\Lambda_\pm$. Note that $\overline{\Sigma}$ is diffeomorphic to $\Sigma \cap [-T, +T] \times Y$ for some $T \gg 0$ sufficiently large. We will also use the notation $\partial_\pm\overline{\Sigma}:=\Lambda_\pm \subset \overline{\Sigma}$, which implies that $\partial \overline{\Sigma}=\partial_+\overline{\Sigma} \sqcup \partial_-\overline{\Sigma}$.

\subsubsection{A generalisation of the long exact sequence of a pair.}

The first exact sequence we produce from a Lagrangian cobordism (see Section \ref{sec: proof of exact sequences}) is given by the following:

\begin{Thm}\label{thm:lespair}
  Let $\Sigma$ be a graded exact Lagrangian cobordism from $\Lambda^-$ to $\Lambda^+$ and let $\varepsilon^-_0$ and $\varepsilon^-_1$ be two augmentations of $\mathcal{A}(\Lambda^-)$ inducing augmentations $\varepsilon_0^+$, $\varepsilon_1^+$ of $\mathcal{A}(\Lambda^+)$. There is a long exact sequence
\begin{equation}
\label{leqtr}
\xymatrixrowsep{0.15in}
\xymatrixcolsep{0.15in}
\xymatrix{
       \cdots\ar[r]& LCH^{k-1}_{\varepsilon^+_0,\varepsilon_1^+}(\Lambda^+) \ar[d] & & & \\
& H_{n+1-k}(\overline{\Sigma},\partial_- \overline{\Sigma};R) \ar[r] & LCH^{k}_{\varepsilon^-_0,\varepsilon^-_1}(\Lambda^-) \ar[r] & LCH^{k}_{\varepsilon^+_0,\varepsilon_1^+}(\Lambda^+)\ar[r] &\cdots,}
\end{equation}
where the map $\Phi^{\varepsilon^-_0,\varepsilon^-_1}_\Sigma \colon LCH^{k}_{\varepsilon^-_0,\varepsilon^-_1}(\Lambda^-) \to LCH^{k}_{\varepsilon^+_0,\varepsilon_1^+}(\Lambda^+)$ is the adjoint of the bilinearised DGA morphism $\Phi_\Sigma$ induced by $\Sigma$ (see \cite{Ekhoka}).
\end{Thm}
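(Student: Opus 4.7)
The plan is to apply Corollary~\ref{cor:posnegLES}(i) to the pair $(\Sigma_0, \Sigma_1) := (\Sigma, \Phi^1_H(\Sigma))$, where $\Phi^1_H$ is the time-one flow of a sufficiently small Hamiltonian $H$ chosen so that three properties hold: (a) the pair is directed, (b) the negative-action Floer complex $CF_-(\Sigma_0, \Sigma_1)$ can be identified with a Morse-theoretic model for the relative singular homology of $\overline{\Sigma}$, and (c) the two-copy bilinearised LCH of the ends reduces to the bilinearised LCH of a single copy.

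For (a) and (b), I would take $H$ to be cylindrical of the form $\rho(t)$ near the positive and negative ends, with $\rho$ arranged so that $\Lambda_1^\pm$ is obtained from $\Lambda_0^\pm=\Lambda^\pm$ by a small positive Reeb translation, and equal to a small positive multiple of a Morse function $f \co \overline{\Sigma} \to \R$ on the compact part. A direct action computation then shows that for sufficiently small perturbation all intersection points have non-positive action, so the pair is directed. The generators of $CF_-(\Sigma_0,\Sigma_1)$ split as critical points of $f$ in the interior together with generators of $C(\Lambda_0^-,\Lambda_1^-)$ at the negative end; the Floer-to-Morse reduction in the cobordism setting (cf.~\cite{Ekholm_FloerlagCOnt}) identifies, for $H$ small enough, the induced differential on the Morse part with the Morse differential of $f$. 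Choosing $f$ with gradient pointing inward at $\partial_+\overline{\Sigma}$ and outward at $\partial_-\overline{\Sigma}$, this computes $H^*(\overline{\Sigma},\partial_+\overline{\Sigma};R)$, which by Poincar\'e--Lefschetz duality is isomorphic to $H_{n+1-*}(\overline{\Sigma},\partial_-\overline{\Sigma};R)$.

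For (c), the identification $LCH^*_{\varepsilon^\pm_0,\varepsilon^\pm_1}(\Lambda^\pm_0,\Lambda^\pm_1) \cong LCH^*_{\varepsilon^\pm_0,\varepsilon^\pm_1}(\Lambda^\pm)$ for the two-copy of $\Lambda^\pm$ under a small Reeb push-off is provided by \cite{Duality_EkholmetAl}. Plugging (b) and (c) into Corollary~\ref{cor:posnegLES}(i) then immediately produces the long exact sequence claimed in the theorem, modulo identifying the connecting map with the adjoint of $\Phi_\Sigma$.

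The main obstacle is precisely that last identification. In the Cthulhu complex, the map $LCH^k_{\varepsilon^-_0,\varepsilon^-_1}(\Lambda^-)\to LCH^k_{\varepsilon^+_0,\varepsilon^+_1}(\Lambda^+)$ counts SFT buildings of strips on $(\Sigma_0,\Sigma_1)$ with a positive puncture at a Reeb chord of $\Lambda^+$, a negative puncture at a Reeb chord of $\Lambda^-$, and possibly additional negative punctures on $\Lambda^-_0$ and $\Lambda^-_1$ decorated by $\varepsilon^-_0$ and $\varepsilon^-_1$. As the Hamiltonian perturbation degenerates, SFT compactness together with a gluing argument should match these configurations with holomorphic disks on the single copy $\Sigma$ having one positive puncture on $\Lambda^+$, one distinguished negative puncture on $\Lambda^-$, and additional negative punctures decorated alternately by $\varepsilon^-_0$ and $\varepsilon^-_1$ according to which boundary arc they lie on --- which is exactly the count defining the bilinearised $\Phi_\Sigma$.
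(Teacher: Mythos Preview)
Your approach is essentially the paper's: push off $\Sigma$ by a small Hamiltonian that acts as the positive Reeb flow at both ends (the paper's $h_{\OP{dir}}$), obtain a directed pair, apply Corollary~\ref{cor:posnegLES}(i), and then identify the three terms via the Morse--Floer comparison (the paper's Theorem~\ref{thm:twocopy}) and the two-copy identification (Theorem~\ref{thm:twocopyends} / Proposition~\ref{prop:twocopy}). The identification $d_{+-}=\Phi_\Sigma^{\varepsilon^-_0,\varepsilon^-_1}$ that you flag as the main obstacle is exactly Theorem~\ref{thm:twocopyends}(2), whose proof is indeed the two-copy bijection of strips you sketch.

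Two small inaccuracies to fix. First, $CF_-(\Sigma_0,\Sigma_1)$ consists \emph{only} of intersection points (critical points of $f$); the Reeb-chord module $C(\Lambda_0^-,\Lambda_1^-)$ is a separate summand of the Cthulhu complex and enters the long exact sequence as its own term, not as part of $CF_-$. Second, with a \emph{positive} Reeb push-off at both ends, the induced Morse function $f$ (a perturbation of $h_{\OP{dir}}|_\Sigma$) has $\nabla f$ pointing \emph{outward} at $\partial_+\overline{\Sigma}$ and \emph{inward} at $\partial_-\overline{\Sigma}$ --- the opposite of what you wrote --- so the Morse complex already computes $H_{n+1-\bullet}(\overline{\Sigma},\partial_-\overline{\Sigma};R)$ directly (cf.\ Remark~\ref{rem:morsefloer}), without the detour through Poincar\'e--Lefschetz duality.
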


When the negative end $\Lambda^-=\emptyset$ is empty, i.e.~when $\Sigma$ is an exact Lagrangian filling of $\Lambda^+$, and $\varepsilon^+_i$, $i=0,1$ both are augmentations induced by this filling, the resulting long exact sequence simply becomes the isomorphism
\[ LCH^{k-1}_{\varepsilon^+_0,\varepsilon_1^+}(\Lambda^+) \xrightarrow \cong H_{n+1-k}(\overline{\Sigma};R) \]
appearing in the work of Ekholm in \cite{Ekholm_FloerlagCOnt}. This isomorphism was first observed by Seidel, and is sometimes called \emph{Seidel's isomorphism}. (See the map $G^{\varepsilon^-_0,\varepsilon^-_1}_\Sigma$ in Section  \ref{sec:seidel} for another incarnation.) Its proof was completed by the second author in \cite{LiftingPseudoholomorphic}; also see \cite{Floer_Conc} for an analogous isomorphism induced by a \emph{pair} of fillings.

\subsubsection{A generalisation of the duality long exact sequence and fundamental class}
\label{sec:dualityles}
A Legendrian submanifold $\Lambda$ is horizontally displaceable if there exists a Hamiltonian isotopy $\phi_t$ of $(P,d\theta)$ which displaces the Lagrangian projection $\Pi_{\OP{Lag}}(\Lambda) \subset P$ from itself. In Section \ref{sec: proof of exact sequences} we obtain the following:

\begin{Thm}\label{thm:lesduality}
Let $\Sigma$ be an exact graded Lagrangian cobordism from $\Lambda^-$ to $\Lambda^+$ and let $\varepsilon^-_0$ and $\varepsilon^-_1$ be two augmentations of $\mathcal{A}(\Lambda^-)$ inducing augmentations $\varepsilon_0^+$, $\varepsilon_1^+$ of $\mathcal{A}(\Lambda^+)$. Assume that $\Lambda^-$ is horizontally displaceable; then there is a long exact sequence
  \begin{equation}
      \label{ledual}
\xymatrixrowsep{0.15in}
\xymatrixcolsep{0.15in}
       \xymatrix{
\cdots \ar[r] & LCH^{k}_{\varepsilon^+_0,\varepsilon_1^+}(\Lambda^+) \ar[r] &  LCH_{n-k-1}^{\varepsilon^-_0,\varepsilon^-_1}(\Lambda^-)\ar[r] & H_{n-k-1}(\Sigma;R) \ar[d]
& \\
& & & LCH^{k+1}_{\varepsilon^+_0,\varepsilon_1^+}(\Lambda^+)\ar[r] & \cdots,
}
\end{equation}
where the map $G^{\varepsilon^-_0,\varepsilon^-_1}_\Sigma \colon H_{n-k-1}(\Sigma;R)\to LCH^{k+1}_{\varepsilon^+_0,\varepsilon_1^+}(\Lambda^+)$ is defined in Section \ref{sec:seidel}.
\end{Thm}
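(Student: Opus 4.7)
The plan is to apply the $V$-shaped case of Corollary~\ref{cor:posnegLES} to a carefully chosen pair $(\Sigma_0, \Sigma_1)$ with $\Sigma_0 = \Sigma$ and $\Sigma_1$ a Hamiltonian push-off of $\Sigma$, and then to identify each of the three non-trivial terms of the resulting long exact sequence with the corresponding term in \eqref{ledual}.

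The first step is to construct $\Sigma_1$. Near the positive end, $\Lambda_1^+$ will be a small Legendrian push-off of $\Lambda^+$ realised by a negative Reeb shift together with a small perturbation coming from a Morse function on $\Lambda^+$, so that Reeb chords from $\Lambda_1^+$ to $\Lambda_0^+$ reproduce those of $\Lambda^+$ together with new generators coming from critical points. Near the negative end, one uses horizontal displaceability to pick a compactly supported Hamiltonian isotopy of $(P, d\theta)$ disjoining $\Pi_{\OP{Lag}}(\Lambda^-)$ from itself; lifting this isotopy to a Legendrian isotopy and applying it to $\Lambda^-$ produces $\Lambda_1^-$, whose Lagrangian projection is disjoint from that of $\Lambda_0^-$, so that every Reeb chord from $\Lambda_1^-$ to $\Lambda_0^-$ is ``displaced''. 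In the middle of the cobordism, one interpolates so that $\Sigma_1$ remains a small Hamiltonian perturbation of $\Sigma_0$, and the push-off is arranged so that every intersection point of $\Sigma_0 \cap \Sigma_1$ has strictly positive action, making $(\Sigma_0, \Sigma_1)$ a $V$-shaped pair.

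Applying Corollary~\ref{cor:posnegLES}(ii) then yields the desired long exact sequence in its $V$-shape form, and it remains to identify its entries. At the positive end, the standard small-push-off identification (cf.\ Section~\ref{sec:les}) yields $LCH^k_{\varepsilon_0^+, \varepsilon_1^+}(\Lambda_0^+, \Lambda_1^+) \cong LCH^k_{\varepsilon_0^+, \varepsilon_1^+}(\Lambda^+)$. At the displaceable negative end, the Ekholm-Etnyre-Sabloff-type duality from \cite{Duality_EkholmetAl} furnishes the grading-reversing identification
\[ LCH^k_{\varepsilon_0^-, \varepsilon_1^-}(\Lambda_0^-, \Lambda_1^-) \cong LCH_{n-k-1}^{\varepsilon_0^-, \varepsilon_1^-}(\Lambda^-), \]
which produces the Poincar\'{e}-type reflection of degree appearing in \eqref{ledual}. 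For the middle term, the $V$-shape perturbation corresponds to a Morse function on $\Sigma$ whose only critical behaviour near the boundary is maximum-like at both $\partial_\pm \overline{\Sigma}$; the positive-action complex then computes $H_{n-k-1}(\Sigma; R)$ by Lefschetz duality on the $(n+1)$-dimensional cobordism $\overline{\Sigma}$, with the degree shift $k \mapsto k+2$ accounted for by the action conventions inherent in the Cthulhu complex. Finally, the connecting map is shown to coincide with $G^{\varepsilon_0^-, \varepsilon_1^-}_\Sigma$ by directly inspecting the holomorphic curves contributing to both maps, following the construction in Section~\ref{sec:seidel}.

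The main obstacle will be the duality identification at the negative end together with the construction of a genuinely $V$-shaped push-off. The former demands compatibility of the Ekholm-Etnyre-Sabloff duality with the specific bilinearisation determined by $\varepsilon_0^-$ and $\varepsilon_1^-$, as well as with the gradings induced by the displacing Hamiltonian; this is precisely where the horizontal displaceability of $\Lambda^-$ enters essentially, since otherwise undisplaced ``short'' chords between the two copies would obstruct the identification. The latter requires delicate action estimates combining the Morse-theoretic perturbation in the middle of $\Sigma$ with the chord contributions at both ends, so as to guarantee the vanishing of $CF_-(\Sigma_0, \Sigma_1)$.
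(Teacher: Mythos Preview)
Your proposal contains a genuine conceptual error at the negative end. You propose to use the horizontal displaceability of $\Lambda^-$ to literally displace $\Lambda_1^-$ so that its Lagrangian projection is disjoint from that of $\Lambda_0^-$. But then there are \emph{no} Reeb chords from $\Lambda_1^-$ to $\Lambda_0^-$ at all, since such chords are in bijection with double points of $\Pi_{\OP{Lag}}(\Lambda_0^-\cup\Lambda_1^-)$. Consequently $C(\Lambda_0^-,\Lambda_1^-)=0$ and $LCH^{k}_{\varepsilon^-_0,\varepsilon^-_1}(\Lambda^-_0,\Lambda_1^-)=0$, so the long exact sequence \eqref{eq:18} degenerates and cannot possibly produce the nontrivial term $LCH_{n-k-1}^{\varepsilon^-_0,\varepsilon^-_1}(\Lambda^-)$. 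Horizontal displaceability is not used to perform a displacement; it is a \emph{hypothesis} required for the duality quasi-isomorphism of \cite{Duality_EkholmetAl} (Proposition~\ref{prop:twocopy}, second part).

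The paper's push-off is instead everywhere $C^1$-small, built from the Hamiltonian $h_V(t)$ of Section~\ref{sec:dualitypushoff}: at the \emph{positive} end it is a small \emph{positive} Reeb shift (giving $LCC^\bullet(\Lambda^+,\phi^{\epsilon}(\Lambda^+{}'))\simeq LCC^\bullet(\Lambda^+)$ by the first part of Proposition~\ref{prop:twocopy}), and at the \emph{negative} end it is a small \emph{negative} Reeb shift (giving $LCC^\bullet(\Lambda^-,\phi^{-\epsilon}(\Lambda^-{}'))\sim LCC_{n-1-\bullet}(\Lambda^-)$ by the second part, which is where horizontal displaceability enters). You have the sign of the Reeb shift wrong at the positive end as well. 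The $V$-shape property and the identification $HF_+^{k+2}\cong H_{n-k-1}(\overline{\Sigma};R)$ both follow from the explicit primitive computation \eqref{eq:14} and Theorem~\ref{thm:twocopy} via the Morse--Floer comparison (Remark~\ref{rem:morsefloer}(2)), not from Lefschetz duality; this only works because the perturbation is globally $C^1$-small, which your large horizontal displacement at the negative end would preclude.
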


If $\Sigma=\R \times \Lambda$, then $H_\bullet(\Sigma)=H_\bullet(\Lambda)$, and hence the above long exact sequence recovers the duality long exact sequence for Legendrian contact homology, which was proved by Sabloff in \cite{Sabloff_Duality} for Legendrian knots and later generalised to arbitrary Legendrian submanifolds in \cite{Duality_EkholmetAl} by Ekholm, Etnyre and Sabloff. In the bilinearised setting, the duality long exact sequence was introduced by Bourgeois and the first author in \cite{augcat}. In Section \ref{sec:fund-class-twist} we use Exact Sequence \eqref{ledual} to prove that the fundamental class in LCH defined by Sabloff in \cite{Sabloff_Duality} and Ekholm, Etnyre and Sabloff in \cite{Duality_EkholmetAl} is functorial with respect to the maps induced by exact Lagrangian cobordisms.

\subsubsection{A generalisation of the Mayer-Vietoris long exact sequence}

The last exact sequence that we will extract from the Cthulhu complex generalises the Mayer-Vietoris exact sequence (see Section \ref{sec:push-inducing-mayer}).

\begin{Thm}\label{thm:lesmayer-vietoris}
Let $\Sigma$ be an exact graded Lagrangian cobordism from $\Lambda^-$ to $\Lambda^+$ and let $\varepsilon^-_0$ and $\varepsilon^-_1$ be two augmentations of $\mathcal{A}(\Lambda^-)$ inducing augmentations $\varepsilon_0^+$, $\varepsilon_1^+$ of $\mathcal{A}(\Lambda^+)$. Then there is a long exact sequence
\begin{equation}
\label{eq:mayer-vietoris}
\xymatrixrowsep{0.15in}
\xymatrixcolsep{0.15in}
\xymatrix{
 & \cdots \ar[r] & LCH^{k-1}_{\varepsilon^+_0,\varepsilon_1^+}(\Lambda^+) \ar[dl] &    \\
& H_{n-k}(\partial_- \overline{\Sigma};R) \ar[r] & LCH^{k}_{\varepsilon^-_0,\varepsilon^-_1}(\Lambda^-) \oplus  H_{n-k}(\overline{\Sigma};R) \ar[r] & LCH^{k}_{\varepsilon^+_0,\varepsilon_1^+}(\Lambda^+)\ar[dl],\\
& & \cdots &  }
\end{equation}
where the component
\[ H_{n-k}(\partial_- \overline{\Sigma};R) \to H_{n-k}(\overline{\Sigma};R) \]
of the left map is induced by the topological inclusion of the negative end.

If $\varepsilon^-_0=\varepsilon^-_1=\varepsilon$, it moreover follows that the image of the fundamental class under the component $H_n(\partial_- \overline{\Sigma};R) \to LCH^{0}_{\varepsilon,\varepsilon}(\Lambda^-)$ of the above morphism vanishes. Moreover, under the additional assumption that $\Lambda^-$ is horizontally displaceable, the image of a generator under $H_0(\partial_- \overline{\Sigma};R) \to LCH^{n}_{\varepsilon,\varepsilon}(\Lambda^-)$ is equal to the fundamental class in Legendrian contact homology.
\end{Thm}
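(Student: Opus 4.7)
The plan is to apply Theorem~\ref{thm:principal} to the pair $(\Sigma_0,\Sigma_1)=(\Sigma,\Sigma')$, where $\Sigma'$ is a carefully chosen Hamiltonian push-off of $\Sigma$ engineered so that intersection points of positive action correspond to critical points of a Morse function $f$ on $\overline{\Sigma}$ sitting away from $\partial_-\overline{\Sigma}$, while intersection points of negative action correspond to critical points of the restriction $f|_{\partial_-\overline{\Sigma}}$. Concretely, one can take $f$ to be a Morse function on $\overline{\Sigma}$ which is negative precisely on a collar of $\partial_-\overline{\Sigma}$ and whose critical points in that collar all lie on $\partial_-\overline{\Sigma}$. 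Via the Floer-to-Morse comparison developed in Section~\ref{sec:small-pert-lagr}, this gives
\[ HF_+^\bullet(\Sigma,\Sigma') \cong H_{n-\bullet}(\overline{\Sigma};R), \qquad HF_-^\bullet(\Sigma,\Sigma') \cong H_{n-\bullet}(\partial_-\overline{\Sigma};R). \]

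I would then identify the first-page differentials of the spectral sequence of Theorem~\ref{thm:principal}. The differential $LCH^\bullet_{\varepsilon^-_0,\varepsilon_1^-}(\Lambda^-) \to LCH^\bullet_{\varepsilon^+_0,\varepsilon_1^+}(\Lambda^+)$ is the cobordism map $\Phi^{\varepsilon^-_0,\varepsilon^-_1}_\Sigma$, exactly as in Theorem~\ref{thm:lespair}; the differentials out of and into the Morse-type summands give the Seidel-type maps from Section~\ref{sec:seidel}. The new key ingredient is the differential $H_{n-\bullet}(\partial_-\overline{\Sigma};R) \to H_{n-\bullet}(\overline{\Sigma};R)$, which I would identify with the map induced by the topological inclusion $\partial_-\overline{\Sigma}\hookrightarrow \overline{\Sigma}$. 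Once these identifications are in place, the acyclicity of the Cthulhu complex (i.e.\ the fact that the spectral sequence collapses to zero at the fourth page) yields the long exact sequence~\eqref{eq:mayer-vietoris} by a standard algebraic unwinding of a length-four filtered acyclic complex.

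For the final assertions, assume $\varepsilon^-_0=\varepsilon^-_1=\varepsilon$. The vanishing of the image of the fundamental class of $\partial_-\overline{\Sigma}$ under the component $H_n(\partial_-\overline{\Sigma};R)\to LCH^{0}_{\varepsilon,\varepsilon}(\Lambda^-)$ follows by unpacking the connecting map in~\eqref{eq:mayer-vietoris} and performing a dimension/degree count on the pseudoholomorphic curves it counts, in analogy with the vanishing of the augmentation on the fundamental class of closed Legendrian contact homology. Under horizontal displaceability of $\Lambda^-$, Theorem~\ref{thm:lesduality} applies, and I would compare the component $H_0(\partial_-\overline{\Sigma};R)\to LCH^n_{\varepsilon,\varepsilon}(\Lambda^-)$ of~\eqref{eq:mayer-vietoris} with the Seidel-type map $G^{\varepsilon,\varepsilon}_{\R\times\Lambda^-}$ associated to the trivial cobordism, which by its construction in Section~\ref{sec:seidel} sends a generator of $H_0$ to the fundamental class in $LCH^n$.

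The principal technical obstacle I anticipate is the identification of the $d_1$-component $H_{n-\bullet}(\partial_-\overline{\Sigma};R) \to H_{n-\bullet}(\overline{\Sigma};R)$ with the topological inclusion. This requires a careful analysis of the thin pseudoholomorphic strips in the collar of $\partial_-\overline{\Sigma}$ in the small-perturbation regime, showing that in the limit they degenerate to negative gradient trajectories of $f$ crossing the collar and thus count precisely the inclusion in Morse homology. Controlling the action windows, compactness, and excluding unwanted breakings into strips with negative ends on Reeb chords of $\Lambda^-$ is where the most care is required, and it is here that the augmentation assumption on $\mathcal{A}(\Lambda^-)$ enters crucially via the bounding cochain mechanism built into the Cthulhu differential.
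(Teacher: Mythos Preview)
Your approach differs from the paper's in a way that creates a real gap. The paper does \emph{not} construct a push-off with both positive and negative action intersection points. Instead it uses the same $V$-shaped push-off $h_V$ as for the duality sequence (so $CF_-(\Sigma_0,\Sigma_1)=0$), and extracts the Mayer--Vietoris sequence from a \emph{different} length-three filtration of the very same complex:
\[
C(\Lambda_0^+,\Lambda_1^+) \;>\; C_+(\Lambda_0^-,\Lambda_1^-)\oplus CF_+(\Sigma_0,\Sigma_1) \;>\; C_0(\Lambda_0^-,\Lambda_1^-),
\]
where $C(\Lambda_0^-,\Lambda_1^-)$ is split into ``long'' chords $C_+$ (coming from genuine Reeb chords of $\Lambda^-$, giving $LCH_{\varepsilon_0^-,\varepsilon_1^-}(\Lambda^-)$) and ``short'' chords $C_0$ (coming from critical points of a Morse function on $\Lambda^-$, giving $H_\bullet(\Lambda^-)$). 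The term $H_\bullet(\partial_-\overline{\Sigma})$ thus arises from short \emph{Reeb chords}, not from intersection points. The middle level is a direct sum on $E_1$ by an action argument: the long chords have action bounded below while the positive intersection points can be made arbitrarily small. This is the key trick, and it is absent from your proposal.

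By contrast, you apply the length-four filtration of Theorem~\ref{thm:principal} and then claim the ``first-page differentials'' assemble into~\eqref{eq:mayer-vietoris}. But in that filtration the $d_1$ maps run
\[
HF_- \to LCH(\Lambda^-) \to HF_+ \to LCH(\Lambda^+),
\]
so the maps you describe --- $LCH(\Lambda^-)\to LCH(\Lambda^+)$ and $H(\partial_-\overline{\Sigma})\to H(\overline{\Sigma})$ --- are $d_2$ maps, not $d_1$. For the $E_2$ page to carry the direct sum $LCH(\Lambda^-)\oplus H(\overline{\Sigma})$ you would first need the $d_1$ component $d_{0^+-}\colon LCH(\Lambda^-)\to HF_+$ to vanish on homology, and you give no argument for this. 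A length-four acyclic filtered complex simply does not yield a three-term long exact sequence ``by standard algebraic unwinding'' without such a vanishing. There is also a Morse-theoretic tension in your setup: if the totality of intersection points is the Morse complex of a single function $f$ on $\overline{\Sigma}$, its homology is determined by the boundary behaviour of $\nabla f$, and it is not clear one can arrange $HF_+\cong H(\overline{\Sigma})$ and $HF_-\cong H(\partial_-\overline{\Sigma})$ simultaneously while keeping $C(\Lambda_0^-,\Lambda_1^-)\cong LCC(\Lambda^-)$.

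Your treatment of the fundamental-class statements is closer to the paper's: both invoke \cite[Theorem~5.5]{Duality_EkholmetAl}. The paper makes this concrete by identifying the short-chord generators with the minimum and maximum of the Morse function $-g$ on $\Lambda^-$, so that the minimum is a nonvanishing cycle in $LCH^{-1}$ and (under horizontal displaceability) the differential of the maximum is the fundamental class in $LCH^n$.
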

In particular we get that the fundamental class in $H_{n}(\partial_- \overline{\Sigma};R)$ either is non-zero in $H_n(\overline{\Sigma})$, or is the image of a class in $LCH^{-1}_{\varepsilon_0^+,\varepsilon_1^+}(\Lambda^+)$. In both cases, $\Lambda^+\not=\emptyset$.  Thus we obtain a new proof of the following result.
\begin{Cor}[\cite{Caps}]
\label{cor:nonemptypos}
If $\Lambda \subset P \times \R$ admits an augmentation, then there is no exact Lagrangian cobordism from $\Lambda$ to $\emptyset$, i.e.~there is no exact Lagrangian ``cap'' of $\Lambda$.
\end{Cor}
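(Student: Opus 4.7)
The plan is to derive a contradiction by applying Theorem \ref{thm:lesmayer-vietoris} to a hypothetical cap and tracking the fundamental class of the negative end through the Mayer--Vietoris exact sequence. Suppose for contradiction that there is an exact Lagrangian cobordism $\Sigma$ from $\Lambda^- := \Lambda$ to $\Lambda^+ := \emptyset$, and let $\varepsilon$ be the given augmentation of $\mathcal{A}(\Lambda^-;R)$. I would work with $R = \mathbb{Z}/2$ in order to sidestep any orientability or grading hypotheses; in the absence of a grading, the exact sequence of Theorem \ref{thm:lesmayer-vietoris} becomes an exact triangle in light of the remarks in Section \ref{sec:remarks-about-grad}, and the argument to follow only uses exactness at a single degree.

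Apply Theorem \ref{thm:lesmayer-vietoris} with $\varepsilon_0^- = \varepsilon_1^- = \varepsilon$, noting that the induced augmentations $\varepsilon_0^+,\varepsilon_1^+$ of $\mathcal{A}(\Lambda^+)$ exist tautologically since $\Lambda^+$ is empty. Because $\Lambda^+ = \emptyset$ has no Reeb chords, the bilinearised groups $LCH^\bullet_{\varepsilon_0^+,\varepsilon_1^+}(\Lambda^+)$ all vanish. Exact sequence \eqref{eq:mayer-vietoris} therefore collapses to isomorphisms
$$H_{n-k}(\partial_-\overline{\Sigma};\mathbb{Z}/2) \xrightarrow{\;\cong\;} LCH^k_{\varepsilon,\varepsilon}(\Lambda^-) \oplus H_{n-k}(\overline{\Sigma};\mathbb{Z}/2)$$
for all $k$. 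In particular, at $k = 0$ the displayed morphism is injective.

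I would then chase the mod--$2$ fundamental class $[\Lambda^-] \in H_n(\partial_-\overline{\Sigma};\mathbb{Z}/2)$, which is nonzero since $\Lambda^-$ is a closed $n$-manifold. Its image has two components: the first, landing in $LCH^0_{\varepsilon,\varepsilon}(\Lambda^-)$, vanishes by the last statement of Theorem \ref{thm:lesmayer-vietoris} (and crucially, this particular vanishing statement does not require horizontal displaceability); the second, landing in $H_n(\overline{\Sigma};\mathbb{Z}/2)$ via the inclusion, vanishes because $\Lambda^+ = \emptyset$ forces $\overline{\Sigma}$ to be a compact $(n+1)$-manifold with $\partial\overline{\Sigma} = \Lambda^-$, so that $[\Lambda^-]$ is the boundary of the relative fundamental class of $\overline{\Sigma}$. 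Combining these gives $[\Lambda^-] = 0$, contradicting the non-vanishing of the mod--$2$ fundamental class of a closed manifold. The only potential obstacle is ensuring that the LCH-component of the image of $[\Lambda^-]$ really does vanish unconditionally; this is precisely the part of Theorem \ref{thm:lesmayer-vietoris} whose conclusion is stated without the horizontal displaceability hypothesis, so no additional assumption on $\Lambda^-$ is needed.
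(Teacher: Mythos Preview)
Your proof is correct and takes essentially the same approach as the paper: both arguments feed the fundamental class $[\Lambda^-]\in H_n(\partial_-\overline{\Sigma})$ into the Mayer--Vietoris sequence \eqref{eq:mayer-vietoris}, use the vanishing of its $LCH^0_{\varepsilon,\varepsilon}(\Lambda^-)$-component from Theorem~\ref{thm:lesmayer-vietoris}, and observe that the remaining topological component vanishes precisely because $\Lambda^+=\emptyset$ makes $\overline{\Sigma}$ a null-cobordism of $\Lambda^-$. The paper phrases this as a dichotomy (``either nonzero in $H_n(\overline{\Sigma})$, or in the image of $LCH^{-1}_{\varepsilon_0^+,\varepsilon_1^+}(\Lambda^+)$'') rather than a direct contradiction, but the content is identical.
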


\begin{Rem} Assume that $\Lambda_-$ admits an exact Lagrangian filling $L$ inside the symplectisation, and that $\varepsilon^-$ is the augmentation induced by this filling. It follows that $\varepsilon^+$ is the augmentation induced by the filling $L \odot \Sigma$ of $\Lambda_+$ obtained as the concatenation of $L$ and $\Sigma$.  Using  Seidel's isomorphisms
$$LCH^{k}_{\varepsilon^-,\varepsilon^-}(\Lambda^-) \cong H_{n-k}(L;R), \quad
LCH^{k}_{\varepsilon^+,\varepsilon^+}(\Lambda^+) \cong H_{n-k}(L\odot \Sigma;R)$$
to replace the relevant terms in the long exact sequences \eqref{leqtr} and \eqref{eq:mayer-vietoris}, we obtain the long exact sequence for the pair $(L \odot \Sigma, L)$ and the Mayer-Vietoris long exact sequence for the decomposition $L\odot \Sigma = L \cup \Sigma$, respectively.  This fact was already observed and used by the fourth author in \cite{GolovkoLagCob}.
\end{Rem}

\subsection{Topological restrictions on Lagrangian cobordisms}
\label{sec:cobobs}
Using the long exact sequences from the previous subsection and their refinements to coefficients twisted by the fundamental group, as defined in Section \ref{sec:l2-legendr-cont}, we find strong topological restrictions on exact Lagrangian cobordisms between certain classes of Legendrian submanifolds.
\subsubsection{The homology of an exact Lagrangian cobordism from a Legendrian submanifold to itself}
One of the consequences of Theorem~\ref{thm:lesmayer-vietoris} is
the following theorem, proved in Section \ref{sec:topol-endoc}. A similar statement
has been proven by the second and the fourth author in \cite[Theorem 1.6]{Rigidityofendo} under the more restrictive assumption that $\Lambda$ bounds an exact Lagrangian filling.

\begin{Thm} \label{homrigidityold}
Let $\Sigma$ be an exact Lagrangian cobordism from $\Lambda$ to $\Lambda$ and $\F$ a field (of characteristic two if $\Lambda$ is not relatively Pin).
If the Chekanov-Eliashberg algebra ${\mathcal A}(\Lambda; \F)$ admits an augmentation, then:
\begin{itemize}
\item[(i)] There is an equality $\dim_{\F} H_\bullet(\Sigma;\F) = \dim_{\F} H_\bullet(\Lambda;\F)$;
\item[(ii)] The map
$(i^-_{\ast}, i^+_{\ast}): H_{\bullet}(\Lambda; \F)\to
H_{\bullet}(\Sigma; \F)\oplus H_{\bullet}(\Sigma; \F)$
is injective; and
\item[(iii)] The map
$i^+_* \oplus i^-_* \colon H_\bullet (\Lambda \sqcup \Lambda) \to H_\bullet(\Sigma)$
is surjective.
\end{itemize}
Here $i^{+}$ is the inclusion of $\Lambda$ as the positive end of $\Sigma$, while $i^{-}$ is the inclusion of $\Lambda$ as the negative end of $\Sigma$.
\end{Thm}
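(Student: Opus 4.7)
The approach is to apply Theorem~\ref{thm:lesmayer-vietoris} to $\Sigma$ with $\Lambda^-=\Lambda^+=\Lambda$ and $\varepsilon^-_0=\varepsilon^-_1=\varepsilon$, yielding the long exact sequence
\begin{equation*}
\cdots \to LCH^{k-1}_{\varepsilon,\varepsilon}(\Lambda) \xrightarrow{\delta_k} H_{n-k}(\Lambda;\F) \xrightarrow{(\beta_k,\,i^-_*)} LCH^k_{\varepsilon,\varepsilon}(\Lambda) \oplus H_{n-k}(\Sigma;\F) \xrightarrow{(\gamma_k,\,G_k)} LCH^k_{\varepsilon,\varepsilon}(\Lambda) \to \cdots
\end{equation*}
in which, by the construction of the sequence via a Hamiltonian push-off, the $H(\Sigma)$-component of the middle map is the inclusion-induced map $i^-_*$ and the $H(\Sigma)$-component $G_k$ of the last map is the Seidel-type map associated to $\Sigma$.

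For part (i), a rank count along the LES---exploiting exactness at each term and telescoping the pointwise identity $\OP{rk}\,\delta_{k}+\OP{rk}\,\delta_{k-1}=\dim H_{n-k+1}(\Lambda;\F)-\dim H_{n-k+1}(\Sigma;\F)$---gives
\[ \dim H(\Lambda;\F) - \dim H(\Sigma;\F) = 2\sum_k \OP{rk}\,\delta_k \geq 0, \]
establishing $\dim H(\Sigma;\F) \leq \dim H(\Lambda;\F)$. The main obstacle is to prove that each $\OP{rk}\,\delta_k$ actually vanishes. I would accomplish this by combining the MV LES with the pair long exact sequence of Theorem~\ref{thm:lespair} applied to $\Sigma$ and with Poincar\'e--Lefschetz duality $H_k(\overline{\Sigma},\partial_-\overline{\Sigma};\F) \cong H^{n+1-k}(\overline{\Sigma},\partial_+\overline{\Sigma};\F)$; the resulting overdetermined system of dimension identities, together with the identification of the connecting maps in the two sequences, forces each $\OP{rk}\,\delta_k$ to vanish and yields the desired equality.

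For part (iii), with (i) in hand, apply the long exact sequence of the topological pair $(\overline{\Sigma},\partial\overline{\Sigma})$ together with Lefschetz duality $H_k(\overline{\Sigma},\partial\overline{\Sigma};\F) \cong H^{n+1-k}(\overline{\Sigma};\F)$. A direct rank count along this sequence yields
\[ \sum_k \dim\OP{im}\bigl(H_k(\overline{\Sigma};\F)\to H_k(\overline{\Sigma},\partial\overline{\Sigma};\F)\bigr) = \dim H(\Sigma;\F) - \dim H(\Lambda;\F) = 0, \]
which by exactness of the pair sequence is equivalent to the surjectivity of $i^+_* \oplus i^-_* \colon H(\Lambda\sqcup\Lambda;\F) \to H(\Sigma;\F)$.

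For part (ii), once $\delta_k = 0$ is known from (i), exactness of the MV LES at $H_{n-k}(\Lambda;\F)$ yields the injectivity of $(\beta_k,\,i^-_*)$, so that $\ker i^-_* \cap \ker\beta_k = 0$. The final step, and the main remaining difficulty, is to identify $\beta_k$ (up to sign) with the composition $G_k \circ i^+_*$: both maps admit a common interpretation as signed counts of pseudo-holomorphic disks on $\Sigma$ with compatible boundary conditions, and a moduli-space comparison (via a neck-stretching or cobordism-of-moduli-spaces argument) should show $\ker i^+_* \subseteq \ker\beta_k$. Combining with $\ker i^-_* \cap \ker\beta_k = 0$ then forces $\ker i^-_* \cap \ker i^+_* = 0$, which is exactly the injectivity of $(i^-_*,\,i^+_*)$.
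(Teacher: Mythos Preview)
Your long exact sequence is misstated, and this is where the argument breaks. In Theorem~\ref{thm:lesmayer-vietoris} the outer two LCH-terms are computed with the \emph{pulled-back} augmentations $\varepsilon^+_i=\varepsilon^-_i\circ\Phi_\Sigma$, whereas the inner one uses $\varepsilon^-_i$. Writing $a_k=\dim LCH^k_{\varepsilon^+,\varepsilon^+}(\Lambda)$ and $c_k=\dim LCH^k_{\varepsilon,\varepsilon}(\Lambda)$, the telescoped rank identity actually reads
\[
\dim H(\Lambda;\F)-\dim H(\Sigma;\F)+\sum_k(a_k-c_k)=2\sum_k\OP{rk}\,\delta_k,
\]
so your formula, and with it the inequality $\dim H(\Sigma)\le\dim H(\Lambda)$, only follows once you know $\sum a_k=\sum c_k$. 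That is precisely the content to be established. The paper's key trick is to choose $\varepsilon$ so that $\dim_\F LCH_\varepsilon(\Lambda)$ is \emph{maximal} among all augmentations; then $\sum a_k\le\sum c_k$, and combining with the elementary topological inequality $\dim H(\Sigma)\ge\dim H(\Lambda)$ (long exact sequence of the pair plus Poincar\'e--Lefschetz duality) forces equality of the totals. The per-degree statement is then obtained not from a single LES but by a separate argument using concatenations $\Sigma^{\odot k}$: if $\dim H_{i_0}(\Sigma)>\dim H_{i_0}(\Lambda)$ for some $i_0$, topological Mayer--Vietoris makes this defect at least double under concatenation, contradicting (i) for $\Sigma\odot\Sigma$. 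Your proposed route to $\OP{rk}\,\delta_k=0$ via ``combining the two LESs with duality'' is not substantiated, and in view of the above it cannot succeed without first controlling $a_k$ versus $c_k$.

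The same augmentation issue infects your approach to (ii): the composite $G_k\circ i^+_*$ lands in $LCH^k_{\varepsilon^+,\varepsilon^+}(\Lambda)$, while $\beta_k$ lands in $LCH^k_{\varepsilon,\varepsilon}(\Lambda)$, so the purported identity $\beta_k=G_k\circ i^+_*$ does not even type-check before you have identified these groups. The paper proceeds differently: it applies the already-established (i) to the concatenation $\Sigma\odot\Sigma$, and the topological Mayer--Vietoris sequence for $\Sigma\cup_\Lambda\Sigma$ then forces $\dim\im(i^-_*,i^+_*)=\dim H(\Lambda)$, i.e.~injectivity.

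Your argument for (iii), on the other hand, is correct and in fact cleaner than the paper's. Once (i) is known, the pair sequence together with Lefschetz duality gives the surjectivity of $i^+_*\oplus i^-_*$ directly, without the concatenation-and-contradiction argument the paper uses.
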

\begin{Rem}
The above equalities hold for the $\Z$-graded singular homology groups without assuming that the cobordism $\Sigma$ is graded.

\end{Rem}

An immediate corollary of Theorem \ref{thm:lespair} is the following
result, which had already appeared in \cite[Theorem 1.7]{Rigidityofendo}
under the stronger assumption that the negative end is fillable.

\begin{Thm}\label{thm:homologycylinder}
  If $\Lambda$ is a homology sphere which admits an augmentation over $\mathbb{Z}$, then any exact Lagrangian cobordism $\Sigma$ from $\Lambda$ to itself is a homology cylinder (i.e. $H_\bullet(\Sigma,\Lambda)=0$).
\end{Thm}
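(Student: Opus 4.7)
The plan is to apply Theorem \ref{thm:lespair} with $R = \Z$ and $\varepsilon^-_0 = \varepsilon^-_1 = \varepsilon$ equal to the given augmentation, obtaining the long exact sequence
\begin{equation*}
\cdots \to LCH^{k-1}_{\varepsilon^+,\varepsilon^+}(\Lambda) \to H_{n+1-k}(\overline{\Sigma},\Lambda;\Z) \to LCH^{k}_{\varepsilon,\varepsilon}(\Lambda) \xrightarrow{\Phi_k} LCH^{k}_{\varepsilon^+,\varepsilon^+}(\Lambda) \to \cdots.
\end{equation*}
This decomposes into short exact sequences $0 \to \operatorname{coker}\Phi_{k-1} \to H_{n+1-k}(\overline{\Sigma},\Lambda;\Z) \to \ker\Phi_k \to 0$, so it suffices to show that the cobordism-induced map $\Phi^{\varepsilon,\varepsilon}_\Sigma$ is an isomorphism in every degree.

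First I establish that $H_\bullet(\overline{\Sigma};\Z) \cong H_\bullet(\Lambda;\Z) \cong H_\bullet(S^n;\Z)$, by applying Theorem \ref{homrigidityold} over every residue field and combining with the Universal Coefficient Theorem. Then I apply Theorem \ref{thm:lesmayer-vietoris} with the same augmentations; since both $H_\bullet(\Lambda;\Z)$ and $H_\bullet(\overline{\Sigma};\Z)$ vanish outside degrees $0$ and $n$, all topological terms in the Mayer--Vietoris long exact sequence vanish for $k \notin \{-1,0,n-1,n\}$, which directly forces $\Phi_k$ to be an isomorphism in that range. A careful inspection of the Mayer--Vietoris sequence at the four exceptional degrees, combined with the further statement in Theorem \ref{thm:lesmayer-vietoris} that the first component of the morphism $H_n(\Lambda) \to LCH^0_{\varepsilon,\varepsilon}(\Lambda) \oplus H_n(\overline{\Sigma})$ vanishes on the fundamental class, shows that $\Phi_{-1}, \Phi_{n-1}, \Phi_n$ are isomorphisms and that $\Phi_0$ is at least injective, both over $\Z$ and over every field $\F$ inheriting an augmentation from $\varepsilon$.

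The main obstacle is upgrading $\Phi_0$ from injective to an isomorphism. For this I invoke Sabloff duality for the Legendrian $\Lambda$ (equivalent to Theorem \ref{thm:lesduality} applied to the cylindrical cobordism $\R \times \Lambda$, applicable here because a Legendrian in the contactisation of a Liouville manifold is horizontally displaceable): combined with the homology-sphere hypothesis this yields, over every such field $\F$, the dimensional identification $\dim_\F LCH^0_{\varepsilon,\varepsilon}(\Lambda) = \dim_\F LCH^{n-1}_{\varepsilon,\varepsilon}(\Lambda)$, as well as the analogous one for $\varepsilon^+$. Since $\Phi_{n-1}$ is already known to be an isomorphism, the source and target of $\Phi_0$ have the same $\F$-dimension, so the injective map $\Phi_0$ must in fact be an isomorphism over every such $\F$. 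A final application of the Universal Coefficient Theorem promotes this field-wise statement to the desired isomorphism over $\Z$, completing the proof.
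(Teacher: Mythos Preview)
Your approach diverges from the paper's. The paper first proves a field-coefficient statement (Theorem~\ref{homrigidity}: $H_\bullet(\overline{\Sigma},\Lambda;\F)=0$ whenever $\mathcal{A}(\Lambda;\F)$ admits an augmentation) by a direct concatenation argument---stacking $\Sigma^{\odot k}$ and exploiting that the relative homology grows linearly in $k$ while $\dim_\F LCH$ is bounded by $|\mathcal{R}(\Lambda)|$---and then combines the cases $\F=\Q$ and $\F=\Z_p$ via universal coefficients. You instead invoke Theorem~\ref{homrigidityold} over every residue field to obtain $H_\bullet(\overline{\Sigma};\Z)\cong H_\bullet(S^n;\Z)$; this first step is correct, and the paper explicitly remarks that Theorem~\ref{homrigidity} ``also can be proved by alluding to Theorem~\ref{homrigidityold}.''

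However, your subsequent argument has a genuine gap. You assert that ``a Legendrian in the contactisation of a Liouville manifold is horizontally displaceable'' in order to invoke Theorem~\ref{thm:lesduality}. This is false in general: the zero section of $J^1M=T^*M\times\R$ for a closed manifold $M$ is a Legendrian homology sphere whenever $M$ is, it admits an augmentation (having no Reeb chords at all), yet its Lagrangian projection---the zero section of $T^*M$---is not Hamiltonianly displaceable. Horizontal displaceability is a genuine hypothesis of Theorem~\ref{thm:lesduality}, not a consequence of the ambient setup. A secondary issue is that Theorem~\ref{thm:lesmayer-vietoris} as stated does not identify the $LCH^k(\Lambda^-)\to LCH^k(\Lambda^+)$ component of its middle map with $\Phi_\Sigma^{\varepsilon,\varepsilon}$, so your passage between the two long exact sequences is not justified by the statements you cite.

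Both problems are avoidable, because your first step already suffices. Once $H_\bullet(\overline{\Sigma};\Z)\cong H_\bullet(S^n;\Z)$ is known and $\overline{\Sigma}$ is orientable (the latter by Corollary~\ref{cor:spin-orient}, since a $\Z$-homology sphere is spin), the long exact sequence of the pair $(\overline{\Sigma},\partial_-\overline{\Sigma})$ shows that the only possibly nonzero groups $H_k(\overline{\Sigma},\partial_-\overline{\Sigma};\Z)$ occur for $k\in\{n,n+1\}$. Poincar\'e--Lefschetz duality $H_k(\overline{\Sigma},\partial_-\overline{\Sigma})\cong H^{n+1-k}(\overline{\Sigma},\partial_+\overline{\Sigma})$ then kills both, since $H_0(\overline{\Sigma},\partial_+\overline{\Sigma})=H_1(\overline{\Sigma},\partial_+\overline{\Sigma})=0$. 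No further Floer-theoretic input is needed beyond what feeds into Theorem~\ref{homrigidityold}.
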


Inspired by the work of Capovilla-Searle and Traynor \cite{NonorLagcobbetLegknots},
in Section \ref{sec:nonorientable} we prove the following restriction on the characteristic classes of an exact Lagrangian cobordism from a Legendrian submanifold to itself. Given a manifold $M$, we denote by $w_i(M)$ the $i$-th Stiefel-Whitney class of $TM$.
\begin{Thm}\label{thm:w_ivanish}
  Let $\Sigma$ be an exact Lagrangian cobordism from $\Lambda$ to itself, and $\F = \Z / 2 \Z$. Assume that ${\mathcal A}(\Lambda; \F)$ admits an augmentation. If, for some $i\in\mathbb{N}$, $w_i(\Lambda)=0$, then $w_i(\Sigma)=0$.

If $\Lambda$ is relatively Pin, the same holds for the Pontryagin classes and for the Maslov class.
\end{Thm}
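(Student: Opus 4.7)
The strategy is to transport the vanishing of characteristic classes from $\Lambda$ to $\Sigma$ by dualising the homological surjectivity provided by Theorem~\ref{homrigidityold}(iii), and then exploiting the fact that $T\Sigma$ restricts along the ends to a trivial extension of $T\Lambda$.

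First I would invoke part (iii) of Theorem~\ref{homrigidityold}: the map
\[ i^+_* \oplus i^-_* \colon H_\bullet(\Lambda \sqcup \Lambda; \F) \to H_\bullet(\Sigma; \F) \]
is surjective. Since $\F$ is a field, the universal coefficient theorem identifies $H^\bullet(-;\F)$ with $\operatorname{Hom}_\F(H_\bullet(-;\F),\F)$, and dualising a surjection yields an injection; hence
\[ (i^+)^* \oplus (i^-)^* \colon H^\bullet(\Sigma; \F) \to H^\bullet(\Lambda; \F) \oplus H^\bullet(\Lambda; \F) \]
is injective. To conclude $w_i(\Sigma) = 0$ it thus suffices to check that both $(i^+)^* w_i(\Sigma)$ and $(i^-)^* w_i(\Sigma)$ vanish.

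This reduction is a matter of inspecting the geometry of the ends: by definition, an exact Lagrangian cobordism is cylindrical over its Legendrian ends, so a collar neighbourhood of $\Lambda \hookrightarrow \Sigma$ is diffeomorphic to a piece of $\R \times \Lambda$ embedded in the symplectisation with the interval factor tangent to the $\R$-direction. Consequently $T\Sigma|_\Lambda \cong T\Lambda \oplus \underline{\R}$, and by naturality together with the Whitney product formula,
\[ (i^\pm)^* w_i(\Sigma) \;=\; w_i(T\Sigma|_\Lambda) \;=\; w_i(T\Lambda \oplus \underline{\R}) \;=\; w_i(\Lambda) \;=\; 0. \]
Combined with the injectivity above, this gives $w_i(\Sigma) = 0$, as desired.

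For the Pontryagin classes under the spin assumption, the same argument goes through verbatim with coefficients in a field of characteristic zero, say $\Q$. The role of the spin hypothesis on $\Lambda$ is precisely to allow the coherent orientation of the relevant pseudoholomorphic moduli spaces used to build the Cthulhu complex (cf.~Section~\ref{sec:remarks-about-grad}), so that Theorem~\ref{homrigidityold}(iii) remains valid over $\Q$; the restriction map on rational cohomology is again injective by the same universal-coefficients argument, and the splitting $T\Sigma|_\Lambda \cong T\Lambda \oplus \underline{\R}$ immediately yields $(i^\pm)^* p_i(\Sigma) = p_i(\Lambda) = 0$. There is no real obstacle beyond identifying the correct duality step; the entire argument is elementary once Theorem~\ref{homrigidityold} is available.
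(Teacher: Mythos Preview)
Your argument is correct and is essentially the paper's own proof: dualise the surjectivity of Theorem~\ref{homrigidityold}(iii) via the universal coefficient theorem to obtain injectivity of $(i^+)^* \oplus (i^-)^*$ on cohomology, and then use that $T\Sigma|_\Lambda \cong T\Lambda \oplus \underline{\R}$ along the cylindrical ends.

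There is, however, one logical subtlety in the Pontryagin case that you gloss over. In order to apply Theorem~\ref{homrigidityold}(iii) with $\Q$-coefficients, the relevant moduli spaces must be coherently oriented, and for this one needs the \emph{cobordism} $\Sigma$ to be spin, not merely its Legendrian end $\Lambda$ (see Section~\ref{sec:remarks-about-grad}). Your sentence ``the role of the spin hypothesis on $\Lambda$ is precisely to allow the coherent orientation of the relevant pseudoholomorphic moduli spaces'' skips this step. The paper handles it by a bootstrap: first run the argument over $\Z_2$ (where no orientations are needed) to conclude $w_1(\Sigma)=w_2(\Sigma)=0$ from $w_1(\Lambda)=w_2(\Lambda)=0$; this is recorded as Corollary~\ref{cor:spin-orient} and shows $\Sigma$ is spin. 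Only then is one entitled to invoke Theorem~\ref{homrigidityold}(iii) over $\Q$ and repeat the argument for Pontryagin classes. You should insert this intermediate step to avoid circularity.
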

In particular we partially answer Question~6.1 of the same article.
\begin{Cor}\label{cor:nonorien}
  If $\Lambda$ is an orientable Legendrian submanifold admitting an augmentation, then any exact Lagrangian cobordism from $\Lambda$ to itself is orientable.
\end{Cor}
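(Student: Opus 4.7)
The plan is to deduce this immediately from Theorem~\ref{thm:w_ivanish} by specialising to $i = 1$. Recall that a smooth manifold $M$ is orientable if and only if its first Stiefel--Whitney class $w_1(M) \in H^1(M;\Z/2\Z)$ vanishes. Since $\Lambda$ is assumed orientable, we have $w_1(\Lambda) = 0$. To apply Theorem~\ref{thm:w_ivanish} we need an augmentation over the field $\F = \Z/2\Z$; this is precisely the coefficient ring in the hypothesis of that theorem, and it is also the most permissive setting, so an augmentation in the sense of the corollary gives an augmentation in the sense required by the theorem (if necessary by reducing coefficients modulo $2$, which always makes sense and is compatible with the DGA structure of $\mathcal{A}(\Lambda)$). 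Invoking Theorem~\ref{thm:w_ivanish} then gives $w_1(\Sigma) = 0$, which is equivalent to $\Sigma$ being orientable.

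There is essentially no obstacle here: the entire content of the statement is packaged in Theorem~\ref{thm:w_ivanish}, and the corollary is really just the extraction of the $i=1$ case together with the standard topological characterisation of orientability via $w_1$. No spin hypothesis on $\Lambda$ is needed, since for the Stiefel--Whitney classes the first part of Theorem~\ref{thm:w_ivanish} already suffices in full generality with $\Z/2\Z$ coefficients; the spin assumption is only required if one wants to upgrade to Pontryagin classes. Thus the proof is a single line once Theorem~\ref{thm:w_ivanish} is granted.
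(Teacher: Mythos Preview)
Your proof is correct and matches the paper's approach exactly: the paper states this corollary immediately after Theorem~\ref{thm:w_ivanish} with the remark that it is obtained ``by specialising to $w_1$''. Your additional observations about the reduction of augmentations to $\Z/2\Z$ and the irrelevance of the spin hypothesis for this particular case are accurate and provide a bit more detail than the paper itself gives.
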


\subsubsection{Restrictions on the fundamental group of certain exact Lagrangian fillings and cobordisms}

In order to incorporate the fundamental group in our constructions, following ideas of Sullivan in \cite{KFloer} and Damian in \cite{Damian_Lifted}, we define a  ``twisted'' version of the Cthulhu complex $Cth(\Sigma_0, \Sigma_1)$  with coefficients in the group ring $R[\pi_1(\Sigma_0)]$ in Section \ref{sec:l2-legendr-cont}.

We also establish long exact sequences analogous to those in Section \ref{sec:les} involving homology groups over twisted coefficients in $R[\pi_1(\Sigma)]$. In the setting of Legendrian contact homology, these techniques were introduced by Ekholm and Smith in \cite{EkholmSmith} and further developed by Eriksson-\"Ostman in \cite{Albin}.

Using generalisations of the long exact sequence from Theorem~\ref{thm:lespair} and the functoriality of the fundamental class  from Proposition~\ref{prp:fundclasstwisted} (see Section \ref{sec:proofpi_1carclass}) we prove the following theorem:

\begin{Thm}\label{thm:pi_1carclass}
Let $\Sigma$ be a graded exact Lagrangian cobordism from $\Lambda^-$ to $\Lambda^+$. Assume that $\mathcal{A}(\Lambda^-;R)$ admits an augmentation and that $\Lambda^+$ has no Reeb chords in degree zero. If $\Lambda^-$ and $\Lambda^+$ both are simply connected, then $\Sigma$ is simply connected as well.
\end{Thm}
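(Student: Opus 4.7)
The plan is to apply the $R[\pi_1(\Sigma,*)]$-twisted generalisation of the long exact sequence of Theorem~\ref{thm:lespair} that is constructed in Section~\ref{sec:twisted}, where $\Sigma$ is endowed with the local system given by the group ring acting on itself by left multiplication. Since $\pi_1(\Lambda^\pm)$ is trivial by hypothesis, the pullbacks of this local system along the inclusions $\Lambda^\pm \hookrightarrow \Sigma$ are trivial, and hence
\[ LCH^k_{\varepsilon^\pm_0,\varepsilon^\pm_1}(\Lambda^\pm;R[\pi_1(\Sigma)]) \simeq LCH^k_{\varepsilon^\pm_0,\varepsilon^\pm_1}(\Lambda^\pm;R) \otimes_R R[\pi_1(\Sigma)]. \]
Consequently, the hypothesis that $\Lambda^+$ has no Reeb chords of degree zero translates, by inspection of the bilinearised Chekanov--Eliashberg complex, into the vanishing $LCH^0(\Lambda^+;R[\pi_1(\Sigma)])=0$.

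I will reduce the theorem to the vanishing of the twisted relative homology group $H_1(\overline{\Sigma},\partial_-\overline{\Sigma};R[\pi_1(\Sigma)])$. Indeed, since $\Lambda^-$ is simply connected, the universal cover $p\co\widetilde{\overline{\Sigma}}\to\overline{\Sigma}$ satisfies $p^{-1}(\partial_-\overline{\Sigma})=\bigsqcup_{\pi_1(\Sigma)}\Lambda^-$. The standard identification $H_\bullet(\overline{\Sigma},\partial_-\overline{\Sigma};R[\pi_1(\Sigma)])\simeq H_\bullet(\widetilde{\overline{\Sigma}},p^{-1}(\partial_-\overline{\Sigma});R)$, combined with the long exact sequence of the latter pair and the vanishing $H_1(\widetilde{\overline{\Sigma}};R)=0$, identifies $H_1(\overline{\Sigma},\partial_-\overline{\Sigma};R[\pi_1(\Sigma)])$ with the augmentation ideal $I[\pi_1(\Sigma)]\subset R[\pi_1(\Sigma)]$, whose vanishing is equivalent to the triviality of $\pi_1(\Sigma)$.

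The plan to produce this vanishing is to analyse the portion
\[ LCH^{n-1}(\Lambda^+;R[\pi_1(\Sigma)])\to H_1(\overline{\Sigma},\partial_-\overline{\Sigma};R[\pi_1(\Sigma)])\to LCH^n(\Lambda^-;R[\pi_1(\Sigma)])\xrightarrow{\Phi_\Sigma^*} LCH^n(\Lambda^+;R[\pi_1(\Sigma)]) \]
of the twisted long exact sequence. The twisted functoriality of the fundamental class established in Proposition~\ref{prp:fundclasstwisted} asserts that $\Phi_\Sigma^*$ carries the twisted fundamental class of $\Lambda^-$ to the twisted fundamental class of $\Lambda^+$; combined with the interpretation of the fundamental class in the simply-connected setting provided by Theorem~\ref{thm:lesmayer-vietoris}, this yields injectivity of $\Phi_\Sigma^*$ on the $R[\pi_1(\Sigma)]$-submodule of $LCH^n(\Lambda^-;R[\pi_1(\Sigma)])$ generated by the fundamental class, so that no class in $I[\pi_1(\Sigma)]$ can come from this side of the sequence.

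The main obstacle is controlling the incoming term $LCH^{n-1}(\Lambda^+;R[\pi_1(\Sigma)])$, which is not directly killed by the degree-zero hypothesis. The plan here is to propagate the vanishing $LCH^0(\Lambda^+;R[\pi_1(\Sigma)])=0$ through successive applications of the exact sequence, exploiting twisted Poincar\'e--Lefschetz duality on the graded orientable cobordism $\overline{\Sigma}$ to transform $H_{n+1-k}$ into $H^k$ and thereby pair up the two ends of the sequence, so that the precise chord-degree condition on $\Lambda^+$ propagates to sufficient control at degree $n-1$; this bookkeeping is the delicate step where the simply-connectedness of $\Lambda^+$ is expected to enter crucially to avoid intermediate obstructions.
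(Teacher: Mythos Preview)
Your approach differs substantially from the paper's and contains a genuine gap.

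The paper does not use the long exact sequence of Theorem~\ref{thm:lespair} at all. Instead it works directly with the map $\widetilde{G}^{\varepsilon^-_0,\varepsilon^-_1}_\Sigma \colon H_0(\Sigma;R[\pi_1(\Sigma)]) \to LCH^n_{\varepsilon^+_0,\varepsilon^+_1}(\Lambda^+;R[\pi_1(\Sigma)])$ from Section~\ref{sec:seidel}. The hypothesis that $\Lambda^+$ has no degree-zero Reeb chords is used \emph{not} to obtain $LCH^0(\Lambda^+)=0$, but to force the pulled-back augmentations $\varepsilon^+_0,\varepsilon^+_1$ to coincide; this is exactly what is needed to invoke \cite[Theorem~5.5]{Duality_EkholmetAl} and conclude that the fundamental class $\widetilde{c}_{\Lambda^+}$ is non-zero. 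Proposition~\ref{prp:fundclasstwisted} (which identifies $\widetilde{c}_{\Lambda^+}$ with $\widetilde{G}_\Sigma(m)$ for a generator $m$ of $H_0(\Sigma;R[\pi_1(\Sigma)])$; the statement about $\Phi_\Sigma^*$ you attribute to it is Theorem~\ref{prp:fundclassfunct}) then says this non-zero class is the image of $m$. Since $\Lambda^+$ is simply connected, $LCH^n(\Lambda^+;R[\pi_1(\Sigma)])\cong LCH^n(\Lambda^+;R)\otimes_R R[\pi_1(\Sigma)]$, so $g\cdot\widetilde{c}_{\Lambda^+}\neq\widetilde{c}_{\Lambda^+}$ for every nontrivial $g\in\pi_1(\Sigma)$. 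The $R[\pi_1(\Sigma)]$-equivariance of $\widetilde{G}_\Sigma$ then forces $H_0(\Sigma;R[\pi_1(\Sigma)])\cong R[\pi_1(\Sigma)]$; but $H_0(\Sigma;R[\pi_1(\Sigma)])=H_0(\widetilde{\Sigma};R)=R$, so $\pi_1(\Sigma)$ is trivial.

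Your proposal has a real gap in its last paragraph. You correctly identify $H_1(\overline{\Sigma},\partial_-\overline{\Sigma};R[\pi_1(\Sigma)])$ with the augmentation ideal, and the injectivity of $\Phi_\Sigma^*$ on the span of the fundamental class is close in spirit to the paper's key step. But you never connect the image of $H_1$ inside $LCH^n(\Lambda^-;R[\pi_1(\Sigma)])$ to that submodule: by exactness that image is $\ker\Phi_\Sigma^*$, and there is no reason for it to lie in the span of the fundamental class. More seriously, your plan to control the incoming term $LCH^{n-1}(\Lambda^+;R[\pi_1(\Sigma)])$ by ``propagating'' $LCH^0=0$ through Poincar\'e--Lefschetz duality is not an argument: the degree-zero hypothesis gives no information about $LCH^{n-1}(\Lambda^+)$, and no inductive mechanism of the kind you sketch is available. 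That term genuinely need not vanish, so the route through the exact sequence of Theorem~\ref{thm:lespair} cannot be completed as you outline it.
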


\begin{Rem}
\label{rem:uniqueaug}
The seemingly unnatural condition that $\Lambda_+$ has no Reeb chords in degree zero is used to ensure that the Chekanov-Eliashberg algebra $\mathcal{A}(\Lambda^+; A)$ has \emph{at most} one augmentation in $A$ for every unital $R$-algebra $A$.

This condition is clearly not invariant under Legendrian isotopy, but the conclusion of Theorem~\ref{thm:pi_1carclass} can be extended to every Legendrian submanifold which is Legendrian isotopic to $\Lambda^+$ because Legendrian isotopies induce Lagrangian cylinders by \cite[4.2.5]{Eliashberg_&_Lagrangian_Intersection_Finite} (also, see \cite{chantraine_conc}). 
\end{Rem}
We now present another result which imposes constraints on the fundamental group of an exact Lagrangian cobordism from a Legendrian submanifold to itself (see Section \ref{sec:proofl2rigidity}). Its proof
uses an $L^2$-completion of the Floer homology groups with twisted coefficients and the $L^2$-Betti numbers of the universal cover (using results of Cheeger and Gromov in \cite{CheeGro}).

\begin{Thm}\label{thm:l2rigidity}
Let $\Lambda$ be a simply connected Legendrian submanifold which is relatively Pin, and let $\Sigma$ be an exact Lagrangian cobordism from $\Lambda$ to itself. If $\mathcal{A}(\Lambda;\C)$ admits an augmentation, then $\Sigma$ is simply connected as well.
\end{Thm}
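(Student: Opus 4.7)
The plan is to establish an $L^2$-refined version of Theorem~\ref{homrigidityold}(ii), then combine it with Cheeger--Gromov's computation of the zeroth $L^2$-Betti number to force $\pi_1(\Sigma)$ to be trivial. Set $G := \pi_1(\Sigma)$ and assume, for contradiction, that $G$ is non-trivial.

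First, extend the $R[\pi_1(\Sigma)]$-twisted Floer and Cthulhu constructions of Section~\ref{sec:twisted} to coefficients in the regular representation $\ell^2(G)$, viewed as a Hilbert bimodule over the group von Neumann algebra $\mathcal{N}(G)$. Following techniques of Sullivan and Damian, the $R[G]$-twisted Cthulhu complex extends to a complex of Hilbert $\mathcal{N}(G)$-modules after $\ell^2$-completion, and its acyclicity under Hamiltonian displacement is preserved: the displacing Hamiltonian isotopy lifts equivariantly to the universal cover $\widetilde{\Sigma}$, and the resulting chain homotopies extend to bounded $\mathcal{N}(G)$-linear operators. The spin hypothesis on $\Lambda$ and the existence of a $\C$-augmentation supply the orientation data and algebraic input needed to set up these complexes over $\C$. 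Feeding the $L^2$-acyclicity into the proof of Theorem~\ref{homrigidityold}(ii) yields an $L^2$-refined injectivity: the natural map of Hilbert $\mathcal{N}(G)$-modules
\[
\bigl(i^{-(2)}_{\ast},\, i^{+(2)}_{\ast}\bigr) \colon H^{(2)}_\bullet(\widetilde{\Lambda}) \hookrightarrow H^{(2)}_\bullet(\widetilde{\Sigma}) \oplus H^{(2)}_\bullet(\widetilde{\Sigma})
\]
is injective, where $\widetilde{\Lambda} \subset \widetilde{\Sigma}$ is the preimage of either Legendrian end.

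Exploit the simple-connectedness of $\Lambda$: since $\pi_1(\Lambda) \to G$ is trivial, $\widetilde{\Lambda}$ is $G$-equivariantly the disjoint union $G \times \Lambda$, and $H^{(2)}_0(\widetilde{\Lambda})$ is the regular $\mathcal{N}(G)$-module $\ell^2(G)$, which is nonzero. On the other hand, $\widetilde{\Sigma}$ is connected with deck group $G$, so by Cheeger--Gromov, $H^{(2)}_0(\widetilde{\Sigma})$ is either $0$ (if $G$ is infinite) or isomorphic to $\C$ with trivial $G$-action (if $G$ is finite). In degree zero the injection above therefore becomes an $\mathcal{N}(G)$-linear injection from $\ell^2(G)$ into either $0$ or the trivial representation $\C^{\oplus 2}$. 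However, every $\mathcal{N}(G)$-linear map from the regular representation to a trivial representation is a scalar multiple of the averaging functional; its image in $\C^{\oplus 2}$ is at most one-dimensional over $\C$, whereas $\dim_\C \ell^2(G) = |G| \geq 2$. Such an injection cannot exist, contradicting the assumption that $G$ is non-trivial.

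The main technical obstacle is the analytic extension of the first step to the $\ell^2$-completion: one must verify that the Cthulhu differentials and the chain homotopies implementing invariance under Hamiltonian displacement give rise to bounded $\mathcal{N}(G)$-linear operators, so that the algebraic acyclicity of the $R[G]$-twisted complex genuinely yields a vanishing of von Neumann dimensions in the completed setting. This requires adapting to the Cthulhu framework the $L^2$-Floer-theoretic techniques of Cheeger--Gromov and Damian, and is where the spin hypothesis plays a crucial role in guaranteeing the coherent orientations needed to work over $\C$ rather than $\Z/2\Z$.
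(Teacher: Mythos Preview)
Your proposal has a genuine gap: the $L^2$-injectivity statement is asserted, not proved, and the proof of Theorem~\ref{homrigidityold}(ii) does \emph{not} transplant to $L^2$-coefficients in the way you suggest. That proof relies on Theorem~\ref{homrigidityold}(i), which is established by choosing an augmentation $\varepsilon$ of $\mathcal{A}(\Lambda;\F)$ that \emph{maximises} $\dim_\F LCH_\varepsilon(\Lambda)$ and then observing that the pull-back $\varepsilon_+=\varepsilon\circ\Phi_\Sigma$ is again in the same class of $\F$-valued augmentations. In the twisted setting, however, the pull-back of a $\C$-valued augmentation under the twisted morphism $\widetilde{\Phi}_\Sigma$ takes values in $\C[\pi_1(\Sigma)]$, so the maximising argument collapses unless you maximise over all $\mathcal{N}(G)$-valued augmentations---a space over which there is no a priori reason for the supremum of the von Neumann rank of $LCH^{(2)}$ to be attained. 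Moreover, the passage from (i) to (ii) uses the concatenation $\Sigma\odot\Sigma$, whose fundamental group is $G*G$ rather than $G$; the relevant $L^2$-theories are therefore for different covers, and the Mayer--Vietoris comparison you need does not take place in a single category of $\mathcal{N}(G)$-modules. You also misidentify the main obstacle: the analytic boundedness you worry about is automatic, since all complexes are finitely generated free $\C[G]$-modules with differentials and chain homotopies given by matrices over $\C[G]$, and convolution by a finitely supported element of $\C[G]$ is a bounded operator on $\ell^2(G)$.

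The paper's argument avoids these issues by working instead with the long exact sequence of a pair (Theorem~\ref{thm:lespair}) applied to the $k$-fold concatenation $\Sigma^{\odot k}$, with coefficients twisted by the cover associated to the epimorphism $\pi_1(\Sigma^{\odot k})\cong G^{*k}\twoheadrightarrow G$. The crucial input is Lemma~\ref{lem:rankgrowth}: when $G$ is infinite one has $\rk_{L^2}H^{(2)}_1(\overline{\Sigma}^{\odot k},\partial_-\overline{\Sigma}^{\odot k};G)\ge k$, and when $G$ is finite one has $\dim_\C H_1(\widetilde{\Sigma}^{\odot k},\partial_-\widetilde{\Sigma}^{\odot k};\C)=(|G|-1)k$. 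In either case this linear growth in $k$ contradicts the uniform bound $\rk_{L^2}LCH^{(2)}_{\varepsilon'}(\Lambda)\le |\mathcal{R}(\Lambda)|$ on the Legendrian terms of the exact triangle, forcing $|G|=1$. The concatenation trick thus replaces the delicate maximising-over-augmentations step by a simple counting argument.
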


Combining Theorem \ref{thm:homologycylinder} with
Theorem \ref{thm:l2rigidity}, we get the following result.
\begin{Cor}\label{cor:trivhspheres}
Let $\Sigma$ be an $n$-dimensional Legendrian homotopy sphere and assume that
${\mathcal A}(\Lambda; \Z)$ admits an augmentation. Then any exact Lagrangian cobordism $\Sigma$ from $\Lambda$ to itself is an $h$-cobordism. In particular:
\begin{enumerate}
\item If $n \neq 3,4$, then $\Sigma$ is diffeomorphic to a cylinder;
\item If $n=3$, then $\Sigma$ is homeomorphic to a cylinder; and
\item If $n=4$ and $\Lambda$ is diffeomorphic to $S^4$, then $\Sigma$ is diffeomorphic to a cylinder.
\end{enumerate}
\end{Cor}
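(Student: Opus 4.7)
The plan is to combine Theorems~\ref{thm:homologycylinder} and~\ref{thm:l2rigidity} with classical h-cobordism results. First I would observe that a homotopy $n$-sphere $\Lambda$ is automatically simply connected (for $n\geq 2$) and a homology sphere, and is spin since $H^i(\Lambda;\Z/2)=0$ for $0<i<n$ forces $w_1=w_2=0$; moreover the given integral augmentation of $\mathcal{A}(\Lambda;\Z)$ induces augmentations of $\mathcal{A}(\Lambda;\F)$ for any commutative ring $\F$ by base change, in particular over $\F=\C$.

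Next I would apply Theorem~\ref{thm:homologycylinder} to obtain $H_\bullet(\overline{\Sigma},\Lambda;\Z)=0$, and Theorem~\ref{thm:l2rigidity} to obtain $\pi_1(\Sigma)=1$. Both boundary inclusions $\Lambda\hookrightarrow\overline{\Sigma}$ are then maps between simply connected spaces inducing isomorphisms on integral homology (via the long exact sequence of the pair), so Whitehead's theorem makes them homotopy equivalences; by definition $\overline{\Sigma}$ is an h-cobordism. The degenerate case $n=1$, where $\Lambda$ is not simply connected, can be handled directly: the surface $\overline{\Sigma}$ has two boundary circles and trivial relative $H_1$, hence must be an annulus, without invoking Theorem~\ref{thm:l2rigidity}.

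The final step is to upgrade the h-cobordism structure to a diffeomorphism (or homeomorphism) with $\Lambda\times[0,1]$, split by the dimension $n+1$ of $\overline{\Sigma}$. For $n\geq 5$ this is Smale's smooth h-cobordism theorem; for $n=2$ it follows from the classification of simply connected $3$-manifolds (Perelman), by capping off the two $S^2$ boundary components of $\overline{\Sigma}$ to obtain a closed simply connected $3$-manifold diffeomorphic to $S^3$; for $n=3$, Freedman's topological h-cobordism theorem yields a homeomorphism $\overline{\Sigma}\cong S^3\times[0,1]$; and for $n=4$ with $\Lambda$ smoothly $S^4$, I would cap off both ends of $\overline{\Sigma}$ by smooth $5$-discs $D^5$ to obtain a closed smooth homotopy $5$-sphere which, since $\Theta_5=0$ by Kervaire--Milnor, is diffeomorphic to $S^5$; removing the two caps then recovers $\overline{\Sigma}\cong S^4\times[0,1]$ smoothly. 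The only nontrivial input in the strategy is Theorem~\ref{thm:l2rigidity}; once it is granted, the remainder consists of applying standard (differential) topological machinery.
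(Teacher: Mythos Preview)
Your proof is correct and follows exactly the approach the paper indicates: the corollary is stated there merely as an immediate consequence of combining Theorem~\ref{thm:homologycylinder} and Theorem~\ref{thm:l2rigidity}, with no further details given. You have spelled out the standard topological arguments (Whitehead, Smale, Perelman, Freedman, Kervaire--Milnor with $\Theta_5=0$) that the paper leaves implicit, and your separate treatment of the degenerate case $n=1$ is a sensible addition.
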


When $n=1$, a stronger result is known. Namely, in \cite[Section 4]{Floer_Conc} we proved that any exact Lagrangian cobordism $\Sigma$ from the standard Legendrian unknot $\Lambda_0$ to itself is  compactly supported Hamiltonian isotopic to the trace of a Legendrian isotopy of $\Lambda_0$ which is induced by the complexification of a rotation by $k\pi$, $k \in \Z$. This classification makes use of the uniqueness of the exact Lagrangian filling of $\Lambda_0$ up to compactly supported Hamiltonian isotopy, which was proved in \cite{Eliashberg_&_Local_Lagrangian_knots} by Eliashberg and Polterovich. In contrast, the methods we develop in this article give restrictions only on the smooth type of the cobordisms and little information is known about their symplectic knottedness in higher dimension.

\subsubsection{Obstructions to the existence of a Lagrangian concordance}
A {\em Lagrangian concordance} from $\Lambda^-$ to $\Lambda^+$ is a symplectic cobordism from $\Lambda^-$ to $\Lambda^+$ which is diffeomorphic to a product $\R \times \Lambda$. In particular this implies that $\Lambda^-$ and $\Lambda^+$ are diffeomorphic as smooth manifolds. Note that a Lagrangian concordance is automatically exact.

If $\Sigma$ is a Lagrangian concordance, then $H_{\bullet}(\overline{\Sigma},\partial_- \overline{\Sigma};R)=0$, and thus Theorem \ref{thm:lespair} implies the following corollary.

\begin{Cor}\label{cor:concobstruction}
Let $\Sigma$ be an exact Lagrangian concordance from $\Lambda^-$ to $\Lambda^+$.  If, for $i=0,1$, $\varepsilon^-_i$ is an augmentation of $\mathcal{A}(\Lambda^-; R)$ and $\varepsilon^+_i$ is the pull-back of $\varepsilon^-_i$ under the DGA morphism induced by $\Sigma$, then the map $$\Phi_\Sigma^{\varepsilon_0^-,\varepsilon_1^-} \colon LCH^\bullet_{\varepsilon^-_0,\varepsilon_1^-}(\Lambda^-)\rightarrow LCH^\bullet_{\varepsilon^+_0,\varepsilon_1^+}(\Lambda^+)$$
is an isomorphism. Consequently, there is an inclusion
\[ \{LCH^\bullet_{\varepsilon^-_0,\varepsilon_1^-}(\Lambda^-) \}/\text{isom.} \:\:  \hookrightarrow  \{LCH^\bullet_{\varepsilon^+_0,\varepsilon_1^+}(\Lambda^+) \}/\text{isom.} \]
of the sets consisting of isomorphism classes of bilinearised Legendrian contact cohomologies, for all possible pairs of augmentations.
\end{Cor}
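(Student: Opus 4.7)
The plan is to deduce the statement directly from Theorem~\ref{thm:lespair}, following the hint given in the sentence preceding the corollary. As a first step, I would observe that by definition a Lagrangian concordance $\Sigma$ from $\Lambda^-$ to $\Lambda^+$ is diffeomorphic to the cylinder $[0,1]\times\Lambda$, with $\partial_-\overline{\Sigma}$ corresponding to $\{0\}\times\Lambda$; in particular $\partial_-\overline{\Sigma}\hookrightarrow\overline{\Sigma}$ is a deformation retract, so $H_{\bullet}(\overline{\Sigma},\partial_-\overline{\Sigma};R)=0$.

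Plugging this vanishing into the long exact sequence~\eqref{leqtr} applied to the concordance $\Sigma$ and to the augmentations $\varepsilon_0^-,\varepsilon_1^-$ of $\mathcal{A}(\Lambda^-;R)$ (with their pull-backs $\varepsilon_0^+,\varepsilon_1^+$), the middle term of each triangle vanishes, and the sequence collapses to the family of short exact sequences
\[ 0\longrightarrow LCH^{k}_{\varepsilon^-_0,\varepsilon^-_1}(\Lambda^-) \xrightarrow{\Phi_\Sigma^{\varepsilon^-_0,\varepsilon^-_1}} LCH^{k}_{\varepsilon^+_0,\varepsilon_1^+}(\Lambda^+)\longrightarrow 0, \]
one for each degree $k$. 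This is precisely the assertion that $\Phi_\Sigma^{\varepsilon^-_0,\varepsilon^-_1}$ is an isomorphism of graded $R$-modules, which settles the first claim.

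For the concluding inclusion of sets, I would consider the assignment that sends the isomorphism class of $LCH^\bullet_{\varepsilon^-_0,\varepsilon^-_1}(\Lambda^-)$ to that of $LCH^\bullet_{\varepsilon^+_0,\varepsilon^+_1}(\Lambda^+)$, where $\varepsilon_i^+=\varepsilon_i^-\circ \Phi_\Sigma$. The isomorphism just established shows that this assignment descends to isomorphism classes (it is independent of the representative pair of augmentations) and that the image is the \emph{same} isomorphism class of graded $R$-modules; viewing both sides as subsets of the universe of isomorphism classes of graded $R$-modules, the resulting map is the set-theoretic inclusion, and in particular injective. No part of the argument is delicate once Theorem~\ref{thm:lespair} is available: the topological input reduces to the fact that a cylinder has trivial relative homology, and the rest is formal — the only minor point to spell out is the well-definedness of the set map, which is tautological from the isomorphism theorem.
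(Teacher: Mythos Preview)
Your proof is correct and follows exactly the approach indicated in the paper: the sentence preceding the corollary already states that the vanishing $H_{\bullet}(\overline{\Sigma},\partial_-\overline{\Sigma};R)=0$ for a concordance, combined with Theorem~\ref{thm:lespair}, yields the result. Your treatment of the inclusion of isomorphism classes is also fine, if slightly over-elaborated --- the point is simply that each isomorphism class arising from a pair of augmentations on $\Lambda^-$ coincides, via $\Phi_\Sigma^{\varepsilon_0^-,\varepsilon_1^-}$, with one arising from the pulled-back pair on $\Lambda^+$.
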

This corollary can be used to obstruct the existence of Lagrangian concordances. For example, it can be applied to the computation of the linearised Legendrian contact homologies given by Chekanov in \cite[Theorem 5.8]{Chekanov_DGA_Legendrian} to prove that there is no exact Lagrangian concordance from either of the two Chekanov-Eliashberg knots to the other. We also use Corollary~\ref{cor:concobstruction} to deduce new examples of non-symmetric concordances in the spirit of the example given by the first author in \cite{Chantraine_Non_symmetry}. We refer to Section \ref{sec:noninv} for a simply connected example in high dimensions.

We recall that a Legendrian isotopy induces a Lagrangian concordance. Since Legendrian isotopies are invertible, two isotopic Legendrian submanifolds thus admit Lagrangian concordances going in either direction. On the other hand, we have now many examples of non-symmetric Lagrangian concordances, and hence the following natural question can be asked.
\begin{Quest}
Assume that there exists Lagrangian concordances from $\Lambda_0$ to $\Lambda_1$ as well as from $\Lambda_1$ to $\Lambda_0$. Does this imply that the Legendrian submanifolds $\Lambda_0$ and $\Lambda_1$ are Legendrian isotopic? Are such Lagrangian concordances moreover Hamiltonian isotopic to one induced by a Legendrian isotopy (as constructed by \cite[4.2.5]{Eliashberg_&_Lagrangian_Intersection_Finite})?
\end{Quest}
In view of Corollary~\ref{cor:concobstruction}, this question will not be easily answered by Legendrian contact homology.

\subsection{Remarks about the hypotheses.}
\label{sec:remarks-about-hypoth}

\subsubsection{Restrictions on the ambient manifolds.}
\label{sec:restriction-y}

The reasons for restricting our attention to Lagrangian cobordisms in the symplectisation of the contactisation of a Liouville manifold are two-fold. First, the analytic framework to have a well defined complex $(\Cth(\Sigma_0,\Sigma_1),\mathfrak{d}_{\varepsilon_0^-,\varepsilon_1^-})$ is vastly simplified from the fact that the Reeb flow has no periodic Reeb orbits. However, using recent work of Pardon in \cite{Pardon_CH} (or the polyfold technology being developed by  Hofer, Wysocki and Zehnder), we expect that it should be possible to extend the construction of the complex $(\Cth(\Sigma_0,\Sigma_1),\mathfrak{d}_{\varepsilon_0^-,\varepsilon_1^-})$ to more general symplectic cobordisms.
Second, our applications use exact sequences arising from the acyclicity of the complex $(\Cth(\Sigma_0,\Sigma_1),\mathfrak{d}_{\varepsilon_0^-,\varepsilon_1^-})$, which is a consequence of the fact that that any Lagrangian cobordism can be displaced in the symplectisation of a contactisation. Floer theory for Lagrangian cobordisms in more general symplectic cobordisms will be investigated in a future article.

\subsubsection{Restrictions on the Lagrangian submanifolds.}
\label{sec:restriction-}

Now we describe some examples showing that many of the hypotheses we made in Section~\ref{sec:cobobs} are in fact essential, and not merely artefacts of the techniques used.
First, an exact Lagrangian cobordism having a negative end whose Chekanov-Eliashberg algebra admits no augmentation can be a quite flexible object: in fact Eliashberg and Murphy proved in
\cite{Eliash_Mur_Caps} that exact Lagrangian cobordisms with a loose negative end satisfy an $h$-principle, and therefore one cannot hope for a result in the spirit of Theorem \ref{thm:homologycylinder} to hold in complete generality. Indeed, we refer to the work of the second and fourth authors in \cite{Rigidityofendo} for examples of exact Lagrangian cobordisms from a loose Legendrian sphere to itself having arbitrarily large Betti numbers.

Second, the condition that $\Lambda$ is a homology sphere in the statement of Theorem \ref{thm:homologycylinder} was shown to be essential already in \cite[Section 2.3]{Rigidityofendo}.

Finally, the importance of the condition on the Reeb chords of the positive end in Theorem \ref{thm:pi_1carclass} is emphasised by the following example, which will
be detailed in Section~\ref{sec:some-expl-lagr}.
\begin{Prop}\label{prop:example}
There exists a non-simply connected exact Lagrangian cobordism from the two-dimensional standard Legendrian sphere to a Legendrian sphere inside the symplectisation of standard contact $(\R^5,\xi_{\OP{std}})$.
\end{Prop}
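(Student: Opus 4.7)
The plan is to build the cobordism explicitly via Lagrangian handle attachments on the trivial cylinder over $\Lambda_0$, and then verify that the resulting cobordism has non-trivial fundamental group.

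\textbf{Topological target.} First I would fix the smooth model of the desired cobordism. A compact oriented $3$-manifold $W$ with $\partial W \cong S^2 \sqcup S^2$ and $\pi_1(W) \cong \Z$ can be described by the following handle decomposition on $S^2 \times [0,1]$: attach a single $3$-dimensional $1$-handle with both feet on the top $S^2$ (yielding an intermediate boundary diffeomorphic to $T^2$), and then attach a $2$-handle along the meridian of this $1$-handle tube. Since the attaching circle of the $2$-handle is nullhomotopic in the manifold obtained after the $1$-handle attachment (it bounds the cocore disk of the $1$-handle), the $2$-handle does not kill the generator produced by the $1$-handle, while at the same time it caps the torus back to a $2$-sphere. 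The result is diffeomorphic to $\bigl(S^2\times[0,1]\bigr)\,\#\,(S^1\times S^2)$ and has $\pi_1(W)=\Z$.

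\textbf{Lagrangian realisation.} Next I would realise the above $W$ as an exact Lagrangian cobordism in the symplectisation $\R\times\R^5$ using the now-standard Lagrangian handle attachment technology (in the spirit of Polterovich, Ekholm--Honda--K\'alm\'an, Dimitroglou Rizell). Starting from the trivial cylinder $\R\times\Lambda_0$, at a slice $\{t_0\}\times \Lambda_0$ I attach a Lagrangian $1$-handle whose feet are small disks around two chosen points of $\Lambda_0$, arranging the attachment inside a Darboux ball so that the result is an exact cobordism from $\Lambda_0$ to an intermediate Legendrian torus $\Lambda_1$. At a later slice I then attach a Lagrangian $2$-handle along a small Legendrian unknot in $\Lambda_1$ representing the meridian of the $1$-handle tube; because this meridian can be realised, by local construction, as a standard Legendrian unknot contained in a Darboux chart, it bounds an embedded Lagrangian disk which provides the core of the $2$-handle. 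The positive end $\Lambda^+$ is then a Legendrian embedded $2$-sphere.

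\textbf{Verification of $\pi_1$.} By construction the resulting exact Lagrangian $\Sigma$ is diffeomorphic to the topological cobordism $W$ of Step~1, hence $\pi_1(\Sigma)\cong\Z$. In particular $\Sigma$ is non-simply connected. Observe that this is consistent with Theorem~\ref{thm:pi_1carclass}: since the $2$-handle attachment introduces new Reeb chords of $\Lambda^+$, one checks that $\Lambda^+$ necessarily possesses a degree-zero Reeb chord, so that Theorem~\ref{thm:pi_1carclass} does not apply.

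The main obstacle is the second step, namely checking that the topological handle decomposition can actually be realised by \emph{exact} Lagrangian handle attachments in the symplectisation while preserving the cylindrical ends. The delicate point is controlling the attaching sphere of the $2$-handle: one must localise the $1$-handle in a Darboux ball and choose the attaching $2$-handle disk so as to cap the meridian Legendrian unknot without simultaneously cancelling the $1$-handle. Once this geometric arrangement is set up, exactness follows from standard control of the primitive of the Liouville form across the two local handle models.
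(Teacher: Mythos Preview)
Your approach differs substantially from the paper's, and the gap you flag in Step~2 is real and is where the argument currently falls short.

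The paper does not attempt to attach Lagrangian handles directly to a cylinder over $\Lambda_0$. Instead it starts from a Legendrian knot $\Lambda\subset(\R^3,\xi_{\OP{std}})$ (the right-handed trefoil) with a non-simply connected exact Lagrangian filling $\Sigma$ (a punctured torus), applies the front $S^1$-spinning construction of \cite{NoteSpin} to obtain a Legendrian torus $\Sigma_{S^1}\Lambda\subset(\R^5,\xi_{\OP{std}})$ filled by the non-simply connected $S^1\times\Sigma$, and then performs a single Legendrian ambient $1$-surgery along the spun left-most cusp edge. The crucial advantage is that the Legendrian $2$-disc needed for this surgery is \emph{explicitly visible in the front projection} (the disc $D$ of Figure~\ref{fig:beforesurgery}); no existence argument is required. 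Concatenating the spun filling with the elementary Lagrangian $2$-handle yields the desired non-simply connected cobordism to a Legendrian sphere.

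In your route, by contrast, the existence of a suitable Legendrian $2$-disc bounding the meridian of your $1$-handle is genuinely unclear. Observing that the meridian sits in a Darboux chart as a ``standard Legendrian unknot'' is not enough: Legendrian ambient surgery in the sense of \cite{LegendrianAmbient} requires a Legendrian $2$-disc in $(\R^5,\xi_{\OP{std}})$ meeting $\Lambda_1$ cleanly exactly along its boundary and with the correct normal framing, and the local model of a Lagrangian $1$-handle does not automatically supply such a disc. (Also, the core of a $2$-handle here is a \emph{Legendrian} disc in the contact level, not a Lagrangian one.) Your concern about ``cancelling the $1$-handle'' is slightly misplaced: smoothly, a $2$-handle attached along the belt sphere of a $1$-handle does not cancel it (cancellation requires a single transverse intersection point, not coincidence of the spheres), so if the Lagrangian $2$-handle can be attached along the meridian at all, the resulting cobordism automatically has $\pi_1\cong\Z$. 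The obstruction is purely the existence of the surgery disc. One way to close the gap is to arrange the $1$-handle so that $\Lambda_1$ agrees near it with a front-spun model, at which point the disc of Figure~\ref{fig:beforesurgery} is available---but then you are essentially reproducing the paper's construction.
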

As a converse to Theorem \ref{thm:pi_1carclass}, the existence of a non-simply connected exact Lagrangian cobordism can be used to show the existence of degree zero Reeb chords on the positive end of the cobordism.

\section*{Acknowledgements}
\label{sec:aknowledgments}
While this research was conducted, the authors benefited from the hospitality of various institutions in addition to their own; in particular they are indebted to CIRGET and CRM in Montr\'{e}al, the Centro De Giorgi in Pisa, and the institute Mittag-Leffler in Stockholm. We warmly thank Stefan Friedl for suggesting that we should look at $L^2$-homology theory to gain information on fundamental groups.

\section{Preliminary definitions}
\label{sec:preliminaries}

\subsection{Exact Lagrangian cobordisms}
\label{sec:exact-lagr-cobord}

Let $(P, \theta)$ be a Liouville manifold.  Its contactisation $(Y, \alpha)$ is $Y= P \times \mathbb{R}$ with the contact form $\alpha = dz+\theta$. We consider exact Lagrangian cobordisms in the contactisation of a Liouville manifolds $(P,\theta)$. The following convention will be used throughout the article: given a smooth manifold $M$, a submanifold $N \subset M$ and a differential form $\eta$ on $M$, we will denote by $\eta|_N$ the pull-back of $\eta$ under the inclusion of $N$ in $M$.

\begin{defn}\label{defn: exact lagrangian cobordism}
 Let $\Lambda^{-}$ and $\Lambda^{+}$ be two
closed Legendrian submanifolds of $(Y,\alpha)$. An \emph{exact Lagrangian cobordism from $\Lambda^-$ to $\Lambda^+$} in $(\R \times Y, d(e^t \alpha))$ is a properly embedded submanifold $\Sigma \subset \R\times Y$ without boundary satisfying the following conditions:
\begin{enumerate}
\item for some $T\gg 0$,
\begin{itemize}
  \item[(a)] $ \Sigma \cap ( (-\infty,-T) \times Y)= (-\infty, -T)\times \Lambda^{-}$,
  \item[(b)] $ \Sigma \cap ((T,+\infty) \times Y) = (T,+\infty)\times \Lambda^{+}$, and
  \item[(c)] $ \Sigma \cap ([-T,T] \times Y)$ is compact;
  \end{itemize}
\item There exists a smooth function $f_\Sigma:\Sigma\to \R$ for which
\begin{itemize}
\item[(a)] $e^{t}\alpha|_{\Sigma} = df_\Sigma$,
\item[(b)] $f_\Sigma|_{(-\infty,-T)\times \Lambda^{-}}$ is constant, and
\item[(c)] $f_\Sigma|_{(T,\infty)\times \Lambda^{+}}$ is constant.
\end{itemize}
\end{enumerate}
We will call $(T,+\infty)\times \Lambda_+ \subset \Sigma$ and $(-\infty,-T) \times \Lambda_- \subset \Sigma$ the \emph{positive end} and the \emph{negative end} of $\Sigma$, respectively. We will call a cobordism from a submanifold to itself an \textit{endocobordism}.
\end{defn}

Condition (2b) is used to rule out certain bad breakings of pseudoholomorphic curves. Condition (2c) is used to ensure that the concatenation of two exact Lagrangian cobordisms (as in Definition \ref{def:concat}) still is an exact Lagrangian cobordism. If one does not care about concatenations, then this condition can be dropped.
\begin{Ex}
If $\Lambda$ is a closed Legendrian submanifold of $(Y, \xi)$, then $\R \times \Lambda$
is an exact Lagrangian cobordism inside $(\R \times Y, d(e^t\alpha))$ from $\Lambda$ to itself. Cobordisms of this
type are called {\em (trivial) Lagrangian cylinders}.
\end{Ex}
In the case when there exists an exact Lagrangian cobordism from $\Lambda^-$ to $\Lambda^+$ we say that \emph{$\Lambda^-$ is exact Lagrangian cobordant to $\Lambda^+$}. If $\Sigma$ is an exact Lagrangian cobordism from the empty set to $\Lambda$, we call $\Sigma$ an \textit{exact Lagrangian filling} of $\Lambda$. In the latter case we say that $\Lambda$ is {\em exactly fillable}.

\begin{defn}\label{def:concat}
Given exact Lagrangian cobordisms $\Sigma_a$ from $\Lambda^-$ to $\Lambda$ and $\Sigma_b$ from $\Lambda$ to $\Lambda^+$, their \emph{concatenation}
$\Sigma_a \odot \Sigma_b$
is defined as follows.

 First, translate $\Sigma_a$ and $\Sigma_b$ so that
\begin{align*}
& \Sigma_a \cap ((- 1, + \infty) \times Y) = (-1,+\infty) \times \Lambda,\\
& \Sigma_b \cap ((- \infty, 1) \times Y) = (- \infty, 1) \times \Lambda.
\end{align*}
Then we define
\[ \Sigma_a \odot \Sigma_b := (\Sigma_a \cap ((- \infty, 0] \times Y)) \cup (\Sigma_b
\cap ([0, + \infty) \times Y)).\]
\end{defn}
Conditions (2b) and (2c) of Definition \ref{defn: exact lagrangian cobordism} imply that $\Sigma_a \odot \Sigma_b$ is an exact Lagrangian cobordism from $\Lambda^-$ to $\Lambda^+$.

The following Lemma follows from the fact that different choices of translation lead to Hamiltonian isotopic exact cobordisms.

\begin{Lem}\label{noia}
The compactly supported Hamiltonian isotopy class of $\Sigma_a \odot \Sigma_b$ is independent of the above choices of translations.
\end{Lem}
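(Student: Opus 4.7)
The plan is to reduce the statement to the standard fact that a smooth one-parameter family of compactly supported deformations through exact Lagrangian submanifolds of an exact symplectic manifold is induced by a compactly supported ambient Hamiltonian isotopy. For two admissible pairs $(s_a, s_b)$ and $(s'_a, s'_b)$ of translation amounts, I will construct an interpolating path of exact Lagrangian cobordisms and apply this extension principle.

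Set $s_a(u) := (1-u) s_a + u s'_a$ and $s_b(u) := (1-u) s_b + u s'_b$ for $u \in [0, 1]$. The admissibility conditions, i.e.\ that $\tau_{s_a(u)}(\Sigma_a)$ be cylindrical on $(-1, +\infty) \times Y$ and that $\tau_{s_b(u)}(\Sigma_b)$ be cylindrical on $(-\infty, 1) \times Y$, amount to one-sided linear inequalities on the translation parameters and are therefore preserved by this convex combination. Define
\[ \Sigma^{(u)} := \bigl(\tau_{s_a(u)}(\Sigma_a) \cap ((-\infty, 0] \times Y)\bigr) \cup \bigl(\tau_{s_b(u)}(\Sigma_b) \cap ([0, +\infty) \times Y)\bigr). \]
Near $t=0$ both halves coincide with $\{0\} \times \Lambda$, so $\Sigma^{(u)}$ is a smooth embedded submanifold. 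Moreover it is exact Lagrangian: on the lower half the Liouville primitive is $e^{s_a(u)}(f_{\Sigma_a} \circ \tau_{-s_a(u)})$, on the upper half it is $e^{s_b(u)}(f_{\Sigma_b} \circ \tau_{-s_b(u)}) + c(u)$, where the constant $c(u)$ is chosen (smoothly in $u$) so that the two expressions match on $\{0\} \times \Lambda$. This matching is possible precisely because conditions (2b)--(2c) of Definition~\ref{defn: exact lagrangian cobordism}, applied to $\Sigma_a$ and $\Sigma_b$, force $f_{\Sigma_a}$ and $f_{\Sigma_b}$ to be constant on the positive end of $\Sigma_a$ and the negative end of $\Sigma_b$ respectively, so that both expressions restrict to constants on $\{0\} \times \Lambda$.

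The family $\{\Sigma^{(u)}\}_{u\in[0,1]}$ is then a smooth isotopy through exact Lagrangian cobordisms, enjoying the following compact-support property: there exists a compact $K \subset \R \times Y$ such that $\Sigma^{(u)} \setminus K = \Sigma^{(0)} \setminus K$ for all $u$, because the non-cylindrical portions of $\tau_{s_a(u)}(\Sigma_a)$ and $\tau_{s_b(u)}(\Sigma_b)$ remain within a bounded $t$-interval as $u$ varies over $[0,1]$, while outside this interval every $\Sigma^{(u)}$ coincides with the trivial cylinders over $\Lambda^\pm$. The final step applies the classical extension theorem for compactly supported exact Lagrangian isotopies to produce a compactly supported ambient Hamiltonian on $(\R \times Y, d(e^t\alpha))$ whose time-one flow sends $\Sigma^{(0)}$ to $\Sigma^{(1)}$. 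The only delicate point in the argument is the smooth matching of primitives across the cut $t=0$, which guarantees that the family is genuinely exact Lagrangian with smoothly varying primitive and hence that the extension result applies; once this is in place, the remainder of the proof is routine.
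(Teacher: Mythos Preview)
Your proof is correct and takes essentially the same approach as the paper: both reduce the statement to the standard fact that a compactly supported isotopy of exact Lagrangian submanifolds is generated by a compactly supported Hamiltonian isotopy, the key observation being that the variation is compactly supported because the cylindrical ends are translation-invariant. The paper organises this slightly differently---rather than interpolating both translation parameters simultaneously, it explicitly cuts off the translation vector field $\partial_t$ outside a large slab to produce a compactly supported ambient diffeotopy realising the translation of a single cobordism piece---but the underlying argument is the same.
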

\subsection{Almost complex structures}
\label{sec:holomorphic-curves}
In this subsection we describe the almost complex structures that we will use to set up the theory.

\subsubsection{Cylindrical almost complex structures}
\label{sec:cylindrical}

Let $(Y,\alpha)$ be a contact manifold with the choice of a contact form. We denote by $\mathcal{J}^{\OP{cyl}}(Y)$ the set of \emph{cylindrical} almost complex structures on the symplectisation $(\mathbb{R}\times Y, d(e^t\alpha))$, i.e.~almost complex structures $J$ satisfying the following conditions:
\begin{itemize}
\item $J$ is invariant under the natural action of $\mathbb{R}$ on $\mathbb{R}\times Y$;
\item $J\partial_t=R_\alpha$;
\item $J(\xi)=\xi$, where $\xi :=\ker \alpha \subset TY$; and
\item $J$ is compatible with $d \alpha|_{\xi}$, i.e.~$d\alpha_{\xi}(\cdot,J \cdot)$ is a metric on $\xi$.
\end{itemize}

\subsubsection{Almost complex structures on Liouville manifolds.}
\label{sec:almost-compl-struct}

Let $(P,\theta)$ be a Liouville manifold.  Recall that there is a subset $P_\infty \subset P$ that is exact symplectomorphic to half a symplectisation
\[ ([0,+\infty) \times V,d(e^{\tau}\alpha_V)),\]
and where $P \setminus P_\infty \subset P$ is pre-compact. We say that an almost complex structure $J_P$ compatible with $d\theta$ is \textit{admissible} if the almost complex structure $J_P$ on $P$ coincides with a cylindrical almost complex structure in $\mathcal{J}^{\OP{cyl}}(V)$ outside of a compact subset of $P_\infty$. We denote by $\mathcal{J}^{\OP{adm}}(P)$ the set of these almost complex structures.

\subsubsection{Cylindrical lifts}
\label{sec:cyllift}
Given an almost complex structure $J_P \in \mathcal{J}^{\OP{adm}}(P)$ there is a unique cylindrical almost complex structure $\widetilde{J}_P$ on $(\R \times Y, d(e^t\alpha))$ which makes the projection
\[ \pi \colon \R \times Y = \R \times P \times \R \to P \]
a $(\widetilde{J}_P,J_P)$-holomorphic map. We will call this almost complex structure the \emph{cylindrical lift of $J_P$}.

We denote by $\mathcal{J}^{\OP{cyl}}_\pi(Y) \subset \mathcal{J}^{\OP{cyl}}(Y)$ the set of cylindrical lifts of almost complex structures in $\mathcal{J}^{\OP{adm}}(P)$.

\subsubsection{Compatible almost complex structures with cylindrical ends.}
\label{sec:admissible}
Let $J^+$ and $J^-$ be almost complex structures in $\mathcal{J}_{\pi}^{\OP{cyl}}(Y)$, and let $T\in\mathbb{R}^+$. We require that $J^+$ and $J^-$ are lifts of
almost complex structures in $\mathcal{J}^{\OP{adm}}(P)$ which coincide outside of a
compact subset of $P$. We denote by $\mathcal{J}_{J^-,J^+,T}^{\OP{adm}}(X)$ the set of almost complex structures $J$ on $X=\mathbb{R}\times Y$ that tame $d(e^t\alpha)$ and satisfy the following:
\begin{enumerate}[label={(\alph*)}]
\item $J$ is equal to the cylindrical almost complex structures $J^-$ and $J^+$ on subsets of the form
$$(-\infty,-T] \times P \times \R, \quad
[T,+\infty) \times P \times \R,$$
respectively; and
\item for some compact $K \subset Y$, the almost complex structure $J$ coincides, outside of $\R \times K$, with a cylindrical lift living in $\mathcal{J}^{\OP{cyl}}_\pi(Y)$.
\end{enumerate}
Condition (b) is needed in order to deal with compactness issues.

We will always assume that $T>0$ is such that the cobordisms are cylindrical outside $[-T, T] \times Y$, and simply write $\mathcal{J}^{\OP{adm}}_{J^-,J^+}(X)$. The union of all $\mathcal{J}^{\OP{adm}}_{J^-,J^+}(X)$ over all $J^-,J^+\in \mathcal{J}^{\OP{cyl}}_{\pi}(Y)$ is denoted $\mathcal{J}^{\OP{adm}}(X)$; almost complex structure in this set will be called \textit{admissible}.

\section{A monster compendium of holomorphic discs}
\label{sec: monster compendium}

Through the paper we will consider various moduli spaces of holomorphic curves with boundary on a pair of Lagrangian cobordisms. Asymptotics will be either intersection points or Reeb chords. Such moduli spaces are similar to many already appearing in the literature (see \cite{RSFT}, \cite{Ekholm_FloerlagCOnt}, \cite{EffectLegendrian}) and compactness results for such moduli spaces follows from the results in \cite{Bourgeois_&_Compactness} and \cite{Abbas_book}. In this section we describe these spaces and express their dimension in terms of the Conley-Zender indices of their asymptotics (using the dimension formula from \cite[Proposition 11.13]{Seidel_Fukaya}). We will follow the SFT convention: in all moduli spaces we take a quotient by reparametrisations of the source.

We fix two lifted almost complex structures $J^\pm \in {\mathcal J}^{\OP{cyl}}_{\pi}(Y)$ on $\R \times Y$ and a path $J_\bullet= \{J_t\}_{t \in [0,1]}$ of almost complex structures in ${\mathcal J}^{\OP{adm}}_{J^+,J^-}(X)$ which is constant near $t=0,1$. We denote by ${\mathcal R}(\Lambda_i^{\pm}, \Lambda_j^\pm)$, $\{i,j \} = \{ 0,1 \}$ the set of Reeb chords from $\Lambda_i^{\pm}$ to $\Lambda_j^\pm$. Those chords are called {\em mixed}, while chords starting and ending on the same Legendrian are called {\em pure}.
\subsection{Pure moduli spaces.}
\label{sec:pure-lch-moduli-spaces}

Denote $\Sigma = \Sigma_i$, $J=J_i$ and $\Lambda^\pm = \Lambda_i^\pm$ for one of $i=0$ or $1$. Let $\gamma^+, \delta^+_i, \ldots, \delta^+_d$ be Reeb chords of $\Lambda^+$ and $\gamma_-, \delta^-_1,\ldots,\delta^-_d$  Reeb chords of $\Lambda^-$. Throughout the paper we will write a $d$-tuple of pure Reeb chords as a word; this notation is reminiscent of the multiplicative structure of the Chekanov-Eliashberg algebra. Therefore, we set
$\boldsymbol{\delta}^\pm= \delta_1^\pm \ldots \delta_d^\pm$.

We will consider three types of pure LCH moduli spaces:
$$\widetilde{M}_{\mathbb{R}\times\Lambda^+}(\gamma^+;\boldsymbol{\delta}^+; J^+), \quad \widetilde{M}_{\mathbb{R}\times\Lambda^-} (\gamma^-;\boldsymbol{\delta}^-; J^-), \quad \text{and} \quad \mathcal{M}_{\Sigma}(\gamma^+;\boldsymbol{\delta}^-; J).$$
The first two moduli spaces will be called pure {\em cylindrical} LCH moduli spaces,  and are used to define the Legendrian contact homology differential of $\Lambda^\pm$.  The third moduli space will be called pure {\em cobordism} LCH moduli space and is used in \cite{Ekhoka} to define maps between Legendrian contact homology algebras.

The pure cylindrical LCH moduli spaces are moduli spaces of $J^\pm$-holomorphic maps from a $(d+1)$-punctured disc ($d \ge 1$) to $\R \times Y$ with boundary on $\R \times \Lambda^\pm$, one puncture positively asymptotic to $\gamma^\pm$ and $d$ punctures negatively asymptotic to $\boldsymbol{\delta}^\pm$. The pure cobordism LCH moduli spaces are moduli spaces of $J$-holomorphic maps from a $(d+1)$-punctured disc ($d \ge 1$) to $\R \times Y$ with boundary on $\Sigma$, one positive puncture asymptotic to $\gamma^+$ and $d$ negative punctures asymptotic to $\boldsymbol{\delta}^-$.
Finally in the cylindrical moduli spaces we take a quotient by the natural $\R$-action, while such an operation is not possible (nor desirable) for the cobordism moduli space.

\subsection{Mixed moduli spaces}\label{subsec: mixed moduli spaces}
\subsubsection{General definition}
Let $\underline{L}=(L_0, L_1)$, where $\{ L_0, L_1 \}$ denotes one of  the sets $\{ \R \times \Lambda^\pm_0,  \R \times \Lambda^\pm_1 \}$ or $\{ \Sigma_0, \Sigma_1 \}$. In the former case define $J_{\underline{L}}=J^\pm$ and in the latter $J_{\underline{L}}=J_\bullet$. Let
$\boldsymbol{\delta}$ be a word in the Reeb cords of the negative end of $L_0$ and $\boldsymbol{\zeta}$ a word in the Reeb cords of the negative end of $L_1$. Finally, compatibly with $\underline{L}$, let $x^\pm$ be either an intersection point or a mixed Reeb chord, requiring that, when $x^+$ is a Reeb chord, it should go from the positive ends of $L_1$ to the positive end of $L_0$.

We denote by ${\mathcal M}_{\underline{L}}(x^+; \boldsymbol{\delta}, x^-, \boldsymbol{\zeta}; J_{\underline{L}})$ the moduli space consisting of $J_{\underline{L}}$-holomorphic maps from a boundary punctures strip (with coordinates $(t,s) \in [0,1] \times \R$) to $\R \times Y$ with boundary on $L_0$ for $t=0$ and on  $L_1$ for $t=1$, asymptotic to $\gamma^+$ for $s \to + \infty$ and to $\gamma^-$ for $s \to - \infty$, and with boundary punctures negatively asymptotic to $\boldsymbol{\delta}$ and $\boldsymbol{\zeta}$.
If $\{ L_0, L_1 \} = \{ \R \times \Lambda^\pm_0, \R \times \Lambda^\pm_1 \}$, we denote by
$\widetilde{\mathcal M}_{\underline{L}}(x^+; \boldsymbol{\delta}, x^-, \boldsymbol{\zeta}; J_{\underline{L}})$ the quotient of ${\mathcal M}_{\underline{L}}(x^+; \boldsymbol{\delta}, x^-, \boldsymbol{\zeta}; J_{\underline{L}})$ by the natural $\R$-action.

\begin{Rem}
When no confusion can arise, we will often drop the almost complex structure, and sometimes
also the boundary conditions, from the notation of the moduli spaces. In the latter case, the boundary conditions can be inferred from the asymptotics.
\end{Rem}
\subsubsection{Mixed LCH moduli spaces.}\label{sec:mixed-lch-moduli-spaces}
Let $\boldsymbol{\delta}^\pm:=\delta^\pm_1\ldots\delta^\pm_{i-1}$ be Reeb chords of $\Lambda^\pm_1$ and $\boldsymbol{\zeta}^\pm=\zeta^\pm_{i+1}\ldots\zeta^\pm_d$ be Reeb chords of $\Lambda_0^\pm$.
We will consider three types of mixed LCH moduli spaces:
$$\widetilde{\mathcal{M}}_{\mathbb{R}\times\Lambda_0^\pm,\mathbb{R}\times\Lambda_1^\pm} (\gamma^+; \boldsymbol{\delta}^\pm, \gamma^-, \boldsymbol{\zeta}^\pm; J^\pm) \quad \text{and} \quad \mathcal{M}_{\Sigma_0,\Sigma_1} (\gamma^+; \boldsymbol{\delta}^-, \gamma^-, \boldsymbol{\zeta}^-; J_\bullet),$$
where
\begin{itemize}
\item[] $\gamma^\pm \in \mathcal{R}(\Lambda^+_1,\Lambda^+_0)$ for $\widetilde{\mathcal{M}}_{\mathbb{R}\times\Lambda_0^+,\mathbb{R}\times\Lambda_1^+} (\gamma^+; \boldsymbol{\delta}^+, \gamma^-, \boldsymbol{\zeta}^+; J^+)$,
\item[] $\gamma^\pm \in \mathcal{R}(\Lambda^-_1,\Lambda^-_0)$ for $\widetilde{\mathcal{M}}_{\mathbb{R}\times\Lambda_0^-,\mathbb{R}\times\Lambda_1^-} (\gamma^+; \boldsymbol{\delta}^-, \gamma^-, \boldsymbol{\zeta}^-; J^-)$,
and
\item[] $\gamma^+ \in \mathcal{R}(\Lambda^+_1,\Lambda^+_0)$, $\gamma^- \in \mathcal{R}(\Lambda^-_1,\Lambda^-_0)$ for $\mathcal{M}_{\Sigma_0,\Sigma_1} (\gamma^+; \boldsymbol{\delta}^-, \gamma^-, \boldsymbol{\zeta}^-; J_\bullet)$.
\end{itemize}

The first two moduli spaces will be called mixed {\em cylindrical} LCH moduli spaces and are used to define the bilinearised Legendrian contact homology differential of $(\Lambda^\pm_0, \Lambda^\pm_1)$ (see \cite{augcat}).  The third moduli space will be called mixed {\em cobordism} LCH moduli space and is used to define maps between bilinearised Legendrian contact homology groups.

 An illustration of  a curve in the mixed cobordism LCH moduli spaces is shown in Figure \ref{fig:LCHmap}.

\begin{figure}[!h]
\centering
\vspace{0.5cm}
\labellist
\pinlabel $\gamma^+$ [bl] at 196 447
\pinlabel $\delta_1$ [tl] at 13 0
\pinlabel $\delta_2$ [tl] at 120 0
\pinlabel $\gamma^-$ [tl] at 215 2
\pinlabel $\zeta_1$ [tl] at 313 0
\pinlabel $\zeta_2$ [tl] at 414 0
\pinlabel $\Sigma_0$ [bl] at 50 240
\pinlabel ${\color{red}\Sigma_1}$ [bl] at 366 240
\endlabellist
\includegraphics[height=4.3cm]{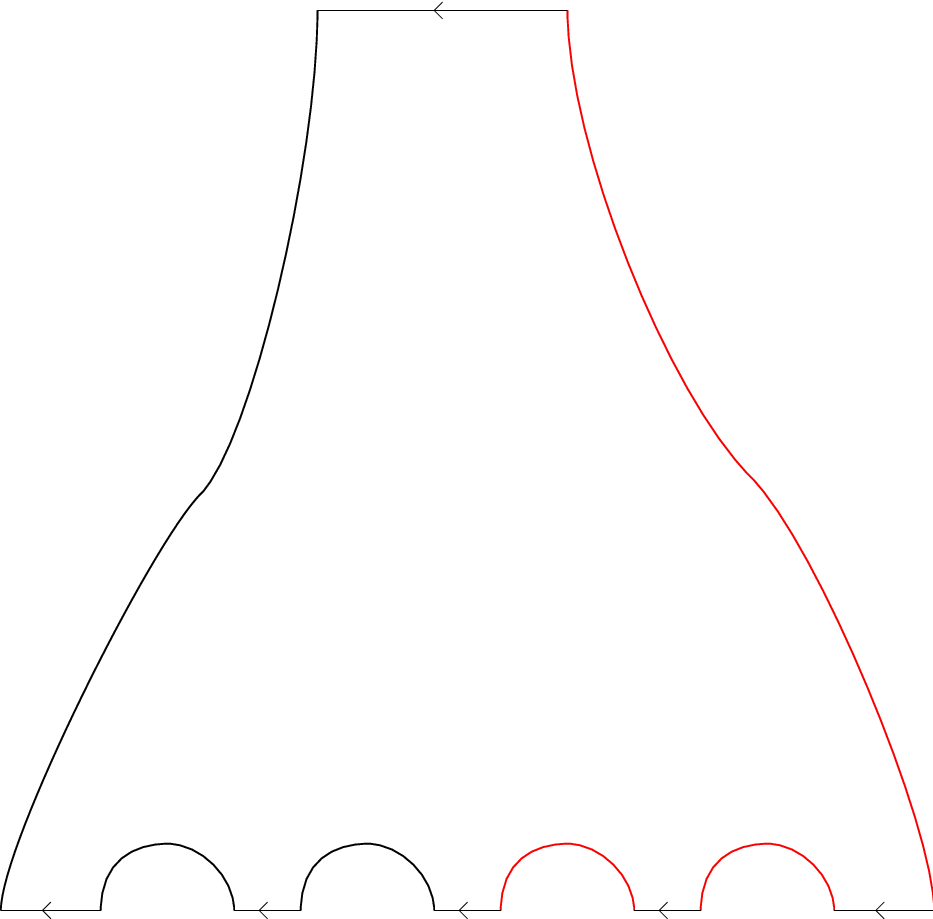}
  \caption{A mixed cobordism LCH curve.}
  \label{fig:LCHmap}
\end{figure}

\subsubsection{Floer moduli space.}
\label{sec:floer-moduli-space}
Let $p,q\in \Sigma_0\cap\Sigma_1$ be intersection points, $\boldsymbol{\delta}^-=
\delta^-_1\ldots\delta^-_{i-1}$ a word of Reeb chords on $\Lambda^-_0$, and $\boldsymbol{\zeta}^-= \zeta^-_{i+1} \ldots \zeta^-_d$ a word of Reeb chords on $\Lambda^-_1$.
Elements of the moduli spaces
$$\mathcal{M}_{\Sigma_0,\Sigma_1}(p;\boldsymbol{\delta}^-,q,\boldsymbol{\zeta}^-; J_\bullet)$$
will be called \textit{(punctured) Floer strips}.
See Figure \ref{fig:floerdiff}.

\begin{figure}[!h]
\centering
\vspace{0.5cm}
\labellist
\pinlabel $p$ [bl] at 207 382
\pinlabel $\delta_1$ [tl] at 11 1
\pinlabel $\delta_2$ [tl] at 121 1
\pinlabel $q$ [tl] at 209 82
\pinlabel $\zeta_1$ [tl] at 312 2
\pinlabel $\zeta_2$ [tl] at 410 3
\pinlabel $\Sigma_0$ [br] at 80 200
\pinlabel ${\color{red}\Sigma_1}$ [bl] at 380 200
\endlabellist
  \includegraphics[height=4cm]{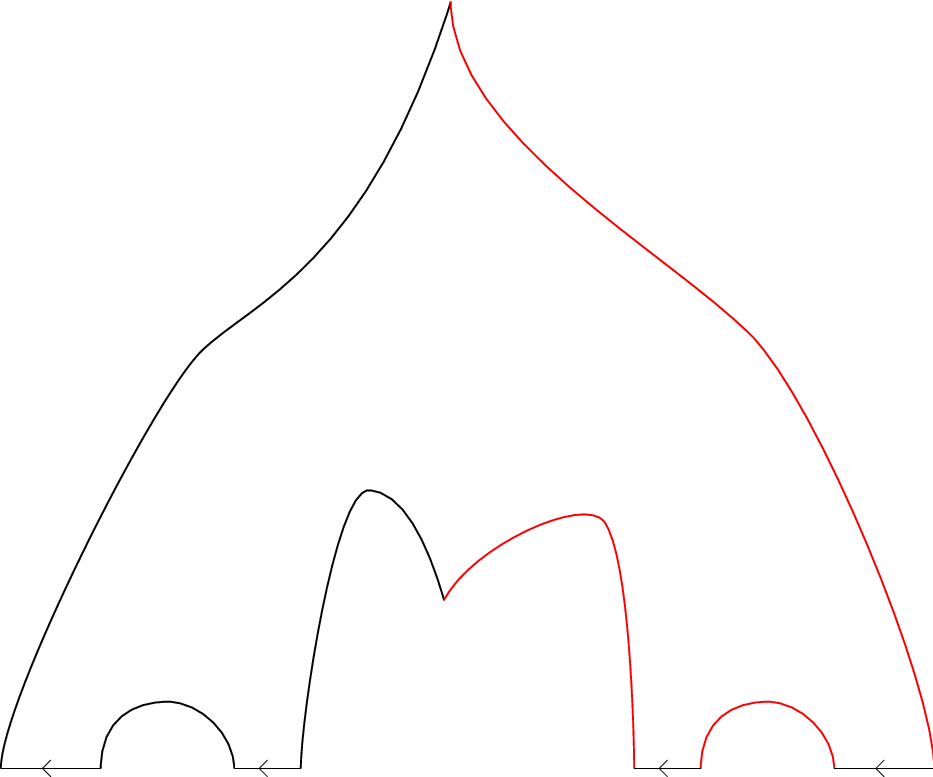}
  \caption{A punctured Floer strip.}
  \label{fig:floerdiff}
\end{figure}

\subsubsection{LCH to Floer moduli space.}
\label{sec:lch-floer-moduli}
Let $\gamma^-\in \mathcal{R}(\Lambda^-_1,\Lambda^-_0)$ be a mixed chord, $p\in \Sigma_0\cap\Sigma_1$ an intersection point, $\boldsymbol{\delta}^-=\delta^-_1\ldots\delta^-_{i-1}$ a word of Reeb chords on $\Lambda^-_0$, and $\boldsymbol{\zeta}^-=\zeta^-_{i+1} \ldots \zeta^-_d$ a word of Reeb chords on $\Lambda^-_1$. Curves in the moduli space $$\mathcal{M}_{\Sigma_0,\Sigma_1}(p;\boldsymbol{\delta}^-,\gamma^-,\boldsymbol{\zeta}^-; J_\bullet)$$ will be called \textit{pseudoholomorphic cthulhus}.
See Figure \ref{fig:cultist1}.

\begin{figure}[!h]
\centering
\vspace{0.5cm}
\labellist
\pinlabel $p$ [bl] at 207 382
\pinlabel $\delta_1$ [tl] at 11 1
\pinlabel $\delta_2$ [tl] at 121 1
\pinlabel $\gamma^-$ [tl] at 217 1
\pinlabel $\zeta_1$ [tl] at 312 2
\pinlabel $\zeta_2$ [tl] at 415 3
\pinlabel $\Sigma_0$ [br] at 75 200
\pinlabel ${\color{red}\Sigma_1}$ [bl] at 380 200
\endlabellist
  \includegraphics[height=4cm]{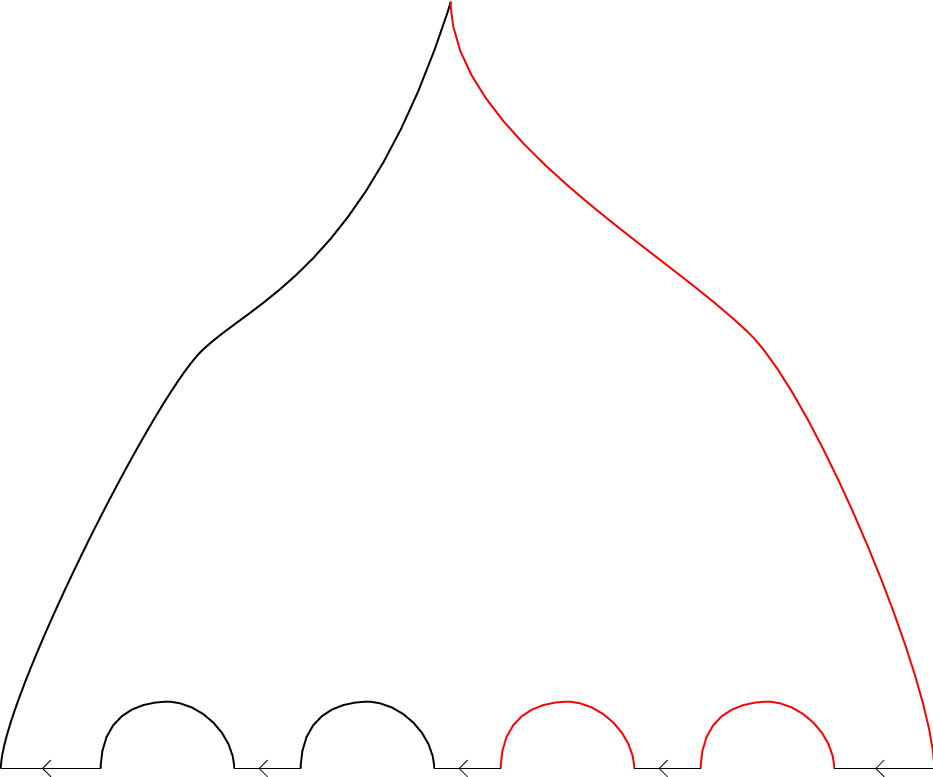}
  \caption{A pseudoholomorphic cthulhu.}
  \label{fig:cultist1}
\end{figure}

\subsubsection{Floer to LCH moduli space.}
\label{sec:floer-lch-moduli}

Let $\gamma^+\in \mathcal{R}(\Lambda^+_1,\Lambda^+_0)$ be a mixed chord,
$p\in \Sigma_0\cap\Sigma_1$ an intersection point,  $\boldsymbol{\delta}^-=\delta^-_1\ldots\delta^-_{i-1}$  Reeb chords of $\Lambda^-_0$ and $\boldsymbol{\zeta}^-=\zeta^-_{i+1} \ldots \zeta^-_d$  Reeb chords of $\Lambda_1^-$. Curves in the moduli space $$\mathcal{M}_{\Sigma_0,\Sigma_1}(\gamma^+; \boldsymbol{\delta}^-, p, \boldsymbol{\zeta}^-; J_\bullet)$$ will be called \textit{pseudoholomorphic cultists}. See Figure \ref{fig:cultist2}.

\begin{figure}[!h]
\centering
\vspace{0.5cm}
\labellist
\pinlabel $\gamma^+$ [bl] at 199 445
\pinlabel $\delta_1$ [tl] at 14 1
\pinlabel $\delta_2$ [tl] at 121 1
\pinlabel $q$ [tl] at 205 92
\pinlabel $\zeta_1$ [tl] at 312 2
\pinlabel $\zeta_2$ [tl] at 418 3
\pinlabel $\Sigma_0$ [br] at 90 215
\pinlabel ${\color{red}\Sigma_1}$ [bl] at 380 215
\endlabellist
  \includegraphics[height=4cm]{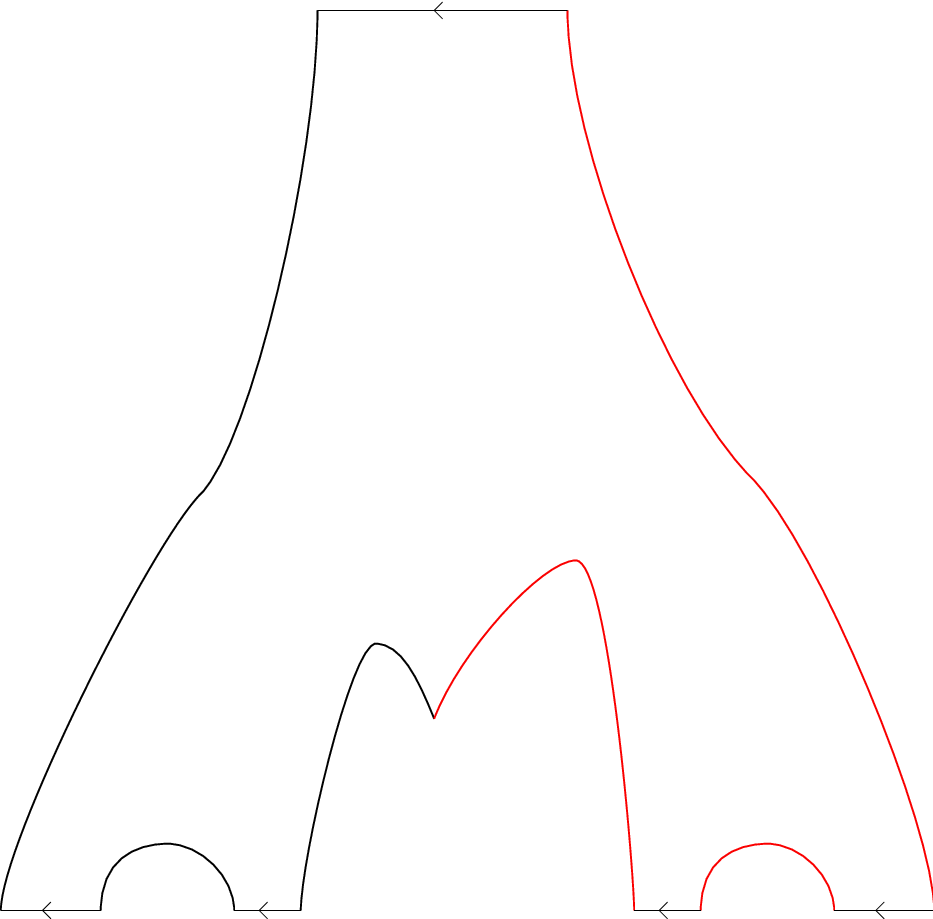}
  \caption{A pseudoholomorphic cultist}
  \label{fig:cultist2}
\end{figure}

\subsubsection{Bananas moduli space.}
\label{sec:banana-moduli-space}

Let $\gamma_{1,0}\in \mathcal{R}(\Lambda^-_1,\Lambda^-_0)$ and $\gamma_{0,1} \in \mathcal{R}(\Lambda^-_0,\Lambda^-_1)$ be mixed Reeb chords (going in opposite directions), let $\boldsymbol{\delta}^- = \delta^-_1 \ldots \delta^-_{i-1}$ be Reeb chords of $\Lambda^-_0$ and $\boldsymbol{\zeta}^- =\zeta^-_{i+1} \ldots \zeta^-_d$ Reeb chords of $\Lambda_1^-$. Curves in the moduli space
$$\widetilde{\mathcal{M}}_{\R \times \Lambda^-_0, \R \times \Lambda^-_1}(\gamma_{1,0}; \boldsymbol{\delta}^-, \gamma_{0,1}, \boldsymbol{\zeta}^-; J^-)$$
will be called \textit{pseudoholomorphic bananas}. See Figure \ref{fig:banana}.

\begin{figure}[!h]
\centering
\vspace{0.5cm}
\labellist
\pinlabel $\gamma_{1,0}$ [bl] at 1 435
\pinlabel $\delta_1$ [tl] at 5 1
\pinlabel $\delta_2$ [tl] at 120 1
\pinlabel $\gamma_{0,1}$ [bl] at 384 435
\pinlabel $\zeta_1$ [tl] at 295 1
\pinlabel $\zeta_2$ [tl] at 404 1
\pinlabel $\R \times \Lambda^-_0$ [br] at 73 185
\pinlabel ${\color{red} \R \times \Lambda^-_1}$ [bl] at 370 185
\endlabellist
  \includegraphics[height=4cm]{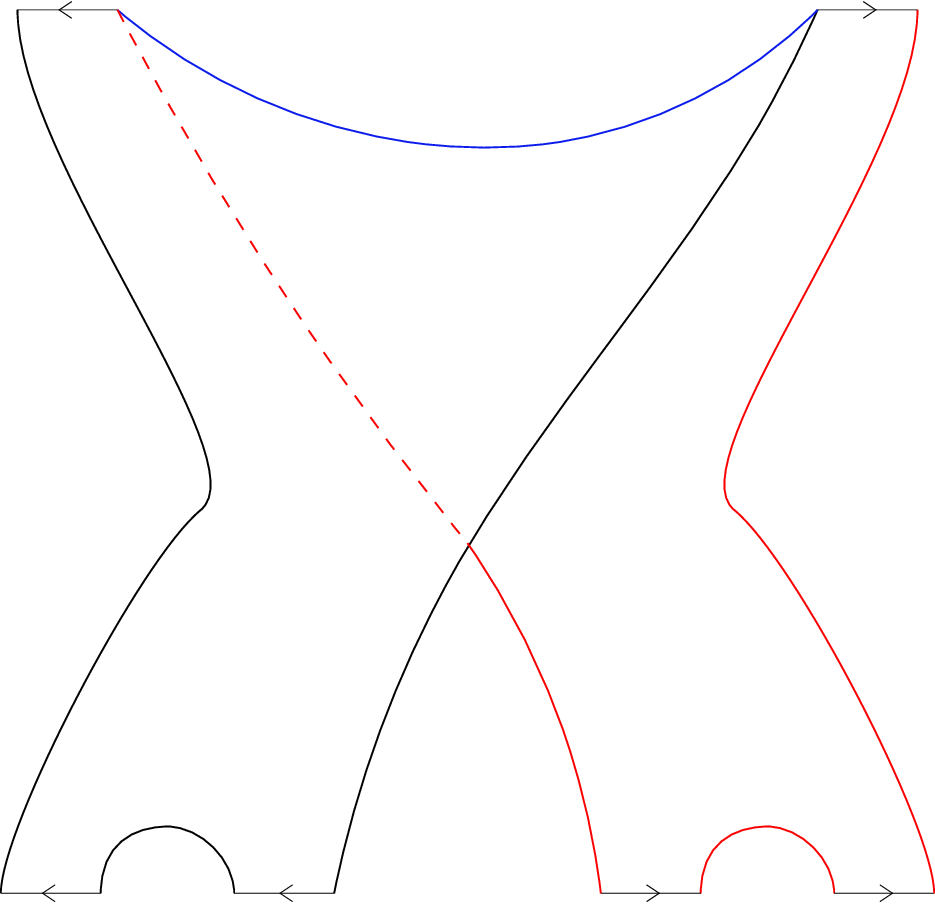}
  \caption{A pseudoholomorphic banana.}
  \label{fig:banana}
\end{figure}

Note that we can also define bananas moduli spaces
$$\widetilde{\mathcal M}_{\R \times \Lambda^+_0, \R \times \Lambda^+_1}(\gamma_{1,0}; \boldsymbol{\delta}^+, \gamma_{0,1}, \boldsymbol{\zeta}^+; J^+), \quad {\mathcal M}_{\Sigma_0, \Sigma_1}(\gamma_{1,0}; \boldsymbol{\delta}^-, \gamma_{0,1}, \boldsymbol{\zeta}^-; J_\bullet).$$

\subsection{Structure of the moduli spaces}
We recall that the cobordisms $\Sigma_i$ have dimension $n+1$. If $x$ is an intersection point or a Reeb chord (either pure or mixed), we denote by $\gr(x)$ its Conley-Zehnder index
as defined in \cite{LCHgeneral}.
If the cobordisms are graded, then $\gr$ is well defined; otherwise it depends on some (non-canonical) choice and only the dimension formulas for the moduli spaces will be well defined for maps in the same relative homotopy class. The proof of the following proposition is a patchwork of results from the literature, see \cite[Section~2.2]{FredholmTheory}, \cite[Theorem A.1]{CieliebakSwitching}, \cite[Lemma 2.5]{Duality_EkholmetAl}.
\begin{Prop}\label{prop: structure of moduli spaces}
For generic almost complex structures $J^\pm$ and $J_\bullet$, the moduli spaces described in Section~\ref{sec: monster compendium} are transversely cut out and therefore are smooth manifolds.  Their dimensions are
\begin{align}
  \label{eq:22}
\dim \mathcal{M}(\gamma^+; \boldsymbol{\delta}^-, \gamma^-, \boldsymbol{\zeta}^-) & = \gr(\gamma^+)-\gr(\gamma^-)-\gr(\boldsymbol{\delta})-\gr(\boldsymbol{\zeta}),\\
\label{eq:23}\dim \mathcal{M}(\gamma^+; \boldsymbol{\delta}^-, q, \boldsymbol{\zeta}^-) & = \gr(\gamma^+)- \gr(q)- \gr(\boldsymbol{\delta})- \gr(\boldsymbol{\zeta})+1,\\
\label{eq:24} \dim \mathcal{M}(p; \boldsymbol{\delta}^-, q ,\boldsymbol{\zeta}^-) & =\gr(p)-\gr(q)-\gr(\boldsymbol{\delta})-\gr(\boldsymbol{\zeta})-1,\\
\label{eq:25}\dim \mathcal{M}(p; \boldsymbol{\delta}^-, \gamma^-, \boldsymbol{\zeta}^-) & =\gr(p)- \gr(\gamma^-)- \gr(\boldsymbol{\delta})- \gr(\boldsymbol{\zeta})-2,\\
\label{eq:26} \dim \mathcal{M}(\gamma_{1,0}; \boldsymbol{\delta}^-, \gamma_{0,1}, \boldsymbol{\zeta}^-) & =\gr(\gamma_{1,0})+ \gr(\gamma_{0,1})- \gr(\boldsymbol{\delta})- \gr(\boldsymbol{\zeta})- n+2.
\end{align}
When the natural $\R$-action is well defined and free, the moduli spaces after taking the quotient are still transversely cut out and their dimension is one less. Moreover the zero-dimensional moduli spaces (after quotient, when possible) are compact, and the one-dimensional moduli spaces can be compactified by adding two-levels pseudoholomorphic buildings where each level belong to a zero-dimensional moduli space. Finally, if both cobordisms are relatively Pin, the moduli spaces can be coherently oriented.
\end{Prop}
\subsection{Energy}
\label{sec:energy-compactness}
In this section, we recall the notion of Hofer energy for pseudoholomorphic curves in the symplectisation of a contact manifold as introduced in \cite{Hofer93} and \cite{Bourgeois_&_Compactness}. See also \cite{Abbas_Chord_Asy} for the relative case.

\subsubsection{Hofer energy}
\label{sec:energy-1}

Assume that $\Sigma_0$ and $\Sigma_1$ are two exact Lagrangian cobordisms  in the symplectisation of $(Y,\alpha)$. Let $f_i \colon \Sigma_i \to \R$ be primitives of  $e^t\alpha\vert_{\Sigma_i}$ which are constant at the cylindrical ends. Without loss of generality we will assume that both constants are $0$ on the negative ends, while the constants on the positive end of $\Sigma_i$ will be denoted by $\mathfrak{c}_i$, $i=0,1$. Here we rely on Definition \ref{defn: exact lagrangian cobordism} of an exact cobordism.

Take any $T > 0$ and $T>\epsilon > 0$ for which
\begin{eqnarray*}
& & \Sigma_i \cap ((- \infty, -T + \varepsilon) \times Y) = (- \infty, -T + \varepsilon) \times \Lambda_i^-\\
& & \Sigma_i \cap (( T - \varepsilon, + \infty) \times Y) =  ( T - \varepsilon, + \infty) \times \Lambda_i^+,
\end{eqnarray*}
for $i=0,1$. Now, we let $\phi:\mathbb{R}\rightarrow [e^{-T},e^T]$ be a smooth function satisfying:
\begin{itemize}
\item $\phi(\pm t)=e^{\pm T}$ for $t>T$;
\item $\phi(t)=e^t$ for $t\in [-T+\epsilon,T-\epsilon]$;
\item $\phi'(t)\geq 0$.
\end{itemize}
In the case when both $\Sigma_0$ and $\Sigma_1$ are trivial cylinders over Legendrian submanifolds, we will also allow the case $T=\epsilon=0$, and $\phi \equiv 1$.

By construction we have $\phi\cdot\alpha\vert_{\Sigma_i} = e^t \alpha|_{\Sigma_i}$ for the reason that $\alpha|_{\Sigma_i}=0$ in the subset where $\phi$ is not equal to $e^t$. A primitive of $e^t \alpha|_{\Sigma_i}$ (which exists by exactness) is hence also a primitive of $\phi\cdot\alpha\vert_{\Sigma_i}$.

Let $\mathcal{C}^-$ be the set of compactly supported smooth functions
\[w_- \colon (-\infty,-T+\epsilon) \to [0,+\infty)\]
satisfying $\displaystyle\int_{-\infty}^{-T+\epsilon} w_-(s)ds=e^{-T}$, and let $\mathcal{C}^+$ be the set of compactly supported smooth functions
\[w_+ \colon (T-\epsilon,+\infty) \to [0,+\infty)\]
satisfying $\displaystyle\int_{T-\epsilon}^{+\infty}w_+(s)ds=e^T$.

\begin{defn}
Let $S$ be a punctured disc and  let $u=(a,v)\colon S \rightarrow \mathbb{R}\times Y$ be a smooth map.
\begin{itemize}
\item The \emph{$d(\phi\alpha)$-energy} of $u$ is given by
\begin{align*}
E_{d(\phi\alpha)}(u)=\displaystyle\int_Su^*(d(\phi\alpha)).
\end{align*}
\item The \emph{$\alpha$-energy of $u$} is given by
\begin{align*}
E_\alpha(u)=\sup\limits_{(w_-,w_+)\in \mathcal{C}^-\times\mathcal{C}^+}\left(\displaystyle\int_S(w_-\circ a) da\wedge v^*\alpha + \displaystyle\int_S(w_+\circ a) da\wedge v^*\alpha\right).
\end{align*}
\item The \emph{total energy}, or the \emph{Hofer energy}, of $u$ is given by
\begin{align*}
E(u)=E_{\alpha}(u)+E_{d(\phi\alpha)}(u).
\end{align*}
\end{itemize}
If $E(u)<\infty$, we say that $u$ is a \textit{finite energy pseudoholomorphic disc}.
\end{defn}

Non-constant holomorphic curves have positive total energy, as stated in the following simple lemma. We leave the proof to the reader.
\begin{Lem}
If $u$ is non-constant punctured pseudoholomorphic disc with boundary on a pair of exact Lagrangian cobordisms, and if the almost complex structure is cylindrical outside of $[-T+\epsilon,T-\epsilon] \times Y$, then $E(u) >0$, $E_\alpha(u) \ge 0$, and $E_{d(\phi\alpha)}(u) \ge 0$. Moreover, $E_{d(\phi\alpha)}(u) =0$ implies that the image of  $u$ is contained in a trivial cylinder over a Reeb orbit.
\end{Lem}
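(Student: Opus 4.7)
The plan is to verify each assertion in turn by analyzing the integrands pointwise and exploiting the taming properties of the two forms $d(\phi\alpha)$ and $da \wedge v^*\alpha$.

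First I would establish $E_{d(\phi\alpha)}(u)\geq 0$ by showing that $d(\phi\alpha) = \phi'\,dt\wedge\alpha + \phi\,d\alpha$ weakly tames $J$ everywhere. Inside the middle region $[-T+\epsilon,T-\epsilon] \times Y$ we have $\phi = e^t$, so $d(\phi\alpha) = d(e^t\alpha)$, which strictly tames $J$ by the admissibility requirement on $J \in \mathcal{J}^{\OP{adm}}_{J^-,J^+}(X)$. Outside, where $\phi' \equiv 0$ and $\phi$ is a positive constant, a direct computation with cylindrical $J$ and a decomposition $X = a\partial_t + bR_\alpha + \xi_0$ with $\xi_0\in\xi$ gives $d(\phi\alpha)(X,JX) = \phi\, d\alpha(\xi_0,J\xi_0)\geq 0$ (with equality iff $\xi_0 = 0$), using the compatibility of $J|_\xi$ with $d\alpha|_\xi$. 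Hence $u^*d(\phi\alpha)$ is pointwise a nonnegative two-form on $S$, and its integral is nonnegative.

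For $E_\alpha(u)\geq 0$ I would observe that the weights $w_\pm \circ a$ are supported exactly where $J$ is cylindrical. In that region, the Cauchy--Riemann equations $\partial_s a = \alpha(\partial_t v)$ and $\partial_t a = -\alpha(\partial_s v)$ yield the identity $v^*\alpha \circ j_S = da$, so
\[ da \wedge v^*\alpha \;=\; (v^*\alpha \circ j_S)\wedge v^*\alpha \;=\; |v^*\alpha|^2 \,\mathrm{vol}_S \;\geq\; 0 \]
pointwise. Multiplying by $w_\pm \circ a \geq 0$ and taking the supremum over $(w_-,w_+) \in \mathcal{C}^-\times\mathcal{C}^+$ preserves nonnegativity.

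The strict positivity $E(u) > 0$ for non-constant $u$ together with the rigidity statement is the main obstacle, and I would handle both at once. Assume $E_{d(\phi\alpha)}(u) = 0$. The pointwise nonnegativity proved above then forces $d(\phi\alpha)(\partial_s u, J\partial_s u) \equiv 0$ on $S$. In the middle region $d(\phi\alpha)$ strictly tames $J$, so $\partial_s u \equiv 0$ there and $u$ is locally constant on the preimage of this region. In the cylindrical regions the same vanishing forces the $\xi$-component of $\partial_s u$ to vanish, so $du$ takes values in the symplectic plane $\mathrm{span}(\partial_t,R_\alpha)$; consequently $u$ locally factors through $\mathbb{R} \times \gamma$ for an integral curve $\gamma$ of $R_\alpha$. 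If $u$ has a non-trivial boundary arc, that arc lies simultaneously in $\mathbb{R}\times\Lambda^\pm_i$ and in $\mathbb{R}\times\gamma$, but $\Lambda^\pm_i$ is Legendrian and therefore transverse to $R_\alpha$, so $\Lambda^\pm_i \cap \gamma$ is discrete, forcing the arc to degenerate and hence $u$ to be constant. In the boundaryless case, the image is contained in $\mathbb{R}\times\gamma$; properness and finite energy of the asymptotics then forces $\gamma$ to be a closed Reeb orbit, so $u$ is contained in the trivial cylinder over $\gamma$. This simultaneously establishes the rigidity statement and the implication that $E(u) > 0$ whenever $u$ is non-constant.
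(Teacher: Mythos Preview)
The paper leaves this proof to the reader, so there is no argument to compare against; your overall strategy is the standard one and the computations for $E_{d(\phi\alpha)}\ge0$ and $E_\alpha\ge0$ are essentially correct. One minor omission: outside $[-T+\epsilon,T-\epsilon]\times Y$ you assert $\phi'\equiv0$, but in the transition strips $T-\epsilon\le|t|\le T$ one only has $\phi'\ge0$. This is harmless, since for cylindrical $J$ the same decomposition gives $d(\phi\alpha)(X,JX)=\phi'(a^2+b^2)+\phi\,d\alpha(\xi_0,J\xi_0)\ge0$, but it should be stated.

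There is, however, a genuine gap in your treatment of the boundary case when $E_{d(\phi\alpha)}(u)=0$. You argue that a boundary arc lies in $(\R\times\Lambda^\pm_i)\cap(\R\times\gamma)=\R\times(\Lambda^\pm_i\cap\gamma)$ with $\Lambda^\pm_i\cap\gamma$ discrete, and conclude the arc must degenerate and $u$ be constant. This is false: the arc can map nontrivially into a vertical line $\R\times\{p\}$, and indeed trivial strips over Reeb chords are exactly non-constant holomorphic discs with boundary contained in $\R\times\gamma$. The correct conclusion from the vanishing of the $\xi$-component of $du$ is only that $u$ is contained in $\R\times\gamma$, which is precisely the rigidity statement the lemma asserts. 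To recover $E(u)>0$ you then need a separate step: for a non-constant $u$ contained in $\R\times\gamma$, the identity $da\wedge v^*\alpha=|v^*\alpha|^2\,\mathrm{vol}_S$ together with $da=(v^*\alpha)\circ j_S$ shows that $v^*\alpha$ cannot vanish identically on the (necessarily non-empty) preimage of the cylindrical ends, whence $E_\alpha(u)>0$.
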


\subsubsection{Action and energy}
\label{sec:action-energy}
Consider a pair of exact Lagrangian cobordisms $\Sigma_i$ from $\Lambda^-_i$ to $\Lambda^+_i$ with potential functions $f_i \colon \Sigma_i \to \R$, $i=0,1$.
For a Reeb chord $c$ of $\Lambda^\pm_0 \cup \Lambda^\pm_1$ we define
\[ \ell(c) := \int_c \alpha.\]
Recall that the definition of the $E_{d(\phi\alpha)}$-energy depends on the choice of a constant $T \ge 0$, where equality is possible  only when both cobordisms are trivial cylinders. The action of a  Reeb chord $\gamma$ of $\Lambda^\pm_1 \cup \Lambda^\pm_0$ is defined by
\begin{align*}
\mathfrak{a}(\gamma) := e^T\ell(\alpha) + (\mathfrak{c}_i- \mathfrak{c}_j) & \quad  \text{if  $\gamma$ is a chord of $\Lambda_0^+ \cup \Lambda_1^+$, and} \\
\mathfrak{a}(\gamma)  := e^{-T}\ell(\alpha) \phantom{+ (\mathfrak{c}_1- \mathfrak{c}_0)}  & \quad  \text{if $\gamma$ is a chord of $\Lambda_0^- \cup \Lambda_1^-$.}
 \end{align*}
In particular,  the action of a pure Reeb chord $\gamma$ of $\Lambda_i^\pm$ is $\mathfrak{a}(\gamma)=e^{\pm T}\ell(\gamma)$. Given a word $\boldsymbol{\gamma}= \gamma_1 \ldots \gamma_d$, we denote
  $\mathfrak{a}(\boldsymbol{\gamma})= \sum \limits_{i=1}^d \mathfrak{a}(\gamma_i)$.

The action of an intersection point $p \in \Sigma_0 \cap \Sigma_1$ is defined by
\[ \mathfrak{a}(p):=f_1(p)-f_0(p).\]

Stoke's theorem gives the following proposition (see \cite{LegendrianAmbient} for details), whose proof heavily relies on the fact that each cobordism $\Sigma_i$, $i=0,1$, is exact.
\begin{Prop}
\label{prop:energy}
Let $\gamma^\pm \in {\mathcal R}(\Lambda_1^\pm, \Lambda_0^\pm)$ be mixed Reeb chords, $\boldsymbol{\delta}^-= \delta_1^- \ldots \delta_{i-1}^-$ and $\boldsymbol{\zeta}^- = \zeta_{i+1} \ldots \zeta_d$ words of pure Reeb chords on $\Lambda_1^-$ and $\Lambda_0^-$, respectively, and $p, q \in \Sigma_0 \cap \Sigma_1$ intersection points. 
\begin{itemize}
\item If $u\in\mathcal{M}(\gamma^+;\boldsymbol{\delta}^-,\gamma^-, \boldsymbol{\zeta}^-)$, then
  \begin{align}
    E_{d(\phi\alpha)}(u)&=\mathfrak{a}(\gamma^+) - \mathfrak{a}(\gamma^-)-\big(\mathfrak{a}(\boldsymbol{\delta}^-) +\mathfrak{a}(\boldsymbol{\zeta}^-) \big).\\\label{eq:7}
E_\alpha(u)&\leq 2\mathfrak{a}(\gamma^+)\nonumber
    \end{align}
\item If $u\in\mathcal{M}(\gamma^+;\boldsymbol{\delta}^-,p,\boldsymbol{\zeta}^-)$, then
  \begin{align}
 E_{d(\phi\alpha)}(u)&=\mathfrak{a}(\gamma^+) - \mathfrak{a}(p) - \big(\mathfrak{a}(\boldsymbol{\delta}^-) +\mathfrak{a}(\boldsymbol{\zeta}^-)\big).\\\label{eq:8}
E_\alpha(u)&\leq 2\mathfrak{a}(\gamma^+)\nonumber
      \end{align}
\item If $u\in\mathcal{M}(p;\boldsymbol{\delta}^-,\gamma^-,\boldsymbol{\zeta}^-)$, then
  \begin{align}
    E_{d(\phi\alpha)}(u)&= \mathfrak{a}(p) -\mathfrak{a}(\gamma^-)-\big(\mathfrak{a}(\boldsymbol{\delta}^-) + \mathfrak{a}(\boldsymbol{\zeta}^-)\big).\\\label{eq:9}
E_\alpha(u)&\leq \mathfrak{a}(p)\nonumber
\end{align}
\item If $u\in\mathcal{M}(p;\boldsymbol{\delta}^-,q,\boldsymbol{\zeta}^-)$, then
  \begin{align} E_{d(\phi\alpha)}(u)&= \mathfrak{a}(p) - \mathfrak{a}(q) -\big(\mathfrak{a}(\boldsymbol{\delta}^-)+\mathfrak{a}(\boldsymbol{\zeta}^-)\big).\\\label{eq:10}
E_\alpha(u)&\leq \mathfrak{a}(p)\nonumber
    \end{align}
 \item If $u\in\mathcal{M}(\gamma_{1,0};\boldsymbol{\delta}^-,\gamma_{0,1},\boldsymbol{\zeta}^-)$, then
  \begin{align}
E_{d(\phi\alpha)}(u)&=\big(\mathfrak{a}(\gamma_{1,0})+\mathfrak{a}(\gamma_{0,1})\big)-\big(\mathfrak{a}(\boldsymbol{\delta}^-)+\mathfrak{a}(\boldsymbol{\zeta}^-)\big).\\\label{eq:12}
E_\alpha(u)&\leq 2\mathfrak{a}(\gamma_{1,0})+2\mathfrak{a}(\gamma_{0,1}) \nonumber
  \end{align}
\end{itemize}
\end{Prop}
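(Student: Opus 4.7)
The plan is to obtain all statements via Stokes' theorem applied to a suitable exhaustion of the punctured domain, relying on the exactness of the Lagrangians and on the standard non-negativity of $v^*(d\alpha)$ for pseudoholomorphic curves in a symplectisation with cylindrical almost complex structure. I would first establish the five identities for $E_{d(\phi\alpha)}(u)$ in a uniform way, and then derive the $E_{\alpha}$-bounds by a variant of the same computation.

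For the $d(\phi\alpha)$-energy, I would proceed as follows. Given $u\colon S\to X$, consider the compact exhaustion $S_R\subset S$ obtained by removing, at each boundary puncture, the strip-like neighbourhood $\{|s|>R\}$ (or its half-plane analogue at the incoming end). Applying Stokes' theorem to $\int_{S_R}u^*d(\phi\alpha)$ produces two types of boundary contributions: integrals along arcs of $\partial S_R$ mapping to $\Sigma_0$ or $\Sigma_1$, and integrals along short arcs encircling each puncture. On the Lagrangian arcs one uses that $\phi\alpha|_{\Sigma_i}=e^t\alpha|_{\Sigma_i}=df_i$ in the region where the boundary actually lies (this is where condition (2c) of Definition~\ref{defn: exact lagrangian cobordism} is needed, since otherwise $\phi\alpha|_{\Sigma_i}$ would differ from $df_i$); hence those integrals telescope into differences of the primitives $f_i$ evaluated at consecutive punctures. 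Taking $R\to\infty$, the arcs around a puncture asymptotic to a Reeb chord $\gamma$ at the positive (resp.\ negative) end contribute $\pm e^{T}\ell(\gamma)$ (resp.\ $\pm e^{-T}\ell(\gamma)$), while those around an intersection puncture $p$ contract to $p$. Adding the Lagrangian arcs' jumps of the primitives yields exactly the action $\mathfrak{a}(p)=f_1(p)-f_0(p)$ at each intersection puncture, and the combination $\mathfrak{a}(\gamma)=e^{T}\ell(\gamma)+(\mathfrak{c}_i-\mathfrak{c}_j)$ at each mixed positive chord from $\Lambda_i^+$ to $\Lambda_j^+$ (the constants $\mathfrak{c}_i$ arise precisely from the differences of primitives at the positive cylindrical ends; at the negative end these constants are $0$ by convention). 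Signs depend only on whether the puncture is incoming or outgoing, and this gives all five displayed identities.

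For the $\alpha$-energy bounds, I would use the primitive trick. Given $w_\pm\in\mathcal{C}^\pm$, set $W_-(s)=\int_{-\infty}^{s}w_-(\tau)d\tau$ and $W_+(s)=-\int_{s}^{+\infty}w_+(\tau)d\tau$, so that $dW_\pm=w_\pm\,ds$ and $W_-$ goes from $0$ to $e^{-T}$ on $(-\infty,-T+\varepsilon)$ while $W_+$ goes from $-e^{T}$ to $0$ on $(T-\varepsilon,+\infty)$. Writing $u=(a,v)$, the defining integrand satisfies
\begin{equation*}
(w_\pm\circ a)\,da\wedge v^*\alpha=d\bigl((W_\pm\circ a)v^*\alpha\bigr)-(W_\pm\circ a)v^*(d\alpha).
\end{equation*}
Since $J$ is cylindrical on the region where $w_\pm$ is supported, $v^*(d\alpha)$ is non-negative as a $2$-form on $S$, and with our sign conventions the terms $-(W_\pm\circ a)v^*(d\alpha)$ are therefore non-positive. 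Hence $E_\alpha(u)$ is bounded above by the supremum over $w_\pm$ of the Stokes boundary term $\int_{\partial S_R}(W_\pm\circ a)v^*\alpha$, computed as in the first step. At negative punctures (Reeb chords or intersection points) $W_\pm$ vanishes in the limit, so they contribute nothing, while at the incoming puncture one gets at most $|W_+(+\infty)-W_-(-\infty)|=e^{T}+e^{-T}$ times the length of the asymptotic chord (or, for an intersection point incoming puncture, $\mathfrak{a}(p)$). Choosing the best cutoffs gives the stated bounds $2\mathfrak{a}(\gamma^+)$, $\mathfrak{a}(p)$, and $2\mathfrak{a}(\gamma_{1,0})+2\mathfrak{a}(\gamma_{0,1})$ in the banana case (where the two incoming chords contribute symmetrically).

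The main obstacle is bookkeeping rather than analysis: one must track signs coming from the orientations of $\partial S_R$, the incoming/outgoing conventions at each puncture, and the jumps of primitives across Lagrangian labels at mixed punctures, so as to see that the constants $\mathfrak{c}_i$ cancel correctly and that the words $\boldsymbol{\delta}^-,\boldsymbol{\zeta}^-$ enter with the claimed sign. The asymptotic convergence statement needed to pass from $S_R$ to $S$ in the limit, in particular the exponential decay of $u$ towards trivial strips over Reeb chords at the cylindrical ends, is precisely guaranteed by the finite Hofer energy hypothesis and is the content of the compactness discussion recalled in Section~\ref{sec:energy-1}; combined with the elementary fact that $v^*(d\alpha)\geq 0$ in the cylindrical regions, this provides all the analytic input needed, and the remainder is the Stokes computation sketched above as carried out in detail in \cite{LegendrianAmbient}.
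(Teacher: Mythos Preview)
Your approach is exactly the paper's: Stokes' theorem applied to the exact one-form $\phi\alpha$ for the $E_{d(\phi\alpha)}$-identities, together with non-negativity of $v^*d\alpha$ in the cylindrical regions for the $E_\alpha$-bounds, as in \cite{LegendrianAmbient}. The paper in fact records the proof only as a one-line reference to Stokes' theorem and exactness.

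One bookkeeping slip in your $E_\alpha$-argument: with your choice $W_+(s)=-\int_s^{+\infty}w_+$ you have $W_+\le 0$, so $-(W_+\circ a)\,v^*(d\alpha)\ge 0$, not $\le 0$ as you claim; moreover $W_+\equiv -e^{T}$ for $s\le T-\varepsilon$, so it does \emph{not} vanish at negative Reeb chord punctures. Taking instead $W_+(s)=\int_{T-\varepsilon}^{s}w_+$ (extended by $0$ below) makes $W_+\ge 0$, identically zero on the non-cylindrical slab and at every negative-end puncture, and then the Stokes boundary term is controlled solely by the positive punctures, yielding the stated upper bounds as you intended.
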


\section{The Cthulhu complex}
\label{sec:Cthulhu-complex}
Let $\Sigma_0$ and $\Sigma_1$ be two exact Lagrangian cobordisms inside the symplectisation $(\R \times P \times \R,d(e^t\alpha))$ of a contactisation. We assume that:

\begin{itemize}
\item $\Sigma_0$ and $\Sigma_1$ intersect transversely (in particular
  $\Lambda_0^\pm\cap \Lambda_1^\pm=\emptyset$), and
\item The links $\Lambda_0^\pm \sqcup \Lambda_1^\pm$ are chord-generic.
\end{itemize}

The \emph{Cthulhu complex} of the pair $(\Sigma_0,\Sigma_1)$ is the complex whose underlying graded $R$-module is
\begin{equation}\label{eqn: cthulhu complex}
\Cth_\bullet (\Sigma_0, \Sigma_1) := C^{\bullet}(\Lambda_0^+, \Lambda_1^+)[2] \oplus C^{\bullet}(\Sigma_0, \Sigma_1) \oplus C^{\bullet}(\Lambda^-_0, \Lambda^-_1)[1]
\end{equation}
for a unital ring $R$. Here $C^{\bullet}(\Lambda_0^+,\Lambda_1^+)$ is the free graded module generated by the Reeb chords
from $\Lambda_1^\pm$ to $\Lambda_0^\pm$ and $C^{\bullet}(\Sigma_0,\Sigma_1)$ is the free graded module generated by the
intersection points $\Sigma_0 \cap \Sigma_1$. 
\subsection{The Cthulhu differential}
In this section we fix generic almost complex structures $J^\pm \in {\mathcal J}^{cyl}_\pi(Y)$
and a generic path $J_\bullet$ of admissible almost complex structures in ${\mathcal J}^{adm}_{J^-, J^+}(X)$. We fix also two augmentations $\varepsilon^-_0$ and $\varepsilon^-_1$ of the Chekanov-Eliashberg algebras of $\Lambda^-_0$ and $\Lambda^-_1$ respectively. 

We define the \emph{Cthulhu differential} $\mathfrak{d}_{\varepsilon^-_0,\varepsilon^-_1}$, which is a differential of degree $1$ on $Cth_\bullet (\Sigma_0, \Sigma_1)$. With respect to the direct sum decomposition in Equation~\eqref{eqn: cthulhu complex}, it has the form
\begin{equation}\label{eq:20}
\mathfrak{d}_{\varepsilon^-_0,\varepsilon^-_1}=\begin{pmatrix}
 d_{++} & d_{+0} & d_{+-} \\
0 & d_{00} &  d_{0-}  \\
0 & d_{-0} & d_{--}
\end{pmatrix}.
\end{equation}
Loosely speaking, every non-zero entry in this matrix is defined by counting rigid punctured pseudoholomorphic strips in the moduli spaces  described in Section \ref{subsec: mixed moduli spaces}, where the counts are ``weighted'' by the augmentations. 

Below we give a careful description of each entry of the matrix~\eqref{eq:20}, where the  mentioned degrees are the degrees as maps between the summands in~\eqref{eqn: cthulhu complex} \emph{without} the shifts in grading appearing in the definition of $\Cth_{\bullet}(\Sigma_0,\Sigma_1)$.
When we want to emphasise to which pair of Lagrangian cobordisms the maps belong, we put them as superscript, e.g.~$\mathfrak{d}_{\varepsilon^+_0,\varepsilon^-_1}^{\Sigma_0,\Sigma_1}$, $d_{+,-}^{\Sigma_0,\Sigma_1}$, etc. We will denote by $\#$ the count (either with sign or modulo two, as appropriate) of the {\em zero dimensional part} of the corresponding moduli space.

\subsubsection{The bilinearised LCH differential}
\label{sec:bilin-diff}
We define $\varepsilon^+_i := \varepsilon^-_i \circ \Phi_{\Sigma_i,J}$, $i=0,1$, which are augmentations of the Chekanov-Eliashberg algebras of $\Lambda^+_0$ and $\Lambda^+_1$, respectively.

The map $d_{\pm\pm}$ is the bilinearised Legendrian cohomology differential for $(\Lambda_0^\pm,\Lambda_1^\pm)$ induced by the pair $(\varepsilon_0^\pm,\varepsilon_1^\pm)$ of augmentations as defined in \cite{augcat}.
 In other words, it is defined as
\begin{eqnarray}
\label{eq:2}
\lefteqn{d_{\pm\pm}(\gamma_2^\pm):=d_{\varepsilon_0^\pm,\varepsilon_1^\pm}(\gamma_2^\pm)=}\\
\nonumber &=&\sum_{\gamma_1^\pm}\sum_{\boldsymbol{\delta}^\pm,\boldsymbol{\zeta}^\pm}\#\widetilde{\mathcal{M}}_{\R \times \Lambda^\pm_0,\R \times \Lambda^\pm_1}(\gamma_1^\pm;\boldsymbol{\delta}^\pm,\gamma_2^\pm,\boldsymbol{\zeta}^\pm;J^\pm)\cdot\varepsilon^\pm_0(\boldsymbol{\delta}^\pm)\varepsilon_1^\pm(\boldsymbol{\zeta}^\pm)\cdot
  \gamma_1^\pm.
\end{eqnarray}

It follows from Equation \eqref{eq:22} that $d_{\pm \pm}$ has degree $1$.
\subsubsection{The Floer ``differential''}
\label{sec:floer-differential}

The map $d_{00}$ is a modification of the differential in Lagrangian Floer homology. For an intersection point $q$, it is defined as
\begin{align}
  \label{eq:1}
d_{00}(q):=\sum_{p}\sum_{\boldsymbol{\delta}^-,\boldsymbol{\zeta}^-}\#\mathcal{M}_{\Sigma_0,\Sigma_1}(p;\boldsymbol{\delta}^-,q,\boldsymbol{\zeta}^-;J_\bullet)\cdot\varepsilon^-_0(\boldsymbol{\delta}^-)\varepsilon_1^-(\boldsymbol{\zeta}^-)\cdot p.
  \end{align}

From Equation \eqref{eq:24} we deduce that this map is of degree $1$.

\subsubsection{The Cultist maps}
\label{sec:cultists-maps}

The maps $d_{+0}$ and $d_{0-}$ are defined as
\begin{align}
  \label{eq:3}
 & d_{0-}(\gamma^-):=\sum_{p}\sum_{\boldsymbol{\delta}^-,\boldsymbol{\zeta}^-}\#\mathcal{M}_{\Sigma_0,\Sigma_1}(p;\boldsymbol{\delta}^-,\gamma^-,\boldsymbol{\zeta}^-; J_\bullet) \cdot\varepsilon^-_0(\boldsymbol{\delta}^-)\varepsilon_1^-(\boldsymbol{\zeta}^-)\cdot p,
\\&d_{+0}(q):=\sum_{\gamma^+}\sum_{\boldsymbol{\delta}^-,\boldsymbol{\zeta}^-}\#\mathcal{M}_{\Sigma_0,\Sigma_1}(\gamma^+;\boldsymbol{\delta}^-,q,\boldsymbol{\zeta}^-; J_\bullet)\cdot\varepsilon^-_0(\boldsymbol{\delta}^-)\varepsilon_1^-(\boldsymbol{\zeta}^-)\cdot \gamma^+.
\end{align}

Equations \eqref{eq:23} implies
that $d_{0-}$ has degree $2$, while \eqref{eq:25} implies that  $d_{+0}$ has degree $-1$.
A version of the map $d_{+0}$ appears already in \cite{Ekholm_FloerlagCOnt} in the case when the negative ends of the cobordisms are empty.
\subsubsection{The LCH map}
\label{sec:lch-map}
The map $d_{+-}$ is analogous to the map in bilinearised Legendrian
contact homology induced by an exact Lagrangian cobordism. It is defined
 as:
\begin{align}
  \label{eq:4}
  & d_{+-}(\gamma^-):=\sum_{\gamma^+}\sum_{\boldsymbol{\delta}^-,\boldsymbol{\zeta}^-}\#\mathcal{M}_{\Sigma_0,\Sigma_1}(\gamma^+;\boldsymbol{\delta}^-,\gamma^-,\boldsymbol{\zeta}^-; J_\bullet)\cdot\varepsilon^-_0(\boldsymbol{\delta}^-)\varepsilon_1^-(\boldsymbol{\zeta}^-)\cdot \gamma^+.
 \end{align}

 It follows from Equation \eqref{eq:22} that this map
has degree $0$.

\subsubsection{The Nessie map}
\label{sec:banana-map}
Let $C_\bullet(\Lambda_1^\pm,\Lambda_0^\pm)$ be the dual of $C^\bullet(\Lambda_1^\pm,\Lambda_0^\pm)$ and $\delta_{\pm\pm}$ the adjoints of the differentials $d_{\pm \pm}^{\Sigma_1,\Sigma_0}$.

The count of ``banana'' pseudoholomorphic strips gives rise to a map

\[ b \colon C_{n-1-\bullet}(\Lambda_1^\pm,\Lambda_0^\pm)\rightarrow C^\bullet(\Lambda_0^\pm,\Lambda_1^\pm) \]
which is defined as
\begin{equation} \label{eq: banana map}
b(\gamma_{01}) = \sum_{\gamma_{10}} \sum_{\boldsymbol{\delta},\boldsymbol{\zeta}} \# \widetilde{\mathcal{M}}_{\R \times \Lambda_0^-, \R \times \Lambda_1^-}(\gamma_{10};\boldsymbol{\delta},\gamma_{01},\boldsymbol{\zeta}; J^-)\cdot \varepsilon_0(\boldsymbol{\delta})\varepsilon_1(\boldsymbol{\zeta})\cdot\gamma_{10}.
\end{equation}
The degree of the map $b$ follows from Equation \eqref{eq:26}.

Further, let $CF_\bullet(\Sigma_0,\Sigma_1)$ be the dual of $CF^\bullet(\Sigma_0,\Sigma_1)$.
For a fixed choice of Maslov potentials for the two cobordisms, there is a canonical isomorphism
\begin{gather*}
CF^\bullet(\Sigma_0,\Sigma_1) \cong CF_{n+1-\bullet}(\Sigma_1,\Sigma_0).
\end{gather*}
Using $\delta_{-0}:CF_\bullet(\Sigma_1,\Sigma_0)\rightarrow C_{\bullet-2}(\Lambda_1^-,\Lambda_0^-)$ to denote the adjoint of the map $d^{\Sigma_1,\Sigma_0}_{0-}$ in the entry of $\mathfrak{d}^{\Sigma_1,\Sigma_0}_{\varepsilon_1,\varepsilon_0}$ (defined using the opposite path $\check{J}_\bullet$ such that $\check{J}_t=J_{1-t}$), we define
\[ d_{-0}:=b\circ \delta_{-0} \colon CF^\bullet(\Sigma_0,\Sigma_1) = CF_{n+1-\bullet}(\Sigma_1,\Sigma_0)\rightarrow C^\bullet(\Lambda_0^-,\Lambda_1^-),\]
which has degree 0.

\begin{figure}[ht!]
\vspace{3mm}
\labellist
\pinlabel $q$ at 80 108
\pinlabel $\gamma$ at 31 68
\endlabellist
  \centering
  \includegraphics[height=3cm]{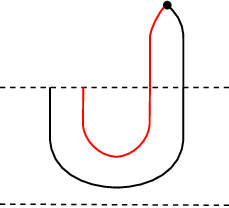}
  \caption{A building contributing to $\langle d_{-0}q,\gamma\rangle$.}
  \label{fig:nessie}
\end{figure}

More explicitly, the map $d_{-0}$ counts pseudoholomorphic buildings with two levels, one of which is a punctured strip with one end at an intersection point $p \in \Sigma_0 \cap \Sigma_1$ and  another end at a chord $\gamma_{01} \in {\mathcal R}(\Lambda^-_0, \Lambda^-_1)$ (the neck of the nessie), and the other one is a banana with one end end at $\gamma_{01} \in {\mathcal R}(\Lambda^-_0, \Lambda^-_1)$ and another end at $\gamma_{10} \in {\mathcal R}(\Lambda^-_1, \Lambda^-_0)$ (the body of the nessie). See Figure \ref{fig:nessie}. Observe that, compared to the pseudoholomorphic cthulhus used in the definition of the map $d_{0-}$, the neck of a nessie has reversed boundary conditions.
 The configurations above have previously been considered by Akaho in \cite{Akaho_conc}. (See also  \cite{Akaho} from the same author.)

\subsection{The proof of $\mathfrak{d}_{\varepsilon^-_0,\varepsilon^-_1}\,^2=0$}
\label{sec:Cthulu-differential}
We are now ready to present and prove the following central result.
\begin{Thm}\label{thm:dsquare}

Let $\Sigma_i \subset \R \times Y$, $i=0,1$, be a pair of exact Lagrangian cobordisms from $\Lambda^-_i$ to $\Lambda^+_i$ as above. 
If $\varepsilon_i^\pm$, $i=0,1$ is an augmentation of the Chekanov-Eliashberg algebra of $\Lambda_i^-$, then
\begin{itemize}
\item $\mathfrak{d}_{\varepsilon^-_0,\varepsilon^-_1}$ is well-defined over
a ring of characteristic two, and
\item $\mathfrak{d}_{\varepsilon^-_0,\varepsilon^-_1}\,^2=0$.
\end{itemize}
If $\Sigma_i$, $i=0,1$, are both endowed with relative Pin structures,  the differential $\mathfrak{d}_{\varepsilon_0^-, \varepsilon_1^-}$ is defined for any ring.
\end{Thm}

In our setting pseudoholomorphic strips are punctured, and their count is weighted by the augmentations. For this reason, we insist on two important points that need to be taken into account when performing these counts:
\begin{enumerate}[label={(\Roman*)}]
\item The punctured pseudoholomorphic strips used in the definition of $d_{++}$ have boundary on $\R \times \Lambda_i^+$ and are allowed to have punctures which are negatively asymptotic to pure chords of $\Lambda^+_i$. When adjoining the rigid punctured strips in the definition of $d_{++}$ and $d_{+*}$, where $*=0,-$, i.e.~the glued configurations corresponding to the compositions $d_{++} d_{+*}$, we do not necessarily obtain a broken pseudoholomorphic strip. In order to obtain a broken strip, we will adjoin the pseudoholomorphic punctured discs that appear in the count defining the right-hand side of
\[ \varepsilon^+_i = \varepsilon^-_i \circ \Phi_{\Sigma_i}, \:\: i=0,1.\]
From the latter equality it follows that the composition $d_{++} d_{+*}$ indeed is obtained by counting buildings of this form. \label{I}

\item Not all broken punctured pseudoholomorphic strips correspond to two glued pseudoholomorphic \emph{strips}.
Namely
there are so-called $\partial$-breakings that consist of a punctured strip together with a punctured disc with boundary on $\R \times \Lambda^\pm_i$, $i=0,1$. The count of these discs defines the differentials $\partial_\pm$ of the Chekanov-Eliashberg algebras of $\Lambda^\pm_0 \cup \Lambda^\pm_1$, and the equalities
\[ \varepsilon^\pm_i \circ \partial_\pm = 0, \:\: i=0,1,\]
holds by definition. It thus follows that the total count of  broken strips of this kind vanishes if it is weighted by the augmentations. \label{II}
\end{enumerate}

\begin{proof}[Proof of Theorem~\ref{thm:dsquare}]
The theorem is a consequence of Proposition~\ref{prop: structure of moduli spaces}. In order to verify $\mathfrak{d}_{\varepsilon^-_0,\varepsilon^-_1}\,^2=0$
we make a term-by-term analysis of the matrix
\begin{gather*}
\mathfrak{d}_{\varepsilon^-_0,\varepsilon^-_1}\,^2 =  \\
\begin{pmatrix}
 d_{++}\,^2& d_{++}d_{+0}+d_{+0}d_{00}+d_{+-}d_{-0} & d_{++}d_{+-}+d_{+0}d_{0-}+d_{+-}d_{--} \\
0 & d_{00}d_{00}+d_{0-}d_{-0} &  d_{00}d_{0-}+d_{0-}d_{--}  \\
0 & d_{-0}d_{00}+d_{--}d_{-0} & d_{-0}d_{0-}+d_{--}d_{--}
\end{pmatrix}.
\end{gather*}
and show that all entries count boundary configurations of one-dimensional moduli spaces.

\begin{itemize}
\item $d_{++}^2=0$. The term $d_{++}$ is the standard bilinearised Legendrian contact cohomology differential \cite{augcat} restricted to mixed chords from $\Lambda^+_1$ to $\Lambda^+_0$.

\item $d_{++}d_{+0}+d_{+0}d_{00}+d_{+-}d_{-0}=0$. We must study the boundary of the moduli spaces $\mathcal{M}_{\Sigma_0,\Sigma_1}(\gamma;\boldsymbol{\delta},p,\boldsymbol{\zeta})$ of dimension $1$. Their possible breakings are schematically depicted in Figure \ref{fig:breaking3}.

We claim that, when counting these boundary points weighted by the augmentations $\varepsilon^-_i$, $i=0,1$, we get the contribution
\[ \langle (d_{++}d_{+0}+d_{+0}d_{00}+d_{+-}d_{-0})(p),\gamma \rangle.\]
Indeed, inspecting the boundary components of different types
we obtain the terms $\langle d_{++}d_{+0}(p),\gamma \rangle$ (here we use $\varepsilon^+_i=\varepsilon^-_i \circ \Phi_{\Sigma_i}$; see \ref{I}), $\langle d_{+0}d_{00}(p),\gamma \rangle$, and $\langle d_{+-}d_{-0})(p),\gamma \rangle$, together with the $\partial$-breakings which contribute to zero (here we use $\varepsilon^\pm_i\circ \partial^\pm=0$; see \ref{II}).

\begin{figure}[ht!]
\centering
\vspace{0.5cm}
\labellist
\pinlabel $\Lambda^+_0\cup{\color{red}\Lambda^+_1}$ at -40 145
\pinlabel $\Sigma_0\cup{\color{red}\Sigma_1}$ at -40 90
\pinlabel $\Lambda^-_0\cup{\color{red}\Lambda^-_1}$ at -40 30
\pinlabel $0$ at 16 160
\pinlabel $0$ at 17 103
\pinlabel $0$ at 44 81
\pinlabel $1$ at 97 160
\pinlabel $0$ at 98 103
\pinlabel $0$ at 168 160
\pinlabel $0$ at 188 80
\pinlabel $1$ at 192 32
\pinlabel $0$ at 257 160
\pinlabel $0$ at 257 104
\pinlabel $1$ at 282 14
\pinlabel $0$ at 306 70
\endlabellist
\includegraphics[scale=0.65]{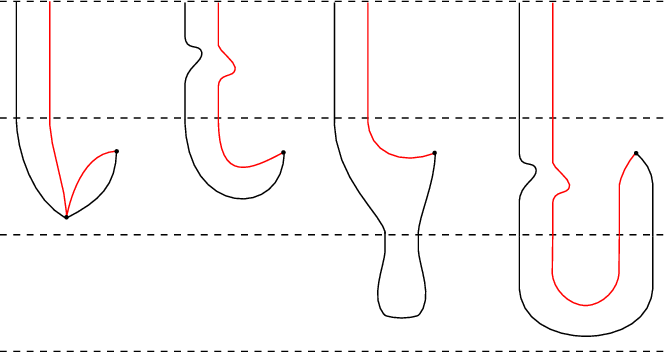}
\caption{Breakings involved in $d_{++}d_{+0}+d_{+0}d_{00}+d_{+-}d_{-0}=0$. The number on each component denotes its Fredholm index.}
\label{fig:breaking3}
\end{figure}

\item $d_{++}d_{+-}+d_{+0}d_{0-}+d_{+-}d_{--}=0$. This follows as above.
\item $d_{00}d_{00}+d_{0-}d_{-0}=0$. Again this follows as above.

\item $d_{00}d_{0-}+d_{0-}d_{--}=0$. This also follows  as
above.
\item $d_{-0}d_{00}+d_{--}d_{-0}=0$. Analysing the breakings of holomorphic bananas, we get that the map $b$ satisfies $b\circ \delta_{--}=d_{--}\circ b$ (see Figure \ref{fig:breakingbanana}), where $\delta_{--}$ again denotes the adjoint of $d_{--}^{\Sigma_1,\Sigma_0}$. Hence, we obtain
\[d_{-0}d_{00}+d_{--}d_{-0}=b\delta_{-0}d_{00}+d_{--}b\delta_{-0}=b\big(\delta_{-0}d_{00}+\delta_{--}\delta_{-0}\big),\]
where $\delta_{-0}$ is the adjoint of $d^{\Sigma_1,\Sigma_0}_{0-}$. Since $d_{00}$ is the adjoint of $d^{\Sigma_1,\Sigma_0}_{00}$, the factor $\delta_{-0}d_{00}+\delta_{--}\delta_{-0}$ is the adjoint of $d^{\Sigma_1,\Sigma_0}_{00}d^{\Sigma_1,\Sigma_0}_{0-}+d^{\Sigma_1,\Sigma_0}_{0-}d^{\Sigma_1,\Sigma_0}_{--}$, which vanishes by the previous case.

\begin{figure}[ht!]
\centering
\vspace{0.5cm}
\labellist
\pinlabel $\Lambda^-_0\cup{\color{red}\Lambda^-_1}$ at -40 90
\pinlabel $\Lambda^-_0\cup{\color{red}\Lambda^-_1}$ at -40 30
\pinlabel $1$ at 16 102
\pinlabel $0$ at 65 102
\pinlabel $1$ at 40 16
\pinlabel $1$ at 133 71
\pinlabel $1$ at 134 33
\pinlabel $0$ at 193 102
\pinlabel $1$ at 243 104
\pinlabel $0$ at 219 16
\endlabellist
\includegraphics[scale=0.65]{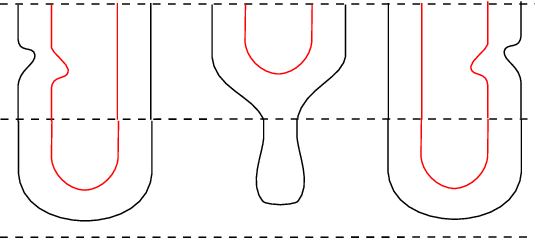}
\caption{Breakings involved in $b\circ \delta_{--}=d_{--}\circ b$.}
\label{fig:breakingbanana}
\end{figure}
\item $d_{-0}d_{0-}+d_{--}d_{--}=0$. For action reasons we must have $d_{-0}d_{0-}=0$. Moreover $d_{--}d_{--}=0$ for the same reason why $d_{++}d_{++}=0$.
\end{itemize}
\end{proof}

\section{The transfer and co-transfer map for concatenations of cobordisms}
\label{sec:conc-cobord}

Recall that two exact Lagrangian cobordisms with matching ends can be concatenated; see Section \ref{sec:exact-lagr-cobord}. In this section we will provide formulas which relate the Floer homologies of the different pieces of such a concatenation. This will be done by introducing a relative version of Viterbo's transfer map, originally defined in \cite{Viterbo_Functors} for symplectic (co)homology. (Recall that Viterbo's transfer map concerns concatenations of \emph{symplectic cobordisms.}) For the Hamiltonian formulation of wrapped Floer homology, the transfer map was constructed and treated in \cite{WrappedFuk}.

In the following we will consider exact Lagrangian cobordisms
$$V_0,V_1,W_0,W_1 \subset \R \times P \times \R,$$
where  $V_i$ is a cobordism from $\Lambda_i^-$ to $\Lambda_i$ and $W_i$ is a cobordism from $\Lambda_i$ to $\Lambda_i^+$
for $i=0,1$. The concatenations
\[ V_i \odot W_i \subset \R \times P \times \R, \:\:i=0,1,\]
are exact Lagrangian cobordisms from $\Lambda^-_i$ to $\Lambda^+_i$.

Under the further assumption that the negative ends of $V_i$, $i=0,1$, are empty, a transfer map
\[ \Phi_{W_0,W_1} \co \Cth_\bullet(V_0,V_1) \to \Cth_\bullet(V_0 \odot W_0, V_1 \odot W_1)\]
was constructed in \cite[Section 4.2.2]{Ekholm_FloerlagCOnt}; recall that the analytic set-up of the latter article is the same as the one used here. Our construction of the transfer map will be a straight-forward generalisation of this construction to the case when the negative ends are non-empty.

We will also construct a  \emph{co-transfer map}
\[\Phi^{V_0,V_1} \co \Cth_\bullet(V_0 \odot W_0, V_1 \odot W_1) \to \Cth_\bullet(W_0,W_1)\]
which should be thought of as a quotient projection associated to a transfer map.

\subsection{Concatenations and stretching of the neck}
\label{sec:conc}

Assume that $J^a_\bullet$ and $J^b_\bullet$ are paths of admissible almost complex structures on $\R \times Y$ which are cylindrical in the subsets $\{ t \ge -1\}$ and $\{t \le 1\}$, respectively, and which moreover agree in the subset $\{ -1 \le t \le 1\}$. We also assume that $V_i$ and $W_i$ are cylindrical in the subsets $\{ t \ge -1\}$ and $\{t \le 1\}$, respectively, where they coincide. For each $N \ge 0$ let $\tau_N \colon \R \times Y \to \R \times Y$ be the translation $\tau_N(t,y)=(t+N, y)$. We define
\begin{eqnarray*}
V \odot_N W &:=& (V \cap \{ t \le 0\}) \cup (\tau_N(W) \cap \{ t \ge 0\}),\\
(J^a_\bullet \odot_N J^b_\bullet)(t,p,z)&:=& \begin{cases} J^a_\bullet(t,p,z) & t \le 0\\
J^b_\bullet(t-N,p,z) & t \ge 0,
\end{cases}
\end{eqnarray*}
where we recall that $\tau_T$ is the translation of the $t$-coordinate by $T \in \R$. We also write $J^a_\bullet \odot J^b_\bullet := J^a_\bullet \odot_0 J^b_\bullet$.

The Hamiltonian isotopy class of $V \odot_N W \subset \R \times Y$ is independent of $N \ge 0$ by Lemma~\ref{noia}. We have thus produced a family of boundary value problems for $(J^a_\bullet \odot_N J^b_\bullet)$-holomorphic curves in $\R \times Y$ having boundary on $V \odot_N W$, $N \ge 0$. This family of boundary value problems is conformally equivalent to the family which ``stretches the neck'' along the contact-type hypersurface $\{0\} \times Y \subset \R \times Y$ with boundary condition $V \odot W$; see \cite[Section 3.4]{Bourgeois_&_Compactness} as well as \cite[Section 1.3]{Eliashberg_&_SFT} for more details. This fact will be important below.

There is a compactness theorem
in the case of a neck-stretching sequence of almost complex structure; see \cite[Section 10]{Bourgeois_&_Compactness} for the precise formulation. The key fact is that a sequence of $J^a_\bullet \odot_N J^b_\bullet$-holomorphic punctured strips, with $N \to +\infty$, has a subsequence converging to a building consisting of \emph{several} levels whose components satisfy non-cylindrical boundary conditions. In the case under consideration, the limit buildings consist of:
\begin{itemize}
\item An upper level containing punctured $J^b_\bullet$-holomorphic strips (or discs) with boundary on $W_0 \cup W_1$; and
\item A lower level containing punctured $J^a_\bullet$-holomorphic strips (or discs) with boundary on $V_0 \cup V_1$.
\end{itemize}
A priori there could also be levels consisting of pseudoholomorphic discs or strips for a cylindrical almost complex structure satisfying cylindrical boundary conditions on $\R \times
\Lambda_i^-$, $\R \times \Lambda_i$ or $\R \times \Lambda_i^+$. However, since we are only interested in rigid configurations, and since the latter solutions will have positive dimension (unless they are trivial strips), they can be omitted from our breaking analysis (under the assumption that transversality is achieved for every level).

There is a gluing result in this setting (see \cite[Lemma 3.14]{RationalSFT}), giving a bijection between buildings of the above type where all components are of Fredholm index zero, and punctured $J^a_\bullet \odot_N J^b_\bullet$-holomorphic strips for each $N \gg 0$ sufficiently large. Figure \ref{fig:strech_hol_buil} schematically depicts two such buildings.

\begin{figure}[ht!]
\vspace{0.5cm}
\labellist
\pinlabel $W_0\cup {\color{red} W_1}$ at -33 95
\pinlabel $V_0\cup {\color{red} V_1}$ at -38 24
\pinlabel (1) at 70 145
\pinlabel (2) at 170 145
\endlabellist
  \centering
  \includegraphics[scale=0.65]{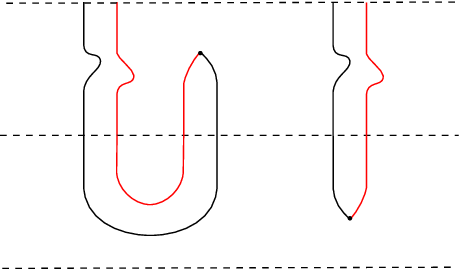}
  \caption{A holomorphic buildings appearing after stretching the neck along $\Lambda$}
  \label{fig:strech_hol_buil}
\end{figure}

\subsection{The complex after a neck stretching procedure}
\label{sec:splitting}
The goal of this section is to find a description of the complex
\[(\Cth_\bullet(V_0 \odot_N W_0, V_1 \odot_N W_1),\mathfrak{d}_{\varepsilon_0^-\varepsilon_1^-}^{V\odot W})\]
when the almost complex structure is given by $J^a_\bullet \odot_N J^b_\bullet$ for $N \gg 0$ sufficiently large.

We first consider the Cthulhu complex for $(V_0, V_1)$ defined using $J^a_\bullet$:
\begin{gather*}
(\Cth_\bullet(V_0,V_1)=C^{\bullet-{2}}(\Lambda_0,\Lambda_1) \oplus C^{\bullet}(V_0,V_1) \oplus C^{\bullet -1} (\Lambda^-_0, \Lambda^-_1),\mathfrak{d}_{\varepsilon_0^-\varepsilon_1^-}^V),\\
\mathfrak{d}_{\varepsilon_0^-\varepsilon_1^-}^V=\begin{pmatrix}
d_{++}^{V_0,V_1} & d^{V_0,V_1}_{+0} & d^{V_0,V_1}_{+-} \\
0 & d^{V_0,V_1}_{00} & d^{V_0,V_1}_{0-} \\
0 & d^{V_0,V_1}_{-0} & d_{--}^{V_0,V_1}
\end{pmatrix}.
\end{gather*}

Consider the entries $d^{V_1,V_0}_{+0}$, $d^{V_1,V_0}_{+-}$, and $d_{\pm\pm}^{V_1,V_0}$ in the differential of the complex $\Cth_\bullet(V_1,V_0)$  (i.e. with the inverse order for the cobordisms), where 
the opposite path $\check{J}^a_\bullet$ is used.
We will need their adjoints
\begin{eqnarray*}
\delta^{V_0,V_1}_{0+} &:=& (d^{V_1,V_0}_{+0})^* \colon C_\bullet(\Lambda_1,\Lambda_0) \to C_{\bullet-2}(V_1,V_0),\\
\delta^{V_0,V_1}_{-+} &:=& (d^{V_1,V_0}_{+-})^* \colon C_\bullet(\Lambda_1,\Lambda_0) \to C_{\bullet-1}(\Lambda^-_1,\Lambda^-_0),\\
\delta_{++}^{V_0,V_1} &:=& (d_{++}^{V_1,V_0})^* \colon C_\bullet (\Lambda_1,\Lambda_0) \to C_{\bullet -1}(\Lambda_1,\Lambda_0),\\
\delta_{--}^{V_0,V_1} &:=& (d_{--}^{V_1,V_0})^* \colon C_\bullet (\Lambda_1^-,\Lambda_0^-) \to C_{\bullet -1}(\Lambda_1^-,\Lambda_0^-),\\
\delta_{-+}^{V_0,V_1} &:=& (d_{+-}^{V_1,V_0})^* \colon C_\bullet (\Lambda_1,\Lambda_0) \to C_{\bullet -1}(\Lambda_1^-,\Lambda_0^-),\end{eqnarray*}
where the canonical basis of Reeb chords and double points has been used in order to identify the modules and their duals. Observe that, exploiting the same notation, we also get $\delta^{V_0,V_1}_{00}=d^{V_0,V_1}_{00}$.

We write
\[ \varepsilon_i' := \varepsilon_i \circ \Phi_{V_i,J^a_\bullet}, \:\: i=0,1, \]
for the pull-backs of the augmentations under the DGA morphisms induced by the respective cobordisms. These augmentations now give rise to a complex
\begin{gather*}
(\Cth_\bullet(W_0,W_1)=C^{\bullet-2}(\Lambda_0^+,\Lambda_1^+) \oplus C^{\bullet} (W_0,W_1) \oplus C^{\bullet-1} (\Lambda_0, \Lambda_1), \mathfrak{d}_{\varepsilon'_0\varepsilon'_1}^W),\\
\mathfrak{d}_{\varepsilon'_0\varepsilon'_1}^W=\begin{pmatrix}
d^{W_0,W_1}_{++} & d^{W_0,W_1}_{+0} & d^{W_0,W_1}_{+-} \\
0 & d^{W_0,W_1}_{00} & d^{W_0,W_1}_{0-} \\
0 & d^{W_0,W_1}_{-0} & d^{W_0,W_1}_{--}
\end{pmatrix},
\end{gather*}

where the differential is defined using the almost complex structure $J^b_\bullet$.
Note that $d^{W_0,W_1}_{--}=d^{V_0,V_1}_{++}$.

In addition we will also need the map
\[ b^{V_0,V_1} \colon C_\bullet(\Lambda_1,\Lambda_0) \to C^{n-2-\bullet}(\Lambda_0,\Lambda_1) \]
which is defined similarly to
\[b^{\Lambda_0,\Lambda_1} \colon C_\bullet(\Lambda_1,\Lambda_0) \to C^{n-1-\bullet}(\Lambda_0,\Lambda_1)\] as defined in Section \ref{sec:banana-map}, but which instead counts \emph{rigid} ``bananas'' having boundary on $V_0 \cup V_1$, and two punctures with positive asymptotics to Reeb chords.

Recall that the compactness theorem for a neck-stretching sequence together with pseudoholomorphic gluing shows the following. For $N \gg 0$ sufficiently large, the rigid $(J^a_\bullet \odot_N J^b_\bullet)$-holomorphic curves in $\R \times Y$ having boundary on $(V_0 \odot_N W_0) \cup (V_1 \odot_N W_1)$ are in bijective correspondence with pseudoholomorphic buildings of the form described in Section \ref{sec:conc}, in which every involved component is rigid.

Analysing the possible such pseudoholomorphic buildings, we obtain the following. When $N \gg 0$ is sufficiently large, the differential of the complex for the concatenated cobordisms, defined using the almost complex structure $J^a_\bullet \odot_N J^b_\bullet$ as in the previous paragraph, is given by
\begin{gather*}
(\Cth_\bullet(V_0 \odot_N W_0, V_1 \odot_N W_1)\\= C^{\bullet-2}(\Lambda_0^+,\Lambda_1^+) \oplus C^{\bullet}(W_0,W_1) \oplus C^{\bullet}(V_0,V_1) \oplus C^{\bullet-1}(\Lambda^-_0, \Lambda^-_1), \mathfrak{d}^{V\odot W}_{\varepsilon^-_0\varepsilon^-_1}),\\
\end{gather*}
with differential
\begin{gather*}
\mathfrak{d}^{V\odot W}_{\varepsilon^-_0\varepsilon^-_1} = \\
\begin{pmatrix}
d^{W_0,W_1}_{++} & d^{W_0,W_1}_{+0}+d^{W_0,W_1}_{+-} b^{V_0,V_1}\delta^{W_0,W_1}_{-0} & d^{W_0,W_1}_{+-}d^{V_0,V_1}_{+0} & d^{W_0,W_1}_{+-}d^{V_0,V_1}_{+-} \\
0 & d^{W_0,W_1}_{00}+d^{W_0,W_1}_{0-} b^{V_0,V_1}\delta^{W_0,W_1}_{-0} & d^{W_0,W_1}_{0-}d^{V_0,V_1}_{+0} & d^{W_0,W_1}_{0-}d^{V_0,V_1}_{+-}  \\
0 & d^{V_0,V_1}_{0+} \delta^{W_0,W_1}_{-0} &  d^{V_0,V_1}_{00} & d^{V_0,V_1}_{0-} \\
0 & b^{\Lambda_0^-,\Lambda_1^-}\delta^{V_0,V_1}_{-+} \delta^{W_0,W_1}_{-0} & b^{\Lambda_0^-,\Lambda_1^-}\delta^{V_0,V_1}_{-0} & d_{--}^{V_0,V_1}
\end{pmatrix},
\end{gather*}
in terms of pseudoholomorphic strips on $V_0\cup V_1$ and $W_0 \cup W_1$ for each $N \gg 0$ sufficiently large. (For instance the term $d^{W_0,W_1}_{+-} b^{V_0,V_1}\delta^{W_0,W_1}_{-0}$ corresponds to the breaking (1) in Figure \ref{fig:strech_hol_buil} and the term $d^{W_0,W_1}_{+-}d^{V_0,V_1}_{+0}$ corresponds to the breaking (2) in the same figure.)

We have here relied on the exactness assumptions in Definition \ref{defn: exact lagrangian cobordism} and action considerations from Section \ref{sec:action-energy} in order to rule out certain configurations.

\subsection{Definition of the transfer and co-transfer maps}
\label{sec:transfer}
The transfer and co-transfer maps on the chain level are defined for a very ``stretched'' almost complex structure on a concatenated cobordism (i.e.~when the parameter $N \gg 0$ in Section \ref{sec:conc} is sufficiently large), so that the complexes take the form as described in Section \ref{sec:splitting}.
\begin{defn}
The \emph{transfer map} is defined by
\begin{gather*}
\Phi_{W_0,W_1} \co \Cth_\bullet (V_0,V_1) \to \Cth_\bullet(V_0 \odot_N W_0,V_1 \odot_N W_1),\\
\Phi_{W_0,W_1} = \begin{pmatrix}  d^{W_0,W_1}_{+-} & 0 & 0\\
d^{W_0,W_1}_{0-}  & 0 & 0 \\
0 & \mathrm{id} & 0 \\
0 & 0 & \mathrm{id} \\
\end{pmatrix},
\end{gather*}
while the \emph{co-transfer map} is defined by
\begin{gather*}
\Phi^{V_0,V_1} \co \Cth_\bullet(V_0 \odot_N W_0,V_1 \odot_N W_1) \to \Cth_\bullet (W_0,W_1),\\
\Phi^{V_0,V_1} = \begin{pmatrix} \mathrm{id} & 0 & 0 & 0\\
0 & \mathrm{id} & 0 & 0 \\
0 &  b^{V_0,V_1}\delta^{W_0,W_1}_{-0} & d^{V_0,V_1}_{+0} & d^{V_0,V_1}_{+-}
\end{pmatrix}.
\end{gather*}
\end{defn}

\begin{Lem}
\label{lem:exptrans}
If $\mathfrak{d}^V_{\varepsilon_0^-\varepsilon_1^-}$, $\mathfrak{d}^W_{\varepsilon'_0\varepsilon'_1}$ and $\mathfrak{d}^{V\odot W}_{\varepsilon_0^-\varepsilon_1^-}$ are defined using $J^a_\bullet$, $J^b_\bullet$ and $J^a_\bullet\odot_N J^b_\bullet$ (for $N \gg 0$ as before) respectively, the transfer and co-transfer maps  are chain maps.
\end{Lem}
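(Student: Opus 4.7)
The plan is to verify the two chain map identities $\mathfrak{d}^{V \odot W}_{\varepsilon^-_0,\varepsilon^-_1} \Phi_{W_0,W_1} = \Phi_{W_0,W_1} \mathfrak{d}^V_{\varepsilon^-_0,\varepsilon^-_1}$ and $\Phi^{V_0,V_1} \mathfrak{d}^{V \odot W}_{\varepsilon^-_0,\varepsilon^-_1} = \mathfrak{d}^W_{\varepsilon'_0,\varepsilon'_1} \Phi^{V_0,V_1}$ by direct entry-by-entry matrix computation, exploiting the explicit form of $\mathfrak{d}^{V \odot W}$ derived in Section~\ref{sec:splitting}. The crucial structural observation is that $\mathfrak{d}^{V \odot W}$, written as a $4 \times 4$ matrix, has a block upper-triangular form $\bigl(\begin{smallmatrix} A & B \\ 0 & C \end{smallmatrix}\bigr)$ with respect to the decomposition into ``$W$-type'' and ``$V$-type'' generators, and moreover the bottom-right block $C$ coincides literally with the restriction of $\mathfrak{d}^V$ to its middle and bottom summands $CF(V_0,V_1) \oplus C(\Lambda_0^-,\Lambda_1^-)[-1]$.

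By construction, the identity blocks in $\Phi_{W_0,W_1}$ give entries that match automatically in both compositions, thanks to this coincidence; the analogous statement holds for the identity entries in $\Phi^{V_0,V_1}$, which match against the top block of $\mathfrak{d}^W$. Hence the content of the lemma reduces to verifying the identities produced by the non-trivial column $(d^W_{+-},d^W_{0-})^T$ of $\Phi_{W_0,W_1}$ (and, dually, the non-trivial row of $\Phi^{V_0,V_1}$ involving $b^{V_0,V_1}\delta^W_{-0}$).

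The identities are driven by three ingredients. First, $(\mathfrak{d}^W)^2=0$ and $(\mathfrak{d}^V)^2=0$ from Theorem~\ref{thm:dsquare} control all compositions of the basic entries $d^W_{\pm\pm},d^W_{\pm 0},d^W_{0\pm},d^W_{00}$ and their $V$-analogues. Second, one has the key equality $d^W_{--}=d^V_{++}$, since both are the bilinearised Legendrian contact differential on the intermediate Legendrian $\Lambda$ with respect to the augmentations $\varepsilon'_i=\varepsilon^-_i\circ\Phi_{V_i}$; here one uses that $J_a$ and $J_b$ agree on the common cylindrical neck, so the two moduli space counts are identical. Third, the extra ``mixing'' terms of the form $d^W_{+\ast}\,b^{V_0,V_1}\delta^W_{-0}$ that appear in $A$, together with the terms $\delta^V_{0+}\delta^W_{-0}$ and $b^{\Lambda_0^-,\Lambda_1^-}\delta^V_{-+}\delta^W_{-0}$ in $B$, cancel thanks to the algebraic identities encoded in $(\mathfrak{d}^{V\odot W})^2=0$ (Theorem~\ref{thm:dsquare} applied to the concatenated pair). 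Geometrically, these cancellations reflect the boundary count of $1$-dimensional moduli spaces of neck-stretched curves with appropriate asymptotics.

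The main obstacle is bookkeeping rather than conceptual: the $4\times 4$ structure produces a proliferation of terms that must be tracked carefully, and one must be attentive to the sign conventions used in Section~\ref{sec:splitting}. No input beyond Theorem~\ref{thm:dsquare} applied to the three Cthulhu complexes involved should be required. Alternatively, one could argue for the chain map property more geometrically: since $\Phi_{W_0,W_1}$ and $\Phi^{V_0,V_1}$ arise from the matrix form of $\mathfrak{d}^{V\odot_N W}$ obtained after neck stretching, their commutation with the differentials reflects a standard compactness-and-gluing argument for $1$-dimensional moduli spaces applied to the stretched family.
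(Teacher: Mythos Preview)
Your overall strategy --- direct matrix verification using the explicit form of $\mathfrak{d}^{V\odot W}$ from Section~\ref{sec:splitting}, together with $d^W_{--}=d^V_{++}$ --- is exactly what the paper does. However, your claim that ``no input beyond Theorem~\ref{thm:dsquare} applied to the three Cthulhu complexes involved should be required'' is not correct, and this is where your sketch has a genuine gap.

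Two additional ingredients are needed, and neither is a formal consequence of $(\mathfrak{d}^V)^2=(\mathfrak{d}^W)^2=(\mathfrak{d}^{V\odot W})^2=0$. First, the terms you call ``mixing terms'' in the transfer-map verification (the $A_i$ in the paper's proof) all contain a factor $\delta^{W}_{-0}\,d^{W}_{0-}$, and this composite vanishes for \emph{action} reasons, not algebraic ones: from $(\mathfrak{d}^W)^2=0$ you only get $b^{\Lambda_0,\Lambda_1}\delta^W_{-0}d^W_{0-}=0$, which does not imply $\delta^W_{-0}d^W_{0-}=0$ since $b^{\Lambda_0,\Lambda_1}$ need not be injective. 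Second, for the co-transfer map the paper must establish an identity relating the cobordism banana $b^{V_0,V_1}$ to the cylindrical banana $b^{\Lambda_0,\Lambda_1}$ (roughly, $b^{\Lambda_0,\Lambda_1}=-d^V_{++}b^{V_0,V_1}+b^{V_0,V_1}\delta^V_{++}+d^V_{+-}b^{\Lambda_0^-,\Lambda_1^-}\delta^V_{-+}+d^V_{+0}\delta^V_{0+}$). Since neither $b^{\Lambda_0,\Lambda_1}$ nor $C(\Lambda_0,\Lambda_1)$ appears as a summand in $\Cth(V\odot W)$, this identity cannot be read off from $(\mathfrak{d}^{V\odot W})^2=0$; it requires a separate breaking analysis of one-dimensional banana moduli spaces on $(V_0,V_1)$, together with the adjoint relation $\delta^W_{-0}d^W_{00}=\delta^W_{--}\delta^W_{-0}$ coming from $(\mathfrak{d}^{W_1,W_0})^2=0$ for the \emph{reversed} pair. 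Your final sentence about compactness-and-gluing is the right instinct for where these come from, but they are genuinely extra inputs, not consequences of the three Cthulhu differentials squaring to zero.
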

\begin{proof}
This a standard but tedious study of the degenerations of $1$-parameter families of curves through the neck stretching procedure.
\end{proof}

\begin{Rem}
\label{rem:transfer}
When there are no Reeb chords from $\Lambda_1$ to $\Lambda_0$ (recall that these are the Legendrian submanifolds along which the concatenations are performed), the transfer and co-transfer maps take a particularly simple form. Since $C(\Lambda_0,\Lambda_1)=0$, the transfer map $\Phi_{W_0,W_1}$ is simply the inclusion of a subcomplex, while the co-transfer map $\Phi^{V_0,V_1}$ becomes the corresponding quotient projection. In fact, as it will be shown below in Section \ref{sec:wrap}, this situation can always be achieved after the application of a Hamiltonian isotopy that ``wraps'' the positive and negative ends of $V_1$ and $W_1$, respectively.
\end{Rem}

The following lemma is standard. It follows from the fact that, in the cylindrical situation, regular $0$-dimensional moduli spaces are trivial, together with a stretching-the-neck argument.
\begin{Lem}\label{lem:composition} The transfer and co-transfer maps satisfy the following properties:
\begin{itemize}
\item If $W_i=\R \times \Lambda_i$, $i=0,1$, and $J^b_\bullet=J^b$ is a cylindrical almost complex structure, we have
\[ \Phi_{W_0,W_1}= \mathrm{id}. \]
\item If $V_i=\R \times \Lambda_i$, $i=0,1$, and $J^a_\bullet=J^a$ is a cylindrical almost complex structure, we have
\[  \Phi^{V_0,V_1} = \mathrm{id}. \]
\item If $W_i=U_i \odot_M U_i'$, $i=0,1$, and $J^b_\bullet=J^c_\bullet \odot_M J^d_\bullet$, we have
\[\Phi_{W_0,W_1}=\Phi_{U_0',U_1'} \circ \Phi_{U_0,U_1}\]
when $M \gg 0$ is sufficiently large.
\item  If $V_i=U_i \odot_M U_i'$, $i=0,1$, and $J^a_\bullet=J^c_\bullet \odot_M J^d_\bullet$ we have
\[\Phi^{V_0,V_1}=\Phi^{U_0',U_1'} \circ \Phi^{U_0,U_1}\]
when $M \gg 0$ is sufficiently large.
\end{itemize}
\end{Lem}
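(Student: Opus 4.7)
The plan is to verify each of the four statements using the explicit matrix description of the (co)transfer maps from Section~\ref{sec:transfer} together with the stretched-complex identifications from Section~\ref{sec:splitting}.

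For statement (1), when $W_i = \R \times \Lambda_i$ and $J_b$ is cylindrical, every moduli space contributing to an entry of $\Phi_{W_0,W_1}$ is $\R$-invariant under target translations. Since the moduli space (before quotienting by reparametrisations of the domain) carries an induced $\R$-action whose orbits are either positive-dimensional or fixed points, a $0$-dimensional component must consist of $\R$-fixed solutions. The only $\R$-fixed punctured pseudoholomorphic strips with all boundary on the cylinders are the trivial strips over Reeb chords, which force equal positive and negative asymptotics and empty auxiliary words $\boldsymbol{\delta},\boldsymbol{\zeta}$. This gives $d^{\R\times\Lambda_0,\R\times\Lambda_1}_{+-} = \mathrm{id}$; the entry $d^{\R\times\Lambda_0,\R\times\Lambda_1}_{0-}$ vanishes because $\R\times \Lambda_0$ and $\R\times\Lambda_1$ are disjoint, so $CF(W_0,W_1) = 0$ and that summand of the target collapses. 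Hence the $4 \times 3$ matrix reduces to the $3 \times 3$ identity. Statement (2) is proved identically: the same argument shows $d^{V_0,V_1}_{+-} = \mathrm{id}$ while $d^{V_0,V_1}_{+0}$ vanishes (no intersection points) and $b^{V_0,V_1}$ vanishes (a banana has two positive Reeb-chord asymptotics and so is never a trivial strip), reducing $\Phi^{V_0,V_1}$ to the identity.

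For statements (3) and (4), the key tool is iterated neck-stretching. Given the triple data $(V_i \odot_{N_1} U_i \odot_{N_2} U_i', \; J_a \odot_{N_1} J_c \odot_{N_2} J_d)$, the SFT compactness theorem of Section~\ref{sec:conc} together with the gluing statement of Theorem~\ref{thm:gluing} gives, for $N_1, N_2 \gg 0$, a bijection between rigid pseudoholomorphic strips on the full concatenation and rigid three-level buildings with one level on each of $V$, $U$, $U'$. Iterating the splitting analysis of Section~\ref{sec:splitting}, we then obtain a canonical identification of $\Cth(V_0 \odot U_0 \odot U'_0, V_1 \odot U_1 \odot U'_1)$ with the five-summand module
\[
C(\Lambda^+_0,\Lambda^+_1)[2] \oplus CF(U_0', U_1') \oplus CF(U_0, U_1) \oplus CF(V_0, V_1) \oplus C(\Lambda^-_0, \Lambda^-_1)[1],
\]
whose differential is an explicit $5 \times 5$ matrix of composites of the structure maps $d^{U,U'}_{\ast\ast}$, $d^{V,U}_{\ast\ast}$ and bananas, exactly as in Section~\ref{sec:splitting} but with an extra layer.

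Once this common five-summand target is identified, both the single transfer $\Phi_{W_0, W_1}$ (built from the splitting of $W = U \odot_M U'$ with $M \gg 0$) and the composition $\Phi_{U_0', U_1'} \circ \Phi_{U_0, U_1}$ take values in the same decomposition and can be computed directly by matrix multiplication. Expanding the composition and comparing entry-by-entry with the formula that defines $\Phi_{W_0, W_1}$, one finds they agree, since every entry of $\Phi_{W_0, W_1}$ is expressed via moduli spaces on $W$ which, after stretching the middle neck of length $M$, split as products of moduli spaces on $U$ and $U'$ realising precisely the corresponding entry of the composition. The dual bookkeeping (reading the matrices from the other side) establishes statement (4) for co-transfers.

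The main obstacle is purely combinatorial: one must check that the asymmetric position of the bananas $b^{V_0, V_1}$ and of the adjoint maps $\delta^{W_0,W_1}_{-0}$ appearing in the transfer and co-transfer formulas really match up after expanding the five-summand differential in the two different orders corresponding to $(V \odot U) \odot U'$ versus $V \odot (U \odot U')$. This is a bookkeeping exercise with no additional analytic input beyond the neck-stretching compactness/gluing theorems already invoked, and it reduces to the observation that stretching two necks simultaneously at two disjoint hypersurfaces produces a single well-defined limit building regardless of the order in which the stretching parameters are sent to infinity.
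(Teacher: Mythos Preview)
Your proof is correct and follows essentially the same approach as the paper. The paper's own proof is only a one-sentence sketch (``It follows from the fact that, in the cylindrical situation, regular $0$-dimensional moduli spaces are trivial, together with a stretching-the-neck argument''), and your argument expands precisely that sketch: the $\R$-invariance forcing trivial strips in parts (1)--(2), and the iterated neck-stretching with the resulting five-summand decomposition for parts (3)--(4).
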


In the case when $W_1=\R \times \Lambda$, we write
$\Phi_{W_0}:=\Phi_{W_0,W_1}$
and, similarly, when $V_1=\R \times \Lambda \subset \R \times Y$, we write $\Phi^{V_0}:=\Phi^{V_0,V_1}$.

\section{Proof of acyclicity}
\label{sec:acyclicity}
In this section we establish the invariance result for our Floer theory. In fact, in our context, the invariance is simply the fact the complex $\Cth(\Sigma_0,\Sigma_1)$ is acyclic (actually null-homotopic). The naive reason for this is that one can use the Reeb flow in order to displace any exact Lagrangian cobordism from any other one.

\subsection{Wrapping the ends}
\label{sec:wrap}
Let $\Sigma_i$, $i=0,1$, be exact Lagrangian cobordisms from $\Lambda^-_i$ to $\Lambda^+_i$, $i=0,1$. We assume without loss of generality that $\Sigma_i$, $i=0,1$, are both cylindrical in the subset $\{ |t| \ge T \}$ for some $T>0$.

Fix a smooth non-decreasing cut-off function $\rho \co \R \to [0,1]$ satisfying $\rho(t)=0$ for $t \le 1$ and $\rho(t)=1$ for $t \ge 2$ and, for $N \ge 0$, consider the functions
\begin{align*}
&\rho_{N, +}(t)  = \rho(t-T-N), \\
&\rho_{N, -}(t)  = \rho(-t-T-N), \\
&\rho_N (t)  = \rho_{N, +}(t) + \rho_{N, -}(t)
\end{align*}
and the flows
\begin{align*}
&\phi_{N,+}^s(t,p,z)  = (t,p,z+s \rho_{N,+}(t)), \\
&\phi_{N,-}^s(t,p,z)  = (t,p,z+s \rho_{N,-}(t)), \\
&\phi_N^s(t,p,z)  = (t,p,z+s \rho_N(t)),
\end{align*}
which are the Hamiltonian flows of the functions $h_{N, \pm}(t,p,z)= e^t \sigma_{N, \pm}(t)$ and
$h_N(t,p,z)= e^t \sigma_N(t)$ respectively where $\sigma_{N, \pm}$ and $\sigma_N$ are such that $\sigma_{N, \pm}'+\sigma_{N, \pm}=\rho_{N, \pm}$ and $\sigma_N'+\sigma_N=\rho_N$. Note that $\phi_N^s = \phi_{N,+}^s \circ \phi_{N, -}^s = \phi_{N, -}^s \circ \phi_{N,+}^s $. We denote also $ \phi_\pm^s= \phi_{0,\pm}^s$.

After applying the isotopy $\phi^{-S}_N$ to $\Sigma_0$ for $S \gg 0$ sufficiently large, we get additional double points $\phi^{-S}_N(\Sigma_0) \cap \Sigma_1$, all which correspond to the Reeb chords starting on the ends of $\Sigma_1$ and ending on the corresponding ends on $\Sigma_0$. More precisely, the following is true (the proof is standard and left to the reader):
\begin{Lem}
\label{lem:wrap}
When
\[S \ge S_0:= 2\max_{c \in \mathcal{R}(\Lambda^-_1,\Lambda^-_0) \cup \mathcal{R}(\Lambda^+_1,\Lambda^+_0)} \ell(c)\]
there are canonical bijections
\[ w_\pm \colon \phi^{-S}_{N, \pm}(\R \times \Lambda^\pm_0) \cap (\R \times \Lambda^\pm_1)\to \mathcal{R}(\Lambda^\pm_1,\Lambda^\pm_0)\]
induced by the Lagrangian projection, i.e.~by identifying elements on both sides with a double point in $\Pi_{\OP{Lag}}(\Lambda^\pm_0 \cup \Lambda^\pm_1) \subset P$. These bijections moreover satisfy
\[ gr(w_-(p))= \gr(p) \:\: \text{and} \:\: gr(w_+(p))= gr(p)-1.\]
In particular, there is a canonical identification
\[\Cth_\bullet(\Sigma_0,\Sigma_1) \cong \Cth_\bullet(\phi_N^{-S}(\Sigma_0),\Sigma_1) \]
on the level of \emph{graded modules}. After taking $N \gg 0$ sufficiently large, we may assume that the action of a generator in
\[w_-(C(\Lambda^-_0,\Lambda^-_1)) \subset \Cth_\bullet(\phi_N^{-S}(\Sigma_0),\Sigma_1),\]
is arbitrarily small, the action of a generator in
\[ w_+(C(\Lambda^+_0,\Lambda^+_1)) \subset \Cth_\bullet(\phi_N^{-S}(\Sigma_0),\Sigma_1)\] is arbitrarily large, while the action of a generator in
\[C(\Sigma_0,\Sigma_1) \subset \Cth_\bullet(\phi_N^{-S}(\Sigma_0),\Sigma_1)\]
coincides with its original action.
\end{Lem}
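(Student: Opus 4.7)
The plan is to analyse each cylindrical end of $\Sigma_0$ separately, where the wrapping isotopy takes the explicit form $(t,p,z)\mapsto(t,p,z-S\rho_N(t))$. For the bijection $w_\pm$, a point $(t,p,z)\in\phi^{-S}_{h_{N,\pm}\partial_z}(\R\times\Lambda_0^\pm)\cap(\R\times\Lambda_1^\pm)$ consists precisely of a pair $(p,z)\in\Lambda_1^\pm$ and $(p,z+S\rho_N(t))\in\Lambda_0^\pm$, that is, of the data of a Reeb chord $\gamma\in\MCR(\Lambda_1^\pm,\Lambda_0^\pm)$ of length $\ell=S\rho_N(t)$. Since $\ell\le S_0/2\le S/2$ by hypothesis and $\rho$ can be arranged to be strictly increasing on $[1,2]$, the equation $S\rho_N(t)=\ell$ has a unique solution $t^*$ in the corresponding transition region $[T+N+1,T+N+2]$ (respectively $[-(T+N+2),-(T+N+1)]$); this defines the bijection $w_\pm$ by projection to the double point in $\Pi_{\OP{Lag}}(\Lambda_0^\pm\cup\Lambda_1^\pm)$.

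To compute the gradings, at the intersection point $q=(t^*,p,z^-)$ the symplectic orthogonal decomposition $T_qX=W\oplus\xi$, where $W=\operatorname{span}(\partial_t,\partial_z)$, induces Lagrangian splittings of both $L_0:=T_q\phi^{-S}_{h_{N,\pm}\partial_z}(\R\times\Lambda_0^\pm)$ and $L_1:=T_q(\R\times\Lambda_1^\pm)$. An explicit computation of the differential $d\phi^{-S}_{h_{N,\pm}\partial_z}$ at the preimage $(t^*,p^+)$ gives $L_0^W=\operatorname{span}(\partial_t\mp\lambda\partial_z)$ with $\lambda>0$ and $L_0^\xi=d\phi_{-\ell}T_{p^+}\Lambda_0^\pm$, while $L_1^W=\operatorname{span}(\partial_t)$ and $L_1^\xi=T_{p^-}\Lambda_1^\pm$. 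Additivity of the graded Maslov index under symplectic orthogonal sums, combined with the invariance of the graded index under the Reeb translation $d\phi_\ell$, identifies the $\xi$-contribution with $\gr(\gamma)+1$ by the definition of the Reeb chord grading in Section~\ref{sec:grading-reeb-chords}. The $W$-contribution is a two-dimensional relative Maslov index for a small rotation of $\operatorname{span}(\partial_t)$, whose value is controlled by Lemma~\ref{grading and perturbation}; the opposite sign of the rotation at the two ends, combined with the asymmetry of Lemma~\ref{grading and perturbation}(1)--(2), produces the desired shifts $|w_+(p)|=|p|-1$ and $|w_-(p)|=|p|$.

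Finally, the action estimates are obtained by tracking the Liouville primitive under the isotopy. Using the identity $(\phi^{-S})^*(e^t\alpha)=e^t\alpha-Se^t\rho_N'(t)\,dt$ and normalising the primitive $f_0^{\mathrm{new}}$ on $\phi^{-S}_{h_N\partial_z}(\Sigma_0)$ to agree with the original constants on its shifted far cylindrical ends, a direct integration along $\phi^{-S}_{h_N\partial_z}(\Sigma_0)$ from the far end to $q$ expresses the change in $f_0^{\mathrm{new}}$ at an intersection point $q$ in the transition region as an integral of the form $S\int e^u\rho'(\pm u-(T+N))\,du$ over the corresponding transition interval $[T+N+1,T+N+2]$ or $[-(T+N+2),-(T+N+1)]$. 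Since $e^u$ is of order $e^{T+N}$ at positive-end intersections and $e^{-(T+N)}$ at negative-end intersections, as $N\to\infty$ the action of a positive-end generator becomes unbounded in magnitude while that of a negative-end generator tends to zero, as asserted. Intersection points in the compact middle region $\{|t|\le T+N\}$ fall outside the support of the wrapping and retain their original action. The canonical identification of underlying graded $R$-modules is then immediate from the bijections and grading computations above.

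The principal difficulty is the sign-sensitive evaluation of the $W$-contribution to the grading at the two ends, which requires carefully combining the explicit rotation produced by $d\phi^{-S}_{h_{N,\pm}\partial_z}$ with the asymmetric cases of Lemma~\ref{grading and perturbation} to account for the two different shifts $|w_+(p)|=|p|-1$ and $|w_-(p)|=|p|$.
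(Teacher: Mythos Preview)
The paper does not supply a proof of this lemma; it is stated as a direct verification from the explicit form of the wrapping isotopy, the grading conventions in Section~\ref{sec:grading}, and the action formula in Section~\ref{sec:action-energy}. Your proposal fills in this verification correctly and along exactly the lines one would expect.

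Two minor remarks. First, in the grading computation, the passage from the $\xi$-contribution to $\gr(\gamma)+1$ requires tracking the Maslov potentials through the Hamiltonian isotopy $\phi^{-S}_{h_{N,\pm}\partial_z}$ rather than through the constant Reeb translation $\phi_{-\ell}$; since the two isotopies differ only in the $W$-factor and Hamiltonian isotopies act on graded Lagrangian planes via the homotopy lifting property (as in Section~\ref{sec:grading-reeb-chords}), your decomposition is legitimate, but this step deserves to be made explicit. Second, your claim that intersection points in the compact middle region ``retain their original action'' is strictly speaking only true up to an additive error of order $Se^{-(T+N)}$, arising from the renormalisation of the primitive $f_0^{\mathrm{new}}$ across the negative transition region. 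This error vanishes as $N\to\infty$, which is all that the subsequent use of the lemma (in the proof of Proposition~\ref{prp:invariance-comp}) requires, and the paper's own phrasing of the lemma is equally loose on this point.
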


Next we will show that the identification given by the above lemma, in fact can be made to hold at the level of \emph{complexes} as well.

\begin{Prop}
\label{prp:wrap}
For each $N \gg 0$ sufficiently large and $S \ge S_0$ as defined above, there is a canonical identification of complexes
\[ (\Cth_\bullet(\Sigma_0,\Sigma_1),\mathfrak{d}_{\varepsilon_0^-\varepsilon_1^-})= (\Cth_\bullet(\phi_N^{-S}(\Sigma_0),\Sigma_1),\mathfrak{d}_{\varepsilon_0^-\varepsilon_1^-}).\]
\end{Prop}
\begin{Rem}
Recall that $J_\bullet$ is a cylindrical lift outside of a compact set of $\R \times Y$ by assumption, so outside that compact set it is invariant under the Reeb flow. Since the negative ends of $\Sigma_0$ and $\phi_N^S(\Sigma_0)$ differ by the time-$S$ Reeb flow, it follows that these Legendrian submanifolds have canonically isomorphic Chekanov-Eliashberg algebras and, in particular, we can identify their augmentations.

For a general choice of almost complex structure with cylindrical ends, there should again exist an analogous isomorphism, albeit non-canonical.
\end{Rem}
\begin{proof}[Proof of Proposition~\ref{prp:wrap}]
Consider the transfer and co-transfer maps
\begin{align*}
\Phi_{\phi^{-S}_+(\R \times \Lambda^+_0)} \co & \Cth_\bullet(\Sigma_0,\Sigma_1) \to \Cth_\bullet(\phi^{-S}_{N,+}(\Sigma_0),\Sigma_1),\\
\Phi^{\phi^{-S}_-(\R \times \Lambda^-_0)} \co & \Cth_\bullet(\phi_N^{-S}(\Sigma_0),\Sigma_1) \to \Cth_\bullet(\phi^{-S}_{N,+}(\Sigma_0),\Sigma_1),
\end{align*}
defined by counting $J^\pm$-holomorphic strips having boundary on $\phi^{-S}_{N,+}(\R \times \Lambda^+_0) \cup (\R \times \Lambda^+_1)$ and $\phi^{-S}_{N,-}(\R \times \Lambda^-_0) \cup (\R \times \Lambda^-_1)$, respectively. In order to identify the domains and codomains of the above maps we have used the fact that,
\[ J_\bullet \odot_N J^+ = J^- \odot_N J_\bullet=J_\bullet, \:\: N \ge 0,\]
which holds by the assumptions made on $J_\bullet$, as well as the facts that
\begin{eqnarray*}
\phi^{-S}_{N,+}(\Sigma_0) &=& \Sigma_0 \odot_N \phi^{-S}_+(\R \times \Lambda^+_0), \\
\phi_N^{-S}(\Sigma_0) &=& \phi^{-S}_-(\R \times \Lambda^-_0) \odot_N \phi^{-S}_{N,+}(\Sigma_0),\\
\end{eqnarray*}
hold for every $N \ge 0$ by construction.

Recall that the transfer and co-transfer maps are chain maps by Lemma \ref{lem:exptrans}, assuming that $N \gg 0$ has been chosen sufficiently large. The proposition will follow from the claim that the maps $\Phi_{\phi^{-S}_+(\R \times \Lambda^+_0)}$ and $\Phi^{\phi^{-S}_-(\R \times \Lambda^-_0)}$ both are isomorphisms which, moreover, induce the respective canonical identifications of graded modules described in Lemma \ref{lem:wrap}. The latter facts follows by the explicit disc count performed in the proof of \cite[Theorem 2.15]{Floer_Conc}; also, see \cite[Proposition 5.11]{LiftingPseudoholomorphic} for a similar argument. Roughly speaking, it is shown there that every $J^\pm$-holomorphic disc of index zero in the definition of the above (co-)transfer map is a transversely cut-out strip having one positive puncture and one negative puncture, and whose image under the canonical projection to $P$ is constant. Conversely, there is an explicitly defined such strip for every double point in $P$ corresponding to a Reeb chord. In particular, under an appropriate choice of basis, the matrices of both maps $\Phi_{\phi^{-S}_+(\R \times \Lambda^+_0)}$ and $\Phi^{\phi^{-S}_-(\R \times \Lambda^-_0)}$ are equal to the identity matrices.
\end{proof}

\subsection{Invariance under compactly supported Hamiltonian isotopies}

The following proposition is the core of the invariance result that we need in order to deduce the acyclicity of the Cthulhu complex.

\begin{Prop}\label{prp:invariance-comp}
Let $(\Sigma^s_0,\Sigma_1)$, $s \in [0,1]$, be a compactly supported one-parameter family of pairs of exact Lagrangian cobordisms from $\Lambda^-_i$ to $\Lambda^+_i$, $i=0,1$.
There is an induced homotopy equivalence
\[\Psi_{\{\Sigma^s_0\}} \co (\Cth_\bullet(\Sigma^0_0,\Sigma_1),\mathfrak{d}_{\varepsilon_0^-\varepsilon_1^-}) \to (\Cth_\bullet(\Sigma^1_0,\Sigma_1),\mathfrak{d}_{\varepsilon_0^-\varepsilon_1^-}).\]
This map moreover restricts to give an isomorphism of the complex
$(C_\bullet(\Lambda^+_0,\Lambda^+_1),d_{++})$, regarded as a subcomplex of $(\Cth_\bullet(\Sigma^*_0,\Sigma_1),\mathfrak{d}_{\varepsilon_0^-\varepsilon_1^-})$, for $i=0,1$.
\end{Prop}
\begin{Rem}
Under the additional assumption that $\Lambda^-_i=\emptyset$, $i=0,1$, this result was established in \cite[Section 4.2.1]{Ekholm_FloerlagCOnt}.
\end{Rem}

\begin{proof}
The first part follows from standard bifurcation analysis.

In order to deduce the last claim of the proposition, it will be necessary to use the following additional property of the identifications of complexes described in the proof of Proposition \ref{prp:wrap}. Consider the subset $C \subset \phi_N^{-S}(\Sigma^s_0) \cap \Sigma_1$ of intersection points corresponding to the Reeb chords from $\Lambda^+_1$ to $\Lambda^+_0$; these intersection points are contained in a subset of the form $(N,+\infty) \times P \times \R$, which may be assumed to be fixed in the one-parameter family $\phi_N^{-S}(\Sigma^s_0)$ of cobordisms. Even though these intersection points are fixed, their actions will in general vary with $s \in [0,1]$. We use $M$ to denote the minimum of the action of a intersection point in $C \subset \phi_N^{-S}(\Sigma^s_0) \cap \Sigma_1$ taken over all $s\in [0,1]$ . For $N \gg 0$ sufficiently large, Lemma \ref{lem:wrap} shows that any intersection point $(\phi_N^{-S}(\Sigma^s_0) \cap \Sigma_1) \setminus C$ has action strictly less than $M$. In conclusion, we must have
$K(C) \subset C$ for the map $K$ defined above, thus implying the claim.
\end{proof}

\subsection{Displacing the cobordisms}
\label{sec:proof-acyclicity}
We are finally ready to prove the main result of this section. The idea is to displace the cobordisms so that the Cthulhu complex vanishes, and to invoke the invariance properties from Propositions~\ref{prp:wrap} and~\ref{prp:invariance-comp}.

\begin{Thm} 
  For any pair of exact Lagrangian cobordisms $\Sigma_0,\Sigma_1 \subset \R \times Y$ from $\Lambda^-_i$ to $\Lambda^+_i$ and choices $\varepsilon_i^-$ of augmentations of the Chekanov-Eliashberg algebras of $\Lambda^-_i$, $i=0,1$, the complex
\[ (\Cth_\bullet(\Sigma_0,\Sigma_1),\mathfrak{d}_{\varepsilon_0^-\varepsilon_1^-})\]
is homotopic to the trivial complex,
\end{Thm}
\begin{proof}
For $S \gg 0$ sufficiently large, there is an exact Lagrangian cobordism $\Sigma_0' \subset \R \times Y$, isotopic to $ \phi_N^{-S} (\Sigma_0)$ by a compactly supported Hamiltonian isotopy, such that $\Cth(\Sigma_0',\Sigma_1)=0$. To that end, observe that $\Cth(\phi_N^{-S} (\Sigma_0),\Sigma_1)$ has no Reeb chord generators for $S \gg 0$ sufficiently large by Lemma \ref{lem:wrap}. The Hamiltonian isotopy is constructed as follows. Let $\phi^s$ be the Reeb flow $\phi^s(t,p,z)=
(t, p, z+s)$. Then, denoting $\Sigma_0' =  \phi^{-s} \circ \phi_N^{-(S-s)}(\Sigma_0)$, we have that $\Sigma_0'$ is isotopic to $\phi_N^{-S} (\Sigma_0)$ by a compactly supported Hamiltonian isotopy and $\Sigma_0' \cap \Sigma_1 = \emptyset$. Therefore, $\Cth(\Sigma_0',\Sigma_1)=0$.

The invariance result for compactly supported Hamiltonian isotopies that was established by Proposition \ref{prp:invariance-comp} thus implies that the complex $\Cth_\bullet(\phi_N^{-S}(\Sigma_0),\Sigma_1)$ is null-homotopic. The fact that the same is true for the complex $\Cth_\bullet(\Sigma_0,\Sigma_1)$ is now an immediate consequence of Proposition \ref{prp:wrap}, for $N \gg 0$ that have been chosen sufficiently large.
\end{proof}

\section{Long exact sequences from the Cthulhu complex}
\label{sec:long-exact-sequence}
Under additional hypotheses, we deduce several exact sequences, including those stated in Section \ref{sec:les}, from the acyclicity of the Cthulhu complex. They will be induced by Lemma \ref{lem:posnegLES} applied to a pair of exact Lagrangian cobordisms $(\Sigma, \Sigma_{\epsilon h})$, where $\Sigma_{\epsilon h}$ is obtained from $\Sigma$ by a Hamiltonian perturbation for a suitable cylindrical Hamiltonian $h$.

\subsection{The general construction}
As usual, we fix augmentations $\varepsilon_i^-$, $i=0,1$, for the negative ends of the Lagrangian cobordisms $\Sigma_i$ and a generic path $J_\bullet$ of admissible almost complex structures.   We denote  by $CF_{-\infty}(\Sigma_0,\Sigma_1)$ the complex $C(\Sigma_0,\Sigma_1)\oplus C(\Lambda^-_0,\Lambda^-_1)$ with differential
\[ d_{- \infty}^{\varepsilon_0^-, \varepsilon_1^-}=\begin{pmatrix}
  d_{00} & d_{0-}\\
d_{-0} & d_{--}
\end{pmatrix}. \]
 Its homology is denoted by $HF_{-\infty}(\Sigma_0,\Sigma_1)$. Note that, in general, $HF_{-\infty}(\Sigma_0,\Sigma_1)$ depends on the choice of augmentations.

The complex $\Cth(\Sigma_0,\Sigma_1)$ is the cone of the chain map
$$d_{+0}+d_{+-}: CF_{-\infty}(\Sigma_0,\Sigma_1)\rightarrow C(\Lambda_0^+,\Lambda_1^+).$$
Thus, the  acyclicity of  $(\Cth(\Sigma_0,\Sigma_1),\mathfrak{d}_{\varepsilon_0^-\varepsilon_1^-})$ implies that this map is a quasi-isomorphism, and hence that
\begin{equation}
  \label{eq:19}
  LCH^k_{\varepsilon_0^+,\varepsilon_1^+}(\Lambda_0^+,\Lambda^+_1)\cong HF^{k+1}_{-\infty}(\Sigma_0,\Sigma_1).
\end{equation}

\begin{Lem} \label{lem:posnegLES}
Let $\Sigma_i$, $i=0,1$, be a graded exact Lagrangian cobordisms from the Legendrian submanifold $\Lambda^-_i$ to $\Lambda^+_i$ in $\R \times Y$ and assume that there are augmentations $\varepsilon^-_i$ of $\mathcal{A}(\Lambda^-_i)$ for $i=0,1$. If either $d_{0-}=0$ or $d_{-0}=0$ then $d_{00}^2=0$ and, denoting by $HF(\Sigma_0, \Sigma_1)$ its homology, we have the following exact sequences.
  \begin{itemize}
  \item 
If $d_{0-}=0$, then there exists a long exact sequence
    \begin{equation}
      \label{eq:17}
\xymatrixrowsep{0.15in}
\xymatrixcolsep{0.15in}
\xymatrix{
       & \cdots\ar[r]& LCH^{k-1}_{\varepsilon^+_0,\varepsilon_1^+}(\Lambda^+_0,\Lambda^+_1) \ar[dl] & \\
& HF^{k}(\Sigma_0,\Sigma_1) \ar[r]_-{d_{-0}} & LCH^{k}_{\varepsilon^-_0,\varepsilon^-_1}(\Lambda^-_0,\Lambda_1^-) \ar[r] & LCH^{k}_{\varepsilon^+_0,\varepsilon_1^+}(\Lambda^+_0,\Lambda^+_1)\ar[dl]\\
&  & \cdots &}
    \end{equation}
\item 
If $d_{-0}=0$, then there exists a long exact sequence
    \begin{equation}
      \label{eq:18}
\xymatrixrowsep{0.15in}
\xymatrixcolsep{0.15in}
       \xymatrix{
 & \cdots\ar[r] & HF^{k+1}(\Sigma_0,\Sigma_1) \ar[dl] & \\
& LCH^{k}_{\varepsilon^+_0,\varepsilon_1^+}(\Lambda^+_0,\Lambda^+_1) \ar[r] &  LCH^{k}_{\varepsilon^-_0,\varepsilon^-_1}(\Lambda^-_0,\Lambda_1^-)\ar[r]^-{d_{0-}} & HF^{k+2}(\Sigma_0,\Sigma_1) \ar[dl] & \\
& & \cdots &
}
\end{equation}
\end{itemize}
\end{Lem}
Note that $HF(\Sigma_0, \Sigma_1)$ depends on the augmentations $\varepsilon_0^-$ and
$\varepsilon_1^-$.
\begin{proof}
Since $d_{00}^2+d_{0-}d_{-0}=0$, the vanishing of either $d_{0-}$ or $d_{-0}$ implies that
$d_{00}$ is a differential. Moreover $d_{- \infty}$ becomes a triangular matrix, and therefore
$CF_{- \infty}(\Sigma_0, \Sigma_1)$ is a cone. Thus the exact sequences of the lemma follow from the exact sequence of the cone and the isomorphism \eqref{eq:19}.
\end{proof}

There are action conditions that imply $d_{0-}=0$ or $d_{-0}=0$ without the need of counting pseudoholomorphic maps.
\begin{Lem} \label{lem: when things work}
If $\mathfrak{a}(p) \le \mathfrak{\gamma^-}$ for every $p \in \Sigma_0 \cap \Sigma_1$ and every Reeb chord $\gamma^- \in {\mathcal R}(\Lambda_1^-, \Lambda_0^-)$, then $d_{0-}=0$. If
$\mathfrak{a}(p) \ge - \mathfrak{a}(\gamma^-)$ for every $p \in \Sigma_0 \cap \Sigma_1$ and every Reeb chord $\gamma^- \in {\mathcal R}(\Lambda_0^-, \Lambda_1^-)$, then $d_{-0}=0$.
\end{Lem}
\begin{proof}
If $d_{0-} \ne 0$, then there is a non-empty moduli space of the form ${\mathcal M}_{\Sigma_0, \Sigma_1}(p; \boldsymbol{\delta^-}, \gamma^-, \boldsymbol{\zeta}^-) \ne \emptyset$. Then  Equation~\eqref{eq:9} implies $\mathfrak{a}(p)- \mathfrak{a}(\gamma^-) >0$.
If $d_{-0} \ne 0$, there is a moduli space ${\mathcal M}_{\Sigma_1, \Sigma_0}(p; \boldsymbol{\delta^-}, \gamma^-, \boldsymbol{\zeta}^-) \ne \emptyset$, with  $\gamma^- \in {\mathcal R}(\Lambda_0^-, \Lambda_1^-)$, forming the neck of a nessie. Then  Equation~\eqref{eq:9} implies $- \mathfrak{a}(p)- \mathfrak{a}(\gamma^-) >0$.
\end{proof}

We introduce the following terminology.
A point $p\in \Sigma_0\cap \Sigma_1$ is \textit{positive} (resp. \textit{negative}) if $\mathfrak{a}(p)=f_1(p)-f_0(p)$ is positive (resp. negative). Assuming the (generic) condition that $\mathfrak{a}(p) \ne 0$ for every intersection point, this leads to a decomposition $CF(\Sigma_0,\Sigma_1)=CF_+(\Sigma_0,\Sigma_1)\oplus CF_-(\Sigma_0,\Sigma_1)$ where $CF_+(\Sigma_0,\Sigma_1)$ and $CF_-(\Sigma_0,\Sigma_1)$
are generated by the positive and negative intersection points respectively. This motivates the following definition for a pair $(\Sigma_0,\Sigma_1)$ of cobordisms.
\begin{defn}
\label{def:directed}
We say that $(\Sigma_0,\Sigma_1)$ is \textit{directed} if $CF_+(\Sigma_0,\Sigma_1)=0$, and \textit{$V$-shaped} if $CF_-(\Sigma_0,\Sigma_1)=0$.
\end{defn}
When $(\Sigma_0, \Sigma_1)$ is directed $d_{0-}=0$ and when $(\Sigma_0, \Sigma_1)$ is
$V$-shaped $d_{-0}=0$. This is a particular case of Lemma \ref{lem: when things work}.
\begin{Rem}
Both $CF_-(\Sigma_0,\Sigma_1)$ and $CF_+(\Sigma_0,\Sigma_1)$ can endowed with a differential induced by $d_{00}$: the first as a subspace, and the second as a quotient. Their homologies are denoted by $HF_\pm (\Sigma_0, \Sigma_1)$. Note that they depend on choices of augmentations and have extremely weak invariance properties under Hamiltonian isotopies.
\end{Rem}

\subsection{Cylindrical Hamiltonians}
\label{sec:small-pert-lagr}
In the following, we assume that $\Sigma \subset \R \times Y$  is an exact Lagrangian cobordism  from $\Lambda_-$ to $\Lambda_+$ inside the symplectisation of a contactisation. We furthermore assume that $\Sigma$ is cylindrical outside of the set $[A,B] \times P \times \R$ for some $A<B$. We shall write
\begin{gather*}
\overline{\Sigma}:= \Sigma \cap \{ t \in [A,B] \}, \\
\partial_- \overline{\Sigma} := \Sigma \cap \{ t = A\},\\
\partial_+ \overline{\Sigma} := \Sigma \cap \{ t = B \},
\end{gather*}
so that clearly $\partial \overline{\Sigma}=\partial_- \overline{\Sigma} \cup \partial_+\overline{\Sigma}$.

We will consider different push-offs constructed via suitable perturbations of an autonomous Hamiltonians $\tilde{h} \colon \R \times P \times \R \to \R$ induced by a function $\tilde{h}(t)$ only depending on the symplectisation coordinate. We write $\phi_{\tilde{h}}^s$ for the Hamiltonian flow of $\tilde{h}$
and observe that it takes the particularly simple form
\[ \phi^s_{\tilde{h}}(t,p,z)=(t,p,s e^{-t}\tilde{h}'(t)+z).\]
In particular, $\phi^s_{\tilde{h}}(\Sigma)$ is an exact Lagrangian cobordism with cylindrical ends if and only if $e^{-t}\tilde{h}'(t)$ is constant. We will assume that $e^{-t}\tilde{h}'(t) = \pm 1$ for $t \not \in [A, B]$, but not necessarily with the same sign in the two components.
The ends of $\phi^s_{\tilde{h}}(\Sigma)$ are modelled on  positive or negative Reeb posh-off of $\Lambda^\pm$, depending on the sign of $\tilde{h}$.

In general $\Sigma$ and $\phi^s_{\tilde{h}}(\Sigma)$ will not intersect transversely and their Legendrian links at infinity will not be chord generic. For this reason we will
replace $\tilde{h}(t)$ with $h(t,p,z)= \tilde{h}(t)+ e^tg(t, p)$, where $g \colon \R \times P \to \R$ is a $C^2$-small function and $\partial_t g=0$ for $t \not \in [A, B]$.
For $\epsilon >0$ we will denote
\[ \Sigma_{\epsilon h}:=\phi^\epsilon_h(\Sigma); \]
if the ends of $\Sigma$ are modelled on the Legendrian submanifolds $\Lambda^{\pm}$, then  the ends of
$\Sigma_{\epsilon h}$ are modelled on Legendrian submanifolds $\Lambda_{\epsilon h}^\pm$.
We will assume, from now on, that $\Sigma$ and $\Sigma_{\epsilon h}$ intersect transversely and $(\Lambda^\pm, \Lambda_{\epsilon h}^\pm)$ are chord generic for every $\epsilon >0$ sufficiently small. In the rest of this section, we will call such a Hamiltonian a {\em generic cylindrical Hamiltonian}.

Let us denote $h'= \partial_th$. A straightforward computation yields
$$L_{X_h} (e^s \alpha)= dh' - dh.$$
So, if $f \colon \Sigma \to \R$ is a primitive of $e^t\alpha|_{\Sigma}$, then the primitive $f_{\epsilon h} \colon \Sigma_{\epsilon h} \to \R$  of $e^t\alpha|_{\Sigma_{\epsilon h}}$ is given by
\begin{equation}
f_{\epsilon h} \circ \phi^\epsilon_h =f  + \int_0^\epsilon ((h' -h)|_{\Sigma} \circ \phi_h^s )ds. \label{eq:14}
\end{equation}
We assume, without loss of generality, that $f$ vanishes on the negative end of $\Sigma$. Since $h'=h$  at the negative end, $f_{\epsilon h}$ also vanishes there.

\subsection{The Cthulhu complex of a Hamiltonian push-off}
\label{sec:computingpushoff}

Let
$$\phi_{\pm}^s \colon P \times \R \to P \times \R$$
 be the contact flow generated by the contact Hamiltonians $g \pm 1$, where $g \colon P \to \R$ is a generic $C^2$-small function.
Given a Legendrian submanifold $\Lambda \subset P \times \R$, we denote $\Lambda_{\pm \epsilon}:= \phi^\epsilon_{\pm}(\Lambda)$. Note that $\Lambda^\pm_{\epsilon h} = \Lambda^\pm_{\pm \epsilon}$, where the sign before $\epsilon$ is the sign of $\partial_th$ in the corresponding end.
\begin{Prop}
\label{prop:twocopy}
Let $\Lambda \subset P \times \R$ be a close Legendrian submanifold.
For the cylindrical lift $\widetilde{J}_P$ of a regular almost complex structure $J_P$ on $P$, we have a canonical isomorphism of complexes
\[LCC^\bullet_{\varepsilon_0,\varepsilon_1}(\Lambda, \Lambda_{+ \epsilon})  \cong LCC^\bullet_{\varepsilon_0,\varepsilon_1}(\Lambda)\]
for any sufficiently small $\epsilon>0$.

Under the additional assumption that $\Lambda$ is horizontally displaceable, we moreover have a quasi-isomorphism
\[LCC^\bullet_{\varepsilon_0,\varepsilon_1}(\Lambda, \Lambda_{-\varepsilon}) \simeq LCC_{n-1-\bullet}^{\varepsilon_0,\varepsilon_1}(\Lambda),\]
where $n$ is the dimension of $\Lambda$.
\end{Prop}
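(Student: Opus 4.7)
The plan is to analyze the Reeb chords of the two-copy link and identify the differentials by projecting to $P$ via the cylindrical lift structure. First I would enumerate the mixed Reeb chord generators $\mathcal{R}(\phi^{\pm\epsilon}(\Lambda'),\Lambda)$. Since Reeb chords flow in the direction $+\partial_z$, any chord from $\phi^{\pm\epsilon}(\Lambda')$ to $\Lambda$ starts below and ends above in the $z$-coordinate. In the push-up case $\phi^\epsilon(\Lambda')$ lies $\epsilon$ higher than $\Lambda$, so for $\epsilon$ large compared to the $C^1$-norm of $\Lambda'-\Lambda$ no ``short'' chords can arise, and each generator corresponds canonically to an element of $\mathcal{R}(\Lambda)$ (of length slightly decreased). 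In the push-down case $\phi^{-\epsilon}(\Lambda')$ lies $\epsilon$ lower, giving both long chords in bijection with $\mathcal{R}(\Lambda)$ and short chords of length $\approx \epsilon$, one for each critical point of the Morse function $f$ used to define $\Lambda'$ in a standard Weinstein neighborhood of $\Lambda$.

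For the chain-level isomorphism in the push-up case I would exploit that $\widetilde{J}_P$ makes the projection $\pi\colon\R\times P\times\R\to P$ pseudoholomorphic. Following \cite{LiftingPseudoholomorphic} and its application in \cite[Theorem 2.8]{Floer_Conc}, $\widetilde{J}_P$-holomorphic strips in the symplectisation with boundary on $\R\times\Lambda\cup\R\times\phi^\epsilon(\Lambda')$ are in bijection (up to the $\R$-action) with $J_P$-holomorphic polygons in $P$ with boundary on $\Pi_{\OP{Lag}}(\Lambda)\cup\Pi_{\OP{Lag}}(\Lambda')$ asymptotic to the corresponding double points, their transversality is equivalent, and the Fredholm indices match. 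Since the Reeb flow preserves the Lagrangian projection, the projected boundary condition is independent of $\epsilon$. A standard compactness-and-gluing argument, taking $\Lambda'\to\Lambda$, then matches these projected moduli spaces canonically with those defining the bilinearised differential of $\Lambda$ alone, yielding the claimed chain-level isomorphism.

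For the quasi-isomorphism in the push-down case, the strategy is to realise the complex as the bilinearised version of the Sabloff duality triangle from \cite{Sabloff_Duality, Duality_EkholmetAl, augcat}. I would filter $LCC^\bullet_{\varepsilon_0,\varepsilon_1}(\Lambda,\phi^{-\epsilon}(\Lambda'))$ by action so that short chords form a subcomplex and long chords the quotient. The short-chord subcomplex is quasi-isomorphic to the Morse cochain complex of $f$ on $\Lambda$, via the classical identification of pseudoholomorphic strips inside the Weinstein neighborhood with negative gradient flow lines; hence it computes $H_{n-\bullet}(\Lambda;R)$. The long-chord quotient can be identified with $LCC_{n-1-\bullet}^{\varepsilon_0,\varepsilon_1}(\Lambda)$ by the same projection technique as in the push-up case, matching its differential with the bilinearised chain differential dually. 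The resulting exact triangle is precisely the bilinearised duality triangle of \cite{Duality_EkholmetAl,augcat}, and horizontal displaceability of $\Lambda$ is exactly what guarantees, as in \cite[Section 5]{Duality_EkholmetAl}, that the connecting map from $H_\bullet(\Lambda;R)$ hits the fundamental class nontrivially, collapsing the triangle into the desired quasi-isomorphism.

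The main obstacle I expect will be the precise matching of the augmentation-weighted, signed counts between the three complexes: verifying the grading shift of $n-1$, checking that the order of the two augmentations $(\varepsilon_0,\varepsilon_1)$ is preserved (and not reversed) by the duality identification, and showing that the projected short-chord moduli spaces are cut out transversely by a standard almost complex structure that simultaneously matches the data of both the Morse and the Legendrian theory. The analytic ingredients (cylindrical-lift projection, the Morse-to-Floer correspondence in a Weinstein neighborhood, and bilinearised Sabloff duality) are all available in the literature; the delicate part is assembling them into the clean statement of the proposition.
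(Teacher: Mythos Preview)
Your first part is correct and matches the paper: both you and the authors appeal to the projection argument underlying \cite[Proposition 2.7]{Floer_Conc}, lifting $J_P$-holomorphic polygons in $P$ to $\widetilde{J}_P$-holomorphic strips in the symplectisation via \cite{LiftingPseudoholomorphic}.

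Your second part, however, has a genuine gap. The filtration you set up on $LCC^\bullet_{\varepsilon_0,\varepsilon_1}(\Lambda,\phi^{-\epsilon}(\Lambda'))$ into short and long chords yields an exact triangle with pieces (Morse complex of $\Lambda$) and (long-chord complex). But neither of these pieces is acyclic, so this triangle does \emph{not} collapse into the claimed quasi-isomorphism with $LCC_{n-1-\bullet}^{\varepsilon_0,\varepsilon_1}(\Lambda)$. Your appeal to the fundamental class is misplaced: the non-vanishing of the fundamental class in \cite[Section 5]{Duality_EkholmetAl} is a \emph{consequence} of the duality, not the mechanism that establishes it; a single connecting map being nonzero never forces a long exact sequence to degenerate into an isomorphism between two of its terms.

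The paper, by contrast, simply cites \cite[Proposition 4.1]{Duality_EkholmetAl} directly. The mechanism behind that result is different from yours: one considers the \emph{full} mixed-chord complex of the link $\Lambda \cup \phi^{-\epsilon}(\Lambda')$, involving chords in \emph{both} directions. Horizontal displaceability lets one Legendrian-isotope the push-off far away so that there are no mixed chords at all; invariance then forces the mixed complex to be acyclic. This mixed complex is a mapping cone between the two directional pieces, one of which is $LCC^\bullet_{\varepsilon_0,\varepsilon_1}(\Lambda,\phi^{-\epsilon}(\Lambda'))$ and the other of which (chords from $\Lambda$ to $\phi^{-\epsilon}(\Lambda')$, which are only long) is identified with $LCC_{n-1-\bullet}^{\varepsilon_0,\varepsilon_1}(\Lambda)$ via the projection argument. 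Acyclicity of the cone then gives the quasi-isomorphism. Your argument lives entirely inside one directional piece and never invokes the acyclicity that displaceability actually provides.
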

\begin{proof}
The statement follows from \cite[Proposition 2.7]{Floer_Conc} and \cite[Proposition 4.1]{Duality_EkholmetAl}.
\end{proof}

\begin{Rem} \label{aspettando il caldaista} The identifications of augmentations in Proposition~\ref{prop:twocopy}, despite the fact that the DGAs are associated to geometrically \emph{different} Legendrians, can be justified as follows. For $\Lambda,\Lambda'' \subset P \times \R$ being sufficiently $C^1$-close together with a fixed choice of compatible almost complex structure $J_P$ on $P$, the invariance theorem in \cite{LCHgeneral} gives a canonical isomorphism between the Chekanov-Eliashberg algebras $({\mathcal A}(\Lambda), \partial_{\Lambda})$ and $({\mathcal A}(\Lambda''), \partial_{\Lambda''})$ induced by the canonical bijection identifying the Reeb chords on $\Lambda$ with the Reeb chords on $\Lambda''$.
\end{Rem}

In the following theorem all  maps will be defined, as usual, using a generic path $J_\bullet$ of admissible almost complex structures. 
\begin{Thm}
\label{thm:twocopyends}
 Let $\Sigma$ be an exact Lagrangian cobordisms from $\Lambda^-$ to $\Lambda^+$ and $h$ a generic cylindrical Hamiltonian.
\begin{enumerate}
\item If $\Phi_{\Sigma} \colon {\mathcal A}(\Lambda^+) \to {\mathcal A}(\Lambda^-)$ and  $\Phi_{\Sigma_{\epsilon h}}  \colon {\mathcal A}(\Lambda^+_{\epsilon h}) \to {\mathcal A}(\Lambda^-_{\epsilon h})$ are the DGA morphisms  induced by $\Sigma$ and $\Sigma_{\epsilon h}$ respectively, then, for $\epsilon >0$ sufficiently small,
$$\Phi_{\Sigma}= \Phi_{\Sigma_{\epsilon h}}$$
after identifying ${\mathcal A}(\Lambda^\pm)$ with ${\mathcal A}(\Lambda^\pm_{\epsilon h})$ as in Remark \ref{aspettando il caldaista}.

\item  Let $\varepsilon_i$, $i=0,1$, be augmentations of ${\mathcal A}(\Lambda^-)$. If $\partial_th >0$ for $t \gg0$ and $\epsilon >0$ is small enough,
there is a commutative diagram
$$\xymatrix{
LCC^\bullet_{\varepsilon_0, \varepsilon_1}(\Lambda^-, \Lambda^-_{\epsilon h}) \ar[r]^-{d_{+-}} \ar[d]_{\cong} & LCC^\bullet_{\Phi_{\Sigma} \circ \varepsilon_0, \Phi_{\Sigma_{\epsilon h}} \circ \varepsilon_1}(\Lambda^+, \Lambda^+_{\epsilon h}) \ar[d]^\cong \\
LCC^\bullet_{\varepsilon_0, \varepsilon_1}(\Lambda^-) \ar[r]^{\Phi_{\Sigma}^{\varepsilon_0,\varepsilon_1}} & LCC^\bullet_{\Phi_{\Sigma} \circ \varepsilon_0, \Phi_{\Sigma} \circ \varepsilon_1}(\Lambda^+)
}$$
\end{enumerate}
where the vertical maps are the isomorphisms coming from Proposition \ref{prop:twocopy}
and (1) and $\Phi_{\Sigma}^{\varepsilon_0,\varepsilon_1}$ is the adjoint of the bilinearised map induced by the DGA morphism $\Phi_{\Sigma}$, as described in \cite{Ekhoka}.
\end{Thm}

\begin{proof}

Both results follow from Proposition \ref{prop:twocopy} and \cite[Theorem 2.15]{Floer_Conc}, where the latter provides the necessary identifications of pseudoholomorphic strips with boundary on the cobordism $\Sigma$ with those with boundary on $\Sigma_{\epsilon h}$ and on its two-copy $\Sigma\cup \Sigma_{\epsilon h}$. To that end, an admissible almost complex structure which coincides with cylindrical lifts in the prescribed subsets must be used.
\end{proof}
\begin{Lem}\label{lemma: fastidio}
Let $\Sigma$ be an exact Lagrangian cobordism from $\Lambda^-$ to $\Lambda^+$ and $h$ a cylindrical Hamiltonian function. For $\epsilon>0$ small enough:
\begin{enumerate}
\item If $\Lambda^-_{\epsilon h} = \Lambda^-_{+ \epsilon}$, then  $d_{0-}=0$;
\item If $\Lambda^-_{\epsilon h} = \Lambda^-_{- \epsilon}$, then $d_{-0}=0$.
\end{enumerate}
\end{Lem}
\begin{proof}
By Equation \eqref{eq:14}, the action of an intersection point $q \in \Sigma \cap
\Sigma_{\epsilon h}$ is
\begin{equation} \label{ne ho abbastanza}
\mathfrak{a}(q)= \epsilon (h'(q) - h(q)).
\end{equation}
On the other hand, if $\Lambda^-_{\epsilon h} = \Lambda^-_{+ \epsilon}$ there is lower bound independent of $\epsilon$ on the action of the Reeb chords from $\Lambda^-_{\epsilon h}$ to $\Lambda^-$. Therefore, for $\epsilon>0$ small enough, $\mathfrak{a}(p)< \mathfrak{a}(\gamma^-)$ for every intersection point $p \in \Sigma_0 \cap \Sigma_1$ and Reeb chord $\gamma^- \in {\mathcal R}(\Lambda^-_{\epsilon h}, \Lambda^-)$. Then Lemma
 \ref{lem: when things work} implies that $d_{0-}=0$. When $\Lambda^-_{\epsilon h} = \Lambda^-_{- \epsilon}$  there is lower bound independent of $\epsilon$ on the action of the Reeb chords from $\Lambda^-$ to $\Lambda^-_{\epsilon h}$ and again we can apply Lemma \ref{lem: when things work} to conclude that $d_{-0}=0$.
\end{proof}
\begin{Thm}
\label{thm:twocopy}
Let $\Sigma$ be an exact Lagrangian cobordism, $\varepsilon_0^-$ and $\varepsilon_1^-$ augmentation of $\Lambda^-$, and $h$ a generic cylindrical Hamiltonian
 such that $f:=h|_{\Sigma} \colon \Sigma \to \R$ is a Morse function. Assume we defined the differential $\mathfrak{d}_{\varepsilon_0^-, \varepsilon_1^-}$ on $Cth_\bullet(\Sigma, \Sigma_{\epsilon h})$ using an almost complex structure which induces a Riemannian metric $g$ on $\Sigma$ making $(f,g)$ into a Morse-Smale pair.
Then, for an appropriately chosen Maslov potential on $\Sigma$ and $\epsilon >0$ sufficiently small, there is an isomorphism of complexes
\[ CF_\bullet (\Sigma, \Sigma_{\epsilon h}) = C^{\OP{Morse}}_{n+1-\bullet}(f).\]
 Moreover, if $\Sigma$ is relatively Pin, then the above identification holds with coefficients in $\Z$ and if $\Sigma$ is not graded, it holds on the level of ungraded complexes.
\end{Thm}

\begin{proof}
First, we observe that $d_{00}^2=0$. In fact $\mathfrak{d}_{\varepsilon_0^-, \varepsilon_1^-}^2=0$ implies that $d_{00}^2= d_{0-}d_{-0}$, and either $d_{0-}=0$ or $d_{-0}=0$ for $\epsilon >0$ sufficiently small by Lemma \ref{lemma: fastidio}. Next, we argue that $d_{00}$ counts only Floer strips without boundary punctures asymptotic to pure Reeb chords. This is a consequence of Equation \eqref{eq:9} because the action of the pure chords of $\Lambda^-$ and $\Lambda^-_{\epsilon h}$ is bounded from below, while the action of the intersection points can be made arbitrarily small by Equation \eqref{ne ho abbastanza}.

At this point, since we have shown that we can work in a compact region, the isomorphism of Morse and Floer complexes is standard, going back to the original work of Floer \cite{FloerHFlag}. For the current setting, the analogous computation made in \cite[Theorem 6.2]{LiftingPseudoholomorphic} is also relevant.
\end{proof}
\begin{Rem}\label{rem:morsefloer}
Observe that the function $f=h|_{\Sigma}$ has no critical points on $\Sigma \setminus int( \overline{\Sigma})$. We denote by $\partial_h \overline{\Sigma}$ the portion of $\partial \overline{\Sigma}$ on which $\nabla f$ points inward; then
\[H(C^{\OP{Morse}}_{n+1-\bullet}(f))=H_{n+1-\bullet}(\overline{\Sigma},\partial_h \overline{\Sigma}).\]
\end{Rem}

\subsection{Proof of the exact sequences} \label{sec: proof of exact sequences}

\begin{proof}[Proof of Theorem \ref{thm:lespair}]
Let $h$ be a generic cylindrical Hamiltonian such that $\partial_th=e^t$ for $t \ne [A, B]$. In this case $\Lambda^\pm_{\epsilon h} = \Lambda^{\pm}_{+ \epsilon}$ and therefore,
by Lemma \ref{lemma: fastidio}, $d_{0-}=0$.

Then we use Proposition \ref{prop:twocopy}, Theorem \ref{thm:twocopyends}, Theorem \ref{thm:twocopy} and Remark \ref{rem:morsefloer} to identify the exact sequence \eqref{eq:17} of Lemma \ref{lem:posnegLES} with the exact sequence of Theorem \ref{thm:lespair}.
\end{proof}

\begin{proof}[Proof of Theorem \ref{thm:lesduality}]
Let $h$ be a generic cylindrical Hamiltonian such that $\partial_th=- e^t$ for
$t \le A$ and $\partial_th= e^t$ for $t \ge B$. In this case $\Lambda^-_{\epsilon h} = \Lambda^-_{- \epsilon}$ and $\Lambda^+_{\epsilon h} = \Lambda^+_{+ \epsilon}$, so, by Lemma \ref{lemma: fastidio}, $d_{-0}=0$. Then we use Proposition \ref{prop:twocopy}, Theorem \ref{thm:twocopyends},  Theorem \ref{thm:twocopy} and Remark \ref{rem:morsefloer} to identify the exact sequence \eqref{eq:18} of Lemma \ref{lem:posnegLES} with the exact sequence of Theorem \ref{thm:lesduality}.
\end{proof}

\begin{proof}[Proof of Theorem \ref{thm:lesmayer-vietoris}]
\label{sec:push-inducing-mayer}
We consider a generic cylindrical Hamiltonian $h$ obtained by perturbing the function
$\widetilde{h}$ as in Figure \ref{fig:pushoff-mixed}. Recall that the perturbation $h$ of $\widetilde{h}$ is realised by adding a term $e^tg(t,p)$ for a $C^2$-small function $g \colon \R \times P \to \R$ as done earlier in this section; the function $g(t,p)$ for $t \le A$ can in fact can be constructed near $\Pi_{\OP{Lag}}(\Lambda^-)$ by a suitable Morse function on $\Lambda^-$ which can be extended to the immersed Weinstein neighbourhood of the Lagrangian immersion (care has to be taken near the double points). For simplicity, we assume that the Morse function on $\Lambda^-$ has a unique local minimum and maximum.

The pair $(\Sigma, \Sigma_{\epsilon h})$ is V-shaped, and therefore all intersection points have positive action and $d_{-0}=0$. We decompose
$$C^{\bullet}(\Lambda^-,\Lambda_{\epsilon h}^-)=C_+^{\bullet}(\Lambda^-,\Lambda_{\epsilon h}^-) \oplus C_0^{\bullet}(\Lambda^-, \Lambda_{\epsilon h}^-)$$
 so that the ``plus'' summand is generated by those Reeb chords in ${\mathcal R}(\Lambda_{\epsilon h}^-, \Lambda^-)$ corresponding to Reeb chords of $\Lambda^-$, so that they have length bounded from below in terms of the minimal length of a Reeb chord on $\Lambda^-$, while the ``zero'' summand is generated by those Reeb chords in ${\mathcal R}(\Lambda_{\epsilon h}^-, \Lambda^-)$ corresponding to the critical points of a Morse function on $\Lambda^-$, which then have length roughly equal to $\epsilon>0$. See \cite[Section 3.1]{Duality_EkholmetAl}. Thus the intersection points have action of order $\epsilon$, the ``plus'' chords have action of order $e^{T}$ and the ``zero chords'' have action of order $\epsilon e^{-T}$ for some fixed $T \gg0$; see Section \ref{sec:action-energy}.

By action reasons there is a short exact sequence of complexes
\begin{equation}\label{ses}
0 \to C_0^{\bullet}(\Lambda^-, \Lambda_{\epsilon h}^-) \to CF_{- \infty}(\Sigma, \Sigma_{\epsilon h}) \to  C_+^{\bullet}(\Lambda^-,\Lambda_{\epsilon h}^-) \oplus CF^{\bullet}(\Sigma,\Sigma_{\epsilon h}) \to 0.
\end{equation}
If we take $\epsilon$ small enough we have $\mathfrak{a}(p)< \mathfrak{a}(\gamma^-)$ for every intersection point $p$ and ``plus'' chord $\gamma^-$, and therefore, by Equation \eqref{eq:9} there can be no holomorphic map between an intersection point and a ``plus'' chord. This shows that the direct sum  $C_+^{\bullet}(\Lambda^-,\Lambda_{\epsilon h}^-) \oplus CF^{\bullet}(\Sigma,\Sigma_{\epsilon h})$, a priori only of modules, is in fact a direct sum
of chain complexes. For this reason
$$H_i(C_+^{\bullet}(\Lambda^-,\Lambda_{\epsilon h}^-) \oplus CF^{\bullet}(\Sigma,\Sigma_{\epsilon h})) = H_i(C_+^{\bullet}(\Lambda^-,\Lambda_{\epsilon h}^-)) \oplus H_i(CF^{\bullet}(\Sigma,\Sigma_{\epsilon h})).$$

We rely on \cite[Theorem 3.6]{Duality_EkholmetAl} together with Proposition \ref{prop:twocopy} in order to make identifications
\begin{gather}
H^i(C_+^\bullet(\Lambda^-,\Lambda_{\epsilon h}^-)) \cong LCH^i_{\varepsilon^-_0,\varepsilon^-_1}(\Lambda^-), \label{paperoga} \\
H^i(C_0^\bullet(\Lambda^-,\Lambda_{\epsilon h}^-))  \cong H_{n-1-i}(\Lambda^-) \label{picodepaperis}
\end{gather}
and therefore, using also Equation \eqref{eq:19} and Theorem \ref{thm:twocopy} together with Remark \ref{rem:morsefloer}, we can identify the long exact sequence induced by the short exact sequence \eqref{ses} with the long exact sequence \eqref{eq:mayer-vietoris}.

The technique in the proof of \cite[Theorem 6.2(ii)]{LiftingPseudoholomorphic} shows that
the restriction of $d_{0-}$ to $C_0^\bullet(\Lambda^-,\Lambda_{\epsilon h}^-)$ fits into a commutative diagram

$$\xymatrix{
H^i(C_0^\bullet(\Lambda^-,\Lambda_{\epsilon h}^-)) \ar[r]^{d_{0-}} \ar[d]_{\cong} & HF^{i+2}(\Sigma, \Sigma_{\epsilon h}) \ar[d]^{\cong} \\
H_{n-1-i}(\Lambda_-) \ar[r]^{i_*} & H_{n-1-i}(\Sigma)
}$$
where $i_*$ is the map induced by the inclusion $i \colon \partial_- \overline{\Sigma} \to \overline{\Sigma}$ and the vertical maps come from \eqref{picodepaperis} and Theorem \ref{thm:twocopy} together with Remark \ref{rem:morsefloer} respectively.

To see the commutativity of this diagram, note that the map
$$i_* \colon H_\bullet(\Lambda^-) \to H_\bullet(\Sigma)$$
can be realised as the inclusion of the subcomplex of the Morse  complex of $f=h|_{\Sigma}$ generated by the critical points near $\{ t = a\}$ into the full Morse complex of $f$. See Figure \ref{fig:pushoff-mixed}.

It then suffices to show that $d_{0-}$ identifies $C_0(\Lambda^-, \Lambda^-_{\epsilon h})$ to the said subcomplex, as in the proof of Proposition \ref{prp:wrap} (in particular, see the computation at the end of the proof). The reason why this technique applies is that, in the subset $\{ t \le a+\epsilon\},$ the push-off $\Sigma_{\epsilon h}$ is obtained by slightly `wrapping' $\Sigma_{\epsilon e^t}$ using the negative Reeb flow.  See also Section \ref{sec:carefulpushoff} for a similar analysis.

Finally, the statements concerning the fundamental classes is a consequence of \cite[Theorem 5.5]{Duality_EkholmetAl}, a result which is only valid if we are working with $\varepsilon^-_0=\varepsilon^-_1=\varepsilon$. Namely, the latter result shows that
the shortest chord in $C_0(\Lambda^-, \Lambda_{\epsilon h}^-)$ defines a nontrivial class in $LCH^{-1}_{\varepsilon,\varepsilon}(\Lambda^-,\Lambda^-_{\epsilon h})$. Similarly, under the additional assumption that $\Lambda^-$ is \emph{horizontally displaceable},
the differential of the longest chord in $C_0(\Lambda^-, \Lambda_{\epsilon h}^-)$ is an element of $C^\bullet_+(\Lambda^-, \Lambda^-_{\epsilon h})$ which defines a non-zero class in
$$H^n(C^\bullet_+(\Lambda^-,\Lambda_{\epsilon h}^-)) \cong  LCH^{n}_{\varepsilon,\varepsilon}(\Lambda^-)$$
called the \emph{fundamental class in Legendrian contact cohomology} (also see Section \ref{sec:fund-class-twist}).

With the above considerations, we have  proved Theorem \ref{thm:lesmayer-vietoris}.
\end{proof}
\begin{figure}
\centering
\labellist
\pinlabel $\widetilde{h}(t)$ at 106 108
\pinlabel $a$ at 52 125
\pinlabel $A$ at 79 125
\pinlabel $B$ at 126 125
\pinlabel $1$ at 43 60
\pinlabel $-1$ at 45 5
\pinlabel $-1$ at 45 112
\pinlabel $t$ at 190 136
\pinlabel $t$ at 190 35
\pinlabel $e^{-t}\widetilde{h}'(t)$ at 160 70
\pinlabel $a$ at 46 43
\pinlabel $A$ at 79 25
\pinlabel $B$ at 126 25
\endlabellist
\includegraphics{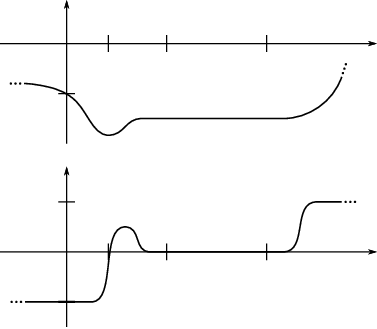}
\caption{The function $\widetilde{h}$. The corresponding Hamiltonian vector field is  $e^{-t}\widetilde{h}'(t)\partial_z$.}
\label{fig:pushoff-mixed}
\end{figure}
\subsection{Seidel's isomorphism}
\label{sec:seidel}
We end this section by reinterpreting the definition of Seidel's isomorphism, originally introduced in~\cite{Ekholm_FloerlagCOnt}, in the context of our theory. This isomorphism will be important when later considering the fundamental class in Legendrian contact homology in relation to a filling; see Section \ref{sec:fundamentalclass}.

Let $\Sigma$ be an exact Lagrangian cobordism from $\Lambda^-$ to $\Lambda^+$ and let $h \colon \R \times Y \to \R$ be a generic cylindrical Hamiltonian such that $\Lambda^+_{\epsilon h} = \Lambda^+_{+ \epsilon}$ and $\Lambda^-_{\epsilon h} = \Lambda_{-\epsilon}^-$.  Combining Theorem \ref{thm:twocopy} and Proposition \ref{prop:twocopy}, for every pair of augmentations $\epsilon_0^-$ and $\epsilon_1^-$ of $\Lambda^-$ we obtain a morphism
\[G^{\varepsilon^-_0,\varepsilon^-_1}_\Sigma \colon C^{\OP{Morse}}_{n+1-\bullet}(f) \to LCC^{\bullet-1}_{\varepsilon^+_0,\varepsilon^+_1}(\Lambda^+;R),\]
induced by the term $d_{+0}$ in the differential of $(\Cth(\Sigma,\Sigma_{\epsilon h}),\mathfrak{d}_{\varepsilon^-_0,\varepsilon^-_1})$. In this case, by Remark \ref{rem:morsefloer}, $H(C^{\OP{Morse}}_{n+1-\bullet}(f)) \cong H_{n+1-\bullet}(\Sigma)$. Observe that $d_{-0}=0$ by Lemma \ref{lemma: fastidio} and therefore $d_{+0}$ is a chain map. Then we have proved the following lemma.
\begin{Lem}
The map $G^{\varepsilon^-_0,\varepsilon^-_1}_\Sigma$ is a chain map which, in the case when the negative end of $\Sigma$ is empty, is a quasi-isomorphism.
\end{Lem}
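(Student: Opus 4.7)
The plan is to derive both statements from the algebraic identity $\mathfrak{d}_{\varepsilon_0^-,\varepsilon_1^-}^2 = 0$ satisfied by the Cthulhu differential, while exploiting the fact that the push-off by $h = h_V$ has been arranged so that $(\Sigma,\phi_h^\epsilon(\Sigma'))$ is a $V$-shaped pair. First I would expand the $(+,0)$ entry of the matrix identity $\mathfrak{d}^2 = 0$, obtaining
\[
d_{++}\circ d_{+0} + d_{+0}\circ d_{00} + d_{+-}\circ d_{-0} = 0,
\]
exactly as was already exploited in the proof of Theorem \ref{thm:dsquare}. To turn this into the sought chain-map relation for $G^{\varepsilon_0^-,\varepsilon_1^-}_\Sigma$, I must arrange for the last term $d_{+-}\circ d_{-0}$ to vanish.

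To do this I would invoke the upper-triangular decomposition of $d_{-\infty}$ with respect to the action-filtration splitting $CF = CF_+ \oplus CF_-$ recorded just before Theorem \ref{thm:principal}. The energy estimates of Proposition \ref{prop:energy}, combined with the action computations of Section \ref{sec:dualitypushoff}, force the component $d_{-0}|_{CF_+}$ to vanish. Since the pair $(\Sigma,\phi_h^\epsilon(\Sigma'))$ is $V$-shaped one has $CF = CF_+$, so in fact $d_{-0}\equiv 0$ on the whole Floer module, and the relation above collapses to
\[
d_{++}\circ d_{+0} + d_{+0}\circ d_{00} = 0.
\]
Transporting this identity across the chain-level identifications provided by Theorem \ref{thm:twocopyends}, Theorem \ref{thm:twocopy} and Proposition \ref{prop:twocopy} is precisely the statement that $G^{\varepsilon_0^-,\varepsilon_1^-}_\Sigma$ intertwines the Morse differential with the bilinearised Legendrian contact cohomology differential.

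For the quasi-isomorphism claim, I would assume $\Lambda^- = \emptyset$. Then both $\Sigma$ and its push-off are exact Lagrangian fillings of $\Lambda^+$, the summand $C(\Lambda_0^-,\Lambda_1^-)$ is trivial, and the Cthulhu complex reduces to
\[
\Cth(\Sigma,\phi_h^\epsilon(\Sigma')) \;=\; C(\Lambda_0^+,\Lambda_1^+) \oplus CF(\Sigma,\phi_h^\epsilon(\Sigma'))
\]
with differential $\begin{pmatrix} d_{++} & d_{+0} \\ 0 & d_{00} \end{pmatrix}$. Up to the grading shift built into the definition of $\Cth$, this is literally the mapping cone of the chain map $d_{+0}$. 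The acyclicity theorem from Section \ref{sec:acyclicity} asserts that $\Cth$ is null-homotopic, and the long exact sequence in cohomology associated to a cone then forces $d_{+0}$, and hence $G^{\varepsilon_0^-,\varepsilon_1^-}_\Sigma$, to be a quasi-isomorphism.

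The main obstacle I anticipate is purely bookkeeping: one must verify that the grading and sign conventions entering the identifications of $d_{00}$ with the Morse differential on $C^{\OP{Morse}}_{n+1-\bullet}(f)$ and of $d_{++}$ with the bilinearised Legendrian contact cohomology differential are mutually compatible, so that the algebraic identity extracted from $\mathfrak{d}^2 = 0$ lands as a genuine chain-map relation with the correct degree. This amounts to carefully tracking the shifts $[1]$ and $[2]$ in the definition of $\Cth$ and the degree $-1$ of $d_{+0}$ established in Section \ref{sec:cultists-maps}, and involves no new geometric input.
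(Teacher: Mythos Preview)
Your proposal is correct and matches the paper's intended argument, which is left implicit there: the paper simply asserts that the lemma follows from the pair being $V$-shaped, and your expansion via the vanishing of $d_{-0}|_{CF_+}$ from the upper-triangular form of $d_{-\infty}$, together with the cone interpretation and acyclicity for the quasi-isomorphism claim, is exactly the reasoning the paper has in mind.
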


\section{Twisted coefficients, $L^2$-completions, and applications}
\label{sec:l2-legendr-cont}

In order to deduce information about the fundamental group of an exact Lagrangian cobordism it is necessary to introduce a version of the Cthulhu complex with coefficients twisted by the fundamental group, analogous to that defined for Lagrangian Floer homology in \cite{KFloer} by Sullivan and in \cite{Damian_Lifted} by Damian. Since it is not possible, in general, to make sense of the rank of a module over a group ring, it will also be necessary to introduce an $L^2$-completion of this complex. So-called $L^2$-coefficients were first considered by Atiyah in \cite{AtiyahL2}. We start by describing the version of the complex with twisted coefficients, and we also introduce  the fundamental class in this setting. The fundamental class will be crucial for the proof of Theorems \ref{thm:pi_1carclass} (see Section \ref{sec:proofpi_1carclass}). We then continue by defining the $L^2$-completion of this complex, for which we recall some basic properties. The proof of Theorem \ref{thm:l2rigidity} will use this theory (see Section \ref{sec:proofl2rigidity}).

\subsection{The Chekanov-Eliashberg algebra with twisted
coefficients}
\label{sec:lch-with-homological}
Legendrian contact homology with twisted coefficients has previously been considered in \cite{EkholmSmith}, and a detailed account is currently under development in \cite{Albin}; see also \cite{NonCommAug}.
Here we consider a version of the Chekanov-Eliashberg algebra for a connected Legendrian submanifold $\Lambda \subset P\times \R$ with coefficients the group ring $R[\pi_1(\Lambda)]$, where $R$ is a commutative ring.

Fix a base point $* \in \Lambda$ and write $\pi_1(\Lambda):=\pi_1(\Lambda,*)$ for short. Let $A$ be a unital, not necessarily commutative, ring for which:
\begin{itemize}
\item There is a ring homomorphism $i  \colon R[\pi_1(\Lambda)]\to A$. This induces an $R[\pi_1(\Lambda)]-R[\pi_1(\Lambda)]$-bimodule structure on $A$;
\item There is an augmentation homomorphism $\mathcal{E}  \colon A\rightarrow R$ such that $\Pi:=\mathcal{E}\circ i$ is the standard augmentation $\Pi  \colon R[\pi_1(\Lambda)]\to R$.
  \end{itemize}
By abuse of notation we will see any element $a$ in $R[\pi_1(\Lambda)]$ as an element of $A$ by identifying it with its image under the ring homomorphism $i$.

As an example, we can take $A=R[G]$ the group ring of $G$ over $R.$ Any group homomorphism $\pi_1(\Lambda)\rightarrow G$ then induces a ring homomorphism $R[\pi_1(\Lambda)]\rightarrow R[G]=A$ and the augmentation corresponds to the canonical ring homomorphism $A=R[G]\rightarrow R$. In case when $G=\{1\}$, the construction we describe below will recover the standard Chekanov-Eliashberg DGA.

We denote by  $\mathcal{A}_A(\Lambda)$ (or $\mathcal{A}_G(\Lambda)$ if $A=R[G]$) the tensor algebra over $A$ of the free $A-A$-bimodule generated by the Reeb chords of $\Lambda$. If $A=R$, we denote $\mathcal{A}(\Lambda) = \mathcal{A}_R(\Lambda)$; this is the usual Chekanov-Eliashberg algebra. Recall that $\mathcal{A}_A(\Lambda)$ is generated, as an $R$-module, by elements of the form $$a_1\gamma_1\otimes a_2\gamma_2\otimes\cdots\otimes a_j\gamma_ja_{j+1},$$ where
$\gamma_1, \ldots, \gamma_j$ are Reeb chords and  $a_1,\dots,a_{j+1}\in A$. (The case $j=0$, i.e.\ no chord) is also permitted.

For any Reeb chord $\gamma$ of $\Lambda$, we fix \textit{connecting paths} $\ell_\gamma^e$ and $\ell_\gamma^s$ on $\Lambda$  from the end point and the starting point of $\gamma$, respectively, to the base point $*$. (Such paths exist because we assume that $\Lambda$ is connected.)  A punctured pseudoholomorphic disc $u\in \mathcal{M}(\gamma^+;\gamma_1^-,\gamma_2^-,\ldots,\gamma_k^-)$ determines an element $\mathbf{c}_u$ of ${\mathcal A}_A(\Lambda)$ via the following procedure. Let  $S$ be the domain of $u$ and denote by $\partial_0 S,\ldots, \partial_k S$ the connected components of $\partial S$ ordered counterclockwise starting from the puncture corresponding to $\gamma^+$. We denote by $p$ the canonical projection $\R \times Y \to Y$.
\begin{itemize}
\item For $j\in \{1,\ldots, k-1\}$, we denote by $a_j$ the based loop $(\ell_{\gamma^-_{j}}^s)^{-1}*(p\circ u|_{\partial_j})*\ell_{\gamma^-_{j+1}}^e$;
\item For $j=0$, we denote by $a_j$ the based loop $(\ell_{\gamma^+}^e)^{-1}*(p\circ u|_{\partial_0})*\ell_{\gamma^-_{1}}^e$; and
\item For $j=k$, we denote by $a_j$ the based loop $(\ell_{\gamma^-_k}^s)^{-1}*(p\circ u|_{\partial_k})*\ell_{\gamma^+}^s$.
\end{itemize}
From now on we assume $A=R[\pi_1(\Lambda)]$; the general case is obtained by the change of coefficients induced by $i \colon R[\pi_1(\Lambda)] \to A$. The element $\mathbf{c}_u$ is then given by
$$\mathbf{c}_u=a_0\gamma_1^- a_1\otimes \gamma_2^- a_2\otimes\cdots\otimes \gamma_k^- a_k.$$
The Chekanov-Eliashberg differential on ${\mathcal A}_A(\Lambda)$ is defined on generators by the formula

$$\partial(\gamma^+)=\sum_{\gamma_1,\ldots,\gamma_k}\sum_{u\in\mathcal{M}(\gamma^+;\gamma_1^-,\gamma_2^-,\ldots,\gamma_k^-)} \mathbf{c}_u,$$
where the sum is taken over the rigid components of the moduli spaces. The differential is then extended to the whole algebra using the Leibniz rule.

By an \emph{augmentation} of the Chekanov-Eliashberg DGA we mean a homomorphism of $R[\pi_1(\Lambda)]-R[\pi_1(\Lambda)]$-bimodules  $\varepsilon \colon {\mathcal A}_A(\Lambda) \rightarrow A$ which is a unital ring homomorphism and  $\varepsilon\circ\partial=0$.
\begin{Rem}
\begin{enumerate}
\item An augmentation in this setting is still determined by its values on the Reeb chord generators.
\item Any homomorphism $G \to H$ of groups induces a unital DGA morphism $r \colon \mathcal{A}_G(\Lambda) \to \mathcal{A}_H(\Lambda)$. In particular, when $H$ is the trivial group, we get a canonical DGA homomorphism $r \colon \mathcal{A}_G(\Lambda) \to \mathcal{A}(\Lambda)$. The pre-composition $\widetilde{\varepsilon}:=\varepsilon \circ r$ of an augmentation $\varepsilon$ of $\mathcal{A}(\Lambda)$ is clearly an augmentation of $\mathcal{A}_G(\Lambda)$; this augmentation will be called the \emph{lift of $\varepsilon$}.
\item An exact Lagrangian cobordism $\Sigma$ from $\Lambda^-$ to $\Lambda^+$  induces a unital DGA homomorphism $\widetilde{\Phi}_\Sigma \colon \mathcal{A}_{\pi_1(\Sigma)}(\Lambda^+) \to \mathcal{A}_{\pi_1(\Sigma)}(\Lambda^-)$ with twisted coefficients. In particular, an exact Lagrangian filling induces an augmentation in the group ring of its fundamental group.
\end{enumerate}
\end{Rem}

For any pair of augmentations, the linearisation procedure gives rise to a differential $d_{\varepsilon_0,\varepsilon_1}: LCC_\bullet^{\varepsilon_0,\varepsilon_1}(\Lambda;A) \to LCC_\bullet^{\varepsilon_0,\varepsilon_1}(\Lambda;A)$ on the free $A-A$-bimodule generated by the Reeb chords in the usual way. The map $d_{\varepsilon_0,\varepsilon_1}$ is in this situation a bimodule homomorphism. We again denote the resulting homology by $LCH_\bullet^{\varepsilon_0,\varepsilon_1}(\Lambda;A)$ and call it the \textit{bilinearised Legendrian contact homology with twisted coefficients}.

Moreover, when $\Lambda=\Lambda_0 \sqcup \Lambda_1$ and $\varepsilon_i$ is an augmentation of $\Lambda_i$ for $i=0,1$, we can again define the complex $LCC^\bullet_{\varepsilon_0,\varepsilon_1}(\Lambda_0,\Lambda_1;A)$ which, as a module, is the free right $A$-module generated by the Reeb chords starting on $\Lambda_1$ and ending on $\Lambda_0$, as well as the corresponding cohomology groups $LCH^\bullet_{\varepsilon_0,\varepsilon_1}(\Lambda_0,\Lambda_1;A)$. The result \cite[Proposition 2.7]{Floer_Conc} carries over immediately to this setting, and thus the identification
\[ LCH^\bullet_{\varepsilon_0,\varepsilon_1}(\Lambda,\Lambda';A) = LCH^\bullet_{\varepsilon_0,\varepsilon_1}(\Lambda;A)\]
holds on the level of homology (again, for a suitable small push-off $\Lambda'$ of $\Lambda$, together with a suitable lifted almost complex structure).

 \begin{Rem}
If $\varepsilon_1$ takes values in $R$, then $LCC^\bullet_{\varepsilon_0,\varepsilon_1}(\Lambda,\Lambda';A)$ is a complex of free \emph{right} $A$-modules. In particular, the differential is a right $A$-module morphism (it is defined by multiplication on the left).
This is relevant for the next section, where we will twist coefficients using $R[\pi_1(\Sigma_0)]$ in a way so that the augmentation $\varepsilon_1$ still will take values in $R$.
\end{Rem}

\subsection{The Cthulhu complex with twisted coefficients}
\label{sec: twisted cthulhu}
Let $R$ be a unital commutative ring. We are now ready to define the Cthulhu complex with twisted coefficients for a pair of exact Lagrangian cobordisms $(\Sigma_0,\Sigma_1)$. 
For Lagrangian intersection Floer homology, such a construction has previously been carried out in \cite{KFloer} and \cite{Damian_Lifted}. This was subsequently generalised to
Wrapped Floer homology in \cite[Section 4.2]{Floer_Conc}. 

Let $\Sigma_i \subset \R \times Y$ be exact Lagrangian cobordisms from $\Lambda_i^-$ to $\Lambda^+_i$, $i=0,1$, where both $\Sigma_0$ and $\Lambda^-_0$ are connected.
The ring homomorphisms $R[\pi_1(\Lambda_0^\pm)] \to R[\pi_1(\Sigma_0)]$ coming from the inclusion maps $\{\pm T\}\times\Lambda_0^\pm \rightarrow \Sigma_0$ induce non-free $R[\pi_1(\Lambda_0^+)]-R[\pi_1(\Lambda_0^+)]$ and $R[\pi_1(\Lambda_0^-)]-R[\pi_1(\Lambda_0^-)]$-bimodule structures on $R[\pi_1(\Sigma_0)]$. (Of course all mixed structures are also possible.)

Let $u\in\mathcal{M}(x;\boldsymbol{\zeta},y,\boldsymbol{\delta})$ be a pseudoholomorphic strip involved in the Cthulhu differential $\mathfrak{d}_{\varepsilon_0^-,\varepsilon_1^-}$ as defined in Section \ref{subsec: mixed moduli spaces}.  If $S$ is the domain of $u$ and $\partial_0S,\ldots,\partial_kS$ are the connected components of $\partial S$ which are mapped to $\Sigma_0$, we
 associate to each arcs $\partial_jS$  an element $a_j \in \pi_1(\Sigma_0)$ in the same manner as in the definition of the differential of the Chekanov-Eliashberg algebra with twisted coefficients described above. These paths, together with augmentations $\varepsilon_0^- \colon \mathcal{A}_{\pi_1(\Sigma_0)}(\Lambda^-_0) \to R[\pi_1(\Sigma_0)]$ and $\varepsilon_1 \colon \mathcal{A}(\Lambda^-_1) \to R$, now determine an element
$$c^{\varepsilon_0^-,\varepsilon_1^-}_u=a_1\varepsilon_0^-(\zeta_1)a_2\varepsilon_0^-(\zeta_2)\cdots a_{k-1}\varepsilon_0^-(\zeta_{k-1})a_k\varepsilon_1^-(\boldsymbol{\delta}) \in R[\pi_1(\Sigma_0)].$$
 This construction allows us to define the Cthulhu differential $\mathfrak{d}_{\varepsilon_0^-,\varepsilon_1^-}$ on the non-free $R[\pi_1(\Sigma_0)]$--$R[\pi_1(\Sigma_0)]$-bimodule
$$\Cth(\Sigma_0,\Sigma_1;R[\pi_1(\Sigma_0)]):=\Cth(\Sigma_0,\Sigma_1)\otimes_{R}R[\pi_1(\Sigma_0)]$$
that is induced by the standard (non-free) bimodule structure on the group ring $R[\pi_1(\Sigma_0)]$ by ring multiplication from left and right. First, when $y$ is either a intersection point or a Reeb chord from $\Lambda^-_0$ to $\Lambda^-_1$, we define
$$\mathfrak{d}_{\varepsilon_0^-,\varepsilon_1^-}(y)=\sum_x\sum_{u \in \mathcal{M}(x;\boldsymbol{\zeta},y,\boldsymbol{\delta})} c^{\varepsilon_0^-,\varepsilon_1^-}_ux,$$
where the sum is taken over the rigid components of the moduli space. The formula when $y$ is a Reeb chord from $\Lambda^+_1$ to $\Lambda^+_0$ is similar, but involves the pull-backs $\varepsilon_0^- \circ \widetilde{\Phi}_{\Sigma_0}$ and $\varepsilon_1^- \circ \Phi_{\Sigma_1}$ of the augmentations under the DGA homomorphism induced by the cobordisms with and without twisted coefficients, respectively. The differential is then extended to all of $\Cth(\Sigma_0,\Sigma_1;R[\pi_1(\Sigma_0)])$ as a \emph{right} $R[\pi_1(\Sigma_0)]$-module homomorphism.

The techniques in Section \ref{sec:acyclicity} can be used to prove the following theorem.
\begin{Thm} \label{thm:d2lifted}
The map
$$\mathfrak{d}_{\varepsilon_0^-,\varepsilon_1^-}:\Cth(\Sigma_0,\Sigma_1;R[\pi_1(\Sigma_0)])\rightarrow \Cth(\Sigma_0,\Sigma_1;R[\pi_1(\Sigma_0)])$$
satisfies $\mathfrak{d}_{\varepsilon_0^-,\varepsilon_1^-}^2=0$, %
and moreover 
$$H(\Cth(\Sigma_0,\Sigma_1;R[\pi_1(\Sigma_0)]),\mathfrak{d}_{\varepsilon_0^-,\varepsilon_1^-})=0.$$
\end{Thm}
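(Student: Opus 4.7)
The plan is to adapt the proofs of Theorem \ref{thm:dsquare} and of the acyclicity result in Section \ref{sec:proof-acyclicity} to the twisted setting. The only new ingredient is the careful bookkeeping of based loops in $\pi_1(\Sigma_0)$ attached to the boundary arcs of pseudoholomorphic discs; no new analytic input is required.

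For $\mathfrak{d}_{\varepsilon_0^-,\varepsilon_1^-}^2 = 0$, I would carry out the same term-by-term breaking analysis as in the proof of Theorem \ref{thm:dsquare}. The crucial point is that when two rigid strips $u_1, u_2$ are glued along a common asymptotic, the product $c^{\varepsilon_0^-,\varepsilon_1^-}_{u_1} \cdot c^{\varepsilon_0^-,\varepsilon_1^-}_{u_2}$ agrees with the decoration of the glued disc, since the capping paths appear in consecutive order along the boundary and their concatenation reconstructs the correct element of $\pi_1(\Sigma_0)$. The two potentially troublesome types of contributions come from $\partial$-breakings involving a pseudoholomorphic half-plane with boundary on $\R \times \Lambda_0^-$ or on $\R \times \Lambda_1^-$. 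In the former case, since $\varepsilon_0^-$ is by definition an augmentation of the twisted DGA $\mathcal{A}_{R[\pi_1(\Sigma_0)]}(\Lambda_0^-)$ (in particular $\varepsilon_0^- \circ \partial = 0$ with twisted coefficients), the totality of such broken configurations contributes zero. In the latter case, the boundary of the half-plane lies entirely in $\R \times \Lambda_1^-$, so its contribution to $\pi_1(\Sigma_0)$ is trivial, and the standard augmentation property $\varepsilon_1^- \circ \partial = 0$ suffices. The identity $\varepsilon_i^+ = \varepsilon_i^- \circ \Phi_{\Sigma_i}$ that entered the top-row terms in Theorem \ref{thm:dsquare} is replaced, for $i=0$, by $\varepsilon_0^+ = \varepsilon_0^- \circ \widetilde{\Phi}_{\Sigma_0}$ where $\widetilde{\Phi}_{\Sigma_0}$ is the twisted DGA morphism described at the end of Section \ref{sec:lch-with-homological}; its validity follows from the same SFT continuation argument used in the untwisted case.

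For acyclicity, I would follow the two-step strategy of Section \ref{sec:acyclicity}. First, I would upgrade Proposition \ref{prp:wrap} to the twisted setting: the explicit pseudoholomorphic strips produced by the transfer and co-transfer maps in its proof (as identified in \cite[Theorem 2.15]{Floer_Conc}) project to constants under $\R \times P \times \R \to P$, so their boundary arcs are homotopically trivial in $\Sigma_0$ relative to the chosen capping paths. Hence the canonical identification of graded modules extends to an isomorphism of $R[\pi_1(\Sigma_0)]$--bimodule chain complexes. Second, I would adapt Proposition \ref{prp:invariance-comp} to the twisted setting. After wrapping, the generators are intersection points, and the bifurcation analysis through birth/death and handle-slide moments carries over verbatim, with the handle-slide coordinate change taking the twisted form $K(y) = c^{\varepsilon_0^-,\varepsilon_1^-}_u \, x$ where $c^{\varepsilon_0^-,\varepsilon_1^-}_u \in R[\pi_1(\Sigma_0)]$ is the decoration of the exceptional index $-1$ disc. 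The same gluing argument as in the untwisted case then shows that this is a chain isomorphism. Since the explicit compactly supported Hamiltonian constructed in Section \ref{sec:proof-acyclicity} removes all intersection points of the wrapped pair, acyclicity of the twisted complex follows.

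The main obstacle is combinatorial rather than analytic: one must verify that all the structural maps used in the untwisted argument---notably the transfer and co-transfer maps of Section \ref{sec:conc-cobord}, the cone decomposition of Lemma \ref{lem:inclusion}, and the handle-slide isomorphism---respect the right $R[\pi_1(\Sigma_0)]$-module structure when their boundary decorations are taken into account. This is a matter of carefully tracking the composition of capping paths through each gluing, and choosing a consistent system of capping paths at all stages of the argument.
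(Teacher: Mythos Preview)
Your proposal is correct and follows essentially the same approach as the paper: both reduce the twisted statement to the untwisted arguments of Theorem \ref{thm:dsquare} and Section \ref{sec:acyclicity}, with the key new observation being the multiplicativity $c^{\varepsilon_0^-,\varepsilon_1^-}_{u*v}=c^{\varepsilon_0^-,\varepsilon_1^-}_v\cdot c^{\varepsilon_0^-,\varepsilon_1^-}_u$ of the $\pi_1(\Sigma_0)$-decorations under gluing, together with the vanishing of $\partial$-breakings via the (twisted, for $i=0$) augmentation equation. Your write-up is in fact more detailed than the paper's own proof, which simply states the multiplicativity property and refers back to the untwisted sections without spelling out the wrapping and bifurcation steps in the twisted setting.
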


\begin{proof}
The proof is similar to the  ones in Sections \ref{sec:Cthulu-differential} and \ref{sec:proof-acyclicity}. We only need to observe the following: if $u$ and $v$ are holomorphic strips with matching asymptotics, so that they can be glued to a strip $u*v$, then  $c^{\varepsilon_0^-,\varepsilon_1^-}_{u*v}=c^{\varepsilon_0^-,\varepsilon_1^-}_u \cdot c^{\varepsilon_0^-,\varepsilon_1^-}_v$ is satisfied in the group ring.
\end{proof}

In view of the above theorem, the computations in Section \ref{sec:long-exact-sequence} can be carried over immediately to the case of twisted coefficients. We proceed to explicitly describe the long exact sequence analogous to \eqref{leqtr} in Theorem \ref{thm:lespair}. Let $\Sigma$ be an exact Lagrangian cobordism from $\Lambda^-$ to $\Lambda^+$. Let $\varepsilon^-_{0}$ and $\varepsilon^-_1$ be two augmentations of $\mathcal{A}_{\pi(\Sigma)}(\Lambda^-)$ and $\mathcal{A}(\Lambda^-)$ into $R[\pi(\Sigma)]$ and $R$, respectively. Further, we consider the
pull-back $\varepsilon^+_0:=\varepsilon^-_0 \circ \widetilde{\Phi}_\Sigma$ and $\varepsilon^+_1:=\varepsilon^-_1 \circ \Phi_\Sigma$ of these augmentations.
\begin{Rem}
It is important to note that $\varepsilon_0^+$ need \emph{not} be the lift of an augmentation into $R$ in general, even in the case when $\varepsilon_0^-$ is.
\end{Rem}

Writing $\widetilde{\Sigma}$ for the universal cover of  the compactification $\overline{\Sigma}$ of $\Sigma$,  there is a long exact sequence:
\begin{equation}
    \label{leqtrlifted}
\xymatrixrowsep{0.15in}
\xymatrixcolsep{0.15in}
\xymatrix{
       \cdots\ar[r]& LCH^{k-1}_{\varepsilon^+_0,\varepsilon_1^+}(\Lambda^+;R[\pi_1(\Sigma)]) \ar[d] & & & \\
& H_{n+1-k}({\widetilde{\Sigma}},\partial_- {\widetilde{\Sigma}};R) \ar[r] & LCH^{k}_{\varepsilon^-_0,\varepsilon^-_1}(\Lambda^-; R[\pi_1(\Sigma)]) \ar[d] & \\
& & LCH^{k}_{\varepsilon^+_0,\varepsilon_1^+}(\Lambda^+;R[\pi_1(\Sigma)])\ar[r] &\cdots} .
\end{equation}
The identification of the topological term $H_{n+1-k}({\widetilde{\Sigma}},\partial_- {\widetilde{\Sigma}};R)$ is proven in the same manner as before (see Theorem \ref{thm:twocopy}), while making the observation that the Morse homology of a manifold with coefficients twisted by its fundamental group computes the homology of its universal cover.

Finally, we point out that
$$ LCH^{k}_{\varepsilon^-_0,\varepsilon^-_1}(\Lambda^-; R[\pi_1(\Sigma)])=LCH^{k}_{\varepsilon^-_0,\varepsilon^-_1}(\Lambda^-) \otimes R[\pi_1(\Sigma)]$$
is satisfied when $\Lambda^-$ is simply connected.

\subsection{Augmentations in finite-dimensional non-commutative algebras.}
\label{sec:augm-finite-dimens}
In this subsection we describe how augmentations into non-commutative unital algebras, as used by the second and fourth authors in \cite{EstimNumbrReebChordLinReprCharAlg}, fit into the framework of this article. Since we will be interested in computing the ranks of linearised complexes, we will restrict ourselves to the case when the augmentations take values in algebras which are finite-dimensional as vector spaces over a field $\F$.

A finite-dimensional augmentation of the Chekanov-Eliashberg algebra is a unital DGA homomorphism
\[ \varepsilon \colon \mathcal{A}(\Lambda) \to A, \]
where $A$ is a \emph{not necessarily commutative} unital algebra over the ground field $\F$, seen as a DGA with trivial differential and concentrated in degree $0$. Here $\F$ denotes the field that was used as coefficient ring for $\mathcal{A}(\Lambda)$. Recall that the existence of such a (graded) augmentation is equivalent to the existence of a finite-dimensional representation of the so-called \emph{characteristic algebra}, which is defined as the quotient algebra $\mathcal{A}(\Lambda)/\langle \partial(\mathcal{A})\rangle$ by the two-sided ideal generated by the boundaries (see \cite{Ngcomputable}).

Given two such augmentations
\[\varepsilon_i \colon \mathcal{A}(\Lambda^-_i) \to A_i, \:\:i=0,1,\]
we can define the linearised and Floer complexes as free left $A_0 \otimes_\F (A_1)^{\OP{op}}$-modules or,  put differently, as free $A_0-A_1$ bimodules.

To construct the differentials in this setting one proceeds as in \cite{EstimNumbrReebChordLinReprCharAlg}.
The differentials are of the form
\[d((a_0\otimes a_1)x)=\sum_{y}\sum_{\boldsymbol{\delta}^-,\boldsymbol{\zeta}^-}\#\mathcal{M}(y;\boldsymbol{\delta}^-,x,\boldsymbol{\zeta}^-)\cdot\varepsilon_0(\boldsymbol{\delta}^-)a_0 \otimes a_1\varepsilon_1(\boldsymbol{\zeta}^-)\cdot y,\]
where $x$ and $y$ denote either intersection points or Reeb chords, and where $a_i \in A_i$, $i=0,1$.
\begin{Rem}
This convention tells us that the differential is defined by multiplication of  $A_0 \otimes_\F (A_1)^{\OP{op}}$  from the \emph{left}, and is hence a morphism of \emph{right}  $A_0 \otimes_\F (A_1)^{\OP{op}}$-modules.
\end{Rem}

The long exact sequences in homology for these bimodules now follow verbatim from the proofs in the case when the augmentation is taken into $\F$. It is important to notice that all the complexes above are  finite dimensional as vector spaces over $\F$. More precisely,
\[ \dim_\F LCC^\bullet(\Lambda^\pm_0,\Lambda^\pm_1)=|\mathcal{R}(\Lambda_1^\pm,\Lambda^\pm_0)| \cdot \dim_\F A_0 \cdot \dim_\F A_1,\]
while
\[ \dim_\F H_\bullet(X,Y;A_0 \otimes_\F (A_1)^{\OP{op}})=\dim_\F H_\bullet(X,Y;\F)\dim_\F(A_0)\dim_\F(A_1)\]
holds by the universal coefficients theorem.

\begin{Ex}
There are examples of Legendrian submanifolds which admit augmentations into finite-dimensional non-commutative unital algebras, but which do not admit any augmentation into any commutative unital algebra. We refer to Part (1) of Example \ref{rem:examplesthatanswernonsymqofcsandt} below for  Legendrian torus knots, found by Sivek in \cite{TheContHomofLegKNotswithMAXTBI}, which admit augmentations into the matrix algebra $\mathrm M_{2}(\Z_2)$. The second and the fourth author later used these examples in order to construct plenty of Legendrian submanifolds inside contact spaces $(\R^{2n+1},\xi_{\OP{std}})$ for arbitrary $n\in\mathbb{N}$ whose Chekanov-Eliashberg algebras admit augmentations into $\mathrm M_{2}(\Z_2)$, but not into any commutative algebra.
\end{Ex}

\subsection{The fundamental class and twisted coefficients}
\label{sec:fund-class-twist}
In this section we will introduce the fundamental class in the setting of twisted coefficients. We will prove that this class coincides with a twisted coefficient version of the fundamental class introduced in \cite{Duality_EkholmetAl}. We will also prove that it is functorial under exact Lagrangian cobordisms. In Section \ref{sec:proofpi_1carclass} we will use this functoriality to prove Theorem \ref{thm:pi_1carclass}. In the following we let $\Sigma$ be a connected exact Lagrangian cobordism from $\Lambda^-$ to $\Lambda^+$, where the latter Legendrian submanifolds are connected as well.

\subsubsection{The definition of the fundamental class}
\label{sec:fundamentalclass}
Recall that in Section \ref{sec:seidel} we defined the map
\[G^{\varepsilon^-_0,\varepsilon^-_1}_\Sigma \colon H_\bullet(\Sigma;R) \to LCH^{n-\bullet}_{\varepsilon^+_0,\varepsilon^+_1}(\Lambda^+;R).\]
The underlying chain map of $G^{\varepsilon^-_0,\varepsilon^-_1}_\Sigma$ lifts to the corresponding complexes with twisted coefficients. Namely, we define the chain map by the same count of pseudoholomorphic strips, but where the count takes the homotopy class of the boundary of the strips into account in the manner described in Section \ref{sec: twisted cthulhu}. The lifted map on homology will be denoted by
\[\widetilde{G}^{\varepsilon^-_0,\varepsilon^-_1}_\Sigma \colon H_\bullet(\Sigma;R[\pi_1(\Sigma)]) \to LCH^{n-\bullet}_{\varepsilon^+_0,\varepsilon^+_1}(\Lambda^+;R[\pi_1(\Sigma)]).\]

Observe that this map is $R[\pi_1(\Sigma)]$-linear from the left, and hence can be interpreted as being $\pi_1(\Sigma)$-equivariant in the appropriate sense. Also, we recall that $H_\bullet(\Sigma;R[\pi_1(\Sigma)])$ is isomorphic to $H_\bullet(\widetilde{\Sigma},R)$ of the universal cover $\widetilde{\Sigma} \to \overline{\Sigma}$  (observe that $\Sigma$ and $\overline{\Sigma}$ are homotopy equivalent).

Later we will be particularly interested in the case when $\Lambda^+$ is simply connected and when $\varepsilon^+_i$, $i=0,1$ both are induced from augmentations in the ground ring $R$. In this situation the universal coefficients theorem gives us an identification
\begin{equation}
\label{eq:free}
LCH^\bullet_{\varepsilon^+_0,\varepsilon^+_1}(\Lambda^+;R[\pi_1(\Sigma)])=LCH^\bullet_{\varepsilon^+_0,\varepsilon^+_1}(\Lambda^+;R) \otimes_R R[\pi_1(\Sigma)].
\end{equation}

Choosing a generator $m \in H_0(\Sigma;R[\pi_1(\Sigma)])$, the fundamental class induced by $\Sigma$ is defined to be the image
\[ \widetilde{c}^{\varepsilon^-_0,\varepsilon^-_1}_{\Sigma,m}:=\widetilde{G}^{\varepsilon^-_0,\varepsilon^-_1}_\Sigma(m) \in  LCH^n_{\varepsilon^+_0,\varepsilon^+_1}(\Lambda^+;R[\pi_1(\Sigma)]).\]

Let $\Lambda^+_{+ \epsilon}$ be a $C^1$-small perturbations of positive positive push-off of $\Lambda^+$ by the Reeb flow as defined in Section \ref{sec:computingpushoff}.
If we identify a standard contact neighbourhood of $\Lambda^+$ with the jet space $J^1 \Lambda^+$, we can identify  $\Lambda^+_{+ \epsilon}$ with the jet
$j^1g^+ \subset J^1 \Lambda^+$ of a Morse function $g^+ \colon \Lambda^+ \to \R$. We assume that $g^+$ has  a unique local minimum $m^+ \in \Lambda^+$. We will denote by $m^+$ also the Reeb chord from $\Lambda^+$ to $\Lambda^+_{+ \epsilon}$ induced by it.

There is a twisted ``banana'' chain map
\[ \widetilde{b} \colon LCC^{\epsilon_1^+, \epsilon_0^+}_\bullet(\Lambda^+_{+ \epsilon}, \Lambda^+; R[\pi_1(\Sigma)]) \to  LCC_{\epsilon_0^+, \epsilon_1^+}^{n-1-\bullet}(\Lambda^+, \Lambda^+_{+ \epsilon}; R[\pi_1(\Sigma)]) \]
defined as $\widetilde{b}(\gamma_{0,1}) = \sum \limits _u  c^{\varepsilon^+_0,\varepsilon^+_1}_u\gamma_{1,0}$, where
$$u\in \widetilde{\mathcal{M}}_{\R \times \Lambda^+, \R \times \Lambda^+_{+ \epsilon}} (\gamma_{1,0}, \boldsymbol{\delta}, \gamma_{0,1}, \boldsymbol{\zeta}).$$

The twisted coefficient version of the fundamental class of \cite{Duality_EkholmetAl} is the class
\begin{equation}
\widetilde{c}^{\varepsilon^+_0,\varepsilon^+_1}_{\Lambda^+,m^+} \in LCH_{\varepsilon_0^+,
\varepsilon_1^+}^n(\Lambda^+; R[\pi(\Sigma)])
\end{equation}
defined as the image of $\widetilde{b}(m^+)$ by the identification
\begin{equation}
\label{eq:pushid} LCH^\bullet_{\varepsilon^+_0,\varepsilon^+_1}(\Lambda^+,\Lambda^+_{+ \epsilon}; R[\pi_1(\Sigma)])  \cong LCH^\bullet_{\varepsilon^+_0,\varepsilon^+_1}(\Lambda^+;R[\pi_1(\Sigma)]),
\end{equation}
which holds by \cite[Proposition 2.7]{Floer_Conc}. The following proposition shows that the two definitions of the fundamental class given above in fact coincide. Its proof is postponed until Section \ref{sec:proof-prop-refprp:f}.
\begin{Prop}
\label{prp:fundclasstwisted}Assume that the natural map
$$H_0(\Lambda^+;R[\pi_1(\Sigma)]) \to H_0(\Sigma;R[\pi_1(\Sigma)])$$
sends $m^+ \in H_0(\Lambda^+,R[\pi_1(\Sigma)])$ to $m \in H_0(\Sigma;R[\pi_1(\Sigma)])$. For appropriate choices of almost complex structures and Hamiltonian perturbations in the constructions, there is an identification
\[\widetilde{c}^{\varepsilon^+_0,\varepsilon^+_1}_{\Lambda^+,m^+} = \widetilde{c}^{\varepsilon^-_0,\varepsilon^-_1}_{\Sigma,m} \in LCH^n_{\varepsilon^+_0,\varepsilon^+_1}(\Lambda^+;R[\pi_1(\Sigma)])\]
of fundamental classes.
\end{Prop}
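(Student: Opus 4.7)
The plan is to reduce the proposition to the commutativity of the diagram
\[
\begin{array}{ccc}
H_0(\Lambda^+;R[\pi_1(\Sigma)]) & \xrightarrow{\widetilde{G}^{\varepsilon_0^+,\varepsilon_1^+}_{\R\times\Lambda^+}} & LCH^n_{\varepsilon_0^+,\varepsilon_1^+}(\Lambda^+;R[\pi_1(\Sigma)]) \\
\downarrow & & \parallel \\
H_0(\Sigma;R[\pi_1(\Sigma)]) & \xrightarrow{\widetilde{G}^{\varepsilon_0^-,\varepsilon_1^-}_{\Sigma}} & LCH^n_{\varepsilon_0^+,\varepsilon_1^+}(\Lambda^+;R[\pi_1(\Sigma)])
\end{array}
\]
whose left vertical map is induced by the inclusion $\Lambda^+\hookrightarrow\Sigma$ and by hypothesis sends $m^+$ to $m$. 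Granting the commutativity, the identity $\widetilde{c}^{\varepsilon^+_0,\varepsilon^+_1}_{\Lambda^+,m^+} = \widetilde{c}^{\varepsilon^-_0,\varepsilon^-_1}_{\Sigma,m}$ follows at once from the observation that $\widetilde{G}^{\varepsilon^+_0,\varepsilon^+_1}_{\R\times\Lambda^+}(m^+) = \widetilde{c}^{\varepsilon^+_0,\varepsilon^+_1}_{\Lambda^+,m^+}$ by unwinding the definitions in \cite{Duality_EkholmetAl} and Section \ref{sec:seidel} applied to the trivial cobordism.

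To verify the commutativity, I would first fix the Hamiltonian push-off $\Sigma_1$ of $\Sigma$ so that on the positive cylindrical end it restricts to the cylinder over $\Lambda^+_1=j^1 f^+$, where $f^+$ is Morse with unique local minimum $m^+$; the function $f^+$ would be extended to a Morse function $f$ on the compact part of $\Sigma$ so that $m^+$ remains the unique global minimum. By Theorem \ref{thm:twocopy}, $m^+$ then represents the class $m$ on the chain level. Next I would perform a neck-stretching argument along a hypersurface $\{t=T_0\}\times P \times \R$ placed above the compact part of $\Sigma$. By SFT compactness combined with the framework of Section \ref{sec:conc-cobord}, the rigid strips contributing to $\widetilde{G}^{\varepsilon^-_0,\varepsilon^-_1}_\Sigma(m)$ degenerate in the stretched limit into buildings with a top-level component on $(\R \times \Lambda^+)\cup(\R\times \Lambda^+_1)$ and a middle-level component on $\Sigma \cup \Sigma_1$; in the process the intersection point $m^+$ migrates into the top level as a short mixed Reeb chord $c_{m^+}$ of arbitrarily small action.

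The key observation is that, by the energy bound of Proposition \ref{prop:energy} together with the smallness of $\mathfrak{a}(c_{m^+})$, in any rigid configuration the middle-level component asymptotic to $c_{m^+}$ must be a trivial strip. The remaining middle-level discs carry positive asymptotics at pure Reeb chords on $\Lambda^+\cup \Lambda^+_1$ (matching the corresponding asymptotics of the top-level component) and negative asymptotics at pure chords on $\Lambda^-\cup \Lambda^-_1$; when counted with $\varepsilon^-_0,\varepsilon^-_1$, their contributions amount to applying the pull-back augmentations $\varepsilon^+_i = \varepsilon^-_i\circ\Phi_{\Sigma_i}$ (using the twisted version $\widetilde{\Phi}_{\Sigma_0}$ for the first one) to the pure chord asymptotics of the top-level disc. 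Consequently the weighted sum over rigid degenerations is precisely the count defining $\widetilde{G}^{\varepsilon^+_0,\varepsilon^+_1}_{\R\times\Lambda^+}(m^+)$, establishing the commutativity. Tracking boundary homotopy classes under the concatenation and using the hypothesis $m^+\mapsto m$ ensures that the $R[\pi_1(\Sigma)]$-coefficients match on both sides.

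The hard part will be verifying that no non-trivial middle-level component with a positive asymptotic at $c_{m^+}$ can appear in a rigid limit. This rests on combining the action estimate with a transversality argument in the spirit of Section \ref{sec:transversality}, where one exploits the freedom to take $\mathfrak{a}(c_{m^+})$ strictly smaller than the action of any non-trivial pseudoholomorphic disc on $(\Sigma,\Sigma_1)$ that could potentially arise in a rigid configuration. Given this step, the remainder is a standard SFT gluing argument together with the defining identity of the pull-back augmentations.
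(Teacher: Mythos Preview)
Your approach differs from the paper's, and the neck-stretching step contains a genuine gap. You place the Morse minimum $m$ in the compact part of $\Sigma$ and stretch at $\{t=T_0\}$ above it, where both $\Sigma_0$ and $\Sigma_1$ are already cylindrical. Under SFT neck-stretching the intersection point $m$ remains in the middle level; it does not ``migrate'' to a Reeb chord. A rigid strip contributing to $d_{+0}(m)$ has incoming puncture at $\gamma\in\mathcal{R}(\Lambda^+_1,\Lambda^+_0)$ and outgoing puncture at $m$. If it breaks in the stretched limit, the node must be at a chord $\beta\in\mathcal{R}(\Lambda^+_1,\Lambda^+_0)$ of the \emph{same} type as $\gamma$ (this is forced by the Lagrangian label: the middle component needs a $\Sigma_1\to\Sigma_0$ jump at its incoming puncture), yielding a cylindrical LCH-strip on top and another $d_{+0}$-type strip below. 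An index count (top component has Fredholm index $\ge1$, middle $\ge0$, sum $=0$) then forces the top to be a trivial strip; equivalently, you are concatenating with $W_i=\R\times\Lambda^+_i$ and Lemma~\ref{lem:composition} gives $\Phi_{W_0,W_1}=\id$. The short chord $c_{m^+}\in\mathcal{R}(\Lambda^+_0,\Lambda^+_1)$ never arises as a breaking asymptotic in this picture, so you cannot recover $\widetilde{G}_{\R\times\Lambda^+}(m^+)$ this way.

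The paper argues differently: it studies the boundary of a one-dimensional moduli space of \emph{bananas} on $(\Sigma_0,\Sigma_1)$, namely strips with two positive Reeb-chord asymptotics, one at the short chord $m^+\in\mathcal{R}(\Lambda^+_0,\Lambda^+_1)$ and one at $\gamma$. The short chord is built into the moduli problem from the outset rather than appearing via a degeneration. A carefully engineered push-off (Section~\ref{sec:carefulpushoff}) places the unique Morse minimum $m=(e^{T'},m^+)$ in the positive cylindrical end, and the key Lemma~\ref{lem:m} establishes that the unique rigid disc on $\Sigma_0\cup\Sigma_1$ with positive asymptotic $\gamma_{m^+}$ is a single strip to $m$; this lemma is the bridge linking one boundary stratum to $\widetilde{G}^{\varepsilon^-_0,\varepsilon^-_1}_\Sigma(m)$ and another to the coefficient of $\gamma$ in $\widetilde{c}_{\Lambda^+,m^+}$. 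The remaining strata either vanish (via $\varepsilon^-_i\circ\partial^-=0$ or by Lemma~\ref{lem:m} again) or assemble into the coboundary $d_{\varepsilon^+_0,\varepsilon^+_1}\circ b^{\Sigma_1,\Sigma}(m^+)$. Note also that your preliminary identity $\widetilde{G}_{\R\times\Lambda^+}(m^+)=\widetilde{c}_{\Lambda^+,m^+}$ is not a matter of unwinding definitions: the two sides count curves for different boundary conditions (a $V$-shaped push-off carrying an intersection point versus a purely cylindrical pair with a short Reeb-chord asymptotic), and relating them is exactly the case $\Sigma=\R\times\Lambda^+$ of the proposition you are trying to prove.
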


In Section \ref{sec:proofpi_1carclass} we will see that Theorem \ref{thm:pi_1carclass} is a direct consequence of the above proposition.

\subsubsection{Slightly wrapping the end}
\label{sec:carefulpushoff}

The push-off $\Sigma_{\epsilon h}$ of $\Sigma$ as used in the proof of Theorem \ref{thm:lesduality} (c.f.~Section \ref{sec:seidel}) will here be constructed with additional care. Since $\Sigma_{\epsilon h}$ is $C^1$-close to $\Sigma$, it can be seen as the graph of $dF$ inside a Weinstein neighbourhood of  $\Sigma$ for a $C^2$-small Morse function $F \colon \Sigma \to \R$ with appropriate behaviour outside of a compact subset. The goal in this subsection is to choose this Morse function to be of a particular form, so that the technique from the end of the proof of Proposition \ref{prp:wrap} can be used to compute the relevant part of the  component $d_{+0}$ of the Cthulhu differential. Recall that the latter computation concerns the differential of a pair of trivial cylinders, where one of them has been wrapped by applying the Reeb flow. This techniques will be applicable here as well since we will achieve making the (relevant part of the) push-off, which is the graph of $dF,$ to be given by a small `wrapping' of the end.

\begin{Rem} Recall that a similar argument was used in Section \ref{sec: proof of exact sequences}. The difference here is that the wrapping takes place near the positive end instead of near the negative end, and that we want some additional control of the Morse function $F$ describing the push-off.
\end{Rem}

Here we suppose that $\Sigma$ is cylindrical outside of the subset $[-T,T] \times P \times \R$.
We will take a Hamiltonian $h(t, p, z) = \widetilde{h}(t) + e^t g(t, p)$ as in Section \ref{sec:small-pert-lagr} such that $\widetilde{h}$ is a function as shown in Figure \ref{fig:pushoff-lambda} and $g$ is a $C^2$-small function satisfying $\partial_tg=0$ for $|t| \ge T$. The complex $\Cth(\Sigma,\Sigma_{\epsilon h})$ is of the type considered in the proof of Theorem \ref{thm:lesduality}  because $\Lambda^-_{\epsilon h}= \Lambda^-_{- \epsilon}$ and $\Lambda^+_{\epsilon h}= \Lambda^+_{+ \epsilon}$ , and can thus be used in order to define the Seidel isomorphism.

\begin{figure}[htp]
\centering
 \labellist
 \pinlabel  $\widetilde{h}(t)$ at 90 108
 \pinlabel  $-T$ at 60 122
 \pinlabel  $T$ at 120 122
 \pinlabel  $T+a$ at 143 121
 \pinlabel $1$ at 45 60
 \pinlabel $-1$ at 45 5
 \pinlabel $-1$ at 45 112
 \pinlabel $t$ at 190 136
 \pinlabel $t$ at 190 35
 \pinlabel $e^{-t}\widetilde{h}'(t)$ at 160 70
 \pinlabel $-T$ at 60 22
 \pinlabel $T$ at 120 22
 \pinlabel $T+a$ at 158 21
 \endlabellist
\includegraphics{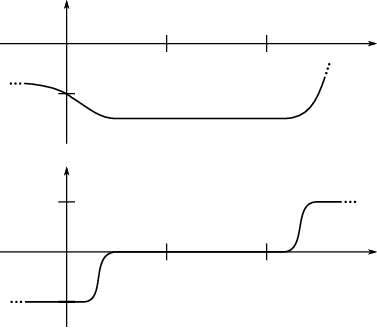}
\caption{The function $\widetilde{h}$.  The corresponding Hamiltonian vector field is $e^{-t}\widetilde{h}'(t)\partial_z$.}
\label{fig:pushoff-lambda}
\end{figure}

\begin{Lem}
\label{lem:morse}
For a suitable function $h$ as above and $\epsilon >0$ sufficiently small, $\Sigma_{\epsilon h}$
is given as a section $dF$ in a Weinstein neighbourhood of $\Sigma$, where $F \colon \Sigma \to \R$ is a Morse function. Furthermore, we may assume that
\begin{enumerate}
\item $\Lambda^+_{\epsilon h}$ is given as a section $j^1{g^+} \subset J^1\Lambda^+$ for a standard Legendrian neighbourhood of the positive end $\Lambda^+$ of $\Sigma$ with $g^+ \colon \Lambda^+ \to (0,\epsilon]$ a small Morse function with a unique minimum $m^+,$
\item $\partial_tF>0$ and $\partial_t F<0$ holds for all $t \gg 0$ and $t \ll 0,$ respectively, and
\item there is a unique local minimum $m$ of $F$ which appears near the slice $\{t = T+a\},$ and which corresponds to the unique minimum $m^+$ of the Morse function $g^+$.
\end{enumerate}
\end{Lem}

The Morse functions $F \colon \Sigma \to \R$ and $g^+ \colon \Lambda^+ \to \R$ both have unique local minima, denoted by $m$ and $m^+,$ respectively. Now we will see that it is also is possible to assume that there is a unique Floer strip connecting the minimum of $F$ with the chord at $+\infty$ corresponding to the minimum of $g^+$. In fact, this Floer strip can be identified with a gradient flow line from the critical point $m^+,$ seen as a saddle point at $+\infty,$ to the global minimum $m$ of $F$.

\begin{Lem}
\label{lem:m}
Let $\Sigma_{\epsilon h}$ be the Hamiltonian push-off of $\Sigma$ described in Lemma \ref{lem:morse}. There exists almost complex structures for which the canonical projection
$$[T, + \infty) \times P \times \R \to P$$
is pseudoholomorphic (in particular, $J$ is a cylindrical lift in the convex end $t \gg 0$) and there is a \emph{unique} and transversely cut out $J$-holomorphic disc having boundary on $\Sigma \cup \Sigma_{\epsilon h}$ and a single positive puncture asymptotic to $\gamma_{m^+}$. This disc is moreover a rigid strip having precisely two punctures, where the second puncture is mapped to the intersection point $m \in \Sigma \cap \Sigma_{\epsilon h}$ corresponding to the unique minimum of the Morse function $F.$
\end{Lem}
\begin{proof}
This follows by the same technique used in the end of the proof of Proposition \ref{prp:wrap}. Also, see the explicit calculation made in \cite[Lemma 8.2]{LiftingPseudoholomorphic} as well as in the proof of \cite[Theorem 2.15]{Floer_Conc}. The key observation is that the boundary of $\Sigma_{\epsilon h} \cap \{ t \le T+ a-\delta\}$ is a submanifold $\Lambda'$ which is Legendrian in $\{ T+ a-\delta \} \times P \times \R$ (for some small $\delta>0$), and therefore $\Sigma_{\epsilon h}$  can be obtained from $\Sigma_{\epsilon h} \cap \{ t \le T+ a- \delta \}$ by concatenation with a small ``wrapping'' of $\R \times \Lambda'$ using a suitable $t$-dependent positive rescaling of the Reeb flow. This is possible because $\partial_tg=0$ if $t \ge 0$.
\end{proof}

\subsubsection{The proof of Proposition \ref{prp:fundclasstwisted} and functoriality}
\label{sec:proof-prop-refprp:f}
\begin{proof}[Proof of Proposition \ref{prp:fundclasstwisted}]
  Let $l_\gamma$ denote the coefficient of the Reeb chord
  $\gamma$ in the fundamental class
  $c^{\varepsilon_0^+,\varepsilon_1^+}_{\Lambda^+,m^+}$. Recall that $l_\gamma$
is given by the count of rigid punctured holomorphic bananas in the moduli spaces of the form
  $\widetilde{\mathcal{M}}_{\R \times \Lambda^+, \R \times  \Lambda^+_{\epsilon h}}(\gamma; \boldsymbol{\delta}, m^+, \boldsymbol{\zeta})$,
  where each strip is counted with the weight
  $\varepsilon^+_0(\boldsymbol{\delta})\varepsilon^+_1(\boldsymbol{\zeta})$.

 Now we consider the moduli spaces ${\mathcal M}_{\Sigma, \Sigma_{\epsilon h}}(\gamma;\boldsymbol{\delta},m^+,\boldsymbol{\zeta})$ of punctured holomorphic bananas with boundary on $\Sigma \cup \Sigma_{\epsilon h}$ having precisely two positive punctures, one of which is asymptotic to the Reeb chord $m^+$ from $\Lambda^+$ to
  $\Lambda^+_{\epsilon h}$ corresponding to the minimum of the Morse function
  $g^+$, and the other one is asymptotic to the Reeb chord $\gamma$ from
  $\Lambda^+_{\epsilon h}$ to $\Lambda^+$. By combining the properties of the non-negativity of the Fredholm index for a generic path of admissible almost complex structures, together with the positivity of the energy, we can conclude the following: the compactification of the index one moduli spaces ${\mathcal M}_{\Sigma, \Sigma_{\epsilon h}}(\gamma; \boldsymbol{\delta}, m^+, \boldsymbol{\zeta}; J_\bullet)$ a priori consists of pseudoholomorphic buildings of the following different kinds when the path of almost complex structures is generically chosen:
  \begin{enumerate}
  \item Pseudoholomorphic buildings with:
    \begin{itemize}
    \item a level consisting of a punctured banana of index one with
      boundary on $\R \times \Lambda^+ \cup \R \times \Lambda^+_{\epsilon h}$
      (which hence is rigid up to translations), and
    \item a level consisting of punctured discs of index
      zero having boundary on either $\Sigma$ or    $\Sigma_{\epsilon h}$.
    \end{itemize}
  \item Pseudoholomorphic buildings with:
    \begin{itemize}
    \item a level consisting of a punctured strip of index one
      having boundary on $\R \times \Lambda^+ \cup \R \times
      \Lambda^+_{\epsilon h}$ (which hence is rigid up to translation) together
      with a trivial strip over a Reeb chord, and
    \item a level consisting of a single punctured banana of
      index zero having boundary on $\Sigma \cup \Sigma_{\epsilon h}$.
    \end{itemize}
  \item Pseudoholomorphic buildings with:
    \begin{itemize}
    \item a level consisting of two punctured strips of index
      zero having boundary on $\Sigma \cup \Sigma_{\epsilon h}$, and
    \item a level consisting of a punctured banana with
      boundary on $\R \times \Lambda^- \cup \R \times \Lambda^-_{\epsilon h}$
      which is of index one (and hence rigid up to translation).
    \end{itemize}
  \item Pseudoholomorphic buildings with:
    \begin{itemize}
    \item a level consisting of a punctured banana of
      index zero having boundary on $\Sigma \cup \Sigma_{\epsilon h}$, and
    \item a  level consisting of a  punctured disc
      of index one with boundary on either $\R \times \Lambda^-$ or $\R \times \Lambda^-_{\epsilon h}$
      (which hence are rigid up to translation), together with
      additional trivial strips over Reeb chords.
    \end{itemize}
  \item A broken punctured strip having boundary on $\Sigma \cup
    \Sigma_{\epsilon h}$ with matching ends asymptotic to an intersection point between $\Sigma$ and $\Sigma_{\epsilon h}$.
  \end{enumerate}
  See Figure \ref{fig:fundclassbreaking} for a schematic picture of
  the above pseudoholomorphic buildings.

A gluing argument implies that the configurations in (1) are in
  bijection with the configurations contributing to the coefficient of $\gamma$ in the fundamental class $\widetilde{c}_{\Lambda^+, m^+}^{\varepsilon_0^+, \varepsilon_0^+}$. Similarly, the count of the configurations in (5) gives the coefficient of $\gamma$ in
  $\widetilde{G}^{\varepsilon^-_0,\varepsilon^-_1}_\Sigma(m)$ by Lemma
  \ref{lem:m}. We proceed to infer that the count of all
  buildings of type (2)-(4) is equal to the coefficient of $\gamma$
  in the expression $d_{\varepsilon^+_0,\varepsilon^+_1}\circ
  {\widetilde{b}}^{\Sigma_1,\Sigma}(m^+)$, from which the sought equality on the
  level of homology now follows. (As usual, all counts above are
  weighted by the augmentations $\varepsilon^-_i$, $i=0,1$.)

  (2): There are two cases: either the non-trivial strip in the top
  level has a positive puncture asymptotic to $m^+$, or it has positive puncture asymptotic to $\gamma$. The former case can
  be excluded by actions reasons, while the count of the latter
  configurations corresponds exactly to the coefficient in front of
  $\gamma$ of the boundary $d_{\varepsilon^+_0,\varepsilon^+_1}\circ
  {\widetilde{b}}^{\Sigma_1,\Sigma}(m_+)$.

  (3): There are no buildings of this type. Namely, by Lemma \ref{lem:m},
  we may assume there are no punctured pseudoholomorphic strips with boundary on
  $\Sigma \cup \Sigma_{\epsilon h}$ having positive asymptotic to the minimum $m^+$ and a negative asymptotic to a Reeb chord from
  $\Lambda^-_0$ to $\Lambda^-_{\epsilon h}$.

  (4): The sum of these contributions vanishes, as follows from the
  fact that $\varepsilon^-_i$, $i=0,1$, vanishes on any boundary of
  the Chekanov-Eliashberg algebras of $\Lambda^-$ and $\Lambda^-_{\epsilon h}$ (see \ref{II} in Section \ref{sec:Cthulhu-complex}). 

\begin{figure}[ht!]
  \centering \vspace{0.5cm}
  \labellist
  \pinlabel (1) at 50 185 \pinlabel (2) at 136 185 \pinlabel (3) at
  223 185 \pinlabel (4) at 315 185 \pinlabel (5) at 400 185 \pinlabel
  $1$ at 24 160 \pinlabel $1$ at 112 160 \pinlabel $0$ at 112 104
  \pinlabel $0$ at 160 160 \pinlabel $0$ at 200 160 \pinlabel $0$ at
  248 160 \pinlabel $0$ at 200 104 \pinlabel $0$ at 248 104 \pinlabel
  $1$ at 200 48 \pinlabel $0$ at 288 160 \pinlabel $0$ at 288 104
  \pinlabel $0$ at 337 160 \pinlabel $1$ at 327 48 \pinlabel $0$ at
  376 160 \pinlabel $0$ at 376 104 \pinlabel $0$ at 425 160 \pinlabel
  $0$ at 425 104 \pinlabel $\Lambda^+\cup\textcolor{red}{\Lambda^+_{\epsilon h}}$
  at -34 140 \pinlabel $\Sigma\cup\textcolor{red}{\Sigma_{\epsilon h}}$ at -34 83
  \pinlabel $\Lambda^-\cup\textcolor{red}{\Lambda^-_{\epsilon h}}$ at -34 30
  \endlabellist
  \includegraphics[scale=0.65]{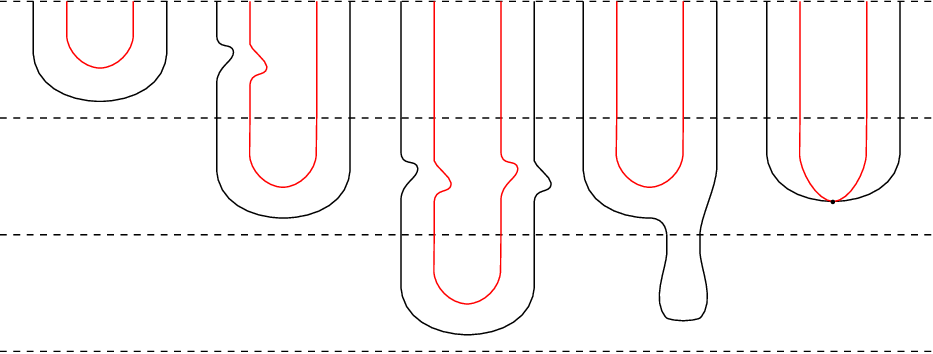}
  \caption{The pseudoholomorphic buildings (1)-(5) described in the
    proof of Proposition \ref{prp:fundclasstwisted}. The number on
    each component denotes its Fredholm index.}
  \label{fig:fundclassbreaking}
\end{figure}
\end{proof}

The argument above allows us to prove the fact (already pointed out in \cite{Ekhoka}) that the fundamental class is functorial with respect to exact Lagrangian cobordisms. Indeed, stretching the neck in the slice $\{t=-T\}$ decomposes the map $G^{\varepsilon^-_0,\varepsilon^-_1}_\Sigma$ into  $\Phi_\Sigma^{\varepsilon_0^-,\varepsilon_1^-}\circ G^{\varepsilon^-_0,\varepsilon^-_1}_{\mathbb{R}\times\Lambda^-}$, where $\Phi_\Sigma$ is the DGA morphism induced by the cobordism. Alternatively, one can also use the long exact sequence produced by Theorem \ref{thm:lesmayer-vietoris} together with Proposition \ref{prp:fundclasstwisted} in order to deduce this. In either case, we have:
\begin{Thm}\cite[Theorem 7.7]{Ekhoka}
\label{prp:fundclassfunct}
Let $\Sigma$ be a connected exact Lagrangian cobordism from $\Lambda^-$ to $\Lambda^+$, and let $\varepsilon^-_i$, $i=0,1$, be augmentations of the Chekanov-Eliashberg algebra of $\Lambda^-$ which pull back to augmentations $\varepsilon^+_i$ under the DGA morphism $\Phi_\Sigma$ induced by $\Sigma$. It follows that
\[\Phi^{\varepsilon^-_0,\varepsilon^-_1}_\Sigma(c^{\varepsilon^-_0,\varepsilon^-_1}_{\Lambda^-,m^-})=c^{\varepsilon^+_0,\varepsilon^+_1}_{\Lambda^+,m^+},\]
i.e.~the fundamental class is preserved under the bilinearised dual of the DGA morphism induced by $\Sigma$, under the additional assumption that the images of $m^\pm$ under the natural maps $H_0(\Lambda^\pm,\F) \to H_0(\Sigma)$ agree.
\end{Thm}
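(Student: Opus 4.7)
The plan is to deduce the statement from Proposition \ref{prp:fundclasstwisted} by establishing a chain level factorisation of the Seidel-type map $G^{\varepsilon^-_0,\varepsilon^-_1}_\Sigma$ via a neck-stretching argument. Applying Proposition \ref{prp:fundclasstwisted} to the trivial cylinder $\R \times \Lambda^-$ yields $G^{\varepsilon^-_0,\varepsilon^-_1}_{\R \times \Lambda^-}(m^-) = c^{\varepsilon^-_0,\varepsilon^-_1}_{\Lambda^-,m^-}$, while applying it to $\Sigma$ itself (using the careful perturbation of Section~\ref{sec:carefulpushoff}) yields $G^{\varepsilon^-_0,\varepsilon^-_1}_\Sigma(m) = c^{\varepsilon^+_0,\varepsilon^+_1}_{\Lambda^+,m^+}$, where $m \in H_0(\Sigma)$ is the distinguished generator. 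The hypothesis that $m^-$ and $m^+$ have the same image in $H_0(\Sigma)$ guarantees that both images coincide with $m$, so it suffices to prove
\[
G^{\varepsilon^-_0,\varepsilon^-_1}_\Sigma \circ i^-_* \; = \; \Phi_\Sigma^{\varepsilon^-_0,\varepsilon^-_1} \circ G^{\varepsilon^-_0,\varepsilon^-_1}_{\R \times \Lambda^-}
\]
as a map from the Morse complex of a Morse function on $\Lambda^-$ to $LCC^{\bullet-1}_{\varepsilon^+_0,\varepsilon^+_1}(\Lambda^+)$.

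To establish this factorisation I would stretch the neck along a hypersurface $\{t = -T_0\}$ with $T_0 \gg T$, i.e.\ well inside the negative cylindrical region where both $\Sigma$ and its $V$-shaped perturbation $\Sigma_1$ agree with (small Reeb push-offs of) $\R \times \Lambda^-$. A pseudoholomorphic strip contributing to the coefficient of $\gamma^+ \in \mathcal{R}(\Lambda^+_1,\Lambda^+_0)$ in $G^{\varepsilon^-_0,\varepsilon^-_1}_\Sigma(i^-_*(m^-))$ must have its input intersection point sitting near the negative end (namely, at the Morse-theoretic image $i^-_*(m^-)$), so by the SFT compactness theorem it limits to a building whose only non-trivial components are: an upper level, being a mixed cobordism LCH strip on $\Sigma_0 \cup \Sigma_1$ with positive mixed asymptote $\gamma^+$, a single negative mixed asymptote $\gamma^- \in \mathcal{R}(\Lambda^-_1,\Lambda^-_0)$, and pure chord negatives augmented by $\varepsilon_0^-,\varepsilon_1^-$; and a lower level, being a holomorphic cultist on $\R \times \Lambda^-_0 \cup \R \times \Lambda^-_1$ with input $m^-$ and output $\gamma^-$, together with pure chord negatives. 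Weighted by the augmentations, the upper count is precisely the $\gamma^+$-coefficient of $\Phi_\Sigma^{\varepsilon^-_0,\varepsilon^-_1}(\gamma^-)$ and the lower count is the $\gamma^-$-coefficient of $G^{\varepsilon^-_0,\varepsilon^-_1}_{\R \times \Lambda^-}(m^-)$, identified with a Reeb chord on $\Lambda^-$ via the canonical bijection of Proposition~\ref{prop:twocopy}. Gluing (Theorem~\ref{thm:gluing}) produces a bijective correspondence between rigid buildings of this form and the rigid strips before stretching, establishing the desired factorisation on the chain level.

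The main obstacle will be ensuring that the SFT limit analysis produces only the two-level buildings described above. One must rule out, by index and action reasons: limits in which extra cylindrical levels of translation-invariant strips appear above the neck (the only such pieces that would be rigid modulo $\R$-translation are trivial strips, contributing identically); bananas in the middle level (excluded by action since we start from a positive-end mixed chord); and degenerations in the upper cylindrical end of $\Sigma$ (which cannot contribute to the $d_{+0}$-component). The $\partial$-breakings involving a pure chord half-plane capped off by $\varepsilon^\pm_i$ are cancelled by $\varepsilon^\pm_i \circ \partial_\pm = 0$, exactly as in point~\ref{II} of Section~\ref{sec:Cthulhu-complex}. Assembling steps, we conclude
\[
\Phi^{\varepsilon^-_0,\varepsilon^-_1}_\Sigma(c^{\varepsilon^-_0,\varepsilon^-_1}_{\Lambda^-,m^-}) = \Phi^{\varepsilon^-_0,\varepsilon^-_1}_\Sigma\bigl(G^{\varepsilon^-_0,\varepsilon^-_1}_{\R\times\Lambda^-}(m^-)\bigr) = G^{\varepsilon^-_0,\varepsilon^-_1}_\Sigma(m) = c^{\varepsilon^+_0,\varepsilon^+_1}_{\Lambda^+,m^+},
\]
as required. (As noted in the remark, an alternative route going through the Mayer--Vietoris sequence \eqref{eq:mayer-vietoris} would give the result by exactness, but it requires the more restrictive hypothesis $\varepsilon^-_0 = \varepsilon^-_1$ together with horizontal displaceability of $\Lambda^-$, so the neck-stretching argument is needed for the stated generality.)
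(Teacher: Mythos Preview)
Your proposal is correct and follows essentially the same approach as the paper: the paper's proof is the one-sentence sketch that stretching the neck along $\{t=-T\}$ factors $G^{\varepsilon^-_0,\varepsilon^-_1}_\Sigma$ as $\Phi_\Sigma^{\varepsilon_0^-,\varepsilon_1^-}\circ G^{\varepsilon^-_0,\varepsilon^-_1}_{\mathbb{R}\times\Lambda^-}$, together with Proposition~\ref{prp:fundclasstwisted} applied to $\Sigma$ and to the trivial cylinder. You have simply fleshed out this sketch, making the implicit $i^-_*$ explicit and carrying out the SFT limit analysis; you also correctly identify (and dismiss for generality reasons) the alternative route via the Mayer--Vietoris sequence that the paper mentions.
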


\begin{Rem} Recall that the fundamental class $c^{\varepsilon_0,\varepsilon_1}_{\Lambda,m}$ is guaranteed to be nonvanishing in homology only when $\varepsilon_0=\varepsilon_1$; we again refer to \cite[Theorem 5.5]{Duality_EkholmetAl}.
\end{Rem}

\subsection{A brief introduction to homology with $L^2$-coefficients.}
\label{sec:l2}
We use the technology of $L^2$-Betti numbers, introduced by Atiyah in \cite{AtiyahL2}, as a tool to study rank properties of Legendrian contact cohomology when the coefficient ring is a (countable) group ring
$\mathbb{C}[\pi]$ for a group $\pi$. This technique  will be used later
in the proof of Theorem \ref{thm:l2rigidity}, see Section \ref{sec:proofl2rigidity}.
The main idea is to replace
$\mathbb{C}[\pi]$, which is not a priori a Noetherian ring, with a more manageable module. Namely, we consider its $L^2$-completion $\ell^2(\pi)$ defined by the set of functions $f\co
\pi\rightarrow\mathbb{C}$ satisfying $\sum_{g\in\pi}|f(g)|^2<\infty$, endowed with its natural structure of a Hilbert space.

We do not intend to give a comprehensive introduction to the subject of $L^2$-homology and $L^2$-dimension,  but we still try to give an understandable overview of the techniques we use. For more details we refer the reader to the book of L{\"u}ck~\cite{L2book} and the introductory paper of Eckmann~\cite{L2intro} as the main references for the results used.

A  \textit{Hilbert $\pi$-module} $V$ is a Hilbert space over $\C$ on which $\pi$ acts by isometries. It is said to be {\em finitely generated} if it admits a $\pi$-equivariant isometric embedding $i \colon V \to \ell^2(\pi)\otimes_{\mathbb{C}}\mathbb{C}^m$ for a some $m\in \mathbb{N}$.
When this is the case, there is also a $\pi$-equivariant orthogonal projection $p \colon \ell^2(\pi)\otimes_{\mathbb{C}}\mathbb{C}^m \to V$ such that $p \circ i = \id_V$.

Morphisms of Hilbert $\pi$-modules are $\pi$-equivariant bounded linear
maps. Given an endomorphism $f\co V\rightarrow V$ of a finitely generated Hilbert module, we define its
\textit{von Neumann trace} by
\begin{equation}
\tr_{\ell^2}(f):=\sum_{i=1}^m\langle \overline{f}(1\otimes e_i),1\otimes e_i \rangle, \label{eq:34}
\end{equation}
where  $\overline{f}:=i\circ f\circ p$ and
$\{e_i\}$ is the standard basis of $\mathbb{C}^m$. A simple computation shows that this trace only depends on $f$ and not on the particular choice of the embedding.

The \textit{von Neumann dimension} of $V$ is ${\dim_{\ell^2}}(V)=\tr_{\ell^2}{\id_V}$, which is a non-negative number bounded from above by $m$, under the assumption that $V$ can be embedded in
$\ell^2(\pi)\otimes_{\mathbb{C}}\mathbb{C}^m$. Note that the von Neumann dimension can take non-integer values.

A sequence of morphisms of Hilbert $\pi$-modules
$$U\xrightarrow{f} V\xrightarrow{g} W$$
is {\em weakly exact} (at $V$) if $\overline{\im f}=\ker g$.
The following basic properties will be crucial:
\begin{Lem}[Theorem 1.12 in \cite{L2book}]
\label{lem:l2basics}
\begin{enumerate}
\item $V=0$ holds if and only if ${\dim_{\ell^2}}(V)=0$;
\item If
$$0\rightarrow U\xrightarrow{i} V\xrightarrow{p} W\rightarrow 0$$
is a short weakly exact sequence, then
${\dim_{\ell^2}}(V)={\dim_{\ell^2}}(U)+{\dim_{\ell^2}}(W)$.
\end{enumerate}
\end{Lem}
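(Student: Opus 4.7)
The plan is to prove the two assertions separately, using standard Hilbert-module operator theory adapted to the $\pi$-equivariant setting.

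For part (1), I would start by fixing an isometric embedding $V\hookrightarrow \ell^2(\pi)\otimes_{\C}\C^m$ and denoting by $p$ the orthogonal projection onto $V$, which is $\pi$-equivariant since $\pi$ acts by isometries preserving $V$. Unpacking the definition \eqref{eq:34} of $\tr_{L^2}$ for the operator $\overline{\id_V}=i\circ p$ gives
\[ \rk_{L^2}(V)=\sum_{i=1}^m \langle p(1\otimes e_i),\,1\otimes e_i\rangle = \sum_{i=1}^m \|p(1\otimes e_i)\|^2, \]
where the last equality uses $p=p^*=p^2$. Under the hypothesis $\rk_{L^2}(V)=0$ each term $p(1\otimes e_i)$ thus vanishes; the $\pi$-equivariance of $p$, combined with the fact that $\pi$ acts by isometries, then yields $p(g\otimes e_i)=g\cdot p(1\otimes e_i)=0$ for every $g\in\pi$ and every $i$. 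Since $\{g\otimes e_i\}_{g,i}$ spans a dense subspace of $\ell^2(\pi)\otimes\C^m$ and $p$ is bounded, this forces $p\equiv 0$, whence $V=\im p=0$. The converse implication is immediate from \eqref{eq:34}.

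For part (2), the first step is to observe that weak exactness at $V$ together with the closedness of $\ker p$ forces $\im i=\overline{\im i}=\ker p$; in particular $i(U)$ is already a closed $\pi$-invariant subspace of $V$. Since $\pi$ acts by isometries, the orthogonal complement $i(U)^\perp$ in $V$ is also $\pi$-invariant, yielding an orthogonal decomposition $V=i(U)\oplus i(U)^\perp$ of Hilbert $\pi$-modules of finite type. Trace additivity on the diagonal blocks of the projection onto $V$ immediately gives
\[ \rk_{L^2}(V)=\rk_{L^2}(i(U))+\rk_{L^2}(i(U)^\perp). \]
It then remains to identify $\rk_{L^2}(i(U))=\rk_{L^2}(U)$ and $\rk_{L^2}(i(U)^\perp)=\rk_{L^2}(W)$. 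For the former, the open mapping theorem promotes the bounded $\pi$-equivariant bijection $i\colon U\to i(U)$ to a topological isomorphism; for the latter, the corestriction $p|_{i(U)^\perp}\colon i(U)^\perp\to W$ is a $\pi$-equivariant bounded injection with dense image, the latter following from (weak) exactness at $W$.

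The main obstacle I anticipate is proving that $\rk_{L^2}$ is invariant under such weak isomorphisms, i.e.~$\pi$-equivariant bounded maps which are injective with dense image. The standard device here is polar decomposition $f=q|f|$ inside the group von Neumann algebra $\mathcal{N}(\pi)$ acting on the ambient $\ell^2(\pi)\otimes\C^m$, where $|f|=(f^*f)^{1/2}$ is positive and $q$ is the resulting partial isometry; both factors are $\pi$-equivariant, the second one by functional calculus. Under the weak-isomorphism hypothesis, $q^*q$ is the projection onto $(\ker f)^\perp$, which equals the source, while $qq^*$ is the projection onto $\overline{\im f}$, which equals the target; the trace identity $\tr_{L^2}(q^*q)=\tr_{L^2}(qq^*)$ then delivers the equality of von Neumann ranks. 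Applied to $i\colon U\to i(U)$ and to $p|_{i(U)^\perp}\colon i(U)^\perp\to W$ this finishes the proof, reducing everything to manipulations of traces of self-adjoint projections once the polar-decomposition machinery is in place.
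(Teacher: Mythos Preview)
The paper does not prove this lemma at all; it is stated with a citation to L\"uck's book \cite{L2book} and used as a black box. So there is no proof in the paper to compare against, and your proposal is in effect supplying a proof where the paper gives none.

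Your argument for part~(1) is correct. In part~(2), however, there is a genuine slip: you assert that weak exactness at $V$ together with the closedness of $\ker p$ forces $\im i=\overline{\im i}=\ker p$. This is not justified. Weak exactness only gives $\overline{\im i}=\ker p$; the image of a bounded injective operator between Hilbert spaces need not be closed (think of a compact injection), so you cannot conclude $\im i=\overline{\im i}$, and hence you cannot invoke the open mapping theorem to make $i\colon U\to i(U)$ a topological isomorphism.

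The fix is immediate using the very machinery you develop at the end. Replace $i(U)$ everywhere by its closure $\overline{i(U)}=\ker p$, so that $V=\ker p\oplus(\ker p)^\perp$ orthogonally. Then $i\colon U\to\overline{i(U)}$ is a $\pi$-equivariant bounded injection with dense image, i.e.\ a weak isomorphism, and your polar-decomposition argument yields $\rk_{L^2}(U)=\rk_{L^2}(\overline{i(U)})$. Likewise $p|_{(\ker p)^\perp}\colon(\ker p)^\perp\to W$ is injective (its kernel is $\ker p\cap(\ker p)^\perp=0$) with dense image by weak exactness at $W$, and the same argument gives $\rk_{L^2}((\ker p)^\perp)=\rk_{L^2}(W)$. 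With this correction your proof goes through.
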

If $(C_\bullet, \partial)$ is a complex of finitely generated free $\C[\pi]$-modules, we can consider its $\ell^2$-completion $(\overline{C}_\bullet, \overline{\partial})$; where $\overline{C}_\bullet= C_\bullet \otimes_{\C[\pi]} \ell^2(G)$ and $\overline{\partial} = \partial
\otimes_{\C[\pi]} \id_{\ell^2(\pi)}$. Then $(\overline{C}_\bullet, \overline{\partial})$ is a complex
of finitely generated Hilbert $\pi$-modules. Its {\em $L^2$-homology}, denoted by $H_\bullet^{(2)}(C_\bullet, \partial)$, is defined as
$$H_\bullet^{(2)}(C_\bullet, \partial) = \ker \overline{\partial} \left / \overline{\OP{im} \overline{\partial}} \right. ;$$
i.e. as the quotient of the subspace of cycles by the {\em closure} of the subspace of boundaries.  It follows from the definition that $H^{(2)}_\bullet(C_\bullet,\partial)$ is also a finitely generated Hilbert $\pi$-module. \color{black}

\begin{Lem}
\label{lem:l2ranks}
If $(C_\bullet, \partial)$ is a complex of finitely generated free $\C[\pi]$-modules, then
\[{\dim_{\ell^2}} H^{(2)}_i (C_\bullet,\partial) \le {\dim_{\C}
C_i \otimes_{\C[\pi]} \C.} \]
Furthermore, for a finite-dimensional complex $(C'_\bullet,\partial')$ over $\C$, we have
\[H^{(2)}_i(C'_\bullet \otimes \C[\pi] ,\partial' \otimes \id_{\C[\pi]})=H_i (C'_\bullet,\partial') \otimes_\C \ell^2(\pi),\]
and thus, in particular,
\[{\dim_{\ell^2}} H^{(2)}_i (C'_\bullet \otimes \C[\pi] ,\partial' \otimes \id_{\C[\pi]}) = \dim_\C H_i (C'_\bullet,\partial'). \]
\end{Lem}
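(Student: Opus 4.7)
The plan is to exploit the basic properties of the von Neumann dimension collected in Lemma~\ref{lem:l2basics}, together with the explicit identification of the completion of a free $\C[G]$-module as a Hilbert $G$-module.

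For the first statement, I would begin by noting that the $L^2$-completion of the finitely generated free $\C[G]$-module $C_i$ is by definition $\overline{C_i} := C_i \otimes_{\C[G]} \ell^2(G)$, which is (non-canonically) isometrically isomorphic to $\ell^2(G)^{\rk_{\C[G]} C_i}$ as a Hilbert $G$-module; consequently $\rk_{L^2}(\overline{C_i}) = \rk_{\C[G]} C_i$. The $L^2$-homology is defined as $H^{(2)}_i(C_\bullet,\partial) = \ker \overline{\partial}_i \big/ \overline{\im \overline{\partial}_{i+1}}$, i.e.\ as the quotient of a closed $G$-invariant subspace of $\overline{C_i}$ by a closed $G$-invariant subspace thereof. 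Two successive applications of the additivity part of Lemma~\ref{lem:l2basics}, applied to the short weakly exact sequences $0\to \ker\overline{\partial}_i \to \overline{C_i}\to \overline{C_i}/\ker\overline{\partial}_i\to 0$ and $0\to \overline{\im\overline{\partial}_{i+1}}\to \ker \overline{\partial}_i\to H^{(2)}_i\to 0$, then yield the desired inequality $\rk_{L^2}H^{(2)}_i(C_\bullet,\partial) \le \rk_{\C[G]}C_i$.

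For the second statement, the key point is that when $C'_\bullet$ is finite dimensional over $\C$, the $L^2$-completion of $C'_\bullet \otimes_\C \C[G]$ is canonically $C'_\bullet \otimes_\C \ell^2(G)$, and the completed differential is simply $\partial' \otimes \id_{\ell^2(G)}$. Since $\partial'$ is a linear map between finite dimensional $\C$-vector spaces, both $\ker \partial'$ and $\im \partial'$ are automatically closed, and tensoring with $\ell^2(G)$ (which is flat in the appropriate Hilbert sense, since we just take a direct sum indexed by a fixed $\C$-basis) gives that the kernel and image of $\partial' \otimes \id$ equal $\ker \partial' \otimes \ell^2(G)$ and $\im \partial' \otimes \ell^2(G)$ respectively, and are \emph{already} closed with no completion necessary. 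Taking the quotient then produces $H_i(C'_\bullet,\partial') \otimes_\C \ell^2(G)$, as claimed.

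Finally, to compute the $L^2$-rank, one observes that if $V$ is a $\C$-vector space of finite dimension $d$, then choosing a $\C$-basis identifies $V \otimes_\C \ell^2(G)$ isometrically with $\ell^2(G)^{d}$ as Hilbert $G$-modules (with $G$ acting trivially on $V$), and from the definition~\eqref{eq:34} of the von Neumann trace one reads off $\rk_{L^2}(V \otimes_\C \ell^2(G)) = d = \dim_\C V$. Applied to $V = H_i(C'_\bullet, \partial')$, this gives the stated equality. I do not anticipate a serious obstacle here: once the identification of the completion is written down the result is essentially formal, the only subtlety being the verification that no closure is needed for the image in the second part, which is what makes the free-$\C[G]$-case so much simpler than the general twisted situation.
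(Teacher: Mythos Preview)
Your argument is correct and matches the paper's proof, which is extremely terse: for the first statement the paper simply cites Lemma~\ref{lem:l2basics} together with the Hodge decomposition \cite[Lemma~1.18]{L2book}, and for the second it says ``direct computation.'' The only cosmetic difference is that where the paper invokes the Hodge decomposition to exhibit $H^{(2)}_i$ as an orthogonal summand of $\overline{C_i}$, you instead chain two short (strongly) exact sequences and apply additivity twice; both routes rest on Lemma~\ref{lem:l2basics} and are interchangeable.
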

\begin{proof}
The first statement follows from Lemma \ref{lem:l2basics} together with the Hodge decomposition in \cite[Lemma 1.18]{L2book}. The second statement follows by a direct computation.
\end{proof}

For a pair of finite CW complexes $(X,Y)$, $Y \subset X$ and a group homomorphism $\varphi \co \pi_1(X) \to \pi$, there is an induced covering $(\widetilde{X},\widetilde{Y}) \to (X,Y)$  with a natural free $\pi$-action. (We do not assume that the total space of a covering is connected.) When $(C_\bullet,\partial)$ is the $\pi$-equivariant cellular complex associated to such a covering, we will write the corresponding $L^2$-homology groups by $H^{(2)}_\bullet(X,Y;\varphi)$ or, by abuse of notation, $H^{(2)}_\bullet(X,Y;\pi)$.

$L^2$-homology for {\em finite}\footnote{The finiteness condition can be relaxed, but not completely removed.} CW-complexes satisfies a version of the exact sequence for the pair and of Mayer-Vietoris sequence.
\begin{Prop} \label{prop: L^2 exact sequences}
Let $X$ be a finite CW-complex and $\varphi \co \pi_1(X) \to \pi$ a group homomorphism.
\begin{enumerate}
\item If $Y \subset X$ is a sub-CW complex and $* \in Y,$ then there is a long weakly exact sequence
$$\to H^{(2)}_{i+1}(X, Y; \pi) \to H^{(2)}_i(Y; \pi) \to H^{(2)}_i(X; \pi) \to H^{(2)}_i(X, Y; \pi)
\to.$$
\item If $X_0, X_1\subset X$ are sub-CW-complexes and $* \in X_0 \cap X_1$, then there is a long weakly exact sequence
\begin{align*}
& \to H^{(2)}_{i+1}(X; \pi) \to \\
& \to H^{(2)}_i(X_0 \cap X_1; \pi) \to H_i^{(2)}(X_0; \pi) \oplus H_i^{(2)}(X_i; \pi) \to H^{(2)}_i(X; \pi) \to.
\end{align*}
\end{enumerate}
\end{Prop}
\begin{proof}[Sketch of proof]
Let $\widetilde{X} \to X$ be the cover induced by $\varphi$ and let $\widetilde{Y}, \widetilde{X}_0, \widetilde{X}_1$ be the preimages of $Y, X_0, X_1$ respectively.
There are $\pi$-equivariant short exact sequences of cellular homology complexes
$$0 \to C_\bullet(\widetilde{Y}) \to C_\bullet(\widetilde{X}) \to C_\bullet(\widetilde{X},
\widetilde{Y}) \to 0$$
and
$$0 \to C_\bullet(\widetilde{X}_0 \cap \widetilde{X}_1) \to C_\bullet(\widetilde{X}_0) \oplus C_\bullet(\widetilde{X}_1) \to C_\bullet(\widetilde{X}) \to 0.$$
If we complete the $\C[\pi]$-modules appearing in the above exact sequences, we obtain
short exact sequences of complexes of finitely generated Hilbert $\pi$-modules. Since complexes of finitely generated Hilbert $\pi$-modules are automatically Fredholm in the sense of \cite[Definition 1.20]{L2book}, we can apply the snake lemma for $L^2$-homology due to Cheeger and Gromov \cite{CheeGro} (see also \cite[Theorem~1.21]{L2book}), from which the long weakly exact sequences follow.
\end{proof}

Let $\widetilde{\Sigma}_0 \to \Sigma_0$ be the covering associated to a group homomorphism $\varphi \co \pi_1(\Sigma_0) \to \pi$ and let $\varepsilon_i^-$, $i=0,1$ be augmentations of the negative ends $\Lambda_i^-$ with values in $\C[\pi]$. Then we can complete the bilinearised Legendrian contact homology complexes $LCC_{\varepsilon_0^\pm, \varepsilon_1^\pm}^\bullet(\Lambda_0^\pm, \Lambda_1^\pm; \pi)$ and the Cthulhu complex $\Cth(\Sigma_0, \Sigma_1; \pi)$ of Section~\ref{sec: twisted cthulhu}. This construction yields
$L^2$ bilinearised Legendrian contact homology groups $LCH^{(2) \bullet}_{\varepsilon_0^\pm, \varepsilon_1^\pm}(\Lambda_0^\pm, \Lambda_1^\pm; \pi)$ and $LCH^{(2) \bullet}_{\varepsilon_0, \varepsilon_1}(\Lambda; \pi)$.

The proof of acyclicity goes through for the completed Cthulhu complex and, since it is a complex of finitely generated Hilbert $\pi$-modules, when either $d_{0-}=0$ or $d_{-0}=0$ (e.g.\ if Lemma \ref{lem: when things work} holds) the $L^2$  Snake Lemma of Cheeger and Gromov yields an $L^2$ version of the exact sequences \eqref{eq:17} and \eqref{eq:18}. Finally, the same arguments of Section
\ref{sec:computingpushoff} go through in the $L^2$ setting, and therefore we have the following proposition.

\begin{Prop}\label{L^2 Cthulhu sequence}
Let $\Sigma$ be a relative Pin exact Lagrangian cobordism with ends $\Lambda^+$ and $\Lambda^-$. If $\varphi \colon \pi_1(\Sigma) \to \pi$ is a group homomorphism, $\varepsilon_0^-$, $\varepsilon_1^-$ are augmentations of $\Lambda_-$ with values in $\C[\pi]$ and $\C$ respectively, and $\varepsilon_0^+ := \widetilde{\Phi}_\Sigma \circ \varepsilon_0^-$, $\varepsilon_1^+ := \Phi_{\Sigma} \circ \varepsilon_1^-$ are their pull-back, then there is a weakly exact sequence
\[
  \xymatrix{
    LCH^{(2) \bullet}_{\varepsilon_0^-, \varepsilon_1^-}(\Lambda^-;\pi) \ar[rr] & &  LCH^{(2) \bullet}_{\varepsilon_0^+, \varepsilon_1^+}(\Lambda^+;\pi)\ar[dl] \\
    & H^{(2)}_\bullet(\overline{\Sigma}, \partial_-\overline{\Sigma};\pi) \ar[ul]
    & }
  \]
\end{Prop}
\color{black}

\section{Applications and examples}
\label{sec:examples}
In this section we deduce all applications mentioned in the introduction of the paper. In addition, we provide explicit examples of Lagrangian cobordisms: both examples to which our results apply, but also examples showing the importance of the different hypotheses used.
We will use $H(X)$ to denote the total homology of $X$, and a similar convention will be used for the Legendrian contact homology.
\subsection{The homology of an endocobordism}
\label{sec:topol-endoc}

The following proofs of Theorems~\ref{homrigidityold} and
\ref{thm:homologycylinder} are similar to the proofs given in
\cite{Rigidityofendo}.

\begin{proof}[Proof of Theorem \ref{homrigidityold}]
We begin by showing the result in the case when $\F=\mathbb{Z}_2$.
(i): First, recall the elementary fact from algebraic topology that
\begin{align}\label{ineqgoeintheformnat}
\dim_{\F} H(\Sigma;\mathbb F) \geq \dim_{\F} H(\Lambda;\mathbb F)
\end{align}
is satisfied, which follows by studying the long exact sequence of the pair $(\overline{\Sigma},\partial\overline{\Sigma})$ together with Poincar\'{e} duality (see \cite[Lemma 2.1]{Rigidityofendo}).

We proceed to prove the opposite inequality
$$\dim_{\F} H(\Sigma;\mathbb F)\leq\dim_{\F}(\Lambda;\mathbb F).$$
Linearised Legendrian contact cohomology satisfies the bound
\[ \dim_\F LCH_{\varepsilon'}(\Lambda) \le |\mathcal{R}(\Lambda)|\]
for any $\varepsilon'$. Thus we can fix an augmentation $\varepsilon$ of $\mathcal{A}(\Lambda;\F)$ satisfying
\begin{align}\label{maxdeffff}
\dim_{\F} LCH_{\varepsilon}(\Lambda;\mathbb F) = \max_{\varepsilon'}\{\dim_{\F} LCH_{\varepsilon'}(\Lambda;\mathbb F)\}.
\end{align}
The exact triangle in Theorem~\ref{thm:lesmayer-vietoris} gives us
\begin{eqnarray*}
\lefteqn{\dim_{\F} LCH_{\varepsilon_+}(\Lambda;\mathbb F) \ge} \\
& \ge & \dim_{\F} LCH_{\varepsilon}(\Lambda;\mathbb F)+\dim_\F H(\Sigma;\mathbb{F})-\dim_\F H(\Lambda;\mathbb{F})
\end{eqnarray*}
where $\varepsilon_+$ is the augmentation of $\mathcal{A}(\Lambda;\F)$ obtained as the pull-back
$\varepsilon_+ :=\varepsilon\circ \Phi_\Sigma$. Formula \eqref{maxdeffff} implies that $\dim_\F H(\Sigma;\mathbb{F})-\dim_\F H(\Lambda;\mathbb{F})\leq 0$. Together with inequality \eqref{ineqgoeintheformnat}, we obtain
\begin{align}\label{equalityofdim}
\dim_{\F} H(\Sigma;\mathbb F)=\dim_{\F} H(\Lambda;\mathbb F).
\end{align}

In order to show that $\dim_{\F} H_i(\Sigma;\mathbb F)=\dim_{\F} H_i(\Lambda;\mathbb F)$ for all $i$, we argue by contradiction, assuming that
\[d_{i_0}(\Sigma):=\dim_{\F} H_{i_0}(\Sigma;\F)-\dim_{\F} H_{i_0}(\Lambda;\F)>0\]
for some $i_0$. By the Mayer-Vietoris sequence we conclude that the inequality
\[\dim_\F H_{i_0}(\Sigma \odot \Sigma;\F) \ge 2\dim_\F H_{i_0}(\Sigma;\mathbb F)-\dim_\F H_{i_0}(\Lambda;\F)\]
holds. In particular,
\[d_{i_0}(\Sigma \odot \Sigma) := \dim_\F H_{i_0}(\Sigma \odot \Sigma;\F) -\dim_{\F} H_{i_0}(\Lambda;\F) \ge 2d_{i_0}(\Sigma),\]
which by induction then leads to a contradiction with equality \eqref{equalityofdim}.  Indeed, after the $k$-th iteration of this argument, we obtain the inequality
$$ \dim_\F H_{i_0}(\Sigma^{\odot 2k};\F) \ge 2^kd_{i_0}(\Sigma),$$
where the right hand side is positive by assumption.

(ii): The argument is the same as the one in the proof of \cite[Theorem 1.6 (ii)]{Rigidityofendo}, and follows form Part (i) applied to the concatenation $\Sigma \odot \Sigma$. Namely the Mayer-Vietoris sequence for the concatenation $\Sigma \odot \Sigma$ seen as two copies of $\overline{\Sigma}$ glued along the boundary component $\Lambda$ shows that
\[ \dim_\F H(\Sigma \odot \Sigma;\F) \ge 2 \dim_\F H(\Sigma;\F) - \dim_\F \im(i^-_*,i^+_*) \]
and by the above result, we conclude that
\[\dim_\F \im(i^-_*,i^+_*) =\dim_\F H(\Sigma;\F)=\dim_\F H(\Lambda;\F),\]
from which the claim follows.

(iii): By contradiction, we assume that $i^+_* \oplus i^-_* \colon H(\Lambda \sqcup \Lambda) \to H(\Sigma)$ is not a surjection. Considering a representative $V \subset H(\Sigma)$ of the cokernel of this map, which hence is of dimension $\dim_\F V > 0$, the Mayer-Vietoris long exact sequence implies that the image of $V \oplus V$ under the map
\[ H(\Sigma) \oplus H(\Sigma) \to H(\Sigma \odot \Sigma) \]
has image of dimension $2 \dim_\F V>0$. Moreover, $V \oplus V$ is not contained in the image of
\[i^+_* \oplus -i^-_* \colon H(\Lambda \sqcup \Lambda) \to H(\Sigma \odot \Sigma).\]
Namely, the above inclusion factorises through the canonical maps as
\[i^+_* \oplus -i^-_* \colon H(\Lambda \sqcup \Lambda) \to H(\Sigma \sqcup \Sigma) \to H(\Sigma \odot \Sigma),\]
where the latter morphism is the one from the above Mayer-Vietoris long exact sequence. In conclusion, the cokernel of
\[i^+_* \oplus i^-_* \colon H(\Lambda \sqcup \Lambda) \to H(\Sigma \odot \Sigma)\]
is of dimension at least $2 \dim_\F V$. Arguing by induction, now we arrive at a contradiction with Part (i).

The proof is now complete for $\Z_2$.
Under the additional assumption that $\Lambda$ is Pin, and admitting an augmentation in an arbitrary field $\F$, Corollary \ref{cor:spin-orient} of Theorem \ref{thm:w_ivanish} will imply that any endocobordism of $\Lambda$ is Pin as well. This allows us to repeat the previous argument with coefficients in the field $\F$. Note that Theorem \ref{thm:w_ivanish} relies on Theorem \ref{homrigidityold} in the particular case $\F=\Z_2$, which can be established without orienting the moduli spaces, and therefore is independent of any assumption on the Stiefel-Whitney classes of $\Sigma$.
\end{proof}

We now prove the following theorem, of which Theorem \ref{thm:homologycylinder} is an immediate corollary. Observe that it can be proved also by using Theorem \ref{homrigidityold}.
\begin{Thm}\label{homrigidity}
Let $\Lambda$ be an $n$-dimensional Legendrian homology sphere inside a contactisation, $\Sigma$ an exact Lagrangian cobordism from $\Lambda$ to itself inside the symplectisation, and $\F$ a field. If $\mathcal{A}(\Lambda;\F)$ admits an augmentation, then $H_\bullet(\Sigma,\Lambda;\F)=0$, i.e.~$\Sigma$ is a $\F$-homology cylinder.
\end{Thm}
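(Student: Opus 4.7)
The plan is to combine the Mayer--Vietoris long exact sequence of Theorem \ref{thm:lesmayer-vietoris} with the maximality trick from the proof of Theorem \ref{homrigidityold}. As a first step, $\Sigma$ is connected: every connected component must have non-empty positive end by Corollary \ref{cor:nonemptypos} and non-empty negative end by Gromov's theorem, and since $\partial_\pm\overline{\Sigma} = \Lambda$ is connected, this forces $\Sigma$ to consist of a single component.

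Next, Theorem \ref{homrigidityold}(i) gives $\dim_{\F} H_k(\Sigma;\F) = \dim_{\F} H_k(\Lambda;\F)$ for each $k$, so that $H_k(\Sigma;\F) \cong \F$ in degrees $k = 0, n$ and vanishes otherwise. From the topological long exact sequence of the pair $(\overline{\Sigma}, \partial_-\overline{\Sigma})$, the vanishing $H_\bullet(\overline{\Sigma}, \partial_-\overline{\Sigma};\F) = 0$ is equivalent to $x := i^-_*[\Lambda] \neq 0$ in $H_n(\Sigma;\F)$.

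The heart of the argument is to show $x \neq 0$. To that end, I would choose an augmentation $\varepsilon$ of $\mathcal{A}(\Lambda;\F)$ maximising $\dim_{\F} LCH^\bullet_{\varepsilon,\varepsilon}(\Lambda;\F)$, and set $\varepsilon^+ := \varepsilon \circ \Phi_\Sigma$. In the Mayer--Vietoris long exact sequence of Theorem \ref{thm:lesmayer-vietoris}, the only possibly nonzero connecting maps $\alpha_k \colon H_{n-k}(\Lambda;\F) \to LCH^k_{\varepsilon,\varepsilon}(\Lambda) \oplus H_{n-k}(\Sigma;\F)$ occur in degrees $k = 0, n$, because $\Lambda$ is a homology sphere. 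The map $\alpha_n$ is automatically injective since its second component $i^-_*\colon H_0(\Lambda;\F) \to H_0(\Sigma;\F)$ is an isomorphism, while the statement in Theorem \ref{thm:lesmayer-vietoris} that the fundamental class of $H_n(\partial_-\overline{\Sigma};\F)$ maps to zero in $LCH^0_{\varepsilon,\varepsilon}(\Lambda)$ forces $\alpha_0$ to be given by $[\Lambda] \mapsto (0, x)$. A rank-nullity count in the long exact sequence, together with the equality $\dim_{\F} H_\bullet(\Sigma;\F) = \dim_{\F} H_\bullet(\Lambda;\F) = 2$, will then yield
\[
\dim_{\F} LCH^\bullet_{\varepsilon^+,\varepsilon^+}(\Lambda;\F) - \dim_{\F} LCH^\bullet_{\varepsilon,\varepsilon}(\Lambda;\F) = \begin{cases} 2 & \text{if } x = 0, \\ 0 & \text{if } x \neq 0. \end{cases}
\]
Since the maximality of $\varepsilon$ forces the left-hand side to be non-positive, we must have $x \neq 0$. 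The analogous vanishing $H_\bullet(\overline{\Sigma}, \partial_+\overline{\Sigma};\F)=0$ then follows either by the symmetric argument applied to the positive end, or via Poincar\'e--Lefschetz duality once orientability of $\Sigma$ has been established through Corollary \ref{cor:nonorien}.

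The main obstacle is the dimension count in the third step: without the vanishing of the fundamental class of $H_n(\Lambda;\F)$ in $LCH^0_{\varepsilon,\varepsilon}(\Lambda)$ supplied by Theorem \ref{thm:lesmayer-vietoris}, the dichotomy between the $x = 0$ and $x \neq 0$ cases would not decouple cleanly and the maximality trick alone would not suffice to extract the result.
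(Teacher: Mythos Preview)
Your argument is correct, but it follows a different route from the paper's own proof. The paper gives a self-contained argument that does \emph{not} invoke Theorem~\ref{homrigidityold}: it concatenates $\Sigma$ with itself $k$ times, applies the exact triangle of Theorem~\ref{thm:lespair} to $\Sigma^{\odot k}$, and uses that for a homology sphere $\Lambda$ one has $\dim_\F H_i(\overline{\Sigma}^{\odot k},\partial_-\overline{\Sigma}^{\odot k};\F)=k\cdot\dim_\F H_i(\overline{\Sigma},\partial_-\overline{\Sigma};\F)$ for $0<i<n+1$. Since both Legendrian contact cohomology groups appearing in the triangle have dimension bounded by $|\mathcal{R}(\Lambda)|$, letting $k\to\infty$ forces the relative homology to vanish in the interior degrees; the extremal degrees are handled by connectedness and Poincar\'e--Lefschetz duality. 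In fact the paper remarks, just before stating Theorem~\ref{homrigidity}, that one can alternatively deduce it from Theorem~\ref{homrigidityold}, which is essentially your route.

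Two comments on your version. First, once Theorem~\ref{homrigidityold}(i) is in hand, there is a shorter path to $i^-_\ast[\Lambda]\neq 0$: part~(iii) of that theorem gives surjectivity of $i^+_\ast\oplus i^-_\ast$ onto $H_n(\Sigma;\F)\cong\F$, so at least one of $i^\pm_\ast[\Lambda]$ is nonzero, and Poincar\'e--Lefschetz duality (orientability via Corollary~\ref{cor:spin-orient}, since a homology sphere is spin) transfers the vanishing of $H_\bullet(\overline{\Sigma},\partial_\pm\overline{\Sigma};\F)$ from one end to the other. Your appeal to the fundamental-class vanishing in Theorem~\ref{thm:lesmayer-vietoris} is correct but brings in a finer geometric input than the problem requires. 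Second, the ``symmetric argument applied to the positive end'' that you allude to is not directly available: the Mayer--Vietoris sequence of Theorem~\ref{thm:lesmayer-vietoris} is built from a specific $V$-shaped push-off and singles out the negative end, so the duality route is the one that actually works there.
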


\begin{proof}
Let $\Sigma^{\odot k}$, $k \ge 1$, be the $k$-fold concatenation of
$\Sigma$ with itself, which again is an exact Lagrangian cobordism
from $\Lambda$ to $\Lambda$. Since $\Lambda$ is a homology sphere it is Pin and, hence, $\Sigma^{\odot k}$ is Pin for all $k \ge 1$ by Corollary \ref{cor:spin-orient}.

We fix an augmentation $\varepsilon$ of
$\mathcal{A}(\Lambda;\F)$ and let $\varepsilon_k$ be the
augmentation of $\mathcal{A}(\Lambda;\F)$ obtained by the pull-back
of $\varepsilon$ under the unital DGA morphism induced by
$\Sigma^{\odot k}$.

The (ungraded version of the) long exact sequence in Theorem \ref{thm:lespair} becomes
\begin{equation}
\label{eq:13}
\xymatrix{
{LCH}_{\varepsilon}(\Lambda) \ar[rr] & &  {LCH}_{\varepsilon_k}(\Lambda)\ar[dl] \\
& {H}(\overline{\Sigma}^{\odot k},\partial_{-}\overline{\Sigma}^{\odot k};\F)  \ar[ul] & }
\end{equation}
Observe that
\[\dim H_i(\overline{\Sigma}^{\odot k},\partial_{-}\overline{\Sigma}^{\odot k};\F)=\begin{cases}
0, & i=0,n+1, \\
k\dim H_i(\overline{\Sigma},\partial_{-}\overline{\Sigma};\F), & 0<i<n+1,
\end{cases}
\]
as follows from the Mayer-Vietoris long exact sequence together with the assumption that $\Lambda$ is a $\F$-homology sphere.

Since the linearised contact cohomology satisfies the bound
\[ \dim_\F LCH_{\varepsilon'}(\Lambda) \le |\mathcal{R}(\Lambda)|\]
for any augmentation $\varepsilon'$, we get the inequality

\[ k\dim H_i(\overline{\Sigma},\partial_{-}\overline{\Sigma};\F) = \dim H_i(\overline{\Sigma}^{\odot k},\partial_{-}\overline{\Sigma}^{\odot k};\F) \le 2 |\mathcal{R}(\Lambda)|, \:\: 0<i<n+1,\]
for each $k$, where the exactness of the above triangle has been
used to show the last inequality. In conclusion, we have established
\[ \dim H_i(\overline{\Sigma},\partial_{-}\overline{\Sigma};\F) = 0, \:\: 0<i<n+1,\]
which finishes the proof.
\end{proof}

\begin{proof}[Proof of Theorem \ref{thm:homologycylinder}]
  Since $\Lambda$ is assumed to have an augmentation over $\mathbb{Z}$ it admits an augmentation over $\mathbb{Q}$ as well. And thus it follows from Theorem \ref{homrigidity} that $H_\bullet(\Sigma,\Lambda;\mathbb{Q})=0$ and thus that $H_\bullet(\Sigma,\Lambda;\mathbb{Z})$ is torsion. The augmentation over $\mathbb{Z}$ also induces an augmentation over any finite field, and thus Theorem \ref{homrigidity} implies that $H_\bullet(\Sigma,\Lambda;\mathbb{Z})$ has no $p$-torsion for any prime $p$. Thus $H_\bullet(\Sigma,\Lambda;\mathbb{Z})=0$.
\end{proof}

\begin{Rem}
Following the discussion in Section \ref{sec:augm-finite-dimens} we get that Theorem \ref{homrigidityold} holds under the weaker assumption that the Chekanov-Eliashberg algebra admits a non-commutative augmentation in a finite-dimensional $\F$-algebra. (The proof is a verbatim reproduction of the precedent.) In the same manner, Theorem \ref{thm:homologycylinder} thus holds under the weaker assumption that the Chekanov-Eliashberg algebra admits a non-commutative augmentation in a finite-rank $\Z$-algebra of characteristic zero.
\end{Rem}

\subsection{Characteristic classes of endocobordisms}
\label{sec:nonorientable}
\begin{proof}[Proof of Theorem~\ref{thm:w_ivanish}]
Recall from Section \ref{sec:remarks-about-grad} that Theorem \ref{homrigidityold} still applies when the cobordism $\Sigma$ is not orientable and has Maslov number one i.e. when the Cthulhu complexes involving $\Sigma$ necessarily are \emph{ungraded}. In this case, however, we obtain exact triangles instead of long exact sequences.

The dual statement of Part (iii) of Theorem \ref{homrigidityold} reads as follows: let $\Sigma$ be an exact Lagrangian endocobordism of $\Lambda$, the map $(i_+^*,i_-^*):H^*(\Sigma,\mathbb{Z}_2)\rightarrow H^*(\Lambda\sqcup \Lambda,\mathbb{Z}_2)$ is injective. Theorem \ref{thm:w_ivanish}  is then an immediate corollary of this and of the naturality of characteristic classes. Theorem \ref{thm:w_ivanish} for the Maslov class and the Pontryagin classes follows similarly, assuming that $\Lambda$ is relatively Pin.
\end{proof}
Theorem \ref{thm:w_ivanish} implies the following.
\begin{Cor}\label{cor:spin-orient}
If $\Lambda$ is orientable (respectively, Pin) and admits an augmentation into a finite-dimensional $\Z_2$-algebra, then any exact endocobordism $\Sigma$ of $\Lambda$ is orientable (respectively, Pin) as well.
\end{Cor}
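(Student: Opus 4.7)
The plan is to deduce the corollary as an immediate consequence of Theorem~\ref{thm:w_ivanish} by translating the topological conditions ``orientable'' and ``spin'' into the vanishing of low-degree Stiefel-Whitney classes. Recall that an $(n+1)$-manifold $M$ is orientable if and only if $w_1(M)=0 \in H^1(M;\Z/2\Z)$, while it is spin if and only if $w_1(M)=0$ and $w_2(M)=0$.

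First I would treat the orientable case. Since $\Lambda$ is orientable we have $w_1(\Lambda)=0$, so Theorem~\ref{thm:w_ivanish} applied with $i=1$ (using the given $\Z/2\Z$-augmentation of $\mathcal{A}(\Lambda;\F)$) yields $w_1(\Sigma)=0$, which by the above characterisation means that $\Sigma$ is orientable. For the spin case, since $\Lambda$ is spin in particular $w_1(\Lambda)=0$ and $w_2(\Lambda)=0$, so Theorem~\ref{thm:w_ivanish} applied separately with $i=1$ and $i=2$ yields $w_1(\Sigma)=0$ and $w_2(\Sigma)=0$, i.e.\ $\Sigma$ is spin.

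The only point that requires a brief remark is the passage from augmentations valued in the field $\Z/2\Z$ to augmentations valued in an arbitrary finite-dimensional $\F$-algebra $A$. As explained in Section~\ref{sec:augm-finite-dimens}, given such an augmentation one can still set up the long exact sequence of Theorem~\ref{thm:lesmayer-vietoris} with coefficients in the bimodule $A\otimes_\F A^{\OP{op}}$, all of whose terms are finite-dimensional over $\F$; comparing $\F$-dimensions as in the proof of Theorem~\ref{homrigidityold} gives the injectivity statement of Theorem~\ref{homrigidityold}(iii) in this more general setting. Dualising then produces the surjectivity of $(i_+^*,i_-^*)\colon H^\bullet(\Sigma;\F)\oplus H^\bullet(\Sigma;\F)\to H^\bullet(\Lambda;\F)$ on which the proof of Theorem~\ref{thm:w_ivanish} relies, so the characteristic class computation goes through unchanged. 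No step here is hard; the content of the corollary is entirely contained in Theorem~\ref{thm:w_ivanish}, and the argument is a one-line reduction to the appropriate low-degree Stiefel-Whitney class.
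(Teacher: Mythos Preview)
Your approach is correct and matches the paper's: the corollary is an immediate consequence of Theorem~\ref{thm:w_ivanish} applied to $w_1$ (for orientability) and to $w_1,w_2$ (for spin), and the extension to finite-dimensional algebra augmentations goes through Section~\ref{sec:augm-finite-dimens} exactly as you indicate. One small slip in your last paragraph: the dual of Theorem~\ref{homrigidityold}(iii) is the \emph{injectivity} of $(i_+^*,i_-^*)\colon H^\bullet(\Sigma;\F)\to H^\bullet(\Lambda;\F)\oplus H^\bullet(\Lambda;\F)$ (not surjectivity, and with domain and codomain swapped from what you wrote); this injectivity is precisely what forces $w_i(\Sigma)=0$ once both restrictions $i_\pm^*w_i(\Sigma)=w_i(\Lambda)$ vanish.
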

This result can be seen as a generalisation of the result of Capovilla-Searle and Traynor, see \cite[Theorem 1.2]{NonorLagcobbetLegknots}.

\begin{Ex}
Recall that a Legendrian knot in the standard contact $\R^3$ for which the Kauffman bound on $\mathtt{tb}$ is not sharp does not admit an augmentation in a commutative ring \cite{KauffmanBound}.
\begin{enumerate}\label{rem:examplesthatanswernonsymqofcsandt}
\item
Consider the family of the Legendrian representatives of torus $(p,-q)$-knots $\Lambda_{(p,-q)}\subset \R^3$ with $q>p\geq 3$ and $p$ odd described by Sivek in \cite[Figure 3]{TheContHomofLegKNotswithMAXTBI}.
Following Sivek \cite{TheContHomofLegKNotswithMAXTBI}, we observe that $\mathtt{tb}(\Lambda_{(p,-q)})=-pq$ and, hence, from the classification result of Etnyre and Honda \cite{Etnyre_&_Knots_Contact1} it follows that $\Lambda_{(p,-q)}$ is $\mathtt{tb}$-maximising. Recall that Sivek \cite{TheContHomofLegKNotswithMAXTBI} proved that the Chekanov-Eliashberg algebra of $\Lambda_{(p,-q)}$ admits a $2$-dimensional representation over $\Z_2$, but for which the Kauffman bound on $\mathtt{tb}$ is not sharp. Therefore, these Legendrian knots do not admit non-orientable exact Lagrangian endocobordisms.
\item
Consider $\Lambda_{(p,-q)}\#\Lambda$, where $p$ is odd, $q>p\geq 3$, and let $\Lambda$ be a $\mathtt{tb}$-maximising Legendrian knot of $\R^3$ whose Chekanov-Eliashberg algebra admits an augmentation (or, more generally, $m$-dimensional linear representation) over $\Z_2$. Then, following the discussion in \cite[Lemma 4.3]{EstimNumbrReebChordLinReprCharAlg}, we see that the Kauffman bound for $\Lambda_{(p,-q)}\#\Lambda$ is not sharp and that the Chekanov-Eliashberg algebra of $\Lambda_{(p,-q)}\#\Lambda$ admits a finite-dimensional linear representation over $\Z_2$. In addition, from the fact that $\Lambda_{(p,-q)}$ and $\Lambda$ are $\mathtt{tb}$-maximising, together with \cite[Corollary 3.5]{Etnyre_&_Connected_Sums} (or \cite[Theorem 1.1]{AdditivityofTBofLegknots}), it follows that $\Lambda_{(p,-q)}\#\Lambda$ also is $\mathtt{tb}$-maximising. This leads us to many other examples, besides $\Lambda_{(p,-q)}$, which do not admit non-orientable exact Lagrangian endocobordisms.
\end{enumerate}
\end{Ex}

\begin{Rem}
The above examples provide a negative answer to a question of Capovilla-Searle and Traynor, see \cite[Question 6.1]{NonorLagcobbetLegknots}: it is not necessarily the case that a Legendrian knot admits a non-orientable endocobordism in the case when its Kauffman bound on $\mathtt{tb}$ is not sharp.
\end{Rem}

There is also an example due to Sivek, see \cite[Sections 2.2 and 3]{TheContHomofLegKNotswithMAXTBI}, of a $\mathtt{tb}$-maximising knot with non-sharp Kauffman bound on $\mathtt{tb}$, whose Chekanov-Eliashberg algebra does not admit a finite-dimensional linear representation over $\Z_2,$ but which does admit a representation in a infinite-dimensional algebra. Unfortunately, in this case our methods do not provide an obstruction to the existence of a topologically non-trivial endocobordism.

\subsection{Restrictions on the fundamental group of an endocobordism between simply connected Legendrians}
\label{sec:fond-group-some}

We now prove the results concerning the fundamental groups of endocobordisms between simply connected Legendrian submanifolds.

\subsubsection{Proof of Theorem \ref{thm:pi_1carclass}}
\label{sec:proofpi_1carclass}
\begin{proof}[Proof of Theorem \ref{thm:pi_1carclass}]
  Recall the construction of the fundamental class in the setting of
  twisted coefficients carried out in Section
  \ref{sec:fund-class-twist}. The proof will be a straightforward
  consequence of Proposition \ref{prp:fundclasstwisted} therein.

 From the assumptions of the theorem, the Legendrian submanifold
  $\Lambda^+$ has a unique augmentation $\varepsilon^+$. It follows that \cite[Theorem
  5.5]{Duality_EkholmetAl} can be applied, and hence the fundamental
  class
  $\widetilde{c}^{\varepsilon^+,\varepsilon^+}_{\Lambda^+,m^+}$ is
  non-vanishing. By Proposition \ref{prp:fundclasstwisted} we,
  moreover, conclude that this fundamental class is the image of a
  generator $m$ of $H_0(\Sigma;R[\pi_1(\Sigma)])$ under the map
  $\widetilde{G}^{\varepsilon^-,\varepsilon^-}_\Sigma.$ Since
  $\Lambda^+$ is simply connected by assumption, it follows from
  \eqref{eq:free} above that this image is not torsion. In particular,
$$g\cdot \widetilde{c}^{\varepsilon^+,\varepsilon^+}_{\Lambda^+,m^+}\not=\widetilde{c}^{\varepsilon^+,\varepsilon^+}_{\Lambda^+,m^+},\:\: \forall g\in\pi_1(\Sigma).$$
    Thus, $m$ is not torsion either, and since it generates
  $H_0(\Sigma;R[\pi_1(\Sigma)])$ we conclude that
    $H_0(\Sigma;R[\pi_1(\Sigma)])=R[\pi_1(\Sigma)]$.
    However, since $\widetilde{\Sigma}$ is connected, we know that
    $H_0(\Sigma;R[\pi_1(\Sigma)])=H_0(\widetilde{\Sigma})=R$. In
  other words, $\pi_1(\Sigma)$ is the trivial group, as sought.
\end{proof}
\subsubsection{Proof of Theorem \ref{thm:l2rigidity}}
\label{sec:proofl2rigidity}
\begin{proof}[Proof of Theorem \ref{thm:l2rigidity}]
  Here it will be crucial to use the machinery of $L^2$-coefficients
  as described in Section \ref{sec:l2}. We will denote $\pi=\pi_1(\Sigma)$.
Since $\Lambda$ is Pin by assumption, it follows from Corollary \ref{cor:spin-orient} that the $k$-fold concatenated cobordisms $\Sigma^{\odot k}$ are Pin for all $k \ge 1$.

Let $\overline{\Sigma}^{\odot k}$ be the quotient of $\sqcup_{i=1}^k \overline{\Sigma}_i$, $\overline{\Sigma}_i \cong \overline{\Sigma}$, which identifies $\partial_+(\overline{\Sigma}) \subset \overline{\Sigma}_i$ with $\partial_-(\overline{\Sigma}) \subset \overline{\Sigma}_{i+1}$. We will write $\partial \overline{\Sigma}^{\odot k}=\partial_-\overline{\Sigma}^{\odot k} \sqcup \partial_+\overline{\Sigma}^{\odot k}$, where $\partial_-\overline{\Sigma}^{\odot k} = \partial_- \overline{\Sigma}_1$, $\partial_+\overline{\Sigma}^{\odot k} = \partial_+ \overline{\Sigma}_k$.

Further, consider the covering space $\widetilde{\Sigma}^{\odot k} \to \overline{\Sigma}^{\odot k}$ obtained by gluing the boundary of the universal cover $\sqcup_{i=1}^k \widetilde{\Sigma}_i \to \sqcup_{i=1}^k \overline{\Sigma}_i$ in a $\pi$-equivariant way via the identification of the induced cover
\[ \widetilde{\Sigma}_i \supset \bigsqcup_{g \in \pi} \partial_+(\overline{\Sigma})\to \partial_+(\overline{\Sigma}) \subset \overline{\Sigma}_i\]
with the induced cover
\[  \widetilde{\Sigma}_{i+1} \supset \bigsqcup_{g \in \pi} \partial_-(\overline{\Sigma})\to \partial_-(\overline{\Sigma}) \subset \overline{\Sigma}_{i+1}.\]
Observe that the covering $\widetilde{\Sigma}^{\odot k} \to \overline{\Sigma}^{\odot k}$ obtained is induced by a group epimorphism
\[ \pi_1(\overline{\Sigma}^{\odot k}) {\cong} \underbrace{\pi * \hdots * \pi}_{k} \to \pi = \pi_1(\overline{\Sigma}) \]
given by multiplying all elements in a word.

  First,  we will consider the case
  $|\pi_1(\Sigma)|<\infty$. Under
  this assumption, the version of the long exact sequence in Theorem
  \ref{thm:lespair} applied to the system of local coefficients
  induced by the covering $\widetilde{\Sigma}^{\odot k} \to \overline{\Sigma}^{\odot k}$ (see Section \ref{sec: twisted cthulhu}) becomes
\begin{equation}
    \label{eq:13pi1}
    \xymatrix{
      LCH^\bullet_{\varepsilon, \varepsilon}(\Lambda;\C[\pi]) \ar[rr] & &  LCH^\bullet_{{\widetilde{\varepsilon}_k,} \varepsilon_k}(\Lambda;\C[\pi])\ar[dl] \\
      & H_\bullet(\widetilde{\Sigma}^{\odot k},\partial_-\widetilde{\Sigma}^{\odot k};\C).  \ar[ul] & }
  \end{equation}
  Here the augmentation $\varepsilon_k$ is the pull-back of the
  augmentation $\varepsilon$ under the unital DGA morphism induced by
  $\Sigma^{\odot k}$ and $\widetilde{\varepsilon}_k$ is the twisted
pull-back of $\varepsilon$.

  For $k \gg 0$ sufficiently large, unless $|\pi_1(\Sigma)|=1$ the
  equality
  \[ \dim_\C(H_1(\widetilde{\Sigma}^{\odot
    k},\partial_-{\widetilde{\Sigma}^{\odot k}};\C)) =
  (|\pi_1(\Sigma)|-1)k, \:\:k \ge 1,\] proved using the Mayer-Vietoris exact sequence,
  together with the universal bound
  \[\dim_\C
  LCH^\bullet_{\varepsilon_0,\varepsilon_1}(\Lambda;\C[\pi_1(\Sigma)])
  \le |\pi_1(\Sigma)||\mathcal{R}(\Lambda)|\] gives a contradiction.

  It remains to show that $|\pi_1(\Sigma)|$ is finite. Assuming the
  contrary, we apply the long weakly exact sequence from Proposition
\ref{L^2 Cthulhu sequence} to the cobordisms $\Sigma^{\odot k}$ to obtain the following weakly exact triangle:
  \[
  \xymatrix{
    LCH^{(2) \bullet}_{{\varepsilon}, \varepsilon}(\Lambda; \pi) \ar[rr] & &  LCH^{(2) \bullet}_{{\widetilde{\varepsilon}_k,} \varepsilon_k}(\Lambda; \pi)\ar[dl] \\
    & H^{(2)}_\bullet(\overline{\Sigma}^{\odot
      k},\partial_-\overline{\Sigma}^{\odot k};\pi). \ar[ul]
    & }
  \]
  The inequality
  \[ {\dim_{\ell^2}}(H^{(2)}_1(\overline{\Sigma}^{\odot
    k},\partial_-{\overline{\Sigma}^{\odot k}};\pi_1(\Sigma))) \ge
  k, \] shown in Lemma \ref{lem:rankgrowth} below, together with the
  universal bound
  \[{\dim_{\ell^2}} LCH^{(2) \bullet}_{\varepsilon'}(\Lambda) \le
  |\mathcal{R}(\Lambda)|, \] which follows by Lemma \ref{lem:l2ranks},
  finally gives  a contradiction. It thus follows that $\pi_1(\Sigma)$ is
  finite, and therefore trivial by the previous argument.
\end{proof}
\subsubsection{Estimating the first $L^2$-Betti number of a tower}
We finish the proof of Theorem \ref{thm:l2rigidity}.
\begin{Lem}
\label{lem:rankgrowth}
If the fundamental group $\pi$ is infinite, then the $L^2$-homology group $H^{(2)}_1(\overline{\Sigma}^{\odot k},\partial_-{\overline{\Sigma}^{\odot k}};\pi)$
satisfies
\[ {\dim_{\ell^2}}(H^{(2)}_1(\overline{\Sigma}^{\odot k},\partial_-{\overline{\Sigma}^{\odot k}};\pi)) \ge k.\]
\end{Lem}
\begin{proof}
 Lemma \ref{lem:l2ranks} implies that
$$ H^{(2)}_0(\partial_\pm \overline{\Sigma};\pi)=\ell^2(\pi) \:\: \text{and} \:\: H^{(2)}_1(\partial_\pm \overline{\Sigma};\pi)=0,$$
since $\partial_\pm \overline{\Sigma}$ are  connected and  simply connected. Observe that we also have
\[H^{(2)}_0(\overline{\Sigma}^{\odot k};\pi)=0, \:\:k \geq 1,\]
as follows from \cite[Theorem 1.35(8)]{L2book}, using the fact that $\pi$ is infinite. The long weakly exact sequence of a pair (Proposition~\ref{prop: L^2 exact sequences}(1)) immediately implies the base case
\[{\dim_{\ell^2}}H^{(2)}_1(\overline{\Sigma}^{\odot 1},\partial_-{\overline{\Sigma}^{\odot 1}};\pi) ={ \dim_{\ell^2}}H^{(2)}_1(\overline{\Sigma},\partial_-{\overline{\Sigma}};\pi)\ge 1\]
as well as the vanishing
\[H^{(2)}_0(\overline{\Sigma}^{\odot k},\partial_-{\overline{\Sigma}^{\odot k}};\pi)=0.\]
The Mayer-Vietoris long weakly exact sequence  (Proposition~\ref{prop: L^2 exact sequences}(2))
\begin{eqnarray*}
\lefteqn{\hdots \to H^{(2)}_1(\partial_-{\overline{\Sigma}};\pi) \to } \\
& \to & H^{(2)}_1(\overline{\Sigma};\pi) \oplus H^{(2)}_1(\overline{\Sigma}^{\odot (k-1)},\partial_-{\overline{\Sigma}^{\odot (k-1)}};\pi) \to\\
& \to & H^{(2)}_1(\overline{\Sigma}^{\odot k},\partial_-{\overline{\Sigma}^{\odot k}};\pi) \\
& \to & H^{(2)}_0(\partial_-{\overline{\Sigma}};\pi) \to 0 \to \hdots,
\end{eqnarray*}
together with $H^{(2)}_1(\partial_-{\overline{\Sigma}};\ell^2(\pi))=0$ and \cite[Theorem 1.12(2)]{L2book} gives that
\begin{eqnarray*}
\lefteqn{{\dim_{\ell^2}} H^{(2)}_1(\overline{\Sigma}^{\odot k},\partial_-{\overline{\Sigma}^{\odot k}};\pi) \ge } \\
& \ge & {\dim_{\ell^2}} H^{(2)}_1(\overline{\Sigma}^{\odot (k-1)},\partial_-{\overline{\Sigma}^{\odot (k-1)}};\pi)+{\dim_{\ell^2}} H^{(2)}_0(\partial_-{\overline{\Sigma}};\pi).
\end{eqnarray*}
Since ${\dim_{\ell^2}} H^{(2)}_0(\partial_-{\overline{\Sigma}};\pi)=1$, the claim now follows by induction.
\end{proof}
\subsection{Explicit examples of Lagrangian cobordisms}
\label{sec:some-expl-lagr}
In this subsection we discuss some examples to illustrate the applications of this section. We start by recalling a few general constructions of Legendrian submanifolds and exact Lagrangian cobordisms.
\subsubsection{A Legendrian ambient surgery on the front-spin}
\label{sec:frontspin}
The front $S^m$-spinning construction described in \cite{NoteSpin} by the fourth author constructs a Legendrian embedding $\Sigma_{S^m}\Lambda \subset (\R^{2(m+n)+1},\xi_{\OP{std}})$ of $S^m \times \Lambda$, given a Legendrian embedding $\Lambda \subset (\R^{2n+1},\xi_{\OP{std}})$. In the same article, it was also shown that the same construction can be applied to an exact Lagrangian cobordism $\Sigma \subset \R \times \R^{2n+1}$ from $\Lambda^-$ to $\Lambda^+$ inside the symplectisation, producing an exact Lagrangian cobordism $\Sigma_{S^m}\Sigma \subset \R \times \R^{2(n+m)+1}$ from $\Sigma_{S^m}\Lambda^-$ to $\Sigma_{S^m}\Lambda^+$ that is diffeomorphic to $S^m \times \Sigma$.

Consider a Legendrian knot $\Lambda \subset (\R^3,\xi_{\OP{std}})$. Its left-most cusp edge in the front projection for a generic representative corresponds to a cusp edge diffeomorphic to $S^m$ in the front projection of the front spin $\Sigma_{S^m}\Lambda \subset (\R^{2m+3},\xi_{\OP{std}})$. Moreover, this cusp edge bounds an obvious embedding of an isotropic $(m+1)$-disc $D \subset (\R^{2(m+n)+1},\xi_{\OP{std}})$ whose interior is disjoint from $\Sigma_{S^m}\Lambda$, while its boundary coincides with this cusp edge; see Figure \ref{fig:beforesurgery}.

\begin{figure}[ht!]
  \centering
  \labellist
  \pinlabel $z$ at 33 138
  \pinlabel $x_1$ at -2 35
  \pinlabel $x_2$ at 96 76
  \pinlabel $D$ at 175 77
  \pinlabel $\Sigma_{S^1}\Lambda$ at 175 135
  \pinlabel $\Lambda$ at 313 77
  \endlabellist
  \includegraphics[height=3.5cm]{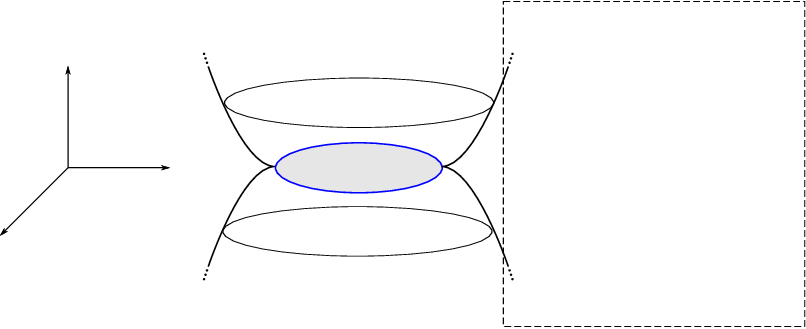}
  \caption{The front projection of the front spin $\Sigma_{S^1}\Lambda \subset (\R^5,\xi_{\OP{std}})$ near the left-most cusp of $\Lambda \subset(\R^3,\xi_{\OP{std}}) $. The corresponding cusp-edge for the front projection of the front spin bounds an obvious embedded Legendrian disc $D$ intersecting $\Sigma_{S^1}\Lambda$ cleanly along this cusp edge.}
  \label{fig:beforesurgery}
\end{figure}

A Legendrian ambient $m$-surgery, described in \cite{LegendrianAmbient} by the second author, can be performed on the sphere $S^m \hookrightarrow \Sigma_{S^m}\Lambda$ corresponding to the cusp edge $\partial D$, utilising the bounding Legendrian disc $D$. The Legendrian submanifold $\Lambda^+ \subset (\R^{2(m+n)+1},\xi_{\OP{std}})$ resulting from the surgery has the front projection shown in Figure \ref{fig:aftersurgery} in the case of $m=1=\dim \Lambda$. Recall that there also is a corresponding elementary Lagrangian $(m+1)$-handle attachment, which is an exact Lagrangian cobordism from $\Sigma_{S^m}\Lambda$ to the Legendrian submanifold $\Lambda^+$ obtained after the surgery. Topologically, this cobordism is simply the handle attachment corresponding to the surgery.

\begin{figure}[ht!]
  \centering
  \labellist
  \pinlabel $z$ at 33 138
  \pinlabel $x_1$ at -2 35
  \pinlabel $x_2$ at 96 76
  \pinlabel $\Lambda_+$ at 175 135
  \pinlabel $\Lambda$ at 313 77
  \endlabellist
  \includegraphics[height=3.5cm]{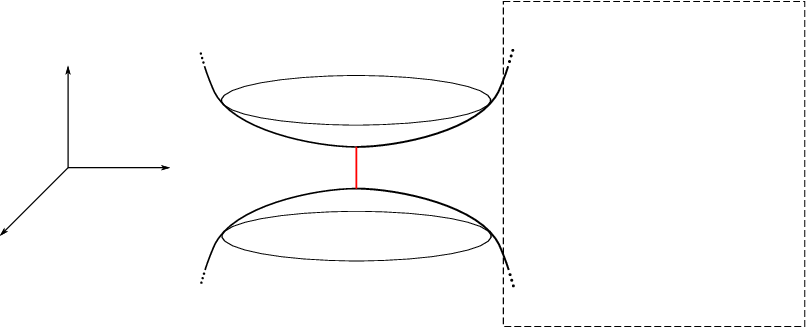}
  \caption{The front projection of the Legendrian submanifold $\Lambda^+ \subset (\R^5,\xi_{\OP{std}})$ obtained after a Legendrian ambient surgery on the front spin $\Sigma_{S^1}\Lambda \subset (\R^5,\xi_{\OP{std}})$, utilising the Legendrian disc $D$ as shown in Figure \ref{fig:beforesurgery}.}
  \label{fig:aftersurgery}
\end{figure}

\subsubsection{Non-simply connected exact Lagrangian fillings of Legendrian spheres (the proof of Proposition \ref{prop:example})}
Using the constructions in Section \ref{sec:frontspin}, the sought examples will not be difficult to produce. We start with a Legendrian knot $\Lambda\subset(\mathbb{R}^3,\xi_{\OP{std}})$ which admits a non-simply connected Lagrangian filling $\Sigma$. For instance, we can take the Legendrian right handed trefoil knot and one of its exact Lagrangian filling diffeomorphic to a punctured torus; see \cite{Ekhoka}. It follows that $\Sigma_{S^m}\Lambda \subset (\R^{2(m+1)+1},\xi_{\OP{std}})$ is a Legendrian $S^m \times S^1$ which admits an exact Lagrangian filling $\Sigma_{S^m}\Sigma$ diffeomorphic to $S^m \times \Sigma$; this filling is of course also not simply connected.

The Legendrian ambient surgery along a cusp-edge in the class $S^m \times \{p\}$ for $p \in \Lambda$ corresponding to the left-most cusp edge of $\Lambda\subset(\mathbb{R}^3,\xi_{\OP{std}})$ as described above produces a Legendrian sphere $\Lambda'$, and concatenating $\Sigma_{S^m}\Sigma$ with the corresponding elementary Lagrangian $(m+1)$-handle provides a non-simply connected filling $\Sigma'$ of $\Lambda'$.
\begin{Rem} Theorem \ref{thm:l2rigidity} rules out non-simply connected endocobordisms of $\Lambda'$. However, the existence of the non-simply connected filling implies that Theorem \ref{thm:pi_1carclass} does not apply to $\Sigma'$. By using Remark \ref{rem:uniqueaug}, one thus sees that this Legendrian sphere necessarily has Reeb chords in degree zero, and that it moreover admits at least two distinct augmentations.
\end{Rem}

\subsubsection{Non-invertible Lagrangian concordances}
\label{sec:noninv}
Here we will prove the statement that
\begin{Prop}
\label{prop:noninvertible}
In all contact spaces $(\R^{2n+1},\xi_{\OP{std}})$ with $n \ge 1$ there exists a Legendrian $n$-sphere $\Lambda$ of $\mathtt{tb}=(-1)^{n(n-1)/2+1}$ which is fillable by a Lagrangian disc, but which admits no Lagrangian concordance to the standard Legendrian sphere $\Lambda_0$ of $\mathtt{tb}=(-1)^{n(n-1)/2+1}$. (Recall that the filling induces a Lagrangian concordance from $\Lambda_0$ to $\Lambda$.)
\end{Prop}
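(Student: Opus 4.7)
The plan is to apply Corollary \ref{cor:concobstruction}, by which a Lagrangian concordance from $\Lambda$ to $\Lambda_0$ would induce an inclusion of the sets of isomorphism classes of bilinearised Legendrian contact cohomologies. Since the standard Legendrian sphere $\Lambda_0$ has a unique Reeb chord (of degree $n$) and hence a unique augmentation, its only bilinearised contact cohomology is one-dimensional, concentrated in degree $n$. It therefore suffices to construct a Legendrian $n$-sphere $\Lambda \subset (\R^{2n+1},\xi_{\OP{std}})$ with $\OP{tb}=-1$ admitting a Lagrangian disc filling, but whose bilinearised contact cohomology is strictly larger for some pair of augmentations.

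For $n=1$ this is precisely the example of the first author in \cite{Chantraine_Non_symmetry}: a Legendrian knot $\Lambda' \subset (\R^3,\xi_{\OP{std}})$ with $\OP{tb}(\Lambda')=-1$, smoothly unknotted (so admitting a Lagrangian disc filling $D_0$), but with non-trivial bilinearised LCH for an augmentation pair, obstructing a concordance to the standard unknot. For $n \ge 2$ I would propagate this example to high dimensions as follows. First, apply the front $S^{n-1}$-spin to $(\Lambda',D_0)$ as recalled in Section \ref{sec:frontspin}, obtaining a Legendrian $\Sigma_{S^{n-1}}\Lambda' \cong S^{n-1}\times S^1$ in $(\R^{2n+1},\xi_{\OP{std}})$ with Lagrangian filling $\Sigma_{S^{n-1}}D_0 \cong S^{n-1}\times D^2$. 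Then perform a Legendrian ambient surgery on the left-most cusp-edge $S^{n-1} \subset \Sigma_{S^{n-1}}\Lambda'$, using the canonical bounding Legendrian disc as in Section \ref{sec:frontspin}; this kills the $S^{n-1}$-factor and produces a Legendrian $n$-sphere $\Lambda$ with $\OP{tb}(\Lambda)=-1$. Concatenating $\Sigma_{S^{n-1}}D_0$ with the elementary Lagrangian handle cobordism induced by the surgery yields an exact Lagrangian filling $L$ of $\Lambda$; since the attaching $S^{n-1}$ generates $\pi_{n-1}(S^{n-1}\times D^2)$ and is thus killed by the $n$-handle, $L$ is contractible and hence diffeomorphic to $D^{n+1}$.

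It remains to show that the bilinearised LCH of $\Lambda$ for an appropriate pair of augmentations is strictly larger than the one-dimensional bilinearised LCH of $\Lambda_0$. The plan is to track Reeb chords and augmentations through both the spin and the surgery: by the formula for the Chekanov-Eliashberg DGA of a front spin, Chantraine's augmentation pair on $\mathcal{A}(\Lambda')$ witnessing non-trivial bilinearised LCH will lift naturally to an augmentation pair of $\mathcal{A}(\Sigma_{S^{n-1}}\Lambda')$, whose bilinearised LCH retains a non-trivial contribution from that of $\Lambda'$. The subsequent ambient surgery introduces only a cancelling pair of Reeb chord generators in complementary degrees, which can be analysed using the surgery exact sequence of \cite{LegendrianAmbient} and shown not to destroy the witnessing classes.

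The main technical hurdle will be precisely this last step: the bookkeeping of Reeb chords, augmentations and differentials through the front spin combined with the ambient surgery, in particular verifying that the witnessing classes in bilinearised LCH inherited from $\Lambda'$ survive the surgery. Once this is established, Corollary \ref{cor:concobstruction} applied to a hypothetical concordance $\Lambda \to \Lambda_0$ will yield the required contradiction.
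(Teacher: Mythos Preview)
Your proposal is correct and follows essentially the same route as the paper: front-spin the $m(9_{46})$ example from \cite{Chantraine_Non_symmetry}, perform a Legendrian ambient surgery on the left-most spun cusp-edge to obtain a sphere, and obstruct a concordance back to $\Lambda_0$ via Corollary~\ref{cor:concobstruction}. Two differences are worth flagging. First, rather than spinning the disc filling, the paper spins the \emph{concordance} $C$ from $\Lambda_0$ to $\Lambda_{9_{46}}$, observing that $C$ can be taken cylindrical near the left-most cusp so that the ambient surgery can be performed compatibly on both ends and on the cobordism itself; this produces the concordance $\Lambda_0 \to \Lambda$ (and hence the disc filling) directly, bypassing your ``contractible hence $D^{n+1}$'' step (which as stated is not quite a proof --- you want a handle-cancellation argument rather than an appeal to contractibility). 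Second, and more to the point, the ``main technical hurdle'' you identify is dispatched in one line using the paper's own machinery: applying the long exact sequence of Theorem~\ref{thm:lespair} to the elementary handle-attachment cobordism from $\Sigma_{S^m}\Lambda_{9_{46}}$ to $\Lambda^+$, the relative homology term is concentrated in a single degree, so the witnessing class in $LCH_{-1}$ passes through unchanged. This is considerably more direct than tracking chords and differentials through the surgery formula of \cite{LegendrianAmbient}.
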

In \cite{Chantraine_Non_symmetry} the first author proved that the relation of Lagrangian concordance is not symmetric by establishing the above proposition in the case $n=1$. More precisely, it was shown there that the Legendrian representative $\Lambda_{9_{46}} \subset (\R^3,\xi_{\OP{std}})$ of the knot $9_{46}$ as depicted in Figure \ref{fig:946} (satisfying $\mathtt{tb}=-1$; this is maximal for this smooth knot class), which is fillable by a Lagrangian disc, is not concordant to the standard Legendrian unknot $\Lambda_0$ of $\mathtt{tb}=-1$.

Recall that an exact Lagrangian filling by a disc can be used to construct a concordance $C$ from $\Lambda_0$ to $\Lambda_{9_{46}}$, which was explicitly described in the same article. One such concordance is described in Figure \ref{fig:concordance946} below. Note that along the entire concordance the leftmost cusp-edge $p$ is fixed, and so we can assume that the cylinder $C$ coincides with the trivial cylinder $\mathbb{R}\times l$ for a small arc $p \in l \subset \Lambda_{9_{46}}$ inside a neighbourhood of this cusp. This fact will be important below.

Using the results in the current article, the non-existence of a concordance from $\Lambda_{9_{46}}$ to $\Lambda_0$ can be reproved by applying Corollary \ref{cor:concobstruction} together with the calculations in \cite{Chantraine_Non_symmetry}. Namely, in the latter article it is shown that, for an appropriate pair $\varepsilon_0,\varepsilon_1$ of augmentations of the Chekanov-Eliashberg algebra of $\Lambda_0$, we have
\[ LCH_{-1}^{\varepsilon_0,\varepsilon_1}(\Lambda_{9_{46}}) \neq 0,\]
and no concordance going the other way can thus exist by Corollary \ref{cor:concobstruction}.

The front spinning construction produces exact Lagrangian concordances $\Sigma_{S^m}C \subset \R \times \R^{3+2m}$, obtained as the front spin of $C$, from $\Sigma_{S^m}\Lambda_0 \subset (\R^{3+2m},\xi_{\OP{std}})$ to $\Sigma_{S^m}\Lambda_{9_{46}} \subset (\R^{3+2m},\xi_{\OP{std}})$. Here, the latter Legendrian submanifolds are the front spins of $\Lambda_0$ and $\Lambda_{9_{46}}$, respectively. In \cite[Section 5]{Floer_Conc} the authors proved using the K\"{u}nneth formula in Floer homology that again
\[ LCH_{-1}^{\widetilde{\varepsilon}_0,\widetilde{\varepsilon}_1}(\Sigma_{S^m}\Lambda_{9_{46}}) \neq 0\]
holds for a suitable pair of augmentations, which together with Corollary \ref{cor:concobstruction} implies that there is no Lagrangian concordance from $\Sigma_{S^m}\Lambda_{9_{46}}$ to $\Sigma_{S^m}\Lambda_0$.

\begin{figure}[ht!]
  \centering
  \includegraphics[height=2.2cm]{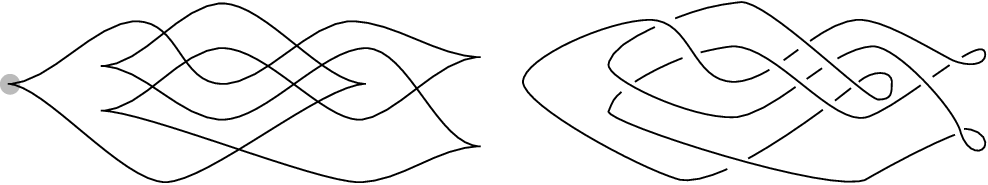}
  \caption{Front (left) and Lagrangian (right) projections of the maximal TB $m(9_{46})$ knot.}
  \label{fig:946}
\end{figure}

\begin{figure}[ht!]
  \centering
  \includegraphics[height=7cm]{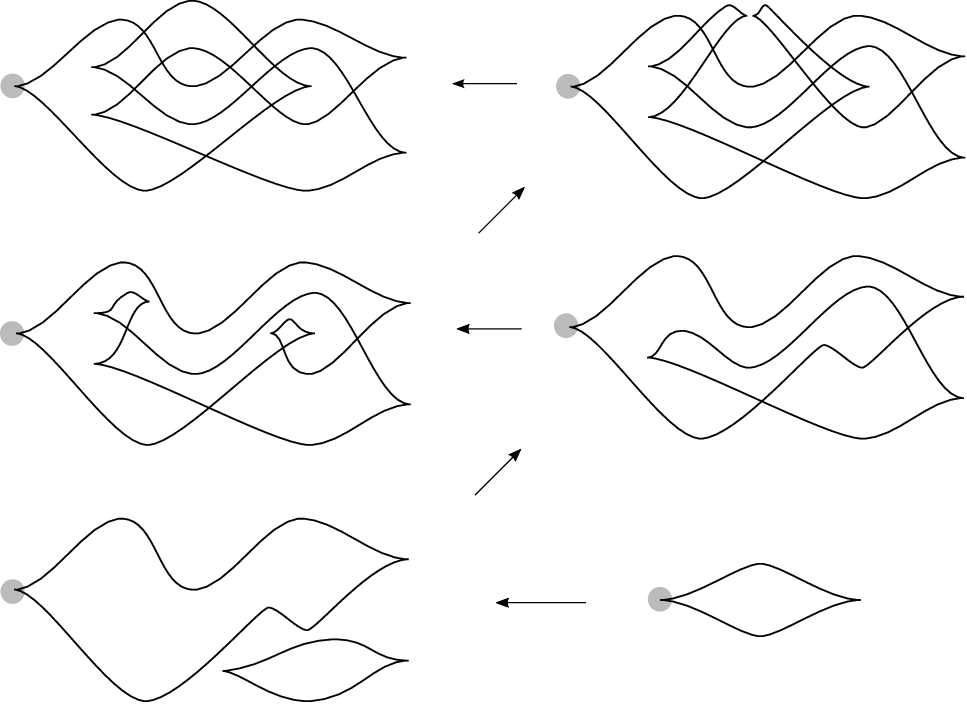}
  \caption{A Lagrangian concordance from $\Lambda_0$ to $\Lambda_{9_{46}}$.}
  \label{fig:concordance946}
\end{figure}

Recall that $\Sigma_{S^m}\Lambda_0 {\cong} \Sigma_{S^m}\Lambda_{9_{46}} {\cong} S^m \times S^1$, while $\Sigma_{S^m}C {\cong} \R \times S^m \times S^1$. We will now perform an explicit modification of the above example to produce an example of Legendrian \emph{spheres} in all dimensions which admit a concordance \emph{from} the standard sphere, but which do not admit a concordance \emph{to} the standard sphere; this establishes Proposition \ref{prop:noninvertible}.

\begin{proof}[Proof of Proposition \ref{prop:noninvertible}]
The Legendrian ambient surgery can be performed to the cusp-edge of the front projection of $\Sigma_{S^m}\Lambda_{9_{46}}$ corresponding to the left-most cusp edge $p \in \Lambda_{9_{46}}$. In this way, a Legendrian sphere $\Lambda^+ \subset (\R^{2(m+1)+1},\xi_{\OP{std}})$ is produced. Since the concordance $C$ moreover may be assumed to be a trivial cylinder over a neighbourhood of $p \in \Lambda$ and, hence, so is $\Sigma_{S^m}C$, we obtain a Lagrangian concordance from $\Lambda^-$ to $\Lambda^+$, where $\Lambda^-$ is the Legendrian sphere obtained by performing the corresponding Legendrian ambient surgery on $\Sigma_{S^m}\Lambda_0$. In fact, the latter sphere is the standard Legendrian $(m+1)$-sphere of $\mathtt{tb}=(-1)^{(m+1)m/2+1}$.

Recall that the Legendrian ambient surgery also produces an exact Lagrangian handle attachment cobordism from $\Sigma_{S^m}\Lambda_{9_{46}}$ to $\Lambda^+$. Inspecting the long exact sequence induced by Theorem \ref{thm:lespair}, we immediately conclude that there are augmentations $\varepsilon^+_i$, $i=0,1$ for the Chekanov-Eliashberg algebra of the Legendrian sphere $\Lambda^+$ satisfying
\[ LCH_{-1}^{\varepsilon^+_0,\varepsilon^+_1}(\Lambda^+) {\cong} LCH_{-1}^{\widetilde{\varepsilon}_0,\widetilde{\varepsilon}_1}(\Sigma_{S^m}\Lambda_{9_{46}}) \neq 0.\]
Once again, Corollary \ref{cor:concobstruction} shows that there exists no concordance from $\Lambda^+$ to $\Lambda^-$.

\end{proof}


\end{document}